\setlist[itemize,1]{leftmargin=1.5em} 
\setlist[enumerate,1]{leftmargin=1.5em} 
	\def\l@subsection{\@tocline{2}{0pt}{2.5pc}{5pc}{}} 
\definecolor{webbrown}{rgb}{0.65, 0.16, 0.16} 
\def\csname ver@etex.sty\endcsname{3000/12/31}
\crefname{lemma}{lemma}{lemmata}
\Crefname{lemma}{Lemma}{Lemmata}
\crefname{introtheorem}{theorem}{theorems}
\Crefname{introtheorem}{Theorem}{Theorems}
\crefname{subsection}{subsection}{subsections}
\Crefname{subsection}{Subsection}{Subsections}
\crefname{conjecture}{conjecture}{conjectures}
\Crefname{conjecture}{Conjecture}{Conjectures}
\tikzset{
	>=stealth',
	punkt/.style={
		rectangle,
		rounded corners,
		draw=black, fill=white,
		minimum height=2em,
		text centered},
	pil/.style={
		->,
		shorten <=2pt,
		shorten >=2pt,}
}
\pgfplotsset{every axis/.append style={
	tick label style={font=\tiny}
}}
\pgfplotsset{compat=1.18}
\numberwithin{equation}{section}                
\theoremstyle{plain}
\newtheorem{theorem}{Theorem}[section]
\newtheorem{proposition}[theorem]{Proposition}
\newtheorem{lemma}[theorem]{Lemma}
\newtheorem{corollary}[theorem]{Corollary}
\newtheorem{introtheorem}{Theorem}
\theoremstyle{definition}
\newtheorem{definition}[theorem]{Definition}
\newtheorem{remark}[theorem]{Remark}
\newtheorem{example}[theorem]{Example}
\DeclarePairedDelimiter\floor{\lfloor}{\rfloor}
\newcommand{\pFq}[2]{\vphantom{F}_{#1}F_{#2}}
\newcommand{\bbraket}[1]{\llbracket #1 \rrbracket}
\DeclareMathOperator\Disc{Disc}
\DeclareMathOperator\Tr{Tr}
\newcommand{\Id}{\mathrm{Id}}
\DeclareMathOperator*{\Res}{Res}
\newcommand{\bigO}{\mathrm{O}}
\newcommand{\smallo}{\mathrm{o}}
\def\N{\mathbb{N}}
\def\Z{\mathbb{Z}}
\def\Q{\mathbb{Q}}
\def\R{\mathbb{R}}
\def\C{\mathbb{C}}
\def\P{\mathbb{P}}
\newcommand{\iu}{\mathrm{i}}
\newcommand{\SL}{\mathrm{SL}}
\def\Mbar{\overline{{\mathcal{M}}}}
\newcommand{\mf}[1]{\mathfrak{#1}}
\newcommand{\mc}[1]{\mathcal{#1}}
\newcommand{\mb}[1]{\mathbb{#1}}
\newcommand{\ms}[1]{\mathsf{#1}}
\begin{document}
\renewcommand{\hbar}{\hslash}

\title[Resurgent large genus asymptotics of intersection numbers]{Resurgent large genus asymptotics of intersection numbers}

\author[B. Eynard]{B.~Eynard}%
\address[B. Eynard]{
	Universit\'e Paris-Saclay, CNRS, CEA, Institut de Physique Th\'eorique, Gif-sur-Yvette, France and %
	CRM, Centre de Recherches Math\'ematiques de Montr\'eal, Universit\'e de Montr\'eal, QC, Canada
	}%
\email{bertrand.eynard@ipht.fr}

\author[E. Garcia-Failde]{E.~Garcia-Failde}
\address[E. Garcia-Failde]{
	Sorbonne Universit\'e, CNRS, Institut de Math\'ematiques de Jussieu--Paris Rive Gauche, Paris, France %
}
\email{elba.garcia-failde@imj-prg.fr}

\author[A.~Giacchetto]{A.~Giacchetto}
\address[A.~Giacchetto]{
	Universit\'e Paris-Saclay, CNRS, CEA, Institut de Physique Th\'eorique, Gif-sur-Yvette, France %
}
\email{alessandro.giacchetto@ipht.fr}

\author[P.~Gregori]{P.~Gregori}
\address[P.~Gregori]{ 
	Universit\'e Paris-Saclay, CNRS, CEA, Institut de Physique Th\'eorique, Gif-sur-Yvette, France %
}
\email{paolo.gregori@ipht.fr}

\author[D.~Lewa{\'{n}}ski]{D.~Lewa{\'{n}}ski}
\address[D.~Lewa{\'{n}}ski]{
	Università degli Studi di Trieste, Sezione di Matematica, Trieste, Italy and Université de Genève, Section de Mathématiques, Genève, Switzerland %
}
\email{danilo.lewanski@units.it}

\subjclass[2020]{Primary 14H10, 14H70; Secondary 37K20, 05A16}
\keywords{intersection numbers, large genus asymptotics, resurgence, determinantal formula}








\begin{abstract}
	In this paper, we present a novel approach for computing the large genus asymptotics of intersection numbers. Our strategy is based on a resurgent analysis of the $n$-point functions of such intersection numbers, which are computed via determinantal formulae, and relies on the presence of a quantum curve. With this approach, we are able to extend the recent results of Aggarwal for Witten--Kontsevich intersection numbers with the computation of all subleading corrections, proving a conjecture of Guo--Yang, and to obtain new results on $r$-spin and Theta-class intersection numbers.
\end{abstract}

\maketitle

\vspace*{-.75cm}
\begin{spacing}{-1.2}
	\tableofcontents
\end{spacing}

\section{Introduction}
\addtocontents{toc}{\protect\setcounter{tocdepth}{1}}

\subsection*{Motivations and methods}
The aim of this paper is to study the large genus asymptotics of certain intersection numbers on the moduli space $\Mbar_{g,n}$ of genus $g$ stable complex curves with $n$ marked points. The main class of intersection numbers under consideration is that of $\psi$-class intersection numbers \cite{Wit91,Kon92}:
\begin{equation}\label{eqn:WK:int:numbrs}
	\braket{\tau_{d_1} \cdots \tau_{d_n}}
	=
	\int_{\Mbar_{g,n}} \prod_{i=1}^n \psi_i^{d_i} \,.
\end{equation}
Here $|d| = d_1 + \cdots + d_n = 3g - 3 + n$ uniquely determines the genus, and $\psi_i = c_1(\mc{L}_i)$ is the first Chern class of the line bundle $\mc{L}_i$ whose fibre over $[C,p_1\dots,p_n] \in \Mbar_{g,n}$ is the cotangent line $T_{p_i}^{\ast} C$. Such intersection numbers can be thought of as the intersection of $\psi$-classes with the fundamental class of the moduli space of curves. Another possibility, recently considered by Norbury \cite{Nor23}, is to intersect $\psi$-classes with a different cohomology class called the $\Theta$-class:
\begin{equation}\label{eqn:Theta:int:numbrs}
	\braket{\tau_{d_1} \cdots \tau_{d_n}}^{\Theta}
	=
	\int_{\Mbar_{g,n}} \Theta_{g,n} \prod_{i=1}^n \psi_i^{d_i} \,,
\end{equation}
where $|d| = g - 1$ due to $\Theta_{g,n}$ being of pure complex degree $2g-2+n$. Yet another collection of intersection numbers is that of $r$-spin intersection numbers \cite{Wit93}: for every $r \ge 2$,
\begin{equation}\label{eqn:rspin:int:numbrs}
	\braket{\tau_{d_1,a_1} \cdots \tau_{d_n,a_n}}^{r\textup{-spin}}
	=
	\int_{\Mbar_{g,n}} W^r_{g,n}(a_1,\dots,a_n) \prod_{i=1}^n \psi_i^{d_i} \,,
\end{equation}
where $W^r_{g,n}(a_1,\dots,a_n)$ is called the Witten $r$-spin class. Here $a_i \in \set{1,\dots,r-1}$, and $r|d| + |a| = (r+1)(2g-2+n)$. For $r = 2$, they coincide with $\psi$-class intersection numbers.

In the last few decades, there has been a growing realisation that many fundamental invariants in physics and geometry can be expressed through intersection numbers. The first connection was established by Witten \cite{Wit91}, who conjectured that the partition functions of topological $2d$ quantum gravity coincides with the generating series of $\psi$-class intersection numbers. Similarly, $r$-spin intersection numbers emerge in the context of topological $2d$ quantum gravity coupled to a certain gauge theory \cite{Wit92}. More recently, $\psi$-class intersection numbers have regained attention within the physics community due to new discoveries linking them to Jackiw--Teitelboim gravity \cite{SSS}. In similar fashion, super Jackiw--Teitelboim gravity has been linked to $\Theta$-class intersection numbers \cite{SW20}.

Geometrically, $\psi$-class intersection numbers represent the easiest possible instance of Gromov--Witten invariants, namely for the target space being a point. They appear in the works of Kontsevich \cite{Kon92} and Mirzakhani \cite{Mir07b} on the symplectic volumes of the moduli space of metric ribbon graphs and hyperbolic Riemann surfaces, as well as in the the asymptotic counting of geodesic curves in flat and hyperbolic random geometry and in the theory of Masur--Veech volumes \cite{Mir08b,DGZZ21,And+23}. All such intersection numbers also appear in the context of combinatorics of maps and random matrix theory \cite{Eyn16,BCEG}, as well as in the theory of integrable systems \cite{Kon92,FSZ10,CGG}.

Given their ubiquitous presence in mathematics and physics, a natural question arises:
\begin{center}
	\vspace{-.5em}
	\textsf{How does one compute such intersection numbers?}
	\vspace{-.5em}
\end{center}
The connection with integrable systems has in many instances been the key for the evaluation of these invariants. Both $\psi$- and $\Theta$-class intersection numbers are specific solutions of the Korteweg--De~Vries (KdV) integrable hierarchy \cite{Kon92,CGG}, while $r$-spin intersection numbers constitute a solution of the $r$-KdV hierarchy \cite{FSZ10}. Concretely, their generating series obey an infinite set of partial differential equations in infinitely many variables that recursively determine the intersection numbers uniquely after fixing initial conditions. An equivalent way of specifying these solutions of integrable hierarchies is by means of Virasoro constraints \cite{DVV91,GN92,AvM92}: the generating series of intersection numbers is annihilated by an infinite set of linear differential operators, forming a representation of the so-called Virasoro (or more generally $W$-) algebra. Alternative recursive methods include the topological recursion \cite{EO07}, the cut-and-join equation \cite{Ale11}, a recursion by Liu--Xu \cite{LX14}, as well as some exact formulae for the generating series such as the determinantal formula \cite{BE,BDY16}, integral representations by Okounkov \cite{Oko02}, Fock space inspired generating series by Buryak \cite{Bur17}, and the symmetric functions approach of Eynard--Mitsios \cite{EM}.

While the methods mentioned above theoretically allow for an explicit evaluation of intersection numbers, it is widely acknowledged that all these quantities are typically intricate. With a few exceptional cases, no simple closed-form expression is expected to exist. Even if on the one hand it might be hopeless to control these invariants exactly, on the other it might be possible to gain an understanding of their behaviour as the genus increases. Thus, another natural question arises:
\begin{center}
	\vspace{-.5em}
	\textsf{Is it possible to access intersection numbers asymptotically in the large genus limit?}
	\vspace{-.5em}
\end{center}
For $\psi$-class intersection numbers, it was predicted by Delecroix--Goujard--Zograf--Zorich \cite{DGZZ21} that in the large genus limit the intersection numbers simplify considerably:
\begin{equation}
	\braket{\tau_{d_1} \cdots \tau_{d_n}}
	\sim
	\frac{(6g-5+2n)!!}{24^g \, g! \, \prod_{i=1}^{n} (2d_i + 1)!!}
\end{equation}
uniformly in $d_1,\dots,d_n$. Special cases of the asymptotic formula were proved in \cite{LX14} and in \cite{DGZZ20} when the genus is mostly concentrated in $d_1$ and in $d_1 + d_2$ respectively. The above asymptotic formula has been recently proved by Aggarwal \cite{Agg21}, whose strategy employs a careful combinatorial and probabilistic analysis of the associated Virasoro constraints (more precisely, Aggarwal proved the asymptotic formula under the more stringent regime $n = \smallo(g^{1/2})$). An alternative proof has been later proposed by Guo--Yang \cite{GY22}, whose approach is based on a combinatorial analysis of the determinantal formula. The large genus asymptotics of $r$-spin intersection numbers was considered by Dubrovin--Yang--Zagier in \cite{DYZ} in the specific case of $n = 1$.

The study of intersection numbers in the large genus limit holds significance for several reasons. Firstly, the intricate nature of intersection numbers simplifies enormously in the large genus limit, leading to closed-form asymptotic evaluations. Secondly, many interesting quantities associated to several geometric models appear to be exclusively attainable in the asymptotic regime. For instance, the length of the shortest geodesic on a fixed hyperbolic surface is notoriously hard to compute, but its average value in the large genus limit can be explicitly computed as $\approx 1.61498\ldots$ \cite{MP19}. This computation crucially relies on the large genus behaviour of Weil--Petersson volumes predicted by Mirzakhani--Zograf \cite{MZ15}. Thirdly, in the large genus limit many universality phenomena can be unveiled. For instance, all intersection numbers studied so far manifest a $(2g)!$ factorial growth, as well as a (conjectural) polynomiality structure \cite{GY22}. Another example of universality regards the length spectra of random hyperbolic and combinatorial geodesics, both of which converge to a Poisson distribution \cite{MP19,JL23}. The large genus regime is also interesting from the physics point-of-view, as it is intimately linked to non-perturbative effects in quantum theory with new phenomena that are not captured by Feynman diagram expansions appear (see for instance \cite{CESV16,GM,GKKM,GS,EGGLS,IM}).

The aforementioned methods used to derive the large genus asymptotics of $\psi$-class intersection numbers heavily depend on ad hoc estimates for various quantities involved in the specific problem. Therefore, it would be desirable to have a new, more general strategy for approaching large genus asymptotics. In this paper, we aim to address this issue by introducing a novel method that utilises two main techniques:
\begin{enumerate}
	\item The \textit{Borel transform method}, that is the computation of large-order asymptotics through the study of the singularity structure in the Borel plane.

	\item The existence of \textit{determinant formulae} (a.k.a.~the matrix resolvent method) for computing the $n$-point functions of the corresponding enumerative problem, with building blocks being the solutions of a quantum curve (a.k.a.~topological ODE).
\end{enumerate}
The Borel transform method essentially combines classical results from Borel (concerning resummation of divergent series) and from Darboux (concerning asymptotic analysis), and can be naturally placed within Écalle's theory of resurgence \cite{Eca81,MS16}. On the other hand, determinantal formulae have been established in connection with topological recursion \cite{BE,BBE15} and integrable systems \cite{BDY16}. In the present paper, we employ the proposed strategy to address the three enumerative problems mentioned earlier: $\psi$-, $\Theta$-, and $r$-spin intersection numbers. Many more enumerative problems are solved by determinantal formulae. These include higher Weil--Petersson volumes \cite{BDY16}, GUE correlators \cite{DY17}, Gromov--Witten theory of $\mb{P}^1$ \cite{DYZ20}, Fan--Jarvis--Ruan--Witten theory \cite{BDY21}, and more. Given this broad applicability, we anticipate that our approach will pave the way for further investigations into large genus phenomena in the future.

The strategy outlined here offers multiple advantages. Firstly, it applies to several enumerative problems with minimal modifications, such as the study of the structure of the Borel singularities of the building blocks appearing in the determinantal formula. A second, and perhaps more noteworthy, advantage is that it enables the computation of subleading and exponentially subleading corrections through an implementable algorithm, revealing the polynomiality phenomena conjectured in \cite{GY22}. Thirdly, it sheds light on the universal features of the large genus regime, as well as the model-dependent ones, as exemplified in \cref{thm:intro:psi:Theta,thm:intro:rspin}.

\subsection*{Results}
As explained above, the geometry of the quantum curve underlying the different enumerative problems under consideration propagates to their large genus asymptotics. Consider, for example, the case of $\psi$- and $\Theta$-class intersection numbers. The underlying quantum curves are the Airy and the Bessel ODE respectively:
\begin{equation}
	\biggl( \biggl(\hbar \frac{d}{dx}\biggl)^2 - \; x \biggr) \psi_{\textup{Airy}}(x;\hbar) = 0 \,,
	\qquad\qquad
	\biggl( \biggl(x \, \hbar \frac{d}{dx}\biggr)^2 - \; x \biggr) \psi_{\textup{Bessel}}(x;\hbar) = 0 \,.
\end{equation}
For the corresponding enumerative problems, the large genus formula is given as follows (see \cref{thm:large:g:WK,thm:large:g:Nor} for more precise statements). In the following, we denote $(x)^{\underline{m}} = x (x-1)\cdots (x-m+1)$ as the falling factorial.

\begin{introtheorem}\label{thm:intro:psi:Theta}
	For any given $n \ge 1$ and $K \ge 0$, uniformly in $d_1,\dots,d_n$ as $g \to \infty$:
	\begin{align}
		\label{eqn:intro:psi:large:g}
		\begin{split}
			\braket{\tau_{d_1} \cdots \tau_{d_n}}
			=
			\ms{S}_{\ms{A}} \, \frac{2^n}{4\pi} \, \frac{\Gamma(2g-2+n)}{\ms{A}^{2g-2+n} \, \prod_{i=1}^n (2d_i + 1)!!}
			\bigg(
				1
				& +
				\frac{\ms{A}}{2g-3+n} \, \alpha_{1}
				+ \cdots \\
			&
				+
				\frac{\ms{A}^K}{(2g-3+n)^{\underline{K}}} \, \alpha_{K}
				+
				\bigO\Bigl( \frac{1}{g^{K+1}} \Bigr)
				\bigg) \,,
		\end{split}
		\\
		\label{eqn:intro:Theta:large:g}
		\begin{split}
			\braket{\tau_{d_1} \cdots \tau_{d_n}}^{\Theta}
			=
			\ms{S}_{\ms{B}} \, \frac{2^n}{4\pi} \, \frac{\Gamma(2g-2+n)}{\ms{B}^{2g-2+n} \, \prod_{i=1}^n (2d_i + 1)!!}
			\bigg(
				1
				& +
				\frac{\ms{B}}{2g-3+n} \, \beta_{1}
				+ \cdots \\
			&
				+
				\frac{\ms{B}^K}{(2g-3+n)^{\underline{K}}} \, \beta_{K}
				+
				\bigO\Bigl( \frac{1}{g^{K+1}} \Bigr)
				\bigg) \,.
		\end{split}
	\end{align}
	Moreover, the constants $\ms{A}$, $\ms{B}$, $\ms{S}_{\ms{A}}$, $S_\ms{B}$ and the sequences $(\alpha_k)_{k \ge 0}$ and $(\beta_k)_{k \ge 0}$ have the following values and geometric interpretations:
	\begin{itemize}
		\item The constants $\ms{A} = \frac{2}{3}$ and $\ms{B} = 2$ determine the exponential growth of the Airy and Bessel functions respectively:
		\begin{equation}
			\psi_{\textup{Airy}}(x;\hbar)
			\sim
			\frac{1}{\sqrt{2} \, x^{1/4}} e^{\pm \frac{\ms{A}}{\hbar} x^{3/2}} \,,
			\qquad\quad
			\psi_{\textup{Bessel}}(x;\hbar)
			\sim
			\frac{1}{\sqrt{2} \, x^{1/4}} e^{\pm \frac{\ms{B}}{\hbar} x^{1/2}} \,,
			\qquad\quad
			\text{as } x \to \infty \,.
		\end{equation}

		\item The constants $\ms{S}_{\ms{A}} = 1$ and $\ms{S}_{\ms{B}} = 2$ are the Stokes constants appearing in the Stokes phenomenon underlying the Airy and Bessel ODEs.

		\item Each term $\alpha_k$ and $\beta_k$ appearing in the asymptotic expansions is a polynomial function of $n$ and the multiplicities $p_m = \#\set{ d_i = m }$:
		\begin{equation}
			\alpha_k = \alpha_k\bigl( n, p_0, \ldots, p_{\floor{\frac{3}{2}k} - 1} \bigr) \,,
			\qquad\qquad
			\beta_k = \beta_k\bigl( n, p_0, \ldots, p_{\floor{\frac{1}{2}k} - 1} \bigr) \,,
		\end{equation}
		The values $3/2$ and $1/2$ are the exponents of $x$ in the exponential growth of the Airy and Bessel functions respectively. Moreover, we provide an algorithm to effectively compute $\alpha_k$ and $\beta_k$ from the coefficients of the asymptotic expansion of the Airy and Bessel functions respectively (see \cref{table:intro} for the first few values).
	\end{itemize}
\end{introtheorem}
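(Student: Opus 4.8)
The plan is to deduce the asymptotic expansions in \cref{eqn:intro:psi:large:g,eqn:intro:Theta:large:g} from the two structural ingredients highlighted in the introduction: the \emph{determinantal formula} for the $n$-point functions of $\psi$- and $\Theta$-class intersection numbers, and a \emph{resurgent} (Borel--Darboux) analysis of its building blocks, namely the wave functions solving the Airy and Bessel quantum curves. I describe the argument for the $\psi$-class case; the $\Theta$-class case is obtained verbatim upon replacing the Airy wave function by the Bessel one and the exponent $3/2$ by $1/2$. First I would repackage the intersection numbers into the $n$-point differentials $\omega_{g,n}$, whose Taylor coefficients near $x_i = \infty$ recover the $\braket{\tau_{d_1} \cdots \tau_{d_n}}$ up to the normalising double factorials. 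The determinantal formula writes $\sum_g \hbar^{2g-2+n}\, \omega_{g,n}$ as a fixed algebraic expression --- multilinear in a suitable sense --- in the coefficients of the formal $\hbar$-expansion of $\psi_{\textup{Airy}}(x;\hbar)$ and of its dual. Extracting one coefficient in each $x_i$ then reduces the statement to the large-order behaviour in $\hbar$, equivalently in the Euler characteristic $2g-2+n$, of a series whose coefficients are explicit finite sums built from the asymptotic coefficients of the Airy function, with each marked point $i$ contributing a single factor that depends only on $d_i$.

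The analytic heart is a resurgent analysis of this series. The formal $\hbar$-expansion of $\psi_{\textup{Airy}}$ is Gevrey-$1$ divergent, and its Borel transform has its nearest singularity at distance governed by the instanton action $\ms{A} = \tfrac{2}{3}$ visible in the growth $e^{\pm \ms{A} x^{3/2}/\hbar}$, with local structure there fixed by the subdominant exponential branch and by the Stokes constant $\ms{S}_{\ms{A}} = 1$ of the Airy equation. Because the determinantal formula is a finite algebraic expression in the wave functions, this structure propagates: the Borel transform (in the variable conjugate to $2g-2+n$) of the generating series of $\braket{\tau_{d_1}\cdots\tau_{d_n}}$ acquires a dominant singularity of square-root type at $\ms{A}$. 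A Darboux-type transfer theorem then reads off the coefficient asymptotics --- the position $\ms{A}$ producing the factor $\ms{A}^{-(2g-2+n)}\Gamma(2g-2+n)$, the half-integer ramification (one half-power per marked point) the prefactor $\tfrac{2^n}{4\pi}$, the Stokes constant the overall $\ms{S}_{\ms{A}}$, and the subleading terms of the local expansion the descending series in $1/(2g-3+n)$ with coefficients $\alpha_k$.

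To pin down $\alpha_k$ and establish its polynomiality, I would track how the asymptotic coefficients of $\psi_{\textup{Airy}}$ near $x = \infty$ enter: $\alpha_k$ is assembled from the first $\bigO(k)$ of these coefficients, and the contribution of a marked point $d_i$ to $\alpha_k$ is, after dividing out $(2d_i + 1)!!$, a rational expression in $d_i$ whose relevant Taylor data vanishes once $2d_i + 1$ exceeds the retained order --- which, the growth exponent of the Airy function being $3/2$, happens exactly for $d_i \ge \floor{\tfrac{3}{2}k}$. Summing over the $n$ marked points then turns a symmetric function of $d_1,\dots,d_n$ into a polynomial in $n$ and in $p_0,\dots,p_{\floor{3k/2}-1}$, where $p_m = \#\set{d_i = m}$; making this bookkeeping effective is precisely the announced algorithm for $\alpha_k$, and the identical scheme with $3/2$ replaced by $1/2$ gives $\beta_k$.

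The main obstacle I anticipate is \emph{uniformity in $d_1, \dots, d_n$}. The Borel--Darboux machinery naturally delivers asymptotics for each fixed sequence; upgrading this to the stated expansion with an error $\bigO(g^{-K-1})$ whose implied constant is independent of the $d_i$ requires quantitative, $x$-uniform control of the remainder in the asymptotic expansion of the Airy wave function and of its propagation through the finitely many terms of the determinantal formula --- i.e.\ effective bounds on the non-dominant part of the Borel transform. A secondary point to verify is that the second Borel singularity at $-\ms{A}$, together with any subdominant saddles, contributes only beyond the tracked order, so that the single tower $\sum_{k} \alpha_k\, \ms{A}^k / (2g-3+n)^{\underline{k}}$ indeed captures the full asymptotic expansion up to $\bigO(g^{-K-1})$.
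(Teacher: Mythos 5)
Your overall architecture coincides with the paper's: determinantal formula for the $n$-point functions, resurgent analysis of the Airy and Bessel wave functions, propagation of the Borel singularity structure through the determinantal expression, and a Darboux-type transfer to coefficient asymptotics. But several steps, as you state them, are either wrong or missing the actual content of the proof.

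First, the local model at the dominant singularity is misidentified. In the convention $\hbar^m\mapsto s^m/m!$ used here, the Borel transforms of the wave functions are explicit Gauss hypergeometric functions with \emph{logarithmic} singularities, not square-root ones, and the prefactor $\frac{2^n}{4\pi}$ does not come from ``half-integer ramification per marked point'': it comes from the normalisation $(-2)^{-(2g-2+n)}\prod_i\frac{(2d_i+1)!!}{2\,x_i^{d_i+3/2}}$ in the genus expansion combined with the explicit leading coefficient of the minors, whose computation requires evaluating the cyclic sum in closed form (\cref{tech:lemma}). Second, the singularity at $-\ms{A}$ does \emph{not} contribute only beyond the tracked order: it is equidistant from the origin and contributes at every order in $1/g$; the paper combines the two towers via the parity relation $W^{(-,i)}_n(\bm{x};\hbar)=(-1)^n\,W^{(+,i)}_n(\bm{x};-\hbar)$, and dropping it costs a factor of $2$ in the constant. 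Third, the polynomiality of $\alpha_k$ in $n$ and $p_0,\dots,p_{\floor{\frac{3k}{2}}-1}$ is the hardest part of the theorem, and your heuristic that ``the relevant Taylor data vanishes once $2d_i+1$ exceeds the retained order'' does not substitute for a proof. One must show that the sum over $i$ of the minors --- each of which individually has poles along $\sqrt{x_i}=\pm\sqrt{x_j}$ --- assembles into a polynomial (a Wronskian cancellation, \cref{lemma:poles:minors}); that $U_{k,g,n}$ decomposes as $\sum_d P^{(d)}_{k,n}\,h_{3g-3+n-d}$ with degree $\le 3k$ in each variable; and that the sequence $(P_{k,n})_n$ stabilises in degree and depends polynomially on $n$ via the specialisation properties $P_{k,n+1}(\,\cdot\,,\pm1)=P_{k,n}(\,\cdot\,)$ (\cref{lemma:poly:subleading}), which is where the value $\floor{\frac{3k}{2}}$ actually originates. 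Finally, you correctly flag uniformity in $d_1,\dots,d_n$ as the main obstacle but leave it unresolved; in the paper this is the crux of the proof of \cref{thm:large:g:WK}, handled by performing the coefficient extraction as a contour integral over circles of $g$-dependent radii $R_i=1+\frac{i-1}{n-1}\frac{1}{g}$ and applying Watson's lemma with bounds uniform in the radii. Without an argument of this kind the error term $\bigO(g^{-K-1})$, uniform in $d$, is not established.
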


\begin{table}
\centering
{\renewcommand{\arraystretch}{1.2}
\begin{tabularx}{.57\textwidth}[t]
	{ c | >{\raggedright\arraybackslash}X }
	\toprule
	$k$ & $\alpha_{k}$ \\
	\midrule
	$0$ & $1$
	\\
	\midrule
	$1$ & $
		- \tfrac{17 - 15n + 3n^2}{12}
		- \tfrac{(3 - n)(n - p_0)}{2}
		- \tfrac{(n - p_0)^{\underline{2}}}{4} $
	\\
	\midrule
	$2$ & $
		\tfrac{1225 - 1632 n + 741 n^2 - 138 n^3 + 9 n^4}{288}
		+
		\tfrac{(105 - 98 n + 30 n^2 - 3 n^3)(n - p_0)}{24}
		+
		\tfrac{3(10 - 7 n + n^2)(n - p_0 - p_1)}{8}
		+
		\tfrac{(59 - 51 n + 9 n^2)(n - p_0)^{\underline{2}}}{48}
		+
		\tfrac{5(n - p_0 - p_1 - p_2)}{8}
		+
		\tfrac{3(4 - n)(n - p_0 - 1)(n - p_0 - p_1)}{4}
		+
		\tfrac{(7 - 3 n)(n - p_0)^{\underline{3}}}{24}
		+
		\tfrac{3(n - p_0 - 1)^{\underline{2}}(n - p_0 - p_1)}{48}
		+
		\tfrac{3(n - p_0)^{\underline{4}}}{96} $
	\\
	\bottomrule
\end{tabularx}
}
\hfill
{\renewcommand{\arraystretch}{1.2}
\begin{tabularx}{.41\textwidth}[t]
	{ c | >{\raggedright\arraybackslash}X }
	\toprule
	$k$ & $\beta_k$ \\
	\midrule
	$0$ & $1$
	\\
	\midrule
	$1$ & $- \frac{1}{4}$
	\\
	\midrule
	$2$ & $\tfrac{9 - 4 n}{8} + \tfrac{n - p_0}{8}$
	\\
	\midrule
	$3$ & $
		- \tfrac{57 - 44 n + 8 n^2}{128}
		- \tfrac{(13 - 4 n)(n - p_0)}{32}
		- \tfrac{(n - p_0)^{\underline{2}}}{16} $
	\\
	\bottomrule
\end{tabularx}
}
\caption{
	The subleading corrections $\alpha_k$ and $\beta_k$ for the first few values of $k$.
}
\label{table:intro}
\end{table}

It can be easily shown that formula \labelcref{eqn:intro:psi:large:g} for $K = 0$ reduces to Aggarwal's result. The new insight that we provide is the geometric interpretation of the constants $\ms{S}_{\ms{A}} = 1$ and $\ms{A} = 2/3$. The subleading correction and their polynomiality behaviour is a novel result, and provides a proof of a conjecture by Guo--Yang \cite[conjecture~1]{GY22}. The case of $\Theta$-class intersection numbers is completely new. As noted previously, our proof strategy highlights the universality of the large genus asymptotics, as well as the the model-dependent ingredients. Moreover, it provides a concrete algorithm to compute the subleading corrections $\alpha_k$ and $\beta_k$ appearing in the asymptotic expansions.

We also provide the large genus asymptotics for $r$-spin intersection numbers, which were previously unknown for $n>1$. In this case, the underlying quantum curve is the $r$-Airy ODE:
\begin{equation}
	\biggl( \biggl(\hbar \frac{d}{dx}\biggl)^r - x \biggr) \psi_{r\textup{-Airy}}(x;\hbar) = 0 \,.
\end{equation}
One notable distinction compared to the previous case is the increased order of the differential equation, which is now $r$. This feature introduces a new element in the asymptotic analysis, specifically the appearance of exponentially subleading terms. Nonetheless, these terms can be effectively addressed using the Borel transform method. See \cref{thm:large:g:rspin} for a more precise statement. In the following, we denote by
\begin{equation}\label{eqn:r:fact}
	m!_{(r)} =
	\begin{cases}
		m (m - r)!_{(r)} & \text{if } m > r \,, \\
		m & \text{if } 0 < m \le r \,, \\
	\end{cases}
\end{equation}
the $r$-factorial, which generalises the double factorial for $r = 2$.

\begin{introtheorem}\label{thm:intro:rspin}
	For any given $n \ge 1$, $K \ge 0$, and $a_1,\dots,a_n$, uniformly in $d_1,\dots,d_n$ as $g \to \infty$:
	\begin{equation} \label{eqn:intro:rspin:large:g}
	\begin{split}
		\braket{\tau_{d_1,a_1} \cdots \tau_{d_n,a_n}}^{r\textup{-spin}}
		= \,
		& \frac{2^n}{2\pi} \frac{\Gamma(2g-2+n)}{r^{g-1-|d|} \, \prod_{i=1}^n (r d_i + a_i)!_{(r)}} \\
		\times
		\Bigg[
			&
			\frac{\ms{S}_{r,1}}{|\ms{A}_{r,1}|^{2g-2+n}}
			\biggl(
				\gamma^{(r,1)}_0
				+ \cdots +
				\frac{|\ms{A}_{r,1}|^K}{(2g-3+n)^{\underline{K}}} \,
				\gamma^{(r,1)}_K
				+ \bigO\Bigl( \frac{1}{g^{K+1}} \Bigr)
			\biggr) \\[1.2ex]
		+ \, & 
			\cdots \\
		+ \, &
			\frac{\ms{S}_{r,\floor{\frac{r-1}{2}}}}{|\ms{A}_{r,\floor{\frac{r-1}{2}}}|^{2g-2+n}}
			\biggl(
				\gamma^{(r,\floor{\frac{r-1}{2}})}_0
				+ \cdots +
				\frac{|\ms{A}_{r,\floor{\frac{r-1}{2}}}|^K}{(2g-3+n)^{\underline{K}}} \,
				\gamma^{(r,\floor{\frac{r-1}{2}})}_K
				+ \bigO\Bigl( \frac{1}{g^{K+1}} \Bigr)
			\biggr) \\
		+ \, &
			\frac{\delta_{r}^{\textup{even}}}{2}
			\frac{\ms{S}_{r,\frac{r}{2}}}{|\ms{A}_{r,\frac{r}{2}}|^{2g-2+n}}
			\biggl(
				\gamma^{(r,\frac{r}{2})}_0
				+ \cdots +
				\frac{|\ms{A}_{r,\frac{r}{2}}|^K}{(2g-3+n)^{\underline{K}}} \,
				\gamma^{(r,\frac{r}{2})}_K
				+ \bigO\Bigl( \frac{1}{g^{K+1}} \Bigr)
			\biggr)
			\Bigg] \,.
	\end{split}
	\end{equation}
	Here $\delta_{r}^{\textup{even}}$ gives one if $r$ is even and zero otherwise. Moreover, the constants $\ms{A}_{r,\alpha}$, $\ms{S}_{r,\alpha}$, and the sequences $(\gamma^{(r,\alpha)}_k)_{k \ge 0}$ have the following values and geometric interpretations:
	\begin{itemize}
		\item The constants $\ms{A}_{r,\alpha} = \frac{r}{r+1} (1 - e^{2\pi\iu \frac{\alpha}{r}})$ are determined by the exponential growth of the $r$-Airy functions:
		\begin{equation}
			\psi_{r\textup{-Airy}}(x;\hbar)
			\sim
			\frac{1}{\sqrt{r} \, x^{\frac{r-1}{2r}}}
			e^{\frac{1}{\hbar} \frac{r}{r+1} e^{2\pi\iu \frac{\alpha}{r}} x^{\frac{r+1}{r}}} \,,
			\qquad\quad
			\alpha = 1,\dots,r \,,
			\quad
			\text{ as } x \to \infty \,.
		\end{equation}
		Furthermore, the various exponentially asymptotic behaviours are arranged in ascending order of distance from the origin, coupled together via the symmetry relation $|\ms{A}_{r,\alpha}| = |\ms{A}_{r,r-\alpha}|$. The behaviour depends on the parity of $r$, as pictured in \cref{fig:actions:rspin}.

		\item The constants $\ms{S}_{r,\alpha} = 1$ are the Stokes constants appearing in the Stokes phenomenon underlying the $r$-Airy ODE.

		\item Each term $\gamma^{(r,\alpha)}_k$ is a function of $n$ and the multiplicities $p_m = \#\set{ r d_i + a_i = m }$. Moreover, we provide an algorithm to effectively compute $\gamma_k$ from the coefficients of the asymptotic expansion of the $r$-Airy function. The leading coefficient is explicitly given by
		\begin{equation}
			\gamma^{(r,\alpha)}_0
			=
			(-1)^{(\alpha-1)(|d|+n)}
			\frac{\prod_{i=1}^n \sin\left(\frac{\alpha a_i}{r}\pi \right)}{\sin(\frac{\alpha}{r}\pi)} \,.
		\end{equation}
		We refer to \cref{thm:large:g:rspin} for a more precise statement about its polynomial dependence on the multiplicities.
	\end{itemize}
\end{introtheorem}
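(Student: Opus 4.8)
The plan is to follow the two-part strategy announced above: first repackage the $r$-spin intersection numbers into $n$-point correlation functions carrying a determinantal formula, then extract the large genus asymptotics from the resurgent structure of the building blocks of that formula. Concretely, I would assemble the numbers $\braket{\tau_{d_1,a_1}\cdots\tau_{d_n,a_n}}^{r\textup{-spin}}$ into generating $n$-point functions and invoke the determinantal (topological ODE) formula of Bertola--Dubrovin--Yang \cite{BDY16,BDY21}, which expresses these $n$-point functions as a sum over permutations of $\set{1,\dots,n}$ of products of entries of a matrix built from the $r$ linearly independent formal solutions of the $r$-Airy ODE, of which $\psi_{r\textup{-Airy}}(x;\hbar)$ is one. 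Conversely, an individual intersection number is recovered from the $n$-point function by a multiple coefficient extraction (a residue at the puncture in each variable); this is the origin of the factors $(r d_i + a_i)!_{(r)}$ and of the power of $r$ in the prefactor of \labelcref{eqn:intro:rspin:large:g}.

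The analytic heart is the resurgent analysis of the building block. The asymptotic series of $\psi_{r\textup{-Airy}}(x;\hbar)$ as $x\to\infty$ is Gevrey-$1$, with $r$ exponential sectors carrying the exponents $\frac{1}{\hbar}\frac{r}{r+1}e^{2\pi\iu\frac{\alpha}{r}}x^{\frac{r+1}{r}}$ for $\alpha = 1,\dots,r$; the pairwise differences of these exponents, transported through the change of variables relating $\hbar$ to the genus expansion parameter, yield exactly the actions $\ms{A}_{r,\alpha} = \frac{r}{r+1}(1-e^{2\pi\iu\frac{\alpha}{r}})$, which are therefore the locations of the Borel singularities. The associated Stokes constants $\ms{S}_{r,\alpha}=1$ are read off from the connection (monodromy) data of the $r$-Airy ODE, generalising the classical value $1$ for Airy. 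I would then propagate this structure through the determinantal formula: since the formula is polynomial in the building blocks and their derivatives, the Borel transform of the genus generating series of the intersection numbers is forced to have its singularities precisely at the $\ms{A}_{r,\alpha}$, ordered by modulus and paired by the reflection $|\ms{A}_{r,\alpha}| = |\ms{A}_{r,r-\alpha}|$, as depicted in \cref{fig:actions:rspin}.

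A Darboux-type (Borel--Laplace) analysis at each singularity then produces the $\Gamma(2g-2+n)/|\ms{A}_{r,\alpha}|^{2g-2+n}$ prefactor together with the descending series in $1/(2g-3+n)^{\underline{k}}$ whose coefficients $\gamma^{(r,\alpha)}_k$ are the Taylor coefficients of the Borel transform at that singularity — this is the promised effective algorithm. The parity of $r$ enters here: for even $r$ the sector $\alpha = r/2$ corresponds to a Borel singularity on the negative real axis, a median direction, which accounts for the extra factor $\tfrac12\delta_r^{\textup{even}}$; the signs $(-1)^{(\alpha-1)(|d|+n)}$ in $\gamma^{(r,\alpha)}_0$ keep track of the branch of $x^{\frac{r+1}{r}}$ selected along the Stokes ray in the direction $\alpha$. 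The leading coefficient $\gamma^{(r,\alpha)}_0$ is obtained by performing the residue computation with only the leading term $\tfrac{1}{\sqrt r}x^{-\frac{r-1}{2r}}$ of the $\alpha$-th sector together with the phases $e^{\pm 2\pi\iu a_i\alpha/r}$ by which the spin label $a_i$ is selected at each marked point; combining the pair of sectors $\alpha$ and $r-\alpha$ then collapses these phases to sines, giving $\gamma^{(r,\alpha)}_0 = (-1)^{(\alpha-1)(|d|+n)}\prod_{i=1}^n\sin(\tfrac{\alpha a_i}{r}\pi)/\sin(\tfrac{\alpha}{r}\pi)$, which for $r=2$ reduces to $\gamma^{(2,1)}_0 = 1$ and to the Witten--Kontsevich case of \cref{thm:intro:psi:Theta}. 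Polynomiality of the higher $\gamma^{(r,\alpha)}_k$ in $n$ and the multiplicities $p_m = \#\set{rd_i+a_i = m}$ follows because, once the $(rd_i+a_i)!_{(r)}$ are stripped off, the residue at each marked point contributes — through the asymptotic expansion of the building block truncated at order $k$ — a quantity depending polynomially and to bounded degree on $rd_i+a_i$, and beyond that degree only on whether $rd_i+a_i$ is small; summing over $i$ converts this into a polynomial in $n$ and the finitely many low-order $p_m$, as made precise in \cref{thm:large:g:rspin}.

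The main obstacle I anticipate is uniformity together with the rigorous error estimate: the expansion must hold uniformly in all tuples $(d_1,\dots,d_n)$ as $g\to\infty$, with remainder genuinely $\bigO(g^{-(K+1)})$ and a constant independent of the $d_i$. This demands quantitative control of the Borel transform — its analytic continuation past the first singularity and polynomial bounds on its growth in the relevant sectors — together with a careful justification of the interchange of the genus limit with the coefficient extraction. The multi-instanton feature specific to the $r$-spin case compounds the difficulty relative to the Airy and Bessel cases of \cref{thm:intro:psi:Theta}: instead of a single dominant exponential scale, one must simultaneously resolve and add the contributions of all sectors of comparable modulus — the complex-conjugate pairs and, for even $r$, the median negative-axis sector — and control their relative sizes as $g$ grows.
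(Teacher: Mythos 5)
Your proposal is correct and follows essentially the same route as the paper: the Bertola--Dubrovin--Yang determinantal formula, propagation of the $r$-Airy Borel singularity structure through the products of kernels, the Borel--Laplace (Darboux) extraction of the $\Gamma(2g-2+n)/|\ms{A}_{r,\alpha}|^{2g-2+n}$ asymptotics with conjugate sectors $\alpha$ and $r-\alpha$ paired and the median sector weighted by $\tfrac12\delta_r^{\textup{even}}$, polynomiality of the $\gamma^{(r,\alpha)}_k$ from the structure of the minors, and uniformity in the $d_i$ handled exactly as you anticipate, via contour integration on genus-dependent radii combined with Watson's lemma. The only slip is immaterial: the unpaired action $\ms{A}_{r,r/2}=\tfrac{2r}{r+1}$ lies on the positive, not negative, real axis.
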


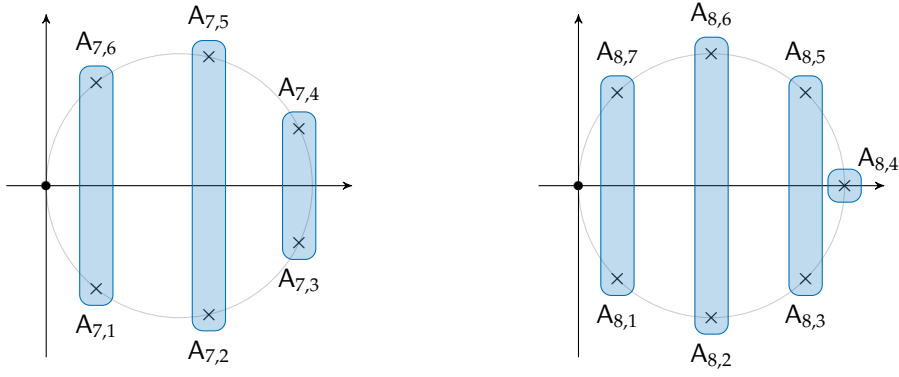
\begin{figure}
\centering
\begin{tikzpicture}[scale=1.75]
	\def\boxmargin{0.125};

	\draw[->] (-.3,0) -- (2.3,0);
	\draw[->] (0,-1.3) -- (0,1.3);

	\draw[opacity=.2] (1,0) circle (1cm);

	\node at (0,0) {\footnotesize $\bullet$};

	\node at (0.37651, -0.781831) {\footnotesize $\times$};
	\node at (0.37651, 0.781831) {\footnotesize $\times$};
	\draw[draw=NavyBlue, fill=NavyBlue, fill opacity=.25, rounded corners] (0.37651 - \boxmargin, -0.781831  - \boxmargin) rectangle ++(2*\boxmargin,2*0.781831 + 2*\boxmargin);
	\node at (0.37651, 0.781831 + \boxmargin) [above] {$\ms{A}_{7,6}$};
	\node at (0.37651, -0.781831 - \boxmargin) [below] {$\ms{A}_{7,1}$};

	\node at (1.22252, -0.974928) {\footnotesize $\times$};
	\node at (1.22252, 0.974928) {\footnotesize $\times$};
	\draw[draw=NavyBlue, fill=NavyBlue, fill opacity=.25, rounded corners] (1.22252 - \boxmargin, -0.974928  - \boxmargin) rectangle ++(2*\boxmargin,2*0.974928 + 2*\boxmargin);
	\node at (1.22252, 0.974928 + \boxmargin) [above] {$\ms{A}_{7,5}$};
	\node at (1.22252, -0.974928 - \boxmargin) [below] {$\ms{A}_{7,2}$};

	\node at (1.90097, -0.433884) {\footnotesize $\times$};
	\node at (1.90097, 0.433884) {\footnotesize $\times$};
	\draw[draw=NavyBlue, fill=NavyBlue, fill opacity=.25, rounded corners] (1.90097 - \boxmargin, -0.433884  - \boxmargin) rectangle ++(2*\boxmargin,2*0.433884 + 2*\boxmargin);
	\node at (1.90097, 0.433884 + \boxmargin) [above] {$\ms{A}_{7,4}$};
	\node at (1.90097, -0.433884 - \boxmargin) [below] {$\ms{A}_{7,3}$};

	\begin{scope}[xshift = 4cm]
		
		\draw[->] (-.3,0) -- (2.3,0);
		\draw[->] (0,-1.3) -- (0,1.3);

		\draw[opacity=.2] (1,0) circle (1cm);

		\node at (0,0) {\footnotesize $\bullet$};

		\node at (0.292893, -0.707107) {\footnotesize $\times$};
		\node at (0.292893, 0.707107) {\footnotesize $\times$};
		\draw[draw=NavyBlue, fill=NavyBlue, fill opacity=.25, rounded corners] (0.292893 - \boxmargin, -0.707107  - \boxmargin) rectangle ++(2*\boxmargin,2*0.707107 + 2*\boxmargin);
		\node at (0.292893, 0.707107 + \boxmargin) [above] {$\ms{A}_{8,7}$};
		\node at (0.292893, -0.707107 - \boxmargin) [below] {$\ms{A}_{8,1}$};

		\node at (1, -1) {\footnotesize $\times$};
		\node at (1, 1) {\footnotesize $\times$};
		\draw[draw=NavyBlue, fill=NavyBlue, fill opacity=.25, rounded corners] (1 - \boxmargin, -1  - \boxmargin) rectangle ++(2*\boxmargin,2*1 + 2*\boxmargin);
		\node at (1, 1 + \boxmargin) [above] {$\ms{A}_{8,6}$};
		\node at (1, -1 - \boxmargin) [below] {$\ms{A}_{8,2}$};

		\node at (1.70711, -0.707107) {\footnotesize $\times$};
		\node at (1.70711, 0.707107) {\footnotesize $\times$};
		\draw[draw=NavyBlue, fill=NavyBlue, fill opacity=.25, rounded corners] (1.70711 - \boxmargin, -0.707107  - \boxmargin) rectangle ++(2*\boxmargin,2*0.707107 + 2*\boxmargin);
		\node at (1.70711, 0.707107 + \boxmargin) [above] {$\ms{A}_{8,5}$};
		\node at (1.70711, -0.707107 - \boxmargin) [below] {$\ms{A}_{8,3}$};

		\node at (2, 0) {\footnotesize $\times$};
		\draw[draw=NavyBlue, fill=NavyBlue, fill opacity=.25, rounded corners] (2 - \boxmargin, 0 - \boxmargin) rectangle ++(2*\boxmargin,2*\boxmargin);
		\node at (2.025, .025) [above right] {$\ms{A}_{8,4}$};

	\end{scope}
\end{tikzpicture}
\caption{
	The position of the constants $\ms{A}_{r,\alpha} = \frac{r}{r+1} (1 - e^{2\pi\iu \frac{\alpha}{r}})$ for $r =7$ (left) and $r=8$ (right). In terms of their distance from the origin, they are all paired together, except $\ms{A}_{r,r/2}$ for even $r$.}
\label{fig:actions:rspin}
\end{figure}



\subsection*{Acknowledgements}
The authors would like to thank Raphaël Belliard, Philip Boalch, Veronica Fantini, Kohei Iwaki, Maxim Kontsevich, and David Sauzin for useful discussions. We are also grateful to IMJ-PRG, IPhT, IHES, SISSA, and the University of Trieste for their kind hospitality.

B.~E., A.~G., and P.~G. are supported by the ERC-SyG project ``Recursive and Exact New Quantum Theory'' (ReNewQuantum) which received funding from the European Research Council under the European Union's Horizon 2020 research and innovation programme under grant agreement No 810573. E.~G.-F. was funded by the public grant ``Jacques Hadamard'' as part of the ``Investissement d'Avenir'' project, reference ANR-11-LABX-0056-LMH, LabEx LMH, and supported by the ERC-StG project ``New Interactions of Combinatorics Through Topological Expansions'' (CombiTop) which received funding from the European Union's Horizon 2020 research and innovation programme under the grant agreement No 716083. D.~L. is funded by the University of Trieste, by the Swiss National Foundation Ambizione project ``Resurgent Topological Recursion, Enumerative Geometry and Integrable Hierarchies'' (EnTIRe) under the grant agreement No PZ00P2-202123, by the INdAM group GNSAGA and by the Trieste node of the INFN project MMNLP.
\addtocontents{toc}{\protect\setcounter{tocdepth}{2}}

\section{The Borel transform method}

In this section we present the \textit{Borel transform method}, namely a technique which allows to obtain the large-order asymptotics of the coefficients of a formal power series in terms of the singularity structure of its Borel transform. In order to do so, we first need to introduce several basic concepts from the theory of resurgence. Those are mostly taken from \cite{MS16,Mar}, to which we refer the interested reader for more details (see also \cite{ABS19} for a physicist-oriented comprehensive review of resurgence).

\subsection{The Borel transform and resurgent functions}

Define the \textit{Borel transform} as the following linear invertible operator acting on formal power series\footnote{
	Note that for our convenience we choose a convention for the Borel transform that is slightly different from the usual one used in the resurgence literature, which acts as $\hbar^{m+1} \mapsto \frac{s^m}{m!}$. This leads to no substantial differences with respect to the literature we cite in this section.
}
\begin{equation}
	\mf{B} \colon \C\bbraket{\hbar} \longrightarrow \C\bbraket{s} \,,
	\qquad
	\hbar^m \longmapsto \frac{s^m}{m!}\,.
\end{equation}
Throughout the text we denote (when possible) quantities in the $\hbar$-plane with a tilde and the corresponding quantities in the $s$-plane (also called Borel plane) with a hat. For instance, writing the starting series as
\begin{equation}
	\widetilde{\varphi}(\hbar) = \sum_{m \ge 0} \varphi_m \hbar^m \,,
\end{equation}
its Borel transform is given by 
\begin{equation}
	\widehat{\varphi}(s) = \sum_{m \ge 0} \varphi_m \frac{s^m}{m!} \,.
\end{equation}
Due to the division by the factorial, $\widehat{\varphi}$ is ``more likely'' to be convergent. This observation leads to the definition of Gevrey-1 series.

\begin{definition}
	A formal power series $\widetilde{\varphi}(\hbar) = \sum_{m \ge 0} \varphi_m \hbar^m$ is \textit{Gevrey-1} if there exists a constant $a > 0$ such that the large $m$ behaviour of the coefficients $\varphi_m$ is given by $\varphi_m = \bigO(m! \, a^{-m})$.
\end{definition}

The above definition is designed in such a way that the Borel transform maps Gevrey-1 series to holomorphic functions at the origin (and vice versa, every holomorphic function at the origin is the image of a Gevrey-1 series). It is then natural to study the analytic continuation of the Borel transform of Gevrey-1 series. The obstruction to analytic continuation is dictated by singularities in the Borel plane. As we are going to see shortly, singularities in the Borel plane play a crucial role in the theory. Among other things, they control the large order asymptotics of the coefficients of the original series.

In order to simplify our discussion, we restrict ourselves to a specific class of functions: that of simple resurgent functions.

\begin{definition}
	A \textit{resurgent function} is a function $\widehat{\varphi}$ which is holomorphic at the origin and satisfies the following property: along every ray issuing from the origin, there is a finite set of singular points such that $\widehat{\varphi}$ can be analytically continued along any path that follows the ray, while circumventing (from above or from below) the singular points. We call a resurgent function \textit{simple} if its singularities are either simple poles or logarithmic branch cuts. A Gevrey-1 series $\widetilde{\varphi}$ is called a simple resurgent series if its Borel transform $\widehat{\varphi}$ is a simple resurgent function.
\end{definition}

For simplicity, we restrict ourselves to simple resurgent functions with logarithmic singularities only. Including simple poles in the discussion does not pose any particular difficulties, as shown in \cite{MS16}. For a fixed simple resurgent series $\widetilde{\varphi}$ whose Borel transform has a logarithmic singularity at $A \in \C^{\times}$, we can write locally at $s = A$
\begin{equation}
	\widehat{\varphi}(s)
	=
	- \frac{S_A}{2\pi} \widehat{\varphi}_A(s - A) \, \log(s - A)
	+
	\text{holomorphic at } A \,,
\end{equation}
where $\widehat{\varphi}_A(s)$, called the \textit{minor} at $A$, is a holomorphic function at $s = 0$ (so that $\widehat{\varphi}_A(s - A)$ is holomorphic at $s = A$). Notice that we might want a specific choice of normalisation for $\widehat{\varphi}_A$, which motivates the introduction of an additional constant $S_A \in \C$ called the \textit{Stokes constant}. In what follows, we adopt the physics jargon and refer to the singularities $A$ as \textit{instanton actions}.

Since $\widehat{\varphi}_A$ is holomorphic at the origin, we can consider its Taylor expansion:
\begin{equation}
	\widehat{\varphi}_A(s) = \sum_{m \ge 0} \varphi_{A,m} \frac{s^m}{m!} \,.
\end{equation}
We consider $\widehat{\varphi}_A$ as the Borel transform of a formal power series
\begin{equation}
	\widetilde{\varphi}_A(\hbar) = \sum_{m \ge 0} \varphi_{A,m} \hbar^m \,.
\end{equation}
The key point so far is that, given a simple resurgent series $\widetilde{\varphi}$, we can associate to it the data of its instanton actions, Stokes constants, and minors:
\begin{equation}
	\widetilde{\varphi} \longrightarrow \bigl\{ \, (S_A, \widetilde{\varphi}_A) \, \bigr\}_{A \in \mc{A}} \,.
\end{equation}
Here $\mc{A}$ denotes the set of singularities of the Borel transform of $\widetilde{\varphi}$. We call this set of data the \textit{Borel plane singularity structure} of $\widetilde{\varphi}$.

\subsection{Exponential integrals}\label{subsec:exp:int}
A common source of simple resurgent functions, both in mathematics and in physics, is that of exponential integrals. We refer the reader to \cite[section~3]{ABS19} for an in-depth review of the main results and the available literature on the topic, and to \cite{KS22} for a short treatment. The result presented here essentially rely on results from \cite{DH02}. Let $X = \C \setminus \set{ p_1,\dots,p_M }$ be the punctured complex plane, $V \colon X \to \C$ an algebraic function (the potential) and $\mu$ an algebraic $1$-form on $X$. We are interested in computing integrals of the form
\begin{equation}
	\varphi(\hbar; \gamma) = \int_{\gamma} e^{-\frac{1}{\hbar} V(t)} \mu(t)
\end{equation}
for $\gamma$ a cycle in $X$. We assume that $V$ is Morse, i.e.~has only finitely many critical points $(t_{\alpha})_{\alpha \in I}$ with $V''(t_{\alpha}) \ne 0$, and we assume that the critical values $V_{\alpha} = V(t_{\alpha})$ are pairwise distinct. The examples we have in mind are the $r$-Airy integrals, for an integer $r \ge 2$, for which we have
\begin{equation}
	X = \C \,,
	\qquad\quad
	V(t) = \frac{t^{r+1}}{r+1} - t \,,
	\qquad\quad
	\mu(t) = dt \,.
\end{equation}
We are going to associate to each critical point a formal power series as follows. Fix a critical point $t_{\alpha}$. From the general theory of Morse functions, one can show that for all values of $\theta = \arg(\hbar)$ such that $\theta \ne \arg(V_{\beta} - V_{\alpha})$ there is a well-defined integral cycle $\gamma_{\alpha,\theta} \subset X$, called Lefschetz thimble or steepest descent path, passing through $t_{\alpha}$ along which the integral $\varphi(\hbar;\gamma_{\alpha,\theta})$ converges. We define the normalised integrals
\begin{equation}
	\varphi^{(\alpha)}(\hbar)
	\coloneqq
	\frac{e^{\frac{V_{\alpha}}{\hbar}}}{\sqrt{2\pi \hbar}}
	\int_{\gamma_{\alpha,\theta}} e^{-\frac{1}{\hbar} V(t)} \mu(t) \,.
\end{equation}
The function $\varphi^{(\alpha)}$ is holomorphic in $\hbar$ in each sector defined by the Stokes rays, that is the ray defined by $\arg(\hbar) = \arg(V_{\beta} - V_{\alpha})$. Again from the general theory of Morse functions and Laplace method, the above integral has a well-defined divergent asymptotic expansion series:
\begin{equation}
	\varphi^{(\alpha)}(\hbar)
	\sim
	\sum_{m \ge 0} \varphi_{m}^{(\alpha)} \, \hbar^{m}
	\eqqcolon
	\widetilde{\varphi}^{(\alpha)}(\hbar)
	\,,
\end{equation}
where the coefficients $\varphi_{m}^{(\alpha)}$ are independent of $\hbar$. The first coefficient is given by $\varphi_0^{(\alpha)} = V''(t_{\alpha})^{-1/2}$. Notice that the asymptotics is the same in every sector. As usual, let $\widehat{\varphi}^{(\alpha)}$ denote the Borel transform of $\widetilde{\varphi}^{(\alpha)}$. The analytic properties of $\widehat{\varphi}^{(\alpha)}$ are summarised as follows.

\begin{theorem}[Singularity structure for exponential integrals]\label{thm:sing:exp:int}
	The asymptotic expansion series $\widetilde{\varphi}^{(\alpha)}$ are simple resurgent. More precisely, the Borel transform $\widehat{\varphi}^{(\alpha)}$ has finitely many singularities at $A_{\alpha,\beta} = V_{\beta} - V_{\alpha}$ of logarithmic type, with behaviour given by
	\begin{equation}\label{eqn:sing:exp:int}
		\widehat{\varphi}^{(\alpha)}(s)
		=
		\frac{\ms{S}_{\alpha,\beta}}{2\pi\iu} \,
		\widehat{\varphi}^{(\beta)}(s - A_{\alpha,\beta}) \,
		\log(s - A_{\alpha,\beta})
		+
		\textup{holomorphic at } A_{\alpha,\beta} \,.
	\end{equation}
	Here $\ms{S}_{\alpha,\beta}$ are the integers defined by the intersection of two Lefschetz thimbles: $\ms{S}_{\alpha,\beta} = \gamma_{\alpha,\theta_+} \cdot \gamma_{\beta,\theta_- + \pi}$ for $\theta = \arg(A_{\alpha,\beta})$.
\end{theorem}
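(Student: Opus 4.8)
The plan is to prove \cref{thm:sing:exp:int} by combining the deformation-of-contour technique for exponential integrals (following Delabaere--Howls \cite{DH02}) with the Borel--Laplace correspondence. First I would set up the \emph{Laplace representation of the thimble integral}: for $\theta = \arg(\hbar)$ in a fixed sector, parametrise the Lefschetz thimble $\gamma_{\alpha,\theta}$ by the value of the potential, writing $V(t) - V_{\alpha} = u$ with $u$ running along a ray of argument $\theta$. Since $V$ is Morse and $V''(t_{\alpha}) \neq 0$, near $t_\alpha$ this is a local biholomorphism, and globally the thimble is the preimage of the ray; the key point is that
\begin{equation}
	e^{\frac{V_\alpha}{\hbar}} \int_{\gamma_{\alpha,\theta}} e^{-\frac{1}{\hbar}V(t)}\mu(t)
	=
	\int_0^{e^{\iu\theta}\infty} e^{-\frac{u}{\hbar}} \, \Phi_\alpha(u) \, du \,,
\end{equation}
where $\Phi_\alpha(u)$ collects the Jacobian $\mu(t)/dV$ summed over the (at most two, by Morse-ness near the thimble) preimages $t$ of $u$. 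This $\Phi_\alpha$ is precisely (up to the $\sqrt{2\pi\hbar}$ normalisation and a shift in the exponent of $\hbar$ dictated by the footnote's convention) the Borel transform $\widehat{\varphi}^{(\alpha)}$: matching the small-$u$ expansion of $\Phi_\alpha$ against the Laplace method gives the asymptotic series $\widetilde\varphi^{(\alpha)}$, so by uniqueness of Borel transforms $\Phi_\alpha = \widehat{\varphi}^{(\alpha)}$. This already shows $\widehat{\varphi}^{(\alpha)}$ is holomorphic at the origin, and that it extends analytically along any ray avoiding the critical values, because $\Phi_\alpha(u)$ is an algebraic function of $u$ whose only branch points occur where the fibre $\{V(t) = V_\alpha + u\}$ develops a coincidence of sheets, i.e.\ at another critical value: $u = V_\beta - V_\alpha = A_{\alpha,\beta}$.

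The second and central step is to \emph{identify the singularity type and the discontinuity at $s = A_{\alpha,\beta}$}. Near such a point the local model is the quadratic degeneration $V(t) \approx V_\beta + \tfrac12 V''(t_\beta)(t - t_\beta)^2$, so $\Phi_\alpha(u)$ has a square-root-type branch point at $u = A_{\alpha,\beta}$ coming from the two sheets $t_\beta \pm \sqrt{2(u - A_{\alpha,\beta})/V''(t_\beta)}$ merging. The difference between the two determinations of $\Phi_\alpha$ around this point is itself a single-valued analytic function of $s - A_{\alpha,\beta}$ which, by repeating the same thimble analysis at $t_\beta$, one recognises as (a Stokes-integer multiple of) $\widehat{\varphi}^{(\beta)}(s - A_{\alpha,\beta})$. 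Translating ``square-root branch point with analytic discontinuity $g(s-A)$'' back to the additive presentation used in the statement gives exactly the logarithmic singular form $\frac{\ms{S}_{\alpha,\beta}}{2\pi\iu}\,\widehat{\varphi}^{(\beta)}(s - A_{\alpha,\beta})\log(s-A_{\alpha,\beta}) + (\text{holo.})$, since $\frac{1}{2\pi\iu}\log$ is the standard primitive whose monodromy is $1$. The coefficient $\ms{S}_{\alpha,\beta}$ is pinned down by tracking how many times, and with what orientation, the deformed thimble $\gamma_{\alpha,\theta_+}$ wraps onto $\gamma_{\beta,\theta_-+\pi}$ as $\theta$ crosses the Stokes ray $\arg(A_{\alpha,\beta})$; this is the Picard--Lefschetz formula, which expresses the jump of the thimble across a Stokes line as an integer intersection number $\gamma_{\alpha,\theta_+}\cdot\gamma_{\beta,\theta_-+\pi}$, yielding the claimed formula for $\ms{S}_{\alpha,\beta}$. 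Finiteness of the singular set is immediate: there are finitely many critical values, hence finitely many $A_{\alpha,\beta}$.

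For the resurgence conclusion one then checks the definition directly: along any ray from the origin, $\widehat\varphi^{(\alpha)}$ continues past the finitely many points $A_{\alpha,\beta}$ on that ray by pushing the contour slightly above or below, which is legitimate because $\Phi_\alpha$ is algebraic hence has no natural boundaries and no accumulation of singularities; and the only singularities encountered are of the logarithmic type just described, so $\widetilde\varphi^{(\alpha)}$ is \emph{simple} resurgent. I expect the main obstacle to be the second step, specifically making rigorous the claim that the discontinuity of $\Phi_\alpha$ at $A_{\alpha,\beta}$ is \emph{globally} equal to $\widehat\varphi^{(\beta)}$ and not merely equal to it in a neighbourhood of the branch point: this requires controlling the global structure of the thimble $\gamma_{\alpha,\theta}$ as it is deformed through the Stokes ray and decomposing it into a sum of thimbles $\gamma_{\beta,\cdot}$ with integer coefficients — the homological content of the Picard--Lefschetz / Cecotti--Vafa wall-crossing picture. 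The Morse and distinct-critical-values hypotheses are exactly what make this decomposition clean (each Stokes jump involves a single $\beta$), and for the concrete $r$-Airy potential $V(t) = \frac{t^{r+1}}{r+1} - t$ one can in fact verify everything by hand, which provides a useful consistency check on the intersection numbers $\ms{S}_{\alpha,\beta}$.
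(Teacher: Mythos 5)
The paper does not actually prove \cref{thm:sing:exp:int}: it imports the statement from Delabaere--Howls \cite{DH02}, and your sketch is essentially the standard argument of that reference --- rewriting the thimble integral as a Laplace transform of the fibrewise pushforward $\mu/dV$, identifying that pushforward with the Borel transform, and using Picard--Lefschetz to express the variation at $u = A_{\alpha,\beta}$ as $\ms{S}_{\alpha,\beta}$ times the pushforward attached to $t_\beta$. Your outline is correct, and you rightly flag the one genuinely delicate point (promoting the local discontinuity to a global identity with $\widehat{\varphi}^{(\beta)}$ via the homological decomposition of the deformed thimble); the only place to be a bit more careful is the translation between the inverse-square-root behaviour of the unnormalised pushforward and the logarithmic form in \cref{eqn:sing:exp:int}, which is exactly where the $\sqrt{2\pi\hbar}$ normalisation (shifting half-integer to integer powers of $\hbar$) enters --- consistent with the explicit Gauss hypergeometric Borel transforms computed later for the Airy case.
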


In other words, exponential integrals naturally generate simple resurgent series $\widetilde{\varphi}^{(\alpha)}$ labelled by critical points of the potential. Moreover, the Borel plane singularity structure of $\widetilde{\varphi}^{(\alpha)}$ is well understood in terms of:
\begin{itemize}
	\item \textit{instanton actions} -- difference of critical values for the location of singularities: $A_{\alpha,\beta} = V_{\beta} - V_{\alpha}$,

	\item \textit{Stokes constants} -- intersection numbers of Lefschetz thimbles: $\ms{S}_{\alpha,\beta}$,

	\item \textit{minors} -- the asymptotic expansion $\widetilde{\varphi}^{(\beta)}$ for the series attached to the singularity at $A_{\alpha,\beta}$.
\end{itemize}
As an example, let us apply the above result to the aforementioned generalised Airy integrals. The critical points are given by $V'(t) = t^r - 1 = 0$, that is $t_{\alpha} = \zeta^{\alpha}$ for $\zeta = e^{\frac{2\pi\iu}{r}}$ and $\alpha = 1,\dots, r$. The critical values are $V_{\alpha} = - \frac{r}{r+1} \zeta^{\alpha}$. According to the above theorem, the Borel transforms $\widehat{\varphi}^{(\alpha)}$ have logarithmic singularities at $A_{\alpha,\beta} = \frac{r}{r+1} (\zeta^{\alpha} - \zeta^{\beta})$ and Stokes constants
\begin{equation}\label{eqn:Stokes:rAiry}
	\ms{S}_{\alpha,\beta}
	= 
	\begin{cases}
		+1 & \text{if } \alpha > \beta \,,\\
		-1 & \text{if } \alpha < \beta \,.
	\end{cases}
\end{equation}
See \cite[section 3.2]{BEH03} for a detailed derivation of the intersection numbers for generalised Airy integrals. It should also be noted that the problem of the Stokes phenomenon for the higher Airy functions was addressed in the literature also beyond the exponential integral context, starting directly from the associated ODE. See for instance \cite{Tur50,Ohy95,Suz01}.

\begin{remark}[Normalisation conventions]
	In the theory of exponential integrals, it is customary to normalise \cref{eqn:sing:exp:int} with $\frac{\ms{S}_{\alpha,\beta}}{2\pi\iu}$, so that the numerator coincide with the (integral) intersection number of two Lefschetz thimbles. As we are going to see in the next section, in the theory of large-order asymptotics an alternative normalisation is more convenient. Specifically, we adopt the normalisation $- \frac{S_{\alpha,\beta}}{2\pi}$. Throughout the rest of the paper, we employ the latter normalisation while utilising \cref{eqn:sing:exp:int} to calculate the Stokes constants of interest.
\end{remark}

\subsection{Large-order asymptotics from the singularity structure}

We are now ready to establish the main theorem of this section, which gives the large-order asymptotics of the coefficients of a simple resurgent function $\widetilde{\varphi}$ in terms of the singularity structure of its Borel transform. The following result justifies the term ``resurgence'' too. Indeed, we have seen that the singularities of the Borel transform lead to new power series. These new series resurge in the original series through the large-order asymptotics of its coefficients. This is essentially an old theorem of Darboux, which relates the large-order asymptotics of the coefficients of an analytic function at the origin to the behaviour near the closest singularity. It is a well-known fact in the physics community, and perhaps not as widely known in the mathematics community. For the reader's convenience, we provide the proof here, which essentially relies on the application of Cauchy's theorem in the Borel plane.

\begin{theorem}[Borel transform method] \label{thm:large:order}
	Assume that $\widetilde{\varphi}$ is a simple resurgent series starting at order $\hbar^{\beta_0}$, with $\beta_0 \in \N$ called the critical exponent:
	\begin{equation}
		\widetilde{\varphi}(\hbar) = \sum_{m \ge 0} \varphi_{m} \hbar^{m + \beta_0} \,.
	\end{equation}
	Assume that its Borel transform $\widehat{\varphi}$ satisfies the following assumptions.
	\begin{itemize}
		\item Polynomial growth at infinity. There exists $\nu \in \R$ such that $\widehat{\varphi}(s) = \bigO(s^{\nu})$ as $s \to \infty$.

		\item Singularity structure. The function $\widehat{\varphi}$ has finitely many logarithmic singularities $A_1, \dots, A_n \in \mathbb{C}^{\times}$ around which it behaves as
		\begin{equation}\label{eqn:behaviour:sing}
			\widehat{\varphi}(s)
			=
			- \frac{S_i}{2\pi} \, \widehat{\varphi}^{(i)}(s - A_i) \, \log(s - A_i)
			+
			\textup{holomorphic at } A_i\,,
		\end{equation}
		for some constants $S_i$ and functions $\widehat{\varphi}^{(i)}$, the minors, which are holomorphic at the origin.

		\item Behaviour of the minors. The functions $\widehat{\varphi}^{(i)}$ have a polynomial growth at infinity and have a finite number of singularities avoiding the ray starting at the origin and passing through $A_i$. Moreover, their Taylor expansion at the origin have critical exponent $\beta_i \in \N$, i.e.~$\widehat{\varphi}^{(i)}(s) = \sum_{k = 0}^{\infty} \varphi^{(i)}_{k} \frac{s^{k + \beta_i}}{(k + \beta_i)!}$.
	\end{itemize}
	Then the large $m$ asymptotics of $\varphi_m$ is given by
	\begin{equation}\label{eqn:large:order}
		\varphi_m
		=
		\sum_{i = 1}^n
			\frac{S_i}{2\pi}
			\frac{\Gamma(m + \beta_0 - \beta_i)}{A_i^{m + \beta_0 - \beta_i}}
			\left(
			\sum_{k = 0}^{K}
				\frac{A_i^{k}}{(m + \beta_0 - \beta_i - 1)^{\underline{k}}} \, \varphi^{(i)}_{k} 
				+
				\bigO \biggl( \frac{1}{m^{K+1}} \biggr)
			\right) \,.
	\end{equation}
	Here $(x)^{\underline{k}} \coloneqq x (x-1) \cdots (x-m+1)$ denotes the falling factorial.
\end{theorem}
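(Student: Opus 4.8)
The plan is to express $\varphi_m$ as a contour integral in the Borel plane and then deform the contour to pick up the contributions of the singularities $A_1,\dots,A_n$. First I would recall that, since $\widetilde{\varphi}$ is Gevrey-1, the coefficient $\varphi_m$ can be recovered from $\widehat{\varphi}$ by the inverse Borel transform together with a Cauchy-type formula: writing $\widehat{\varphi}(s) = \sum_{m\ge 0} \varphi_m \frac{s^{m+\beta_0}}{(m+\beta_0)!}$, one has $\varphi_m = \frac{(m+\beta_0)!}{2\pi\iu} \oint \frac{\widehat{\varphi}(s)}{s^{m+\beta_0+1}} \, \dd s$ for a small circle around the origin. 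The polynomial growth assumption $\widehat{\varphi}(s) = \bigO(s^{\nu})$ at infinity is what allows me to blow up this small circle into a large one (plus the keyhole contours around each singular ray): along the large circle the integrand is $\bigO(s^{\nu - m - \beta_0 - 1})$, which vanishes in the limit once $m$ is large enough, so the whole integral localises on the singularities.

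Next I would analyse the contribution of a single logarithmic singularity at $A_i$. Deforming the contour so that it wraps around the ray through $A_i$, the discontinuity of $\widehat{\varphi}$ across the cut is governed by \labelcref{eqn:behaviour:sing}: the jump of $\log(s - A_i)$ across the branch cut is $2\pi\iu$, so the local contribution reduces to an integral of the form $-\frac{S_i}{2\pi\iu}\cdot 2\pi\iu \cdot \frac{(m+\beta_0)!}{2\pi\iu}\int_{A_i}^{\infty} \frac{\widehat{\varphi}^{(i)}(s-A_i)}{s^{m+\beta_0+1}}\,\dd s$ along the ray. Substituting $s = A_i(1+u)$ and inserting the Taylor expansion $\widehat{\varphi}^{(i)}(s) = \sum_k \varphi^{(i)}_k \frac{s^{k+\beta_i}}{(k+\beta_i)!}$ turns each term into a Beta-type integral $\int_0^{\infty} \frac{u^{k+\beta_i}}{(1+u)^{m+\beta_0+1}}\,\dd u = B(k+\beta_i+1, m+\beta_0-\beta_i-k) = \frac{(k+\beta_i)!\,\Gamma(m+\beta_0-\beta_i-k)}{\Gamma(m+\beta_0+1)}$. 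Combining the prefactor $(m+\beta_0)! = \Gamma(m+\beta_0+1)$ with this and using $\Gamma(m+\beta_0-\beta_i-k) = \frac{\Gamma(m+\beta_0-\beta_i)}{(m+\beta_0-\beta_i-1)^{\underline{k}}}$ and the power $A_i^{-(m+\beta_0-\beta_i-k)} = A_i^{k}\cdot A_i^{-(m+\beta_0-\beta_i)}$, one obtains exactly the summand $\frac{S_i}{2\pi}\frac{\Gamma(m+\beta_0-\beta_i)}{A_i^{m+\beta_0-\beta_i}}\cdot \frac{A_i^k}{(m+\beta_0-\beta_i-1)^{\underline{k}}}\varphi^{(i)}_k$ of \labelcref{eqn:large:order}. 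Summing over $i$ gives the leading behaviour, with the dominant terms coming from the $A_i$ of smallest modulus.

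Finally I would handle the truncation: instead of summing the full Taylor series of $\widehat{\varphi}^{(i)}$, I keep the first $K+1$ terms and bound the tail. This is where the assumption that $\widehat{\varphi}^{(i)}$ itself has polynomial growth and only finitely many singularities \emph{off} the ray through $A_i$ is used — it guarantees that $\widehat{\varphi}^{(i)}(s) - \sum_{k=0}^{K}\varphi^{(i)}_k \frac{s^{k+\beta_i}}{(k+\beta_i)!}$ is, after division by $s^{K+\beta_i+1}$, still integrable along the ray with the decay needed to produce an error of size $\bigO(\Gamma(m+\beta_0-\beta_i)/(A_i^{m+\beta_0-\beta_i}\, m^{K+1}))$, and that the large-circle piece together with the contributions of the subdominant singularities are absorbed into this error. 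I expect the main obstacle to be precisely this error analysis: one has to be careful that the remainder integral converges (the off-ray singularities of the minor could otherwise obstruct the contour deformation) and that all the estimates are uniform, so that the $\bigO(m^{-(K+1)})$ in \labelcref{eqn:large:order} is genuinely controlled rather than merely formal. The algebraic manipulation with Gamma functions, falling factorials, and the change of variables is routine once the contour-deformation and tail-bound steps are in place.
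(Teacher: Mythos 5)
Your proposal follows the paper's proof essentially step for step: Cauchy's formula at the origin, deformation to Hankel contours around each logarithmic cut with the arcs at infinity killed by the polynomial-growth hypothesis, the $2\pi\iu$ discontinuity of the logarithm reducing everything to ray integrals of the minors, the Euler/Beta integral producing the Gamma factors and falling factorials, and a tail bound for the truncated Taylor series (which the paper makes precise via the substitution $s = |A_i|(e^t-1)$ and Watson's lemma, exactly the mechanism you identify). The argument is correct and matches the paper's.
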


Before proceeding with a proof of the above result, it is instructive to write down explicitly the first terms of \cref{eqn:large:order}. Without loss of generality, suppose that the singularities are ordered as $|A_1| \le \cdots \le |A_n|$ from the closest to farthest distance from the origin. Then
\begin{equation*}
\begin{split}
	\varphi_m
	& = \phantom{+}
	\frac{S_1}{2\pi}
	\frac{\Gamma(m + \beta_0 - \beta_1)}{A_1^{m + \beta_0 - \beta_1}}
	\left(
		\varphi^{(1)}_{0}
		+
		\frac{A_1}{(m + \beta_0 - \beta_1 - 1)} \, \varphi^{(1)}_{1}
		+
		\frac{(A_1)^2}{(m + \beta_0 - \beta_1 - 1)^{\underline{2}}} \, \varphi^{(1)}_{2}
		+
		\cdots
	\right) \\
	& \phantom{=} + \cdots \\
	& \phantom{=} +
	\frac{S_n}{2\pi}
	\frac{\Gamma(m + \beta_0 - \beta_n)}{A_n^{m + \beta_0 - \beta_n}}
	\left(
		\varphi^{(n)}_{0}
		+
		\frac{A_n}{(m + \beta_0 - \beta_n - 1)} \, \varphi^{(n)}_{1}
		+
		\frac{(A_n)^2}{(m + \beta_0 - \beta_n - 1)^{\underline{2}}} \, \varphi^{(n)}_{2}
		+
		\cdots
	\right)\,.
\end{split}
\end{equation*}
We can see that the asymptotics presents an overall factorial growth, $\Gamma(m + \beta_0 - \beta_i) = \bigO(m!)$, and it is ``organised'' into different exponential growths corresponding to the rows governed by $1/A_i^{m + \beta_0 - \beta_1} = \bigO(A_i^{-m})$ with overall constant given by $S_i$. Moreover, the $i$-th row contains an asymptotic series in $1/m$ which scales as $A_i$ and whose coefficients are the Taylor coefficients of the corresponding minors.

\begin{proof}[Proof of \cref{thm:large:order}]
	Since $\widehat{\varphi}$ is holomorphic at the origin, we can extract its expansion coefficients through Cauchy's formula:
	\begin{equation*}
		\frac{\varphi_{m}}{(m + \beta_0)!}
		=
		\frac{1}{2\pi\iu} \oint_{\mc{C}_0} \frac{\widehat{\varphi}(s)}{s^{m + \beta_0 + 1}} \, ds \,.
	\end{equation*}
	Here $\mc{C}_0$ is a small contour around the origin oriented counter-clockwise. We can deform the contour, avoiding the logarithmic cuts starting at each singularity $A_i$ with partial Hankel contours $\mc{H}^{(R)}_i$ starting at $A_i$ in the direction $\theta_i \coloneqq \arg(A_i)$, connected by arcs whose union is denoted by $\gamma^{(R)}$ (see \cref{fig:deformation:contour}).

	\begin{figure}
	\centering
	\begin{tikzpicture}[scale=1.2]
		\draw [->] (-2,0) -- (2,0);
		\draw [->] (0,-2) -- (0,2);

		\draw[
			decoration={markings, mark=at position 0.625 with {\arrow{>}}},
			postaction={decorate}
			] (0,0) circle (.3cm);

		\node at (-.4,-.4) {\footnotesize$\mathcal{C}_0$};

		\begin{scope}[xshift=.4cm,yshift=.4cm,rotate=45]
			\draw[BrickRed,decoration = {zigzag,segment length = 1mm, amplitude = .3mm},decorate] (0,0) -- (1.53431457505,0);
			\node at (0,0) {$\bullet$};
			\node at (0,0) [below right] {\footnotesize$A_2$};
		\end{scope}
		\begin{scope}[xshift=0.5cm,yshift=-0.86602540378cm,rotate=-60]
			\draw[BrickRed,decoration = {zigzag,segment length = 1mm, amplitude = .3mm},decorate] (0,0) -- (1.1,0);
			\node at (0,0) {$\bullet$};
			\node at (0,0) [above] {\footnotesize$A_3$};
		\end{scope}
		\begin{scope}[xshift=-1.03923048454cm,yshift=0.6cm,rotate=150]
			\draw[BrickRed,decoration = {zigzag,segment length = 1mm, amplitude = .3mm},decorate] (0,0) -- (0.9,0);
			\node at (0,0) {$\bullet$};
			\node at (0,0) [below left] {\footnotesize$A_1$};
		\end{scope}

		\draw[->,decorate,decoration=snake] (3,0) -- (4,0);

		\begin{scope}[xshift=7cm]

			\draw[opacity=.5] (0,0) -- (-135:1.7);
			\node at (-145:1) {\footnotesize$R$};

			\draw [->] (-2,0) -- (2,0);
			\draw [->] (0,-2) -- (0,2);

			\draw[decoration={markings, mark=at position 0.3 with {\arrow{>}}},
				postaction={decorate}] (-55.1:1.7cm) arc (-55.1:40.1:1.7cm);
			\draw (50.1:1.7cm) arc (50.1:145.1:1.7cm);
			\draw (155.1:1.7cm) arc (155.1:295.1:1.7cm);

			\node at (65:1.2) {\footnotesize$\mathcal{H}_2^{(R)}$};
			\node at (-78:1.5) {\footnotesize$\mathcal{H}_3^{(R)}$};
			\node at (130:1.3) {\footnotesize$\mathcal{H}_1^{(R)}$};
			\node at (-150:2) {\footnotesize$\gamma^{(R)}$};

			\begin{scope}[xshift=.4cm,yshift=.4cm,rotate=45]
				\draw[BrickRed,decoration = {zigzag,segment length = 1mm, amplitude = .3mm},decorate] (0,0) -- (1.53431457505,0);
				\node at (0,0) {$\bullet$};
				\node at (0,0) [below right] {\footnotesize$A_2$};
			\end{scope}
			\begin{scope}[xshift=0.5cm,yshift=-0.86602540378cm,rotate=-60]
				\draw[BrickRed,decoration = {zigzag,segment length = 1mm, amplitude = .3mm},decorate] (0,0) -- (1.1,0);
				\node at (0,0) {$\bullet$};
				\node at (0,.2) [above] {\footnotesize$A_3$};
			\end{scope}
			\begin{scope}[xshift=-1.03923048454cm,yshift=0.6cm,rotate=150]
				\draw[BrickRed,decoration = {zigzag,segment length = 1mm, amplitude = .3mm},decorate] (0,0) -- (.9,0);
				\node at (0,0) {$\bullet$};
				\node at (0,0) [below left] {\footnotesize$A_1$};
			\end{scope}

			\clip (0,0) circle (1.7cm);

			\begin{scope}[xshift=.4cm,yshift=.4cm,rotate=45]
				\draw (2,.15) -- (0,.15) arc (90:270:.15) -- (0,-.15) -- (2,-.15);
			\end{scope}
			\begin{scope}[xshift=0.5cm,yshift=-0.86602540378cm,rotate=-60]
				\draw (2,.15) -- (0,.15) arc (90:270:.15) -- (0,-.15) -- (2,-.15);
			\end{scope}
			\begin{scope}[xshift=-1.03923048454cm,yshift=0.6cm,rotate=150]
				\draw (2,.15) -- (0,.15) arc (90:270:.15) -- (0,-.15) -- (2,-.15);
			\end{scope}
		\end{scope}
	\end{tikzpicture}
	\caption{
		Deformation of the contour $\mc{C}_0$ to the union of $\gamma^{(R)}$ and the partial Hankel contours $\mc{H}_i^{(R)}$ avoiding the logarithmic cuts.
	}
	\label{fig:deformation:contour}
	\end{figure}
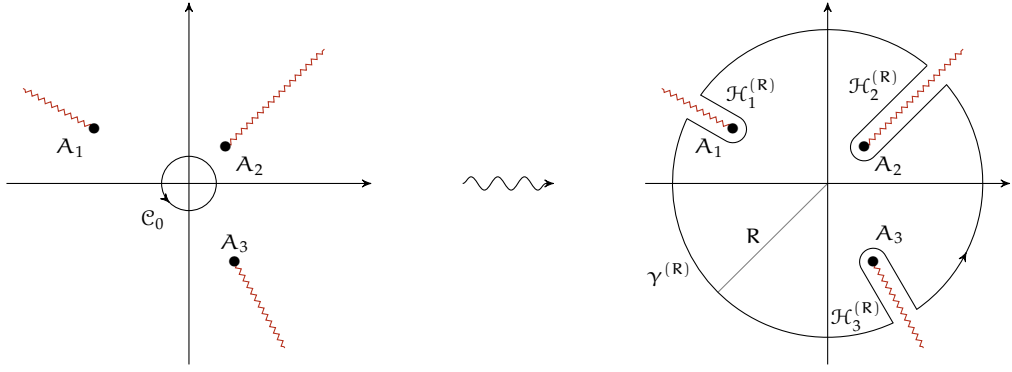

	Here $R$ is the radius of the arcs, and is assumed to be greater than $\max_{i=1,\dots,n}{|A_i|}$. Taking the limit $R \to +\infty$ gives zero contribution from the contours $\gamma^{(R)}$. Indeed, assuming $R$ large enough so that $|\widehat{\varphi}(s)| < C s^{\nu}$, we find
	\[
		\left| \frac{1}{2\pi \iu} \int_{\gamma_R} \frac{\widehat{\varphi}(s)}{s^{m + \beta_0 + 1}} \, ds \right|
		\le
		\frac{C}{R^{m + \beta_0 - \nu}}
	\]
	which tends to zero for $R \to +\infty$ for $m \gg 0$. Thus we only have to consider the Hankel contours $\mc{H}_i$ obtained in the limit $R \to +\infty$. A simple manipulation of the integral shows that
	\begin{multline*}
		\frac{\varphi_{m}}{(m + \beta_0)!}
		=
		\frac{1}{2\pi\iu} \sum_{i=1}^n \int_{\mc{H}_i} \frac{\widehat{\varphi}(s)}{s^{m + \beta_0 + 1}} \, ds 
		=
		-\frac{1}{2\pi\iu} \sum_{i=1}^n \frac{S_i}{2\pi}
		\int_{A_i}^{e^{\iu \theta_i }\infty}
			\Disc\left[ \widehat{\varphi}^{(i)}(s - A_i) \, \log(s - A_i)  \right] \frac{ds}{s^{m + \beta_0 + 1}}
		\\ 
		=
		\sum_{i=1}^n \frac{S_i}{2\pi}
		\int_{A_i}^{e^{\iu \theta_i }\infty}
			\frac{\widehat{\varphi}^{(i)}(s - A_i)}{s^{m + \beta_0 + 1}} \, ds
		=
		\sum_{i=1}^n \frac{S_i}{2\pi} e^{-\iu\theta_i (m+\beta_0)}
		\int_{0}^{+\infty}
			\frac{\widehat{\varphi}^{(i)}(e^{\iu \theta_i} s)}{(s + |A_i|)^{m + \beta_0 + 1}} \, ds \,.
	\end{multline*}
	In the second equality, we used the behaviour of $\widehat{\varphi}$ at $A_i$ given in \cref{eqn:behaviour:sing}. In the third equality, we used the fact that the discontinuity is completely governed by the logarithm, and gives a contribution of $-2\pi\iu$. The last equality is simply a change of variable in the integral. The goal now is to approximate $\widehat{\varphi}^{(i)}$ with the first $K$ terms of its Taylor expansion, and estimate the error using Watson's lemma. Define the tail of $\widehat{\varphi}^{(i)}$ by removing the first $K$ terms of its Taylor expansion around the origin: $\widehat{\varphi}^{(i)}(s; K) \coloneqq \widehat{\varphi}^{(i)}(s) - \sum_{k=0}^{K} \varphi^{(i)}_{k} \frac{s^{k + \beta_i}}{(k + \beta_i)!}$. Then
	\begin{multline}\label{eqn:to:estimate}
		\frac{\varphi_{m}}{(m + \beta_0)!}
		-
		\sum_{i=1}^n \frac{S_i}{2\pi} e^{-\iu\theta_i (m + \beta_0)}
			\sum_{k=0}^K
				\frac{\varphi^{(i)}_k e^{\iu \theta_i (k + \beta_i)}}{(k + \beta_i)!}
				\int_{0}^{+\infty}
				\frac{s^{k + \beta_i}}{(s + |A_i|)^{m + \beta_0 + 1}} \, ds \\
		=
		\sum_{i=1}^n \frac{S_i}{2\pi} e^{-\iu\theta_i (m + \beta_0)}
			\int_{0}^{+\infty}
			\frac{\widehat{\varphi}^{(i)}(e^{\iu \theta_i} s;K)}{(s + |A_i|)^{m + \beta_0 + 1}} \, ds \,.
	\end{multline}
	The left-hand side of the equation above can be explicitly computed using the Euler-type integral $\frac{1}{\Gamma(\mu + 1)} \int_{0}^{+\infty} \frac{s^{\mu}}{(s + A)^{\nu + 1}} ds = A^{\mu-\nu} \, \frac{ \Gamma(\nu - \mu) }{ \Gamma(\nu + 1) }$ valid for $A > 0$ and $\nu > \mu > 0$. We thus find
	\begin{equation*}
		\frac{\varphi_{m}}{(m + \beta_0)!}
		-
		\frac{1}{(m + \beta_0)!}
		\sum_{i=1}^n
			\frac{S_i}{2\pi} 
			\sum_{k=0}^K
			\varphi^{(i)}_k
			\frac{\Gamma(m + \beta_0 - \beta_i - k)}{A_i^{m + \beta_0 - \beta_i - k}} \,.
	\end{equation*}
	We are now left with an estimate of the right-hand side of \eqref{eqn:to:estimate}. In the $i$-th term of the sum, let us set $s = |A_i|(e^t - 1)$. We obtain
	\begin{equation*}
		\sum_{i=1}^n \frac{S_i}{2\pi} e^{-\iu\theta_i (m+\beta_0)}
			\int_{0}^{+\infty}
			\frac{\widehat{\varphi}^{(i)}(e^{\iu \theta_i} s;K)}{(s + |A_i|)^{m + \beta_0 + 1}} \, ds
		=
		\sum_{i=1}^n \frac{S_i}{2\pi}
			\frac{1}{A_i^{m + \beta_0}}
			\int_{0}^{+\infty}
				e^{-t(m + \beta_0)} \widehat{\varphi}^{(i)}( A_i (e^t - 1) ;K) \, dt \,.
	\end{equation*}
	We can now apply Watson's \cref{Watson:lemma} to estimate the integral: since $\widehat{\varphi}^{(i)}( A_i(e^t - 1) ;K) = \bigO(t^{K+\beta_i+1})$ as $t \to 0^+$ (recall that $\widehat{\varphi}^{(i)}(s;K)$ is $\widehat{\varphi}^{(i)}(s)$ pruned of its first $K$ expansion coefficients at the origin) and $\widehat{\varphi}^{(i)}( A_i(e^t - 1) ;K)$ behaves at most exponentially at infinity, we find 
	\begin{equation*}
		\int_{0}^{+\infty}
			e^{-t(m + \beta_0)} \widehat{\varphi}^{(i)}( A_i(e^t - 1) ;K) \, dt
		=
		\bigO\biggl( \frac{1}{(m + \beta_0)^{K+\beta_i+2}} \biggr)
		=
		\bigO\biggl( \frac{1}{m^{K+\beta_i+2}} \biggr) \,.
	\end{equation*}
	After multiplication by $(m+\beta_0)!$ and some algebraic manipulation, we find the thesis.
\end{proof}

The estimate for the error term in the asymptotics was a consequence of Watson's lemma for Laplace-type integrals, which we report here for the reader's convenience. See \cite{Olv97} for more details.

\begin{lemma}[Watson]\label{Watson:lemma}
	Let $f \colon \R_{+} \to \C$ be a continuous function satisfying the following assumptions.
	\begin{itemize}
		\item Polynomial growth at the origin. There exists an integer $K$ such that $f(t) = \bigO( t^{d} )$ as $t \to 0^+$.

		\item Exponential growth at infinity. There exists $\nu \in \R$ such that $f(t) = \bigO( e^{t\nu} )$ as $t \to + \infty$.
	\end{itemize}
	Then, as $x \to +\infty$:
	\begin{equation}
		\int_{0}^{+\infty} e^{-tx} \, f(t) \, dt
		=
		\bigO\biggl( \frac{1}{x^{d+1}} \biggr) \,.
	\end{equation}
\end{lemma}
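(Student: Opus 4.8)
The plan is to prove Watson's lemma by the classical device of splitting the integral at a fixed positive value, say $t = 1$, and estimating the two resulting pieces separately. The tail will be shown to decay exponentially in $x$ (hence faster than any negative power), while the piece near the origin will produce the stated power $x^{-(d+1)}$ via a direct comparison with an Euler integral.

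First I would handle the tail $\int_{1}^{+\infty} e^{-tx} f(t)\, dt$. The exponential growth hypothesis $f(t) = \bigO(e^{t\nu})$ as $t \to +\infty$, combined with the continuity of $f$ on a compact interval $[1,T]$ to control the intermediate range, yields a uniform bound $|f(t)| \le C \, e^{t\nu}$ for all $t \ge 1$. Consequently, for $x > \nu$,
\[
	\left| \int_{1}^{+\infty} e^{-tx} f(t)\, dt \right|
	\le
	C \int_{1}^{+\infty} e^{-t(x - \nu)}\, dt
	=
	C \, \frac{e^{-(x - \nu)}}{x - \nu} \,,
\]
which tends to $0$ exponentially fast as $x \to +\infty$, and in particular is $\bigO(x^{-(d+1)})$.

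Next I would treat the principal part $\int_{0}^{1} e^{-tx} f(t)\, dt$. The hypothesis $f(t) = \bigO(t^d)$ as $t \to 0^+$ gives $|f(t)| \le C' t^d$ on some interval $(0,\delta]$; on $[\delta, 1]$ the function $f$ is continuous, hence bounded, so after enlarging the constant one obtains $|f(t)| \le C'' t^d$ on all of $(0,1]$. Extending the range of integration to $(0,+\infty)$ and invoking the Euler integral $\int_{0}^{+\infty} e^{-tx} t^d\, dt = \Gamma(d+1)\, x^{-(d+1)}$ (valid for $d > -1$, which is the case in our application where $d = K + \beta_i + 1 \ge 1$) yields
\[
	\left| \int_{0}^{1} e^{-tx} f(t)\, dt \right|
	\le
	C'' \int_{0}^{+\infty} e^{-tx} t^d\, dt
	=
	C'' \, \Gamma(d+1) \, x^{-(d+1)} \,.
\]
Adding the two contributions gives the claimed bound $\int_{0}^{+\infty} e^{-tx} f(t)\, dt = \bigO(x^{-(d+1)})$.

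The argument is entirely elementary and I do not anticipate any genuine obstacle; the only point deserving a little care is the promotion of the purely local bounds (near $0$ and near $+\infty$) to bounds valid on the full half-lines $(0,1]$ and $[1,+\infty)$, which is precisely where the continuity of $f$ enters. One could of course push the same splitting further to recover the full asymptotic expansion in powers of $x^{-1}$, but only the leading-order estimate is needed for the application to \cref{thm:large:order}.
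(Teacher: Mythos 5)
Your proof is correct. Note that the paper does not actually prove this lemma --- it only states it and defers to Olver's book --- so there is no in-paper argument to compare against; your splitting of the integral at $t=1$, promoting the local bounds to global ones on $(0,1]$ and $[1,+\infty)$ via continuity, and comparing the principal part with the Euler integral $\int_0^{+\infty}e^{-tx}t^d\,dt = \Gamma(d+1)\,x^{-(d+1)}$ is exactly the standard textbook proof, and your parenthetical check that $d>-1$ in the intended application (where $d=K+\beta_i+1\ge 1$) correctly addresses the one hypothesis the paper's statement leaves implicit (the statement even has a typo, introducing $K$ where it means $d$).
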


\begin{remark}[{Gevrey-2 series}]\label{rem:Gevrey:2}
	In the next sections, we are going to apply the Borel transform method to formal power series with vanishing even or odd coefficients. In other words, up to relabelling of the coefficients, we have
	\begin{equation}
		\widetilde{\varphi}(\hbar)
		=
		\sum_{g \ge 0} \varphi_{g} \hbar^{2g + \beta_0} \,.
	\end{equation}
	Formal power series with $\varphi_{g} = \bigO((2g)! a^{-g})$ are also known as \textit{Gevrey-2} series. In this case, the large $g$ asymptotics, that is \cref{eqn:large:order}, reads
	\begin{equation}
		\varphi_{g}
		=
		\sum_{i = 1}^n
			\frac{S_i}{2\pi}
			\frac{\Gamma(2g + \beta_0 - \beta_i)}{A_i^{2g + \beta_0 - \beta_i}}
			\left(
			\sum_{k = 0}^{K}
				\frac{A_i^{k}}{(2g + \beta_0 - \beta_i - 1)^{\underline{k}}} \, \varphi^{(i)}_{k} 
				+
				\bigO \biggl( \frac{1}{g^{K+1}} \biggr)
			\right) \,.
	\end{equation}
	We also remark that the above theorem can be generalised to the case of $\widehat{\varphi}$ admitting simple poles. Moreover, one can allow for the critical exponents $\beta_0$ and $\beta_i$ to be any real numbers by suitably adapting the definition of the Borel transform.
\end{remark}

\subsection{Algebraic properties of resurgent series}\label{subsec:Borel:properties}

So far we have established that the large-order asymptotics of the coefficients of a simple resurgent series is entirely determined by its Borel singularity structure. One advantage of working within the class of simple resurgent functions is that they exhibit favourable properties under addition and multiplication. As will become evident in the subsequent sections, this aspect is of particular significance to us, since our goal is to deduce the singularity structure of specific power series. These series are derived as sums of products of simple resurgent ``building blocks'', whose singularity structure is completely under control. For all these reasons, let us discuss some algebraic properties of simple resurgent series.

The Borel transform is by definition a linear operation. On the other hand, the Borel transform does not respect the Cauchy product of formal power series, but it rather sends it to the convolution product.

\begin{definition}
	Given $\widehat{\phi}, \widehat{\psi} \in \C\bbraket{s}$, their \textit{convolution product} is
	\begin{equation}
		\widehat{\phi} \ast \widehat{\psi}
		\coloneqq
		\mf{B} \bigl[ \widetilde{\phi} \cdot \widetilde{\psi} \bigr] \,,
	\end{equation}
	with $\widehat{\phi} = \mf{B}[ \widetilde{\phi} ]$ and $\widehat{\psi} = \mf{B}[ \widetilde{\psi} ]$.
\end{definition}

It is easy to check that, if both $\widehat{\phi}$ and $\widehat{\psi}$ converge in a disc of radius $R$ around the origin, then for all $s$ in such a disc we find $(\widehat{\phi} \ast \widehat{\psi})(s) = \frac{d}{ds} \int_{0}^{s} \widehat{\phi}(\sigma) \widehat{\psi}(s-\sigma) \, d\sigma$. In other words, $\widehat{\phi} \ast \widehat{\psi}$ is (the derivative of) the usual convolution product of functions, hence the name. The next two theorems precisely determine how the analytic properties of simple resurgent functions transform under convolution.

\begin{lemma}[{Analyticity of the convolution product \cite[lemma~5.54]{MS16}}]\label{lemma:conv}
	Let $\Omega$ be a star-shaped open subset of $\C$ (that is, for every $s\in\Omega$, the line segment $[0,s]$ is contained in $\Omega$), and $\widehat{\phi},\widehat{\psi}$ be holomorphic in $\Omega$. Then their convolution product is also holomorphic in $\Omega$.
\end{lemma}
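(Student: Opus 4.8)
The plan is to prove that the convolution product $\widehat{\phi} \ast \widehat{\psi}$ extends holomorphically to any star-shaped domain $\Omega$ on which both factors are holomorphic, by exploiting the integral representation $(\widehat{\phi} \ast \widehat{\psi})(s) = \frac{d}{ds} \int_0^s \widehat{\phi}(\sigma) \widehat{\psi}(s-\sigma)\, d\sigma$ valid near the origin, and showing the right-hand side makes sense throughout $\Omega$. First I would fix $s \in \Omega$ and parametrise the segment $[0,s]$ as $\sigma = us$ for $u \in [0,1]$; since $\Omega$ is star-shaped we have $us \in \Omega$ for all $u \in [0,1]$, and by convexity of the interval also $(1-u)s = s - us \in \Omega$. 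This lets us write
\begin{equation*}
	F(s) \coloneqq \int_0^s \widehat{\phi}(\sigma)\, \widehat{\psi}(s - \sigma)\, d\sigma = s \int_0^1 \widehat{\phi}(us)\, \widehat{\psi}((1-u)s)\, du \,,
\end{equation*}
an integral whose integrand, for each fixed $u \in [0,1]$, is holomorphic in $s$ on $\Omega$.

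The key step is then to apply the standard theorem on holomorphy of parameter-dependent integrals: the integrand $(u,s) \mapsto s\, \widehat{\phi}(us) \widehat{\psi}((1-u)s)$ is continuous on $[0,1] \times \Omega$, holomorphic in $s$ for each $u$, and locally uniformly bounded (on a compact neighbourhood $\overline{D} \subset \Omega$ of a given point, the segments $[0,s]$ sweep out a compact subset of $\Omega$ on which $\widehat{\phi}$ and $\widehat{\psi}$ are bounded). By Morera's theorem together with Fubini — or equivalently by differentiating under the integral sign — $F$ is holomorphic on $\Omega$. Since $\Omega$ contains a disc around the origin on which $F$ agrees with the elementary convolution $\int_0^s \widehat\phi(\sigma)\widehat\psi(s-\sigma)d\sigma$, and the latter coincides there with $\int_0^s (\widehat{\phi}\ast\widehat{\psi})$ (the primitive of $\widehat\phi\ast\widehat\psi$ vanishing at $0$), we conclude that $\widehat{\phi}\ast\widehat{\psi} = F'$ extends holomorphically to all of $\Omega$.

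I would close by remarking that one must check the identification of $F'$ with the Borel transform of $\widetilde\phi\cdot\widetilde\psi$ on the disc of convergence: expanding $\widehat\phi(\sigma) = \sum_j \phi_j \sigma^j/j!$ and $\widehat\psi(\tau) = \sum_k \psi_k \tau^k/k!$ and integrating term by term over $[0,s]$ gives $F(s) = \sum_{j,k} \frac{\phi_j \psi_k}{j!\,k!} \cdot \frac{s^{j+k+1}}{(j+k+1)} \cdot \mathrm{B}(j+1,k+1)^{-1}\cdots$ — more cleanly, $F'(s) = \sum_m \left(\sum_{j+k=m} \binom{m}{j} \phi_j \psi_k\right) \frac{s^m}{m!}$, which is exactly $\mathfrak{B}[\widetilde\phi \cdot \widetilde\psi]$. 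The main obstacle is purely a matter of care rather than depth: one must verify that the star-shaped hypothesis genuinely guarantees that both $us$ and $(1-u)s$ stay in $\Omega$ — the first is immediate, and the second follows because $s - us = (1-u)s$ also lies on the segment $[0,s] \subseteq \Omega$ — and then assemble the local uniform bounds needed to invoke holomorphy under the integral sign. No serious estimate is required; the content is the reparametrisation that converts the awkward variable upper limit into a fixed interval.
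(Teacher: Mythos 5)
The paper does not prove this lemma; it is imported verbatim from \cite[lemma~5.54]{MS16}, so there is nothing internal to compare against. Your argument is, in substance, the standard proof from that reference: rewrite $F(s)=\int_0^s\widehat{\phi}(\sigma)\widehat{\psi}(s-\sigma)\,d\sigma$ as $s\int_0^1\widehat{\phi}(us)\,\widehat{\psi}((1-u)s)\,du$, observe that star-shapedness keeps both $us$ and $(1-u)s$ on the segment $[0,s]\subset\Omega$, invoke holomorphy of parameter-dependent integrals (the needed local uniform bound comes from compactness of $\{us : u\in[0,1],\ s\in\overline{D}\}\subset\Omega$), and identify $F'$ with $\widehat{\phi}\ast\widehat{\psi}$ near the origin by term-by-term integration. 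This is correct and complete. The one slip is in your final verification: $\int_0^s\sigma^j(s-\sigma)^k\,d\sigma=s^{j+k+1}\,\mathrm{B}(j+1,k+1)=s^{j+k+1}\,\frac{j!\,k!}{(j+k+1)!}$, so the factorials cancel cleanly and
\begin{equation*}
	F'(s)=\sum_{m\ge 0}\Bigl(\sum_{j+k=m}\phi_j\psi_k\Bigr)\frac{s^m}{m!}\,,
\end{equation*}
with no binomial coefficient $\binom{m}{j}$; the version you wrote would be the Borel transform of something other than the Cauchy product $\widetilde{\phi}\cdot\widetilde{\psi}$ under the paper's convention $\hbar^m\mapsto s^m/m!$. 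Since the correct computation does give $\mf{B}[\widetilde{\phi}\cdot\widetilde{\psi}]$, your conclusion stands once that formula is fixed.
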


\begin{theorem}[{Product of simple resurgent series \cite[theorem~6.83]{MS16}}]\label{theorem:prod}
	Let $\mc{A}$ and $\mc{B}$ be non-empty closed discrete subsets of $\C^{\times}$, and denote $\mc{C} \coloneqq \mc{A} \cup \mc{B} \cup \left( \mc{A} + \mc{B} \right)$, with $\mc{A} + \mc{B} \coloneqq \set{ A + B | A \in \mc{A}, \, B \in \mc{B}  }$. Let $\widetilde{\phi}$ and $\widetilde{\psi}$ be simple resurgent series with Borel plane singularities in $\mc{A}$ and $\mc{B}$ respectively, such that
	\begin{equation}
	\begin{aligned}
		\widehat{\phi}(s)
		& =
		- \frac{S_{A}}{2\pi} \widehat{\phi}_{A}(s - A) \, \log(s - A)
		+
		\textup{holomorphic at } A
		&& \qquad
		\forall A \in \mc{A} \,, \\
		\widehat{\psi}(s)
		& =
		- \frac{S_{B}}{2\pi} \widehat{\psi}_{B}(s - B) \, \log(s - B)
		+
		\textup{holomorphic at } B
		&&\qquad \forall B \in \mc{B} \,.
	\end{aligned}
	\end{equation}
	If $\mc{C}$ is closed and discrete, then $\widetilde{\phi} \cdot \widetilde{\psi}$ is also a simple resurgent series with Borel plane singularities in $\mc{C}$ and
	\begin{equation}
	\begin{aligned}
		\bigl( \widehat{\phi} \ast \widehat{\psi} \bigr)(s)
		& =
		- \frac{S_{A}}{2\pi}
		\bigl( \widehat{\phi}_{A} \ast \widehat{\psi} \bigr)(s - A) \,
		\log(s - A)
		+
		\textup{holomorphic at } A
		&& \qquad
		\forall A \in \mc{A} \,, \\
		\bigl( \widehat{\phi} \ast \widehat{\psi} \bigr)(s)
		& =
		- \frac{S_{B}}{2\pi}
		\bigl( \widehat{\phi} \ast \widehat{\psi}_{B} \bigr)(s - B) \,
		\log(s - B)
		+
		\textup{holomorphic at } B
		&& \qquad
		\forall B \in \mc{B} \,.
	\end{aligned}
	\end{equation}
\end{theorem}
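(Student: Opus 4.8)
The plan is to reduce everything to the local analysis of the convolution product near each singularity, leveraging \cref{lemma:conv} for the holomorphic parts. First I would observe that $\widetilde\phi\cdot\widetilde\psi$ is Gevrey-1: if $\phi_m = \bigO(m!\,a^{-m})$ and $\psi_m = \bigO(m!\,b^{-m})$, then the Cauchy product coefficients are $\bigO(m!\,c^{-m})$ for any $c < \min(a,b)$ by a standard binomial estimate, so $\widehat{\phi\cdot\psi} = \widehat\phi \ast \widehat\psi$ is holomorphic at the origin. The real content is the analytic continuation along rays and the description of the singular behaviour. Fix a ray $d_\theta$ from the origin. By hypothesis $\mc{C} = \mc A \cup \mc B \cup (\mc A + \mc B)$ is closed and discrete, so only finitely many points of $\mc C$ lie on any bounded portion of $d_\theta$, and it suffices to continue $\widehat\phi\ast\widehat\psi$ past each such point, circumventing it from above or below.

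Next, to analyse the behaviour near a point $A \in \mc A$ (the case $B \in \mc B$ is symmetric, and a point in $(\mc A+\mc B)\setminus(\mc A\cup\mc B)$ is handled by combining both), I would split $\widehat\phi = \widehat\phi_{\mathrm{sing}} + \widehat\phi_{\mathrm{hol}}$ near the relevant region, where $\widehat\phi_{\mathrm{sing}}(s) = -\frac{S_A}{2\pi}\widehat\phi_A(s-A)\log(s-A)$ collects the singular contributions and $\widehat\phi_{\mathrm{hol}}$ is holomorphic in a star-shaped neighbourhood of the segment $[0,A]$ punctured appropriately. The convolution is bilinear, so $\widehat\phi\ast\widehat\psi = \widehat\phi_{\mathrm{sing}}\ast\widehat\psi + \widehat\phi_{\mathrm{hol}}\ast\widehat\psi$; by \cref{lemma:conv} the second term is holomorphic across $A$ (choosing $\Omega$ star-shaped and large enough, using that $\widehat\psi$'s singularities other than at $A$ itself are avoided on the relevant sector). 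For the first term, the standard computation — writing $(\widehat\phi_{\mathrm{sing}}\ast\widehat\psi)(s) = \frac{d}{ds}\int_0^s \widehat\phi_{\mathrm{sing}}(\sigma)\widehat\psi(s-\sigma)\,d\sigma$ and deforming the integration path to isolate the endpoint contribution near $\sigma = A$ — produces, after integrating $\log(\sigma - A)\cdot(\text{holomorphic})$ against $\widehat\psi(s-\sigma)$, precisely a term $-\frac{S_A}{2\pi}(\widehat\phi_A\ast\widehat\psi)(s-A)\log(s-A)$ plus something holomorphic at $A$. The key mechanism is that the $\log$-singularity of $\widehat\phi_{\mathrm{sing}}$ at $\sigma=A$ convolved against the value of $\widehat\psi$ near $0$ reproduces a $\log(s-A)$ singularity whose coefficient-function is the convolution of the minor $\widehat\phi_A$ with $\widehat\psi$; this is exactly \cite[theorem~6.83]{MS16}, so I would cite that argument rather than reprove it.

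Finally I would check that the resulting object is \emph{simple} resurgent: the only singularities are logarithmic (no simple poles are created, since $\log \ast (\text{holomorphic})$ stays logarithmic and the minors $\widehat\phi_A, \widehat\psi_B$ are holomorphic at $0$), and the minor at each singularity, being a convolution $\widehat\phi_A\ast\widehat\psi$ or $\widehat\phi\ast\widehat\psi_B$ of two functions holomorphic at the origin, is again holomorphic at the origin. The only remaining point is to confirm the singularity set is contained in $\mc C$ and that $\mc C$ being closed and discrete is what makes this a legitimate simple resurgent series in the sense of the definition; this is where the hypothesis on $\mc C$ is used, and it is the one genuinely delicate structural point — one must rule out accumulation of the sumset $\mc A + \mc B$, which is why it is assumed rather than derived. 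I expect the main obstacle to be the careful bookkeeping of the contour deformation in the convolution integral near a singular point, in particular handling the case where $A \in \mc A$ also lies in $\mc B$ or in $\mc A + \mc B$ so that several singular contributions coincide; but since the statement is quoted verbatim from \cite[theorem~6.83]{MS16}, the cleanest route is to invoke that reference for the local analysis and spend the effort only on verifying that our building blocks satisfy its hypotheses.
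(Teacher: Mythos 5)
The paper does not actually prove this statement---it is quoted verbatim with the citation \cite[theorem~6.83]{MS16}---and your proposal likewise defers the substantive local analysis of the convolution near a singular point to that same reference, so the two approaches coincide in the only respect that matters here. Your surrounding sketch (Gevrey-1 closure under the Cauchy product, bilinearity plus \cref{lemma:conv} for the holomorphic part, and the observation that the closed-and-discrete hypothesis on $\mc{C}$ is what rules out accumulation of the sumset $\mc{A}+\mc{B}$) is a correct account of what the cited proof does.
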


\begin{remark}[Singularities on the principal sheet]\label{rem:princ:sheet}
	A priori, \cref{theorem:prod} implies that the set of Borel plane singularities of $\widetilde{\phi} \cdot \widetilde{\psi}$ not only contain elements of $\mc{A}$ and $\mc{B}$, but also of $\mc{A} + \mc{B}$. Nonetheless, \cref{lemma:conv} ensures that, if all elements of $\mc{A} \cup \mc{B}$ lay on distinct rays issuing from the origin, then all singularities from $\mc{A} + \mc{B}$ will not be on the principal sheet of the Borel plane. Thus, according to \cref{thm:large:order}, they do not contribute to the large-order asymptotic behaviour of the coefficients of $\widetilde{\phi} \cdot \widetilde{\psi}$.
\end{remark}

\section{Large genus asymptotics of \texorpdfstring{$\psi$}{psi}-class intersection numbers}\label{sec:WK}
The goal of this section is to prove the large genus asymptotics of the $\psi$-class intersection numbers, including all subleading corrections. To this end, we apply the Borel transform method to the $n$-point function, which in turn is built out of formal solutions of the Airy differential equations through the determinantal formulae. Thus, all terms appearing in Aggarwal's asymptotic formula can be explained in terms of data appearing in such formal solutions of the Airy ODE.

We start by recalling some facts about the Airy functions and their Borel transform, as well as the determinantal formulae for the $\psi$-class intersection numbers. We then proceed with the resurgent analysis of $n$-point functions, i.e.~their singularity structure on the Borel plane.

\subsection{Airy functions}
Consider the ($\hbar$-dependent) \textit{Airy ODE}:
\begin{equation}
	\biggl( \biggl( \hbar \frac{d}{dx} \biggr)^2 - x \biggr) \psi(x;\hbar) = 0 \,.
\end{equation}
A general solution is given by the Airy integral: $\int_{\gamma} e^{-\frac{1}{\hbar}V(t,x)} dt$, with $V(t,x) \coloneqq \frac{t^3}{3} - xt$ and $\gamma$ a properly chosen integration contour. In this section, we are interested in a basis of formal (or WKB) solutions, i.e.~the asymptotic expansion of the (properly normalised) Airy integrals with $\gamma$ being a Lefschetz thimble. In the context of Lax operator formalism these are called the \textit{wave functions} or Baker--Akhiezer functions. Following the prescription outlined in \cref{subsec:exp:int}, such formal solutions and their derivatives are given by
\begin{equation}
\begin{split}
	\psi_{\pm}(x;\hbar)
	&= 
	\frac{e^{\mp \frac{V(x)}{\hbar}}}{\sqrt{2}} x^{-1/4}
	\sum_{k = 0}^{\infty}
		\frac{1}{864^k}\frac{(6k)!}{(2k)!(3k)!} \left(\mp \frac{\hbar}{V(x)} \right)^{k} \,, \\
	\psi_{\pm}'(x;\hbar)
	&= 
	\pm \frac{e^{\mp \frac{V(x)}{\hbar}}}{\sqrt{2}} x^{1/4}
	\sum_{k = 0}^{\infty}
		\frac{1}{864^k}\frac{(6k)!}{(2k)!(3k)!} \frac{1+6k}{1-6k} \left(\mp \frac{\hbar}{V(x)} \right)^{k} \,, \\
\end{split}
\end{equation}
where $\pm V(x) \coloneqq \mp \frac{2}{3} x^{3/2}$ are the critical values of the potential at the critical point $t = \pm x^{1/2}$. The reader can recognise $\psi_{-}$ and $\psi_{+}$ as the asymptotic expansions in the appropriate regions of the Airy and Bairy functions respectively. The normalisation is fixed so that the Wronskian is worth $\psi_- \psi_+' - \psi_-' \psi_+ = 1$.

Consider the Borel transform of the Airy functions. To start with, let us separate the exponential part by setting $\psi_{\pm}(x;\hbar) \eqqcolon \exp(\mp \frac{1}{\hbar} V(x)) \, \widetilde{\psi}(x;\hbar)$, so that $\widetilde{\psi}_{\pm}$ is a formal power series in $\hbar$. Similarly for the derivatives. Then the Borel transforms of $\widetilde{\psi}_{\pm}$ and $\widetilde{\psi}_{\pm}'$ are expressed in terms of Gauss hypergeometric functions as:
\begin{equation}
	\widehat{\psi}_{\pm}(x;s)
	=
	\frac{x^{-1/4}}{\sqrt{2}} \, \pFq{2}{1} \left( \tfrac{5}{6},\tfrac{1}{6} ; 1 ; \pm \frac{s}{A(x)} \right) \,,
	\qquad
	\widehat{\psi}_{\pm}'(x;s)
	=
	\pm \frac{x^{1/4}}{\sqrt{2}} \, \pFq{2}{1} \left( \tfrac{7}{6},-\tfrac{1}{6} ; 1 ; \pm \frac{s}{A(x)} \right) \,,
\end{equation}
where $\pm A(x) \coloneqq \pm \frac{4}{3} x^{3/2}$ are the instanton actions.

From the above explicit expression, we deduce that the Borel transforms $\widehat{\psi}_{\pm}$ and $\widehat{\psi}_{\pm}'$ converge in a disc of radius $|A(x)|$ centred at the origin, and they can be extended analytically to functions with logarithmic singularities at $s = \pm A(x)$. From a direct analysis of the integral representation of the Gauss hypergeometric functions, one can prove the following behaviour at the singularities:
\begin{equation}\label{eqn:Borel:Airy}
\begin{aligned}
	\widehat{\psi}_{\pm}(x;s)
	& =
		- \frac{S}{2\pi} \,
		\widehat{\psi}_{\mp}\bigl( x;s \mp A(x) \bigr)
		\log\bigl( s \mp A(x) \bigr)
		+
		\text{holomorphic at} \pm A(x)\,, \\
	\widehat{\psi}_{\pm}'(x;s)
	& =
		- \frac{S}{2\pi} \,
		\widehat{\psi}_{\mp}'\bigl( x;s \mp A(x) \bigr)
		\log\bigl( s \mp A(x) \bigr)
		+
		\text{holomorphic at} \pm A(x) \,,
\end{aligned}
\end{equation}
where $S = 1$ is the Stokes constant. In other words, $\widetilde{\psi}_{\pm}$ and $\widetilde{\psi}_{\pm}'$ are simple resurgent functions with finitely many logarithmic singularities, with Stokes constants and minors that are fully under control. Notice that the above behaviour also follows from \cref{thm:sing:exp:int} and the (properly normalised) integral representation of $\psi_{\pm}$ and $\psi_{\pm}'$, for which the explicit expressions of the Borel transforms is not needed.

In the remaining part of this section, we often keep the generic notation $S$ and $A(x)$ to emphasise how their actual values are not crucial in the following analysis. This fact will allow us to compute the large genus asymptotics of $\Theta$-class intersection numbers in \cref{sec:Norbury} by simply changing the values of $S$ and $A(x)$ to the appropriate ones.

\subsection{Airy correlators}
The Airy ODE can be re-written as a first order $2 \times 2$ system in terms of the companion matrix $\mathcal{D}$:
\begin{equation}
	\hbar \frac{d}{dx} \Psi(x;\hbar) = \mathcal{D}(x) \Psi(x;\hbar) \,,
	\qquad
	\mathcal{D}(x)
	\coloneqq
	\begin{pmatrix}
		0 & 1 \\
		x & 0
	\end{pmatrix} \,,
	\qquad
	\Psi(x;\hbar)
	=
	\begin{pmatrix}
		\psi_-(x;\hbar) & \psi_+(x;\hbar) \\
		\psi_-'(x;\hbar) & \psi_+'(x;\hbar)
	\end{pmatrix} \,.
\end{equation}
We refer to $\Psi$ as the \textit{wave matrix}. Notice that with the chosen normalisation of the Wronskian, we have $\det{\Psi} = 1$. We are now interested in computing the correlators associated to the above differential system. To this end, define the matrix
\begin{equation}\label{eqn:M:matrix}
	M(x;\hbar)
	\coloneqq
	\Psi(x;\hbar)
	E
	\Psi^{-1}(x;\hbar) \,,
	\qquad\qquad
	E
	\coloneqq
	\frac{1}{2} \begin{pmatrix}
		1 & 0 \\
		0 & -1
	\end{pmatrix} .
\end{equation}
Notice that $E$ is a generator of the Cartan subalgebra of $\mf{sl}_2(\C)$. More explicitly $M$ is given by
\begin{equation}
	M
	=
	\begin{pmatrix}
		\frac{1}{2}(\psi_+' \psi_- + \psi_+ \psi_-') & \psi_+ \psi_- \\
		\psi_+' \psi_-' & - \frac{1}{2}(\psi_+' \psi_- + \psi_+ \psi_-')
	\end{pmatrix}
	=
	\begin{pmatrix}
		\frac{1}{2}(\widetilde{\psi}_+' \widetilde{\psi}_- + \widetilde{\psi}_+ \widetilde{\psi}_-') &
		\widetilde{\psi}_+ \widetilde{\psi}_- \\
		\widetilde{\psi}_+' \widetilde{\psi}_-' &
		- \frac{1}{2}(\widetilde{\psi}_+' \widetilde{\psi}_- + \widetilde{\psi}_+ \widetilde{\psi}_-')
	\end{pmatrix} .
\end{equation}
In the second equation, we used the simple but crucial fact that the exponential factors $e^{\pm \frac{1}{\hbar}V(x)}$ cancel out in quadratic expressions involving functions labelled by `$+$' and `$-$'. In other words, we can substitute the $\psi$'s with their respective $\widetilde{\psi}$'s. Thus, $M$ is a matrix-valued formal power series in $\hbar$.

We remark that, from a more geometric point-of-view, $\Psi$ can be identified with a formal flat section for the $\mf{sl}_2(\C)$-connection $\nabla = \hbar d - \mathcal{D} dx$ on the trivial bundle over the projective line. Besides, $M$ can be seen as a flat section of the adjoint bundle, i.e.~it satisfies the differential system $\hbar dM - [\mathcal{D},M] dx = 0$.

We are now ready to define the correlators associated to the Airy differential system. To the best of our knowledge, correlators in the context of ODEs appeared for the first time in \cite{BE}, inspired by the analogy with random matrix models (see for instance \cite{Dys70,Met04}). Later, they were re-discovered in the context of tau-functions of the KdV hierarchy in \cite{BDY16}.

\begin{definition}[Airy correlators]
	Define the $n$-point \textit{Airy correlators} $W_n$ as\footnote{In the literature, a different convention is used for $n = 2$. Since the the difference is of order $\hbar^0$ and we are interested in the $\hbar$-asymptotic expansion, we can ignore the correction term. Another difference in the literature is the convention for the matrix $E$, which is sometimes shifted by a matrix proportional to the identity $\lambda \Id$. Thus, $M$ is also shifted by the same matrix $\lambda \Id$. One can prove that such a shift leaves $W_n$ invariant, see \cite[appendix~A]{EM}.}
	\begin{equation}\label{eqn:n:pnt:Airy}
	\begin{aligned}
		& W_1(x_1;\hbar)
		\coloneqq
		- \frac{1}{\hbar} \Tr{\bigl( \mathcal{D}(x_1) M(x_1;\hbar) \bigr)} \,, \\
		& W_n(x_1,\dots,x_n;\hbar)
		\coloneqq
		(-1)^{n-1} \sum_{\sigma \in S_n^{\textup{cyc}}}
		\frac{\Tr{ \bigl( \prod_{i=1}^n M(x_{\sigma^i(1)};\hbar) } \bigr) }{\prod_{i=1}^n ( x_{i} - x_{\sigma(i)} )}
		\qquad\quad
		\text{for }n \ge 2 \,.
	\end{aligned}
	\end{equation}
	We refer to the above formula as the \textit{determinantal formula}. Here $S_n^{\textup{cyc}}$ denotes the set of cyclic permutations, that is the subset of $S_n$ consisting of all permutations with a single cycle of length $n$.
\end{definition}

It can be shown that $W_n$ is regular along $x_i = x_j$ (see for instance \cite{BBE15}). Moreover, as $M$ is a formal power series in $\hbar$, so is $W_{n}$. The next theorem makes the $\hbar$-dependence more precise, and most importantly it establishes an enumerative-geometric interpretation of the expansion coefficients: they correspond to $\psi$-class intersection numbers. For this reason, the $\hbar$-expansion of $W_{n}$ is also called a \textit{genus expansion}. A proof of the theorem can be deduced from the original work of Kontsevich \cite{Kon92}, together with the fact that correlators of matrix models are computed by determinantal formulae (fact already hinted in \cite[section~10.5]{EO07}). The theorem was later re-discovered in \cite{BDY16} in the context of solutions of the KdV hierarchy.

\begin{theorem}[{Genus expansion of the Airy correlators}]\label{thm:genus:expns:Airy}
	The $n$-point Airy correlators $W_{n}$ admit the following $\hbar$-expansion:
	\begin{equation}\label{eqn:genus:expns:Airy}
		W_{n}(\bm{x};\hbar)
		=
		\sum_{g = 0}^{\infty}
			\hbar^{2g-2+n} \, W_{g,n}(\bm{x}) \,.
	\end{equation}
	Moreover, $W_{g,n}$ stores $\psi$-class intersection numbers: if $2g-2+n>0$, 
	\begin{equation}
		W_{g,n}(\bm{x})
		=
		(-2)^{-(2g-2+n)}
		\sum_{\substack{ d_{1},\dots,d_n \ge 0 \\ |d| = 3g-3+n }}
			\braket{ \tau_{d_1} \cdots \tau_{d_n} }
			\prod_{i=1}^n \frac{(2d_i+1)!!}{2 \, x_i^{d_i+3/2}} \,.
	\end{equation}
\end{theorem}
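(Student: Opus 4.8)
The plan is to prove the two assertions of the theorem separately: (A) that $W_n(\bm{x};\hbar)$ is a formal power series in $\hbar$ supported exactly on the exponents $2g-2+n$ with $g\ge 0$, and (B) that for $2g-2+n>0$ the coefficient $W_{g,n}$ is the displayed generating series of $\psi$-class intersection numbers.

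\emph{Part (A).} From the WKB expansions one reads off the symmetries $\widetilde{\psi}_{\pm}(x;-\hbar)=\widetilde{\psi}_{\mp}(x;\hbar)$ and $\widetilde{\psi}_{\pm}'(x;-\hbar)=-\widetilde{\psi}_{\mp}'(x;\hbar)$, that is $\widetilde{\Psi}(x;-\hbar)=\epsilon\,\widetilde{\Psi}(x;\hbar)\,J$ with $\epsilon=\diag(1,-1)=2E$ and $J=\bigl(\begin{smallmatrix}0&1\\1&0\end{smallmatrix}\bigr)$. Since $JEJ=-E$, this gives $M(x;-\hbar)=-\epsilon\,M(x;\hbar)\,\epsilon$, and then $\epsilon\,\mathcal{D}(x)\,\epsilon=-\mathcal{D}(x)$ together with the cyclicity of the trace turns the determinantal formula into $W_n(\bm{x};-\hbar)=(-1)^n W_n(\bm{x};\hbar)$; hence only powers $\hbar^m$ with $m\equiv n\pmod{2}$ appear. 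Moreover $M$ is a formal power series in $\hbar$, so $W_n\in\C\bbraket{\hbar}$ for $n\ge 2$ while $W_1=-\hbar^{-1}\Tr(\mathcal{D}M)\in\hbar^{-1}\C\bbraket{\hbar}$. For $n\le 3$ these two facts already confine the support of the expansion to $\{\hbar^{\,2g-2+n}\}_{g\ge 0}$. For $n\ge 4$ one must additionally see that the coefficients of $\hbar^0,\dots,\hbar^{\,n-4}$ vanish; this ``no unstable contributions'' statement I would import from the polynomiality and pole structure of determinantal correlators established in \cite{BBE15}, or else deduce a posteriori from Part (B), where the grading is built into the formalism.

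\emph{Part (B).} The mechanism here is that the correlators $W_n$ attached by the determinantal formula to a $2\times 2$ quantum-curve system are, after the genus expansion and the substitution $x_i=z_i^2$ turning $W_{g,n}(\bm{x})$ into a symmetric meromorphic $n$-form $\omega_{g,n}$ on $(\P^1)^n$, exactly the topological recursion correlators of the classical spectral curve of $\mathcal{D}$, here $\det\bigl(y\,\Id-\mathcal{D}(x)\bigr)=y^2-x=0$, i.e.\ the Airy curve $(x,y)=(z^2,z)$. This is the loop-equation characterization of the determinantal formula of \cite{BE,BBE15}: the $W_n$ satisfy the linear and quadratic abstract loop equations, and those, together with the correct pole behaviour at the ramification point $z=0$ and regularity at $z=\infty$, are equivalent to topological recursion on that curve; equivalently, the $W_n$ are the resolvent correlators of the Kontsevich matrix model, as hinted in \cite{EO07}. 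One then invokes Kontsevich's theorem \cite{Kon92} --- in its topological recursion incarnation --- that topological recursion on the Airy curve generates the Witten--Kontsevich numbers, after which what remains is the routine bookkeeping relating the Laurent expansion of $\omega_{g,n}$ near $z=\infty$ to the expansion of $W_{g,n}$ in powers of $x^{-1/2}$, producing the overall constant $(-2)^{-(2g-2+n)}$ and the factors $(2d_i+1)!!/(2x_i^{d_i+3/2})$. An equivalent route, closer to \cite{BDY16}, is to show directly that $\exp\bigl(\sum_{g,n}\tfrac{\hbar^{2g-2+n}}{n!}\int^{\bm{x}}\!\cdots\,W_{g,n}\bigr)$ is a tau function of the KdV hierarchy obeying the string equation, hence equal to the Witten--Kontsevich tau function; here the third-order ODE satisfied by $W_1$, read off from the Airy ODE for $\psi_\pm$, plays the role of the quantum curve.

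The genuine obstacle is Part (B): making precise the chain ``determinantal correlators $=$ solutions of the loop equations $=$ topological recursion correlators of the Airy curve $=$ generating series of $\psi$-classes'', and then tracking the normalizations carefully through the substitution $x=z^2$. Part (A) is essentially formal once the parity relation $M(x;-\hbar)=-\epsilon M(x;\hbar)\epsilon$ is recorded; its only structural ingredient is the vanishing of the unstable terms for $n\ge 4$, which I would either borrow from the polynomiality of determinantal correlators or fold into the argument of Part (B).
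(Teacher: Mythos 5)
Your proposal is correct and takes essentially the same route as the paper, which does not prove this theorem from scratch but deduces it from Kontsevich's work combined with the loop-equation equivalence between determinantal correlators and topological recursion on the Airy spectral curve $\set{x = y^2}$ (and, alternatively, the KdV tau-function formulation of Bertola--Dubrovin--Yang). Your Part (A), the parity argument via $M(x;-\hbar) = -\epsilon\, M(x;\hbar)\,\epsilon$, is a correct elaboration consistent with the relations $\psi_{-}(x;\hbar) = \psi_{+}(x;-\hbar)$ and $\psi_{-}'(x;\hbar) = -\psi_{+}'(x;-\hbar)$ that the paper itself records and uses in its kernel lemma.
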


\begin{example}[{$1$-point correlators}]
	A direct computation shows that
	\begin{equation*}
	\begin{split}
		W_1(x;\hbar)
		&=
		\frac{x \, \widetilde{\psi}_- \widetilde{\psi}_+ - \widetilde{\psi}_-' \widetilde{\psi}_+'}{\hbar}
		=
		\frac{1}{\hbar} x^{1/2}
		- \hbar \tfrac{1}{32} x^{-5/2}
		- \hbar^3 \tfrac{105}{2048} x^{-11/2}
		- \hbar^5 \tfrac{25025}{65536} x^{-17/2}
		+ \bigO(\hbar^7) \,.
	\end{split}
	\end{equation*}
	In particular, the above expansion agrees with the intersection numbers:
	\begin{alignat*}{4}
		W_{0,1}(x) &= x^{1/2} \,,
		\qquad
		& W_{1,1}(x) &= - \tfrac{1}{32} x^{-5/2} \,,
		\qquad
		& W_{2,1}(x) &= - \tfrac{105}{2048} x^{-11/2} \,,
		\qquad
		& W_{3,1}(x) &= - \tfrac{25025}{65536} x^{-17/2} \,, \\
		&
		& \braket{\tau_1} &= \tfrac{1}{24} \,,
		& \braket{\tau_4} &= \tfrac{1}{1152} \,,
		& \braket{\tau_7} &= \tfrac{1}{82944} \,.
	\end{alignat*}
	More generally, an explicit computation of the expansion coefficients of $W_1$ from those of the Airy and Bairy functions shows the well-known closed formula $\braket{\tau_{3g-2}} = \frac{1}{24^g g!}$ for $1$-point intersection numbers.
\end{example}

\begin{remark}[Connection with topological recursion and the spectral curve]
	The reader familiar with topological recursion can recognise $W_{g,n}$ as essentially the topological recursion correlators computed from the Airy spectral curve $(\P^1, x(z) = z^2, y(z) = z, \omega_{0,2}(z_1,z_2) = \frac{dz_1 dz_2}{(z_1 - z_2)^2})$. More precisely:
	\begin{equation}
		\omega_{g,n}(\bm{z}) = W_{g,n}(\bm{x}) \, dx_1 \cdots dx_n \big|_{x_i = z_i^2} \,.
	\end{equation}
	This can be explained by the fact that both correlators defined through determinantal formulae and topological recursion satisfy the same loop equations \cite{BE,BEO15}. We also emphasise that the wave functions, and consequently the correlators, are defined up to a choice of square root of $x$. In other words, they are well-defined on the spectral curve $\set{x = y^2}$. Throughout the rest of the section, we will work with a fixed choice of square root of $x$, often denoted as $\sqrt{x} = z$.
\end{remark}

The next result gives an alternative expression for the $n$-point Airy correlators in terms of the Airy kernel, which plays an important role in the Tracy--Widom law and in extreme values statistics \cite{TW94a}.

\begin{lemma}[Determinantal formula in kernel form]\label{lemma:kernel}
	For $n \ge 2$, the $n$-point Airy correlators are given by the two equivalent expressions
	\begin{equation}
		W_n(\bm{x};\hbar)
		=
		(-1)^{n-1} \sum_{\sigma \in S_n^{\textup{cyc}}}
			\prod_{i = 1}^n K_{+,-}(x_i,x_{\sigma(i)};\hbar)
		=
		(-1)^{n-1} \sum_{\sigma \in S_n^{\textup{cyc}}}
			\prod_{i = 1}^n K_{-,+}(x_i,x_{\sigma(i)};\hbar)\,,
	\end{equation}
	where $K_{\pm,\mp}(x,y;\hbar)$, known as the Airy kernels, are defined as
	\begin{equation}
		K_{\pm,\mp}(x,y;\hbar)
		\coloneqq
		\frac{\widetilde{\psi}_{\pm}'(x;\hbar) \widetilde{\psi}_{\mp}(y;\hbar) - \widetilde{\psi}_{\pm}(x;\hbar) \widetilde{\psi}_{\mp}'(y;\hbar)}{x-y} \,. \\
	\end{equation}
	Moreover, the kernels are related by the parity relation $K_{-,+}(x,y;\hbar) = - K_{+,-}(x,y;-\hbar)$.
\end{lemma}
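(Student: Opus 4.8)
The idea is to peel a rank-one factor out of $M$ and recognise the Airy kernel inside it. First I would write
\[
	M(x;\hbar) = P(x;\hbar) - \tfrac12\Id ,
	\qquad
	P(x;\hbar) \coloneqq \Psi(x;\hbar)\,\diag(1,0)\,\Psi(x;\hbar)^{-1},
\]
using $E = \diag(1,0) - \tfrac12\Id$. Since $\det\Psi = 1$, the first column of $\Psi$ and the first row of $\Psi^{-1}$ carry opposite exponential prefactors $e^{\pm V(x)/\hbar}$, which cancel in their product; writing $v_-(x) = \bigl(\widetilde\psi_-(x),\widetilde\psi_-'(x)\bigr)^{\!\top}$ and $w_+(x) = \bigl(\widetilde\psi_+'(x),-\widetilde\psi_+(x)\bigr)$ for the corresponding exponent-free column and row, one gets $P(x) = v_-(x)\,w_+(x)$ and $w_+(x)\,v_-(x) = \Tr P(x) = 1$. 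The point of this bookkeeping is that the Airy kernel is then literally $K_{+,-}(x,y;\hbar) = (x-y)^{-1}\,w_+(x)\,v_-(y)$.

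Next I would exploit the rank-one structure. Because $P(x_i)P(x_j) = v_-(x_i)\,\bigl(w_+(x_i)v_-(x_j)\bigr)\,w_+(x_j)$, a product of the $P(x_i)$'s around any cyclic permutation $\sigma$ telescopes into a product of scalars:
\[
	\Tr\Bigl(\textstyle\prod_{i=1}^n P(x_{\sigma^i(1)};\hbar)\Bigr)
	= \prod_{i=1}^n w_+(x_i)\,v_-(x_{\sigma(i)}) ,
\]
hence $\prod_{i=1}^n K_{+,-}(x_i,x_{\sigma(i)};\hbar) = \bigl(\prod_i w_+(x_i)v_-(x_{\sigma(i)})\bigr)\big/\bigl(\prod_i (x_i - x_{\sigma(i)})\bigr) = \Tr\bigl(\prod_i P(x_{\sigma^i(1)})\bigr)\big/\prod_i(x_i-x_{\sigma(i)})$. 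In other words, the right-hand side of the claimed identity is exactly the determinantal formula \eqref{eqn:n:pnt:Airy} with $M$ replaced by $P = M + \tfrac12\Id$.

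It therefore remains to show that this scalar shift is harmless, i.e.\ that $W_n$ is unchanged if $M$ is replaced by $M + c\,\Id$ for any scalar $c$; this is the step that carries the real content. One option is to quote the shift-invariance of the determinantal formula, the very statement recalled in the footnote to \eqref{eqn:n:pnt:Airy} (cf.\ \cite[appendix~A]{EM}). A self-contained route: set $F(c) = (-1)^{n-1}\sum_{\sigma\in S_n^{\textup{cyc}}}\Tr\bigl(\prod_i (M(x_{\sigma^i(1)})+c\Id)\bigr)\big/\prod_i(x_i-x_{\sigma(i)})$, differentiate in $c$ — each differentiation deletes one factor — and observe that for a fixed deleted label $j$ the contribution of all $\sigma\in S_n^{\textup{cyc}}$ inducing a fixed $(n-1)$-cycle $\tau$ on $[n]\setminus\{j\}$ is
\[
	\frac{1}{\prod_{i\ne j}(x_i - x_{\tau(i)})}\,\sum_{a\ne j}\frac{x_a - x_{\tau(a)}}{(x_a-x_j)(x_j - x_{\tau(a)})}
	= \frac{1}{\prod_{i\ne j}(x_i - x_{\tau(i)})}\,\sum_{a\ne j}\Bigl(\frac{1}{x_a-x_j}+\frac{1}{x_j-x_{\tau(a)}}\Bigr) = 0 ,
\]
the cancellation using that $\tau$ permutes $[n]\setminus\{j\}$. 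Hence $F'\equiv 0$ and $F(\tfrac12) = F(0) = W_n$, which is the asserted identity for the $K_{+,-}$-kernel. This argument needs $n\ge 3$; for $n=2$ the two sides differ by the $\hbar^0$ term $\tfrac12(x_1-x_2)^{-2}$, which is the familiar $\omega_{0,2}$-convention shift already flagged in the footnote and is invisible to the genus expansion of \cref{thm:genus:expns:Airy}. I expect this shift-invariance step to be the only real obstacle; everything else is formal.

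Finally, the $K_{-,+}$-expression and the parity relation follow formally. From the definitions $K_{+,-}(y,x;\hbar) = K_{-,+}(x,y;\hbar)$, and since $\sigma\mapsto\sigma^{-1}$ is a bijection of $S_n^{\textup{cyc}}$, $\sum_\sigma\prod_i K_{-,+}(x_i,x_{\sigma(i)}) = \sum_\sigma\prod_i K_{+,-}(x_{\sigma(i)},x_i) = \sum_\sigma\prod_i K_{+,-}(x_i,x_{\sigma^{-1}(i)}) = \sum_\sigma\prod_i K_{+,-}(x_i,x_{\sigma(i)})$, giving the second expression. For the parity relation, the explicit Airy expansions show that $\hbar\mapsto-\hbar$ sends $\widetilde\psi_+(x;\hbar)\mapsto\widetilde\psi_-(x;\hbar)$ and $\widetilde\psi_+'(x;\hbar)\mapsto-\widetilde\psi_-'(x;\hbar)$; substituting these into the definition of $K_{+,-}$ yields $K_{+,-}(x,y;-\hbar) = -K_{-,+}(x,y;\hbar)$.
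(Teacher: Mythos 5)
Your proof is correct and follows essentially the same route as the paper's: shift $M$ by a multiple of the identity to obtain a rank-one matrix, telescope the trace of the cyclic product into a product of kernels, and deduce the second expression and the parity relation from the symmetry $K_{+,-}(x,y;\hbar)=K_{-,+}(y,x;\hbar)$ together with $\widetilde{\psi}_{\mp}(x;\hbar)=\widetilde{\psi}_{\pm}(x;-\hbar)$ and $\widetilde{\psi}_{\mp}'(x;\hbar)=-\widetilde{\psi}_{\pm}'(x;-\hbar)$. The only differences are that you supply a self-contained proof of the shift-invariance of the determinantal formula, which the paper outsources to \cite[appendix~A]{EM}, and that you explicitly isolate the $n=2$ discrepancy $\tfrac{1}{2}(x_1-x_2)^{-2}$ that the paper absorbs into its $\omega_{0,2}$-convention footnote — both are welcome additions rather than deviations.
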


\begin{proof}
	The two expression for $W_n$ are equivalent, due to the cyclicity of the sum and the symmetry property $K_{+,-}(x,y;\hbar) = K_{-,+}(y,x;\hbar)$. Thus, we only need to prove one equality. Let us start from the definition of the $n$-point correlators, \cref{eqn:n:pnt:Airy}. Since the formula is invariant under shifts of $M$ by matrices proportional to the identity \cite[appendix~A]{EM}, we can substitute $M$ by
	\[
		M - \tfrac{1}{2} \, \Id
		=
		\begin{pmatrix}
			\widetilde{\psi}_+ \widetilde{\psi}_-' & - \widetilde{\psi}_+ \widetilde{\psi}_- \\
			\widetilde{\psi}_+' \widetilde{\psi}_-' & - \widetilde{\psi}_+' \widetilde{\psi}_-
		\end{pmatrix}
		=
		\begin{pmatrix}
			\widetilde{\psi}_+ \\
			\widetilde{\psi}_+'
		\end{pmatrix}
		\otimes
		\begin{pmatrix}
			\widetilde{\psi}_-' \\
			- \widetilde{\psi}_-
		\end{pmatrix} \,.
	\]
	Here $u \otimes v$ denotes the outer product of two vectors. Inserting the above expression in the determinantal formula and using the identity $\Tr( \prod_{k=1}^{n} u_{k} \otimes v_k ) = \prod_{k=1}^{n} \braket{v_k,u_{k+1}}$ (with the identification $u_{n+1} = u_1$) yields the thesis. To conclude, the relations $\psi_{-}(x;\hbar) = \psi_{+}(x;-\hbar)$ and $\psi_{-}'(x;\hbar) = - \psi_{+}'(x;-\hbar)$ imply the parity property.
\end{proof}

\subsection{Singularity structure and large genus asymptotics}
The goal of this section is to understand the large genus behaviour of the coefficients in the $\hbar$-expansion of the correlators via the Borel transform method. Since the singularity structure of the wave functions is well understood (see \cref{eqn:Borel:Airy}), one can deduce a similar statement for the correlator $W_n$ using the algebraic properties of the Borel transform, as explained in \cref{subsec:Borel:properties}. In the following analysis we assume $n \ge 2$. The case $n = 1$ can be considered separately and leads to the same result. Moreover, we assume $(x_1,\dots,x_n)$ to be in a generic position, so that the instanton actions lay on distinct rays issuing from the origin, therefore realising the conditions of \cref{rem:princ:sheet}.

\begin{proposition}[Singularity structure of the Airy correlators]
	The Borel transform $\widehat{W}_{n}(\bm{x};s)$ of the $n$-point Airy correlator is simple resurgent. More precisely, on the principal sheet $\widehat{W}_{n}$ has $2n$ logarithmic singularities located at $s = \pm A(x_i)$ and such that
	\begin{equation}
		\widehat{W}_{n}(\bm{x};s)
		=
		- \frac{S}{2\pi} \,
		\widehat{W}^{(\pm,i)}_{n}\bigl( \bm{x}; s \mp A(x_i) \bigr) \,
		\log\bigl( s \mp A(x_i) \bigr)
		+
		\textup{holomorphic at } \pm A(x_i) \,,
	\end{equation}
	where:
	\begin{itemize}
		\item $A(x) = \frac{4}{3} x^{3/2}$ and $S = 1$ are the instanton action and the Stokes constant associated to the Airy functions,

		\item the minor $\widehat{W}^{(\pm,i)}_{n}$ is the Borel transform of the formal power series
		\begin{equation}\label{eqn:1inst:corr}
			W^{(\pm,i)}_{n}( \bm{x}; \hbar )
			\coloneqq
			(-1)^{n-1} \sum_{\sigma \in S_n^{\textup{cyc}}}
				K_{\mp,\mp}(x_i,x_{\sigma(i)};\hbar) \prod_{j \neq i} K_{\pm,\mp}(x_j,x_{\sigma(j)};\hbar) \,,
		\end{equation}
		where the kernels $K_{\pm,\pm}$ are given by
		\begin{equation}
			K_{\pm,\pm}(x,y;\hbar)
			\coloneqq
			\frac{\widetilde{\psi}_{\pm}'(x;\hbar) \widetilde{\psi}_{\pm}(y;\hbar) - \widetilde{\psi}_{\pm}(x;\hbar) \widetilde{\psi}_{\pm}'(y;\hbar)}{x-y} \,.
		\end{equation}
	\end{itemize}
	Moreover, the kernels are related by the parity relation $K_{-,-}(x,y;\hbar) = - K_{+,+}(x,y;-\hbar)$. Hence, the analogous relation holds for the correlators: $W^{(-,i)}_{n}( \bm{x}; \hbar ) = (-1)^n \, W^{(+,i)}_{n}( \bm{x}; -\hbar )$.
\end{proposition}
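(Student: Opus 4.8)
The plan is to bootstrap the statement from the already-established singularity structure of the Airy wave functions, \cref{eqn:Borel:Airy}, using the algebraic properties of the Borel transform. We work with $n\ge 2$ (the case $n=1$ being handled directly from $W_1=\hbar^{-1}(x\,\widetilde{\psi}_-\widetilde{\psi}_+-\widetilde{\psi}_-'\widetilde{\psi}_+')$, which is again a finite combination of products of two building blocks). First I would invoke the kernel form of the determinantal formula in \cref{lemma:kernel}, which presents $\widetilde{W}_n(\bm{x};\hbar)$ as a finite $\C(\bm{x})$-linear combination of products of the building blocks $\widetilde{\psi}_\pm(x_i;\hbar)$ and $\widetilde{\psi}_\pm'(x_i;\hbar)$; by \cref{eqn:Borel:Airy} each building block is simple resurgent with a single logarithmic Borel singularity, at $s=+A(x_i)$ for a ``$+$'' block and at $s=-A(x_i)$ for a ``$-$'' block, with Stokes constant $S$. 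Write $\mathcal{A}=\{\pm A(x_1),\dots,\pm A(x_n)\}$; by the genericity hypothesis on $\bm{x}$ the $2n$ elements of $\mathcal{A}$ lie on pairwise distinct rays from the origin. Applying \cref{theorem:prod} and the linearity of $\mf{B}$ iteratively then shows that $\widehat{W}_n(\bm{x};s)$ is simple resurgent; applying \cref{lemma:conv} in the guise of \cref{rem:princ:sheet} discards all the spurious singularities in $\mathcal{A}+\mathcal{A}$, which are pushed off the principal sheet by the distinct-rays condition. Hence on the principal sheet $\widehat{W}_n$ can only be singular at the $2n$ points $s=\pm A(x_i)$, and the whole content is the identification of the minor at each.

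To compute the minor at $s=+A(x_i)$, I would start from the \emph{first} expression in \cref{lemma:kernel}, $\widetilde{W}_n=(-1)^{n-1}\sum_{\sigma\in S_n^{\textup{cyc}}}\prod_{k=1}^n K_{+,-}(x_k,x_{\sigma(k)};\hbar)$. Fix a cyclic $\sigma$. Among the $n$ kernels in the corresponding product, the only one whose Borel transform is singular at $+A(x_i)$ is $K_{+,-}(x_i,x_{\sigma(i)};\hbar)$, namely the unique factor carrying $x_i$ in its first (``$+$'') slot: every other factor involves $x_i$, if at all, only through a ``$-$'' block and is therefore holomorphic there, genericity ruling out any accidental coincidence of the remaining singularities with $+A(x_i)$. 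Splitting the product as (this singular factor) $\times$ (the remainder, which is holomorphic at $+A(x_i)$) and applying \cref{theorem:prod}, the minor of the $\sigma$-term at $+A(x_i)$ is the Borel transform of the series obtained by replacing $K_{+,-}(x_i,x_{\sigma(i)})$ with its own minor and keeping the other $n-1$ kernels intact. Expanding $K_{+,-}(x_i,x_{\sigma(i)})$ into its two monomials and using \cref{eqn:Borel:Airy}, which substitutes $\widetilde{\psi}_+(x_i)\mapsto\widetilde{\psi}_-(x_i)$ and $\widetilde{\psi}_+'(x_i)\mapsto\widetilde{\psi}_-'(x_i)$ with a common factor $-\tfrac{S}{2\pi}$, this minor equals exactly $K_{-,-}(x_i,x_{\sigma(i)};\hbar)$. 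Summing over $\sigma$ with the prefactor $(-1)^{n-1}$ reproduces the series $W_n^{(+,i)}$ of \cref{eqn:1inst:corr}, with the overall normalisation $-\tfrac{S}{2\pi}$ left unchanged by the convolutions. The minor at $s=-A(x_i)$ is obtained by the mirror computation, now starting from the \emph{second} expression in \cref{lemma:kernel} (the one built from $K_{-,+}$): here $x_i$ occupies the first (``$-$'') slot of the unique singular factor $K_{-,+}(x_i,x_{\sigma(i)};\hbar)$, whose minor at $-A(x_i)$ is $K_{+,+}(x_i,x_{\sigma(i)};\hbar)$, and summing over $\sigma$ yields $W_n^{(-,i)}$ of \cref{eqn:1inst:corr}. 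Choosing the appropriate kernel form for each sign is precisely what makes the re-indexing of the cyclic sum match the generic kernel $K_{\pm,\mp}$ in \cref{eqn:1inst:corr}.

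For the parity statements I would first record the elementary identities $\widetilde{\psi}_-(x;\hbar)=\widetilde{\psi}_+(x;-\hbar)$ and $\widetilde{\psi}_-'(x;\hbar)=-\widetilde{\psi}_+'(x;-\hbar)$ (read off from the explicit WKB series, equivalently from $\psi_-(x;\hbar)=\psi_+(x;-\hbar)$ and $\psi_-'(x;\hbar)=-\psi_+'(x;-\hbar)$), and substitute them into the definition of $K_{-,-}$ to get $K_{-,-}(x,y;\hbar)=-K_{+,+}(x,y;-\hbar)$. Combining this with $K_{-,+}(x,y;\hbar)=-K_{+,-}(x,y;-\hbar)$ from \cref{lemma:kernel} and carefully collecting the signs — $(-1)^{n-1}$ from the prefactor, one $(-1)$ from the special kernel, $(-1)^{n-1}$ from the $n-1$ generic kernels, and a further $(-1)^{n-1}$ absorbed into the prefactor of $W_n^{(+,i)}(\bm{x};-\hbar)$ — turns the formula for $W_n^{(-,i)}(\bm{x};\hbar)$ into $(-1)^n\,W_n^{(+,i)}(\bm{x};-\hbar)$, which is the claimed correlator parity.

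The step I expect to be the main obstacle is the bookkeeping in the second paragraph: one must verify that, in each monomial obtained after fully expanding the determinantal formula, \emph{exactly one} factor is singular at the prescribed point $\pm A(x_i)$, so that \cref{theorem:prod} applies with a genuinely holomorphic complement and no higher-order (e.g.\ $\log^2$) singularity is produced; that the Stokes constant $S$ and the normalisation $-\tfrac1{2\pi}$ pass through all the convolutions unscathed; and that the sum over cyclic permutations, after the singular factor has been ``broken'', reassembles into precisely \cref{eqn:1inst:corr}. By contrast, the passage from \cref{theorem:prod,lemma:conv} to simple resurgence of $\widehat{W}_n$ and the suppression of the $\mathcal{A}+\mathcal{A}$ singularities via \cref{rem:princ:sheet} are routine once genericity is assumed, and the Gevrey bookkeeping needed to iterate these results is standard.
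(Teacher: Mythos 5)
Your proposal is correct and follows essentially the same route as the paper's (very terse) proof: decompose $W_n$ via the kernel form of the determinantal formula, propagate the known singularity structure of $\widehat{\psi}_\pm$ through \cref{theorem:prod}, discard the $\mathcal{A}+\mathcal{A}$ singularities via \cref{lemma:conv} and \cref{rem:princ:sheet}, and read off the minors by replacing the unique singular kernel factor with its minor; the parity relations likewise follow from $\widetilde{\psi}_-(x;\hbar)=\widetilde{\psi}_+(x;-\hbar)$ and $\widetilde{\psi}_-'(x;\hbar)=-\widetilde{\psi}_+'(x;-\hbar)$ exactly as you describe. Your write-up simply supplies the bookkeeping that the paper leaves implicit, and the details (one singular factor per monomial, $K_{+,-}(x_i,\cdot)\mapsto K_{-,-}(x_i,\cdot)$, the sign count giving $(-1)^n$) all check out.
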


\begin{proof}
	The proof simply follows from the behaviour of the Borel transform of products, \cref{lemma:conv} and \cref{rem:princ:sheet}, together with the definition of the $n$-point function through the determinantal formula. The parity relations follow again from those satisfied by the Airy functions and their derivatives: $\psi_{-}(x;\hbar) = \psi_{+}(x;-\hbar)$ and $\psi_{-}'(x;\hbar) = - \psi_{+}'(x;-\hbar)$.
\end{proof}

\begin{remark}[{Minors in matrix form and $\Z/2\Z$-symmetry}]\label{rem:minors:Z2symmetry}
	In matrix formulation, the minors are expressed as
	\begin{equation}
		W^{(\pm,i)}_{n}( \bm{x}; \hbar )
		=
		(-1)^{n-1} \sum_{\sigma \in S_n^{\textup{cyc}}}
		\frac{\Tr{ \bigl( M_{\mp}(x_i;\hbar) \prod_{j=1}^{n-1} M(x_{\sigma^j(i)};\hbar) } \bigr) }{\prod_{j=1}^n ( x_{j} - x_{\sigma(j)} )} \,,
	\end{equation}
	where $M$ and $M_{\pm}$ are given by
	\begin{equation}
		M
		=
		\begin{pmatrix}
			\frac{1}{2}(\widetilde{\psi}_+' \widetilde{\psi}_- + \widetilde{\psi}_+ \widetilde{\psi}_-') &
			\widetilde{\psi}_+ \widetilde{\psi}_- \\
			\widetilde{\psi}_+' \widetilde{\psi}_-' &
			- \frac{1}{2}(\widetilde{\psi}_+' \widetilde{\psi}_- + \widetilde{\psi}_+ \widetilde{\psi}_-')
		\end{pmatrix} ,
		\qquad
		M_{\pm}
		=
		\begin{pmatrix}
			\widetilde{\psi}'_{\pm}\widetilde{\psi}_{\pm}
			&
			- ( \widetilde{\psi}_{\pm} )^2
			\\
			( \widetilde{\psi}'_{\pm} )^2
			&
			- \widetilde{\psi}'_{\pm}\widetilde{\psi}_{\pm}
		\end{pmatrix} .
	\end{equation}
	From these expression, it is easy to show that $x_j^{1/2} W^{(\pm,i)}_{n}$ is an even function of $x_j^{1/2}$ for all $j \ne i$. This follows from the fact that $x^{1/4} \psi_{\pm}$ is mapped to $x^{1/4} \psi_{\mp}$ under $x^{1/2} \mapsto -x^{1/2}$, and similarly for the derivatives. Thus $x^{1/2} M$ is even in $x^{1/2}$. With a similar argument, $x_i^{1/2} W^{(\pm,i)}_{n}$ is mapped to $x_i^{1/2} W^{(\mp,i)}_{n}$ under $x_i^{1/2} \mapsto -x_i^{1/2}$.
\end{remark}

Since $W^{(\pm,i)}_{n}$ are related by the parity relation, we simply consider $W_n^{(+,i)}$ in what follows. To simplify the notation, we drop the `$+$' symbol from the superscript. We can now apply the Borel transform method \labelcref{thm:large:order} to deduce the large genus asymptotics of the expansion coefficients of $W_n$ from the knowledge of its Borel singularity structure.

\begin{proposition}[{Large genus asymptotics of the Airy correlators}]\label{prop:large:g:corr:WK}
	For every $K$, the large genus asymptotics of the expansion coefficients of the $n$-point Airy correlators is given by
	\begin{equation}\label{eqn:large:g:corr:WK}
		W_{g,n}(\bm{x})
		=
		\frac{S}{\pi} \sum_{i=1}^n
			\frac{\Gamma(2g-2+n)}{A(x_i)^{2g-2+n}}
			\left(
				\sum_{k = 0}^K
				\frac{A(x_i)^{k}}{(2g-3+n)^{\underline{k}}} W^{(i)}_{k,n}(\bm{x})
				+
				\bigO\biggl( \frac{1}{g^{K+1}} \biggr)
			\right) \,,
	\end{equation}
	where:
	\begin{itemize}
		\item $A(x) = \frac{4}{3} x^{3/2}$ and $S = 1$ are the instanton action and the Stokes constant associated to the Airy functions,

		\item $W^{(i)}_{k,n}$ is the coefficient of $\hbar^k$ in the expansion of $W_n^{(i)}$.
	\end{itemize}
	Moreover, the leading term is explicitly given by
	\begin{equation}\label{eqn:large:g:corr:leading:WK}
		W_{g,n}(\bm{x})
		=
		\frac{(-1)^n}{4\pi} \,
		\frac{\Gamma(2g-2+n)}{\bigl( \frac{4}{3} \bigr)^{2g-2+n}}
		\Biggl(
			\sum_{\substack{ d_1,\dots,d_n \ge 0 \\ |d| = 3g-3+n}}
			\prod_{i=1}^n \frac{1}{x_i^{d_i+3/2}}
			+
			\bigO\bigl( g^{-1} \bigr)
		\Biggr) \,.
	\end{equation}
\end{proposition}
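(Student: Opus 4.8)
The plan is to feed the Borel singularity structure just obtained into the Borel transform method \cref{thm:large:order}, in its Gevrey-2 form of \cref{rem:Gevrey:2}. By \cref{thm:genus:expns:Airy} the series $W_n(\bm{x};\hbar) = \sum_{g \ge 0} \hbar^{2g-2+n} W_{g,n}(\bm{x})$ is even-shifted with critical exponent $\beta_0 = n-2$, and the preceding proposition supplies its $2n$ Borel singularities $\pm A(x_i)$, the common Stokes constant $S$, and the minors $\widehat{W}^{(\pm,i)}_n$, written in kernel form in \cref{eqn:1inst:corr}. First I would check the remaining hypotheses of \cref{thm:large:order}. The polynomial growth at infinity of $\widehat{W}_n$ and of the minors follows from the explicit hypergeometric form of $\widehat{\psi}_{\pm}, \widehat{\psi}'_{\pm}$ — which behave as $\bigO(s^{-1/6})$ and $\bigO(s^{1/6})$ respectively as $s \to \infty$ — together with the elementary fact that a finite convolution of functions growing at most polynomially still grows at most polynomially, applied to the products of Airy kernels in \cref{lemma:kernel}. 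That $\widehat{W}^{(+,i)}_n$ has finitely many singularities avoiding the ray issuing from the origin through $A(x_i)$ follows from the kernel description \cref{eqn:1inst:corr} together with \cref{eqn:Borel:Airy} and \cref{theorem:prod}: the ``$+$-type'' wave functions only appear with arguments $x_j$, $j \ne i$, so the singularities of $\widehat{W}^{(+,i)}_n$ sit at $-A(x_i)$ and at $\pm A(x_j)$ for $j \ne i$ (plus points off the principal sheet, discarded by \cref{rem:princ:sheet} under the genericity assumption on $\bm{x}$), none of which lies on that ray. Finally each minor has critical exponent $\beta_i = 0$, since every Airy kernel $K_{\pm,\mp}$, $K_{\pm,\pm}$ is regular in $\hbar$ with nonzero $\hbar^0$ term for generic $\bm{x}$.

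With the hypotheses in place, \cref{rem:Gevrey:2} expresses $W_{g,n}$ as a sum over the $2n$ singularities $\pm A(x_i)$. It then remains to merge the contributions of $A(x_i)$ and $-A(x_i)$: using the parity relation $W^{(-,i)}_n(\bm{x};\hbar) = (-1)^n W^{(+,i)}_n(\bm{x};-\hbar)$, equivalently $W^{(-,i)}_{k,n} = (-1)^{n+k} W^{(i)}_{k,n}$, together with $(-A(x_i))^{2g-2+n} = (-1)^{n} A(x_i)^{2g-2+n}$ and $(-1)^k(-A(x_i))^k = A(x_i)^k$, one checks term by term that the $-A(x_i)$ contribution coincides with the $A(x_i)$ contribution. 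The two therefore double up into $\frac{S}{\pi}\,\frac{\Gamma(2g-2+n)}{A(x_i)^{2g-2+n}}\,(\cdots)$, which is \cref{eqn:large:g:corr:WK} once one inserts $S = 1$ and $A(x) = \frac{4}{3} x^{3/2}$. The case $n = 1$ (where $\beta_0 = -1$) runs through the same argument separately, or is read off directly from the explicit expansion of $W_1$.

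For the leading term \cref{eqn:large:g:corr:leading:WK} I would specialise to $k = 0$ and evaluate $W^{(i)}_{0,n}(\bm{x}) = W^{(+,i)}_n(\bm{x};0)$ through the $\hbar \to 0$ limits of the kernels. Writing $z_j = \sqrt{x_j}$ and using $\widetilde{\psi}_{\pm}(x;0) = \tfrac{1}{\sqrt{2}} x^{-1/4}$, $\widetilde{\psi}'_{\pm}(x;0) = \pm \tfrac{1}{\sqrt{2}} x^{1/4}$, one obtains $K_{+,-}(x,y;0) = \frac{1}{2(xy)^{1/4}(\sqrt{x}-\sqrt{y})}$ and $K_{-,-}(x,y;0) = \frac{-1}{2(xy)^{1/4}(\sqrt{x}+\sqrt{y})}$, whence
\begin{equation*}
	W^{(i)}_{0,n}(\bm{x})
	=
	\frac{(-1)^n}{2^n \prod_{k=1}^{n} z_k}
	\sum_{\sigma \in S_n^{\textup{cyc}}}
	\frac{1}{z_i + z_{\sigma(i)}} \, \prod_{j \ne i} \frac{1}{z_j - z_{\sigma(j)}} \,.
\end{equation*}
Multiplying by $\Gamma(2g-2+n)/A(x_i)^{2g-2+n} = \Gamma(2g-2+n)\,(4/3)^{-(2g-2+n)}\,z_i^{-3(2g-2+n)}$, summing over $i$, and expanding the rational factors as geometric series, the cyclic sum collapses onto the fully symmetric sum $\sum_{|d| = 3g-3+n} \prod_j x_j^{-d_j-3/2}$ with overall constant $\tfrac{(-1)^n}{4\pi}$. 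The case $n = 2$ already exhibits the mechanism: there $\sum_{i=1}^2 z_i^{-6g}\, W^{(i)}_{0,2}(\bm{x}) = \frac{1}{4 z_1 z_2} \cdot \frac{x_2^{-3g} - x_1^{-3g}}{x_1 - x_2}$, which expands precisely to $\tfrac{1}{4} \sum_{d_1+d_2 = 3g-1} x_1^{-d_1-3/2}\, x_2^{-d_2-3/2}$.

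The principal obstacles are, on the analytic side, the careful verification that the minors fulfil the hypotheses of \cref{thm:large:order} — in particular that no singularity of $\widehat{W}^{(+,i)}_n$ lands on the ray through $A(x_i)$ and that the iterated convolutions remain polynomially bounded at infinity — and, on the combinatorial side, the bookkeeping in the geometric-series expansion that turns $\sum_i z_i^{-3(2g-2+n)} W^{(i)}_{0,n}$ into the symmetric sum over $d$. Neither is deep: the first is largely delivered by the preceding proposition and the genericity assumption of \cref{rem:princ:sheet}, while the second only requires a consistent choice of expansion regime (reading the identity as one between formal Laurent series in the $x_j$) and some care with the half-integer shifts.
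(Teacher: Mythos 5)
Your proposal is correct and follows essentially the same route as the paper: apply the Borel transform method in its Gevrey-2 form to the singularity structure of $\widehat{W}_n$, merge the contributions of $\pm A(x_i)$ via the parity relation $W^{(-,i)}_{k,n} = (-1)^{n+k}W^{(+,i)}_{k,n}$ (which, as you correctly check, makes the two singularities contribute equally and produces the factor $S/\pi$), and evaluate the $\hbar^0$ term of the kernels for the leading asymptotics. The one step you compress — collapsing the cyclic sum $\sum_{\sigma}\frac{1}{z_i+z_{\sigma(i)}}\prod_{j\ne i}\frac{1}{z_j-z_{\sigma(j)}}$ and then resumming over $i$ into the complete homogeneous polynomial $h_{3g-3+n}(\bm{x}^{-1})$ — is exactly what the paper isolates as \cref{tech:lemma} together with the partial-fraction identity $\sum_i \frac{x_i^{-D-1}}{\prod_{j\ne i}(x_i-x_j)} = \frac{(-1)^{n-1}}{x_1\cdots x_n}h_D(\bm{x}^{-1})$, so your ``geometric-series bookkeeping'' is the same computation, merely verified only at $n=2$ rather than proved for general $n$.
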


\begin{proof}
	As a consequence of the Borel transform method \cref{thm:large:order} (see also \cref{rem:Gevrey:2}) and the proposition above, we find
	\[
		W_{g,n}(\bm{x})
		=
		\frac{S}{2\pi} \sum_{i=1}^n
			\frac{\Gamma(2g-2+n)}{A(x_i)^{2g-2+n}}
			\left( \sum_{k = 0}^K
				\frac{A(x_i)^k}{(2g-3+n)^{\underline{k}}} \left(
					W^{(+,i)}_{k,n}(\bm{x})
					+
					(-1)^k
					W^{(-,i)}_{k,n}(\bm{x})
			\right)
			+
			\bigO\biggl( \frac{1}{g^{K+1}} \biggr)
			\right) .
	\]
	The parity relation implies $W^{(-,i)}_{k,n}(\bm{x}) = (-1)^{n+k} \, W^{(+,i)}_{k,n}(\bm{x})$, hence the first part of the proposition.

	As for the leading term, it is determined by the coefficient of $\hbar^0$ in the kernels. From the definition of the kernels and the asymptotic expansion of the Airy functions, we see that the first term in the $\hbar$-expansion is given by
	\begin{equation*}
		K_{\pm,-}(x,y;\hbar)
		=
		\pm \frac{1}{2 (x y)^{1/4}}
		\frac{1}{x^{1/2} \mp y^{1/2}}
		+ \bigO(\hbar) \,.
	\end{equation*}
	Inserting the above expressions in the formula for the $W_{n}^{(i)}$, we find
	\begin{equation*}
	\begin{split}
		W_{0,n}^{(i)}(\bm{x})
		&=
		\frac{(-1)^n}{2^n \, (x_1 \cdots x_n)^{1/2}}
		\sum_{\sigma \in S_n^{\textup{cyc}}}
			\frac{1}{x_i^{1/2} + x_{\sigma(i)}^{1/2}} \prod_{j \ne i} \frac{1}{x_j^{1/2} - x_{\sigma(j)}^{1/2}}
		=
		- \frac{1}{4}
		\frac{1}{(x_1 \cdots x_n)^{1/2}}
		\frac{x_i^{n/2 - 1}}{ \prod_{i \ne j} (x_i - x_j) } \,.
	\end{split}
	\end{equation*}
	In the last equation, we applied \cref{tech:lemma}. As a consequence, recalling that $S = 1$ and $A(x_i) = \frac{4}{3} x^{3/2}$, we find
	\[
		W_{g,n}(\bm{x})
		=
		-\frac{1}{4\pi} \, \frac{\Gamma(2g-2+n)}{\bigl( \frac{4}{3} \bigr)^{2g-2+n}}
		\Biggl(
			\frac{1}{(x_1 \cdots x_n)^{1/2}}
			\sum_{i=1}^n \frac{x_i^{-(3g-3+n)-1}}{ \prod_{i \ne j} (x_i - x_j) }
			+
			\bigO\bigl( g^{-1} \bigr)
		\Biggr) \,.
	\]
	Using the relation $\sum_{i=1}^n \frac{x_{i}^{-D-1}}{\prod_{i \ne j} (x_i - x_j)} = \frac{(-1)^{n-1}}{x_1 \cdots x_n} h_D(\bm{x}^{-1})$, with $h_D$ being the complete homogeneous symmetric polynomial of degree $D$, we find the thesis. This relation is a special case of \cref{lemma:poly}, but it can be easily proved by taking the generating series on both sides.
\end{proof}

As a consequence, we obtain the leading large genus asymptotics of $\psi$-class intersection numbers.

\begin{corollary}[{Leading large genus asymptotics for $\psi$-class intersection numbers}] \label{cor:leading:large:g:WK}
	For any given $n \ge 1$, uniformly in $d_1,\dots,d_n$ as $g \to \infty$:
	\begin{equation}
		\braket{\tau_{d_1} \cdots \tau_{d_n}} \prod_{i=1}^n (2d_i + 1)!! \\
		=
		\frac{2^n}{4\pi}  \frac{\Gamma(2g-2+n)}{( \frac{2}{3} )^{2g-2+n}}
		\Bigl(
			1
			+
			\bigO\bigl( g^{-1} \bigr)
		\Bigr) \,.
	\end{equation}
\end{corollary}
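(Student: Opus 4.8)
The plan is to extract the large genus asymptotics of $\braket{\tau_{d_1} \cdots \tau_{d_n}}$ from the asymptotics of the correlators $W_{g,n}(\bm{x})$ established in \cref{prop:large:g:corr:WK}, by comparing the two known expansions of $W_{g,n}$ in the variables $x_i$. On one side, \cref{thm:genus:expns:Airy} gives
\[
	W_{g,n}(\bm{x})
	=
	(-2)^{-(2g-2+n)}
	\sum_{\substack{ d_1,\dots,d_n \ge 0 \\ |d| = 3g-3+n }}
		\braket{ \tau_{d_1} \cdots \tau_{d_n} }
		\prod_{i=1}^n \frac{(2d_i+1)!!}{2 \, x_i^{d_i+3/2}} \,,
\]
so each intersection number is the coefficient of a specific monomial $\prod_i x_i^{-(d_i + 3/2)}$. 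On the other side, \cref{eqn:large:g:corr:leading:WK} gives, after expanding $\prod_i x_i^{-(d_i+3/2)}$, exactly the generating series $\sum_{|d| = 3g-3+n} \prod_i x_i^{-d_i - 3/2}$ as the leading term. Matching coefficients of a fixed monomial term-by-term would then immediately yield the corollary, \emph{provided} one can control the uniformity in $d_1, \dots, d_n$ of the error term.

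Concretely, the steps I would carry out are: (1) fix $d_1, \dots, d_n \ge 0$ with $|d| = 3g - 3 + n$; (2) from \cref{eqn:large:g:corr:leading:WK}, read off that the coefficient of $\prod_i x_i^{-(d_i + 3/2)}$ in $W_{g,n}(\bm{x})$ equals $\frac{(-1)^n}{4\pi} \bigl( \frac{4}{3} \bigr)^{-(2g-2+n)} \Gamma(2g-2+n) \bigl( 1 + \bigO(g^{-1}) \bigr)$, where the key point is that the coefficient of every such monomial in the leading generating series $\sum_{|d|=3g-3+n} \prod_i x_i^{-d_i-3/2}$ is exactly $1$; (3) equate this with the coefficient read off from \cref{thm:genus:expns:Airy}, namely $(-2)^{-(2g-2+n)} \braket{\tau_{d_1} \cdots \tau_{d_n}} \prod_i \frac{(2d_i+1)!!}{2}$; (4) solve for $\braket{\tau_{d_1}\cdots\tau_{d_n}} \prod_i (2d_i+1)!!$, so that the $(-2)^{-(2g-2+n)}$ and $(4/3)^{-(2g-2+n)}$ combine to $(2/3)^{-(2g-2+n)}$ (up to the sign $(-1)^n$ cancelling $(-2)^{-(2g-2+n)}$ against $2^{-n}$ appropriately) and the $\prod_i 2^{-1} \cdot 2$ factors give the overall $2^n$; (5) conclude the stated formula.

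The main obstacle — and the point requiring genuine care rather than bookkeeping — is \textbf{uniformity in the $d_i$}. The expansion in \cref{prop:large:g:corr:WK} is an asymptotic expansion in $\hbar$ at fixed $\bm{x}$, producing an error $\bigO(g^{-1})$ with an implied constant that a priori depends on $\bm{x}$; one must argue that upon extracting the coefficient of $\prod_i x_i^{-(d_i+3/2)}$ the resulting error is $\bigO(g^{-1})$ with a constant independent of the partition $(d_1, \dots, d_n)$ of $3g-3+n$. This is precisely the content needed to match Aggarwal's uniform statement, and it is handled by the more precise version \cref{thm:large:g:WK} referenced in the introduction; here I would either invoke that more refined result, or observe that the subleading correlator coefficients $W^{(i)}_{k,n}(\bm{x})$ appearing in \cref{eqn:large:g:corr:WK} are themselves rational functions whose relevant monomial coefficients can be bounded uniformly (e.g.\ via the symmetric-function identities \cref{tech:lemma,lemma:poly} used in the proof of \cref{prop:large:g:corr:WK}), so that the $k=1$ term already contributes only $\bigO(g^{-1})$ uniformly. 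Modulo this uniformity input, the corollary is a direct coefficient extraction.
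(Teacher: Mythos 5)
Your proposal matches the paper's treatment: the coefficient bookkeeping between \cref{thm:genus:expns:Airy} and \cref{eqn:large:g:corr:leading:WK} is exactly right, and the paper likewise regards the corollary as a direct coefficient extraction whose only delicate point is uniformity in $d_1,\dots,d_n$, postponing that to the proof of \cref{thm:large:g:WK}. One caution on your suggested alternative to invoking that theorem: uniformly bounding the coefficient $W^{(i)}_{1,n}$ is not enough, because the $\bigO(g^{-1})$ error in \cref{prop:large:g:corr:WK} is not the next term of the series but a remainder integral; making it uniform in $d$ requires the contour-integral coefficient extraction with $g$-dependent radii $R_i = 1 + \tfrac{i-1}{n-1}\tfrac{1}{g}$ and Watson's lemma, as carried out in the proof of \cref{thm:large:g:WK}, so you should take the first of your two options.
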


The most challenging aspect of proving the corollary lies in the uniformity statement. We postpone the proof to the more general \cref{thm:large:g:WK}, which accounts for all subleading corrections. In fact, the next subsection is devoted to the study of the subleading terms $W_{k,n}^{(i)}$ for arbitrary $k$.

\subsubsection{Subleading contributions}
From \cref{eqn:large:g:corr:leading:WK}, it is clear that both $W_{g,n}$ and its leading large genus behaviour are polynomials in $x_1^{-1},\dots,x_n^{-1}$ of degree $3g-3+n$ (up to a prefactor of $(x_1 \cdots x_n)^{-3/2}$). It is natural to ask whether the same property holds for the subleading corrections. More precisely, from the analysis of the kernel we see that the leading term $W_{0,n}^{(i)}$ has, a priori, simple poles along $\sqrt{x_i} = - \sqrt{x_j}$ for all $j \ne i$ and along $\sqrt{x_k} = \sqrt{x_l}$ for all $k \ne l$. However, the only poles appearing are those at $\sqrt{x_i} = \pm \sqrt{x_j}$. Furthermore, after summing over all contributions, the poles remarkably disappear, yielding a polynomial of the expected form. The next lemmas show that a similar structure holds for all subleading terms.

\begin{lemma}[Pole structure of the minors] \label{lemma:poles:minors}
	The minor $W_n^{(i)}$ is regular along $\sqrt{x_k} = \sqrt{x_l}$ for all $k \ne l$ with $k,l \ne i$. Moreover, it has simple poles at $\sqrt{x_i} = \pm \sqrt{x_j}$ for all $j \neq i$, with residue
	\begin{equation}
		\Res_{\sqrt{x_j} \to \pm \sqrt{x_i}} W_n^{(i)}(x_1,\dots,x_n;\hbar)
		=
		- \frac{1}{2 \sqrt{x_i}} \, W_{n-1}^{(i)}(x_1,\dots,\widehat{\,x_j\,},\dots,x_{n};\hbar) \, .
	\end{equation}
	On the right-hand side, the superscript $(i)$ means that the ``special variable'' is $x_i$.
\end{lemma}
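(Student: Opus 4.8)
The plan is to read everything off the kernel representation \cref{eqn:1inst:corr} of the minor, $W_n^{(i)} = (-1)^{n-1}\sum_{\sigma\in S_n^{\textup{cyc}}} K_{-,-}(x_i,x_{\sigma(i)})\prod_{j\ne i}K_{+,-}(x_j,x_{\sigma(j)})$, by tracking how the poles of the two kernels propagate through the cyclic sum. The first step is to pin down the singularities of the kernels themselves on the spectral curve $\{x=z^2\}$. Writing $K_{+,-}(x,y) = \frac{\widetilde{\psi}_+'(x)\widetilde{\psi}_-(y) - \widetilde{\psi}_+(x)\widetilde{\psi}_-'(y)}{x-y}$ with $x-y=(\sqrt{x}-\sqrt{y})(\sqrt{x}+\sqrt{y})$, one sees that at $\sqrt{x}=\sqrt{y}$ the numerator equals $\widetilde{\psi}_+'\widetilde{\psi}_- - \widetilde{\psi}_+\widetilde{\psi}_-'$, which is $1$ by the normalisation of the Wronskian, whereas at $\sqrt{x}=-\sqrt{y}$ it vanishes — this is precisely the statement recorded in \cref{rem:minors:Z2symmetry} that $x^{1/4}\widetilde{\psi}_\pm\mapsto x^{1/4}\widetilde{\psi}_\mp$ under $\sqrt{x}\mapsto-\sqrt{x}$. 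Symmetrically, $K_{-,-}$ is regular at $\sqrt{x}=\sqrt{y}$ and has a simple pole at $\sqrt{x}=-\sqrt{y}$. So each kernel carries a single simple pole: $K_{+,-}$ at $\sqrt{x}=\sqrt{y}$ with $\Res_{\sqrt{x}\to\sqrt{y}}K_{+,-}(x,y)=\tfrac{1}{2\sqrt{y}}$, and $K_{-,-}$ at $\sqrt{x}=-\sqrt{y}$. Since the $\widetilde{\psi}$'s are regular for $x\ne0$, it follows that $W_n^{(i)}$ can only have simple poles, supported on loci $\sqrt{x_k}=\pm\sqrt{x_l}$, which may then be analysed one summand at a time.

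Next I would prove regularity along $\sqrt{x_k}=\sqrt{x_l}$ for $k,l\ne i$. In the $\sigma$-summand the only kernels singular on this locus are $K_{+,-}(x_k,x_l)$, present exactly when $\sigma(k)=l$, and $K_{+,-}(x_l,x_k)$, present exactly when $\sigma(l)=k$; every other summand is already regular there. For $n\ge3$ the conjugation $\sigma\mapsto(k\,l)\,\sigma\,(k\,l)$ is a fixed-point-free involution on the set of such $\sigma$ — an $n$-cycle with $n\ge3$ cannot commute with the transposition $(k\,l)$ — and it interchanges the two cases. Taking residues at $\sqrt{x_k}=\sqrt{x_l}$ of two paired summands, each reduces to the residue of the singular kernel (numerator $\to1$) times the remaining kernels restricted to the diagonal; the two partners produce the same product of remaining kernels (the variables $x_k,x_l$ get substituted into the surrounding kernels in matching ways) but opposite residues, because the denominators $x_k-x_l$ and $x_l-x_k$ differ by a sign. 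Hence the paired residues cancel and $W_n^{(i)}$ is regular there; for $n=2$ the statement is vacuous, and regularity along $\sqrt{x_k}=-\sqrt{x_l}$ for $k,l\ne i$ is automatic since no kernel is singular there.

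For the poles at $\sqrt{x_j}=\sqrt{x_i}$ ($j\ne i$) the unique singular kernel on this locus is $K_{+,-}(x_j,x_i)$, which occurs in the summands with $\sigma(j)=i$. For such $\sigma$ I would extract the residue in $\sqrt{x_j}$ using $\Res_{\sqrt{x_j}\to\sqrt{x_i}}K_{+,-}(x_j,x_i)=\tfrac{1}{2\sqrt{x_i}}$ together with the substitution $x_j=x_i$ in the only other kernel containing $x_j$, namely $K_{+,-}(x_{\sigma^{-1}(j)},x_j)$; the leftover product is exactly the $\bar\sigma$-summand of $W_{n-1}^{(i)}(x_1,\dots,\widehat{x_j},\dots,x_n)$, where $\bar\sigma$ is the cyclic permutation on the remaining indices obtained by deleting $j$ from the cycle of $\sigma$. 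Since $\sigma\mapsto\bar\sigma$ is a bijection $\{\sigma\in S_n^{\textup{cyc}}:\sigma(j)=i\}\xrightarrow{\sim}S_{n-1}^{\textup{cyc}}$ and the prefactors $(-1)^{n-1}$ and $(-1)^{(n-1)-1}$ differ by $-1$, summing yields $\Res_{\sqrt{x_j}\to\sqrt{x_i}}W_n^{(i)} = -\tfrac{1}{2\sqrt{x_i}}W_{n-1}^{(i)}$. The pole at $\sqrt{x_j}=-\sqrt{x_i}$ and its residue then come for free from \cref{rem:minors:Z2symmetry}: $\sqrt{x_j}\,W_n^{(i)}$ is even in $\sqrt{x_j}$ while $W_{n-1}^{(i)}$ is independent of $x_j$, which forces the residue at $\sqrt{x_j}=-\sqrt{x_i}$ to coincide with the one at $\sqrt{x_j}=\sqrt{x_i}$ (on the summand level this pole comes from $K_{-,-}(x_i,x_j)$ when $\sigma(i)=j$).

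I expect the main obstacle to be purely bookkeeping: handling the identifications $\widetilde{\psi}_\pm(x)|_{\sqrt{x}\to-\sqrt{x}}$ together with the roots of $-1$ carried by the $x^{1/4}$ prefactors, both in establishing the two numerator-vanishing statements for the kernels and in checking that no stray constant survives in the final residue. The Wronskian normalisation $\widetilde{\psi}_+'\widetilde{\psi}_- - \widetilde{\psi}_+\widetilde{\psi}_- = 1$ is what makes the answer come out with the clean factor $-\tfrac{1}{2\sqrt{x_i}}$; everything else is elementary residue calculus and a permutation count. As a sanity check I would verify the residue formula at leading order in $\hbar$ against the explicit expression for $W_{0,n}^{(i)}$ obtained in the proof of \cref{prop:large:g:corr:WK}, and treat the base case $n=2$ (where $W_1^{(i)}$ is the one-point minor $-\tfrac{1}{\hbar}\Tr(\mathcal{D}\,M_-)$) by direct inspection.
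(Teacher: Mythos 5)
Your proposal is correct and follows essentially the same route as the paper's proof: expand the minor in kernel form, use the Wronskian normalisation to evaluate the residues of the kernels, observe the pairwise cancellation between the $\sigma(k)=l$ and $\sigma(l)=k$ summands for the regularity statement, and re-index the cyclic sum over $S_{n-1}^{\textup{cyc}}$ for the residue formula, with the sign coming from $(-1)^{n-1}$ versus $(-1)^{n-2}$. The only deviation is at the locus $\sqrt{x_j}=-\sqrt{x_i}$, where the paper computes the residue directly from $\Res K_{-,-}(x_i,x_j)=\tfrac{\iu}{2\sqrt{x_i}}$ and the identity $K_{+,-}(x_j,\cdot)\big|_{\sqrt{x_j}\to-\sqrt{x_i}}=-\iu\,K_{-,-}(x_i,\cdot)$, whereas you deduce it from the evenness of $\sqrt{x_j}\,W_n^{(i)}$ in $\sqrt{x_j}$ (\cref{rem:minors:Z2symmetry}); both rest on the same $\Z/2\Z$ symmetry of the wave functions and your shortcut is valid since that remark is established independently of the lemma.
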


\begin{proof}
	Throughout the proof, we denote $z_m = \sqrt{x_m}$. Consider the poles at $z_k = z_l$ for all $k \ne l$ with $k,l \ne i$. The only terms in the definition of $W_n^{(i)}$ contributing to the residue are those permutations $\sigma \in S_n^{\textup{cyc}}$ such that $\sigma(k) = l$ or $\sigma(l) = k$. If both $k$ and $l$ are different from $i$, we find
	\begin{multline*}
		\Res_{z_k \to z_l} W_n^{(i)}(\bm{x};\hbar)
		=
		(-1)^{n-1} \Res_{z_k \to z_l} 
		\Biggl(
			\sum_{\substack{\sigma \in S_n^{\textup{cyc}} \\ \sigma(k) = l}}
				K_{-,-}(x_i,x_{\sigma(i)};\hbar)
				K_{+,-}(x_k,x_l;\hbar)
				\prod_{j \ne i,k} K_{+,-}(x_j,x_{\sigma(j)};\hbar) \\
			+
			\sum_{\substack{\sigma \in S_n^{\textup{cyc}} \\ \sigma(l) = k}}
				K_{-,-}(x_i,x_{\sigma(i)};\hbar)
				K_{+,-}(x_l,x_k;\hbar)
				\prod_{j \ne i,l} K_{+,-}(x_j,x_{\sigma(j)};\hbar)
		\Biggr) .
	\end{multline*}
	The pole at $z_k \to z_l$ is given by $K_{+,-}(x_k,x_l;\hbar)$ in the first sum and by $K_{+,-}(x_l,x_k;\hbar)$ in the second one. Moreover, the residues are
	\[
		\Res_{z_k \to z_l} K_{+,-}(x_k,x_l;\hbar) = \frac{1}{2z_l} \,,
		\qquad\qquad
		\Res_{z_k \to z_l} K_{+,-}(x_l,x_k;\hbar) = - \frac{1}{2z_l} \,,
	\]
	since they coincide (up to a sign) with the Wronskian of the differential system. After taking the residue, the two terms cancel out. This proves the first part of the statement.

	For the second part, assume without loss of generality that $j = n$ (and $i < n$). Consider the pole at $z_n = z_i$. With the same computation as before, we find
	\begin{multline*}
		\Res_{z_n \to z_i} W_n^{(i)}(\bm{x};\hbar)
		=
		(-1)^{n-1} \Res_{z_n \to z_i} 
		\Biggl(
			\sum_{\substack{\sigma \in S_n^{\textup{cyc}} \\ \sigma(n) = i}}
				K_{-,-}(x_i,x_{\sigma(i)};\hbar)
				K_{+,-}(x_n,x_i;\hbar)
				\prod_{j \ne i,n} K_{+,-}(x_j,x_{\sigma(j)};\hbar) \\
			+
			\sum_{\substack{\sigma \in S_n^{\textup{cyc}} \\ \sigma(i) = n}}
				K_{-,-}(x_i,x_n;\hbar)
				\prod_{j \ne i} K_{+,-}(x_j,x_{\sigma(j)};\hbar)
		\Biggr) .
	\end{multline*}
	Now the residues from the two sums give different contributions:
	\[
		\Res_{z_n \to z_i} K_{+,-}(x_n,x_i;\hbar) = \frac{1}{2z_i} \,,
		\qquad\qquad
		\Res_{z_n \to z_i} K_{-,-}(x_i,x_n;\hbar) = 0 \,.
	\]
	The first residue is again the Wronskian, while the second one is simply a cancellation of the numerator of $K_{-,-}$ in the limit. Thus, the second sum gives no contribution, while the first sum can be re-written as a sum over $S_{n-1}^{\textup{cyc}}$ recovering $- W_{n-1}^{(i)}$.

	Finally, consider the pole at $z_n = - z_i$. With the same computation as before, but with the residue contributions
	\[
		\Res_{z_n \to -z_i} K_{+,-}(x_n,x_i;\hbar) = 0 \,,
		\qquad\qquad
		\Res_{z_n \to -z_i} K_{-,-}(x_i,x_n;\hbar) = \frac{\iu}{2z_i} \,.
	\]
	and the property $K_{+,-}(x_n,x_{\sigma(n)};\hbar) \big|_{z_n \mapsto -z_i} = - \iu \, K_{-,-}(x_i,x_{\sigma(n)};\hbar)$, we conclude the proof.
\end{proof}

We now turn our attention to the $k$-th subleading term in the asymptotic expansion of the $n$-point Airy correlators. For simplicity, denote
\begin{equation}
	U_{k,g,n}(\bm{x})
	\coloneqq
	(-1)^n \, 2^{k+2} \, (x_1 \cdots x_n)^{3/2}
	\sum_{i=1}^n
		x_i^{-\frac{3}{2}(2g-2+n-k)} \, W_{k,n}^{(i)}(\bm{x}) \,.
\end{equation}
With this notation, \eqref{eqn:large:g:corr:WK} reads
\begin{equation}\label{eqn:large:g:corr:Y}
	W_{g,n}(\bm{x})
		=
		\frac{(-1)^n}{4\pi}
		\frac{\Gamma(2g-2+n)}{(\frac{4}{3})^{2g-2+n}}   
			\left(
			\sum_{k = 0}^K
				\frac{(\frac{2}{3})^{k}}{(2g-3+n)^{\underline{k}}}
				\frac{U_{k,g,n}(\bm{x})}{(x_1 \cdots x_n)^{3/2}}
			+
			\bigO\biggl( \frac{1}{g^{K+1}} \biggr)
			\right) \,.
\end{equation}
\Cref{eqn:large:g:corr:leading:WK} is equivalent to $U_{0,g,n}(\bm{x}) = h_{3g-3+n}(\bm{x}^{-1})$, i.e.~the leading term is the complete homogeneous symmetric polynomial of degree $3g-3+n$ in the variables $x_i^{-1}$. The next lemma extends the result to subleading corrections, showing that the genus dependence is completely captured by a complete homogeneous polynomial of degree $3g-3+n$ minus a small defect which depends on $k$. We refer to \cref{app:symmetric:fncts} and \cite{Mac98} for more details and notations on symmetric functions.

\begin{lemma}[Polynomial properties of subleading corrections]\label{lemma:poly:subleading}
	For every $k \ge 0$, $n \ge 1$ and $g$ large enough, $U_{k,g,n}$ is a homogeneous symmetric polynomial in $x_i^{-1}$ of degree $3g-3+n$ of the form
	\begin{equation}\label{eqn:subleading:poly}
		U_{k,g,n}(\bm{x})
		=
		\sum_{d=0}^{\min\{ \floor{\frac{3k+n-1}{2}},\, 3k-1 \}}
			P_{k,n}^{(d)}(\bm{x}^{-1/2}) \, h_{3g-3+n-d}(\bm{x}^{-1}) \,,
	\end{equation}
	where $P_{k,n}^{(d)}$ is a homogeneous symmetric polynomial of degree $2d$, both even and of degree $\le 3k$ in each individual variable. The sequence of polynomials $(P_{k,n})_{n \ge 1}$, with\footnote{
		Experimentally, we can see $P_{k,n}$ has degree $2 \min\{ \floor{\frac{3k+n-1}{2}}, 2k \}$, so that the sum in \cref{eqn:subleading:poly} terminates earlier than expected. This would slightly improve the algorithm for computing the subleading corrections, since the improved bound would lower the number of quantities to compute. Moreover, we notice that the expansion of $P_{k,n}$ in the basis of elementary symmetric does not contain all possible partitions of $d$. This underlying structure of intersection numbers was noticed in \cite{EL,EM} as well.
	}
	\begin{equation}
		P_{k,n} \coloneqq \sum_{d=0}^{\min\{ \floor{\frac{3k+n-1}{2}},\, 3k-1 \}} P_{k,n}^{(d)} \,,
	\end{equation}
	satisfies the following two specialisation properties:
	\begin{equation}\label{eqn:spec}
		P_{k,n+1}(u_1,\dots, u_n, u_{n+1}) \big|_{u_{n+1} = \pm 1}  = P_{k,n}(u_1,\dots,u_n) \,.
	\end{equation}
	In turn, the degree condition and the specialisation properties uniquely determine $P_{k,n}$ for all $n$ by an explicit algorithm from the first terms of the sequence, namely $P_{k,1},\dots, P_{k,3k-1}$. Moreover, the coefficient of $m_{2\nu}$ in the expansion of $P_{k,n}$ in the monomial basis is a polynomial of $n$ of degree $\le 3k-1 - |\nu|$.
\end{lemma}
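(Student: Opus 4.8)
The plan is to pass to the rescaled variables $z_i=\sqrt{x_i}$ and the rescaled Airy kernels $\mc K_{a,b}(z,w)\coloneqq (zw)^{1/2}K_{a,b}(x,y)$. Since the wave functions are power series in $\hbar/z^3$, these kernels are rational in $(z,w,\hbar)$ and satisfy $\mc K_{a,b}(\lambda z,\lambda w;\lambda^3\hbar)=\lambda^{-1}\mc K_{a,b}(z,w;\hbar)$, and each $\hbar^k$-coefficient of $\mc K_{+,-}$ (resp.\ $\mc K_{-,-}$) is a rational function whose only singularity at finite nonzero arguments is a simple pole at $z=w$ (resp.\ $z=-w$). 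From the determinantal formula it follows that $\mc W^{(i)}_{k,n}(\bm z)\coloneqq (z_1\cdots z_n)W^{(i)}_{k,n}(\bm x)$ is homogeneous of degree $-(n+3k)$, so $U_{k,g,n}$ is homogeneous of degree $3g-3+n$ in the variables $x_i^{-1}$; and the $\Z/2\Z$-symmetry of \cref{rem:minors:Z2symmetry}, together with $W^{(-,i)}_n(\bm x;\hbar)=(-1)^nW^{(+,i)}_n(\bm x;-\hbar)$, forces each summand of $U_{k,g,n}$ to be even in every $z_i$. Polynomiality in the $x_i^{-1}$ then follows from \cref{lemma:poles:minors}: the only finite nonzero poles of the $i$-th summand lie along $\sqrt{x_i}=\pm\sqrt{x_j}$ (poles along $\sqrt{x_k}=\sqrt{x_l}$, $k,l\ne i$, cancel in the $S_n^{\mathrm{cyc}}$-sum, while $\sqrt{x_k}=-\sqrt{x_l}$ never occurs as $K_{+,-}$ is regular there), and since the prefactors $x_i^{-\frac32(2g-2+n-k)}$ and $x_j^{-\frac32(2g-2+n-k)}$ agree on that locus while the two residues reduce to the same $(n-1)$-point minor, they cancel pairwise in $\sum_i$; with the degree known and boundedness at $x_i=\infty$ coming from the decay of the kernels, $U_{k,g,n}$ is a symmetric polynomial in the $x_i^{-1}$ of degree $3g-3+n$.

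For the decomposition \eqref{eqn:subleading:poly}, I would first record that, after summing over $S_n^{\mathrm{cyc}}$, one has $\mc W^{(i)}_{k,n}(\bm z)=R_{k,n,i}(\bm z)\big/\prod_{j\ne i}(z_i^2-z_j^2)$ with $R_{k,n,i}$ a Laurent polynomial, symmetric and even in the $z_j$ with $j\ne i$, whose degree in each variable exceeds its $k=0$ value by at most $3k$ --- a bookkeeping statement on the $\hbar$-expansions of the kernels, each power of $\hbar$ carrying a factor $z^{-3}$. Substituting into $U_{k,g,n}$, expanding $\prod_{j\ne i}(z_i^2-z_j^2)^{-1}$ as the generating series $x_i^{-(n-1)}\sum_{m\ge0}h_m\big((x_j^{-1})_{j\ne i}\big)\,x_i^{-m}$, and applying the identity $\sum_{i=1}^n x_i^{-D-1}\big/\prod_{j\ne i}(x_i-x_j)=(-1)^{n-1}(x_1\cdots x_n)^{-1}h_D(\bm x^{-1})$ (and its variant with monomial insertions, \cref{lemma:poly}) term by term, the entire $g$-dependence collapses onto a single index $3g-3+n-d$ of an $h$. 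Collecting, the coefficient of $h_{3g-3+n-d}(\bm x^{-1})$ is a symmetric polynomial $P^{(d)}_{k,n}(\bm x^{-1/2})$: homogeneous of degree $2d$, even, of degree $\le 3k$ in each variable by the bound on $R_{k,n,i}$, with $d$ cut off both by this bound ($d\le 3k-1$) and by the homogeneity/number-of-variables constraint ($d\le\floor{\frac{3k+n-1}{2}}$).

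The specialization properties \eqref{eqn:spec} are the crux. Here I would form $\sum_{g}U_{k,g,n}(\bm x)\,q^{3g}$, which by the previous step equals $\sum_d P^{(d)}_{k,n}(\bm x^{-1/2})$ times a $\Z/3\Z$-graded piece of $\prod_i(1-x_i^{-1}q)^{-1}$ --- hence has poles at $q=x_i$ with residues built from the $P^{(d)}_{k,n}$ --- whereas summing the same geometric series directly from $U_{k,g,n}\propto (z_1\cdots z_n)^3\sum_i z_i^{-3(2g-2+n-k)}W^{(i)}_{k,n}$ yields poles at $q=x_i$ with residues built from the $W^{(i)}_{k,n}$. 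Adjoining a point $x_{n+1}$ and setting $x_{n+1}=1$ adjoins the factor $(1-q)^{-1}$, i.e.\ a new pole at $q=1$; computing its residue on both sides --- using the residue formula of \cref{lemma:poles:minors} to evaluate the limit of the $(n+1)$-point minors as $\sqrt{x_{n+1}}\to1$ on the ``$W$''-side --- identifies $P_{k,n+1}(u_1,\dots,u_n,\pm1)$ with $P_{k,n}(u_1,\dots,u_n)$.

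Once the specialization is known, uniqueness is elementary: a symmetric polynomial in $n+1$ variables that is even in each and vanishes on $\{u_{n+1}=\pm1\}$ is divisible by $\prod_{j=1}^{n+1}(u_j^2-1)$, of total degree $2(n+1)$; since $P_{k,n}$ has total degree $\le 2(3k-1)<2(n+1)$ once $n\ge 3k-1$, the restriction map is injective for $n+1\ge 3k$, so $P_{k,n+1}$ is determined by $P_{k,n}$ there and $P_{k,1},\dots,P_{k,3k-1}$ determine the whole sequence, the determination amounting to solving finite linear systems (the advertised algorithm). The polynomial dependence on $n$ of the coefficient of $m_{2\nu}$ then follows by inserting $m_\lambda(u_1,\dots,u_n,1)=\sum_\mu m_\mu(u_1,\dots,u_n)$ into the specialization recursion and tracking degrees, each step lowering the admissible partition size by one and giving the bound $3k-1-|\nu|$. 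I expect the genuine obstacle to be the residue matching at $q=1$: one must check that the combinatorics of the $h$-index shifts and the $\Z/3\Z$-grading conspire to reproduce exactly $P_{k,n}$ with no residual $g$-dependent terms, and that the limit $\sqrt{x_{n+1}}\to1$ of the $(n+1)$-point minors is governed solely by \cref{lemma:poles:minors}; a secondary nuisance is the precise degree bookkeeping producing exactly $\min\{\floor{\frac{3k+n-1}{2}},\,3k-1\}$.
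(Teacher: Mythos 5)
Your opening analysis (homogeneity, evenness in the $z_i$, and the pairwise cancellation of the poles of $\sum_i x_i^{-\frac32(2g-2+n-k)}W^{(i)}_{k,n}$ along $\sqrt{x_i}=\pm\sqrt{x_j}$ via \cref{lemma:poles:minors}) is sound and close to the paper's, and your divisibility-by-$\prod_j(u_j^2-1)$ argument is a clean, correct route to the injectivity of the restriction map for $n\ge 3k-1$. The genuine gap is in how you obtain the specialisation property \labelcref{eqn:spec}, which you correctly identify as the crux but then attack from the wrong end. The paper reads it off directly from the residue formula of \cref{lemma:poles:minors}: writing $W^{(i)}_{k,n}\propto \frac{x_i^{n/2-1-3k/2}}{\prod_{j\ne i}(x_i-x_j)}P_{k,n-1}\bigl((x_i/x_j)^{1/2}\bigr)_{j\ne i}$, the collision $\sqrt{x_j}\to\pm\sqrt{x_i}$ sends the $j$-th argument to $\pm1$ while the residue is $-\tfrac{1}{2\sqrt{x_i}}W^{(i)}_{k,n-1}$, and matching the two sides \emph{is} the statement $P_{k,n}(\,\cdot\,,\pm1)=P_{k,n-1}(\,\cdot\,)$. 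Your route --- adjoining $x_{n+1}$, setting $x_{n+1}=1$, and matching residues at $q=1$ of a generating series in $q$ --- does not connect to \cref{lemma:poles:minors}, which controls collisions of the $x_i$ with each other and says nothing about the locus $x_{n+1}=1$; the pole at $q=1$ is an artefact of your choice of $x_{n+1}$ and its residue involves $P_{k,n+1}$ evaluated at $x_j^{-1/2}$ in the old slots, not the restriction to the last slot. Worse, your step 2 is circular: to recognise the coefficient of $h_{3g-3+n-d}(\bm{x}^{-1})$ in $\sum_i\frac{x_i^{-D-1}}{\prod_{j\ne i}(x_i-x_j)}R_{k,n,i}$ as the value of a \emph{single} $n$-variable symmetric polynomial $P^{(d)}_{k,n}$, the $(n-1)$-variable numerators $R_{k,n,i}$ must already be known to be the $\pm1$-specialisations of that $n$-variable polynomial --- exactly the input the paper's $f(q)$ residue computation uses, and exactly what your step 3 was supposed to deliver afterwards.

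The second gap is the cutoff $d\le 3k-1$ in \labelcref{eqn:subleading:poly}. You derive it from ``degree $\le 3k$ in each variable'', but a per-variable bound does not bound the total degree by $2(3k-1)$; the only a priori total bound is $3k+n-1$, which grows with $n$. The $3k-1$ cutoff is equivalent to the degree stabilisation $\deg(P_{k,n+1})=\deg(P_{k,n})$ for $n\ge 3k-1$, which is the most technical part of the paper's proof (the two shifted elementary bases $\hat e_s=e_s(u_j-1)$, $\check e_s=e_s(u_j+1)$ and the unitriangular transition operator $\ms{T}_n$, or the footnote's shortcut via divisibility by $u_1^2\cdots u_n^2$). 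Your injectivity argument, while correct, only shows that $P_{k,n+1}$ is \emph{determined} by $P_{k,n}$; it does not show its degree stops growing, and the final polynomiality-in-$n$ claim with degree bound $3k-1-|\nu|$ also rests on that stabilisation (the paper gets it from the eventual identity $\hat P_{k,n+1}=\hat P_{k,n}$ together with the polynomial-in-$n$ change of basis back to monomials). Both missing pieces are repairable, but they are the substance of the lemma rather than bookkeeping.
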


\begin{table}[t]
\centering
{\renewcommand{\arraystretch}{1.15}
\begin{tabularx}{\textwidth} { 
	c
	| >{\raggedright\arraybackslash}X 
	| >{\raggedright\arraybackslash}X }
	\toprule
	$k$ & $P_{k,n}$ & $\alpha_k(n,\bm{p})$ \\
	\midrule
	$0$ & $1$ & $1$
	\\
	\midrule
	$1$ & $
		- \tfrac{17 - 15n + 3n^2}{12} m_{\varnothing}
		- \tfrac{3 - n}{2} \, m_{(1)}
		- \tfrac{1}{2} \, m_{(1^2)}$
	& $
		- \tfrac{17 - 15n + 3n^2}{12}
		- \tfrac{(3 - n)(n - p_0)}{2}
		- \tfrac{(n - p_0)^{\underline{2}}}{4} $
	\\
	\midrule
	$2$
	& $
		\tfrac{1225 - 1632 n + 741 n^2 - 138 n^3 + 9 n^4}{288} m_{\varnothing}
		+
		\tfrac{105 - 98 n + 30 n^2 - 3 n^3}{24} m_{(1)}
		+
		\tfrac{3(10 - 7 n + n^2)}{8} m_{(2)}
		+
		\tfrac{59 - 51 n + 9 n^2}{24} m_{(1^2)}
		+
		\tfrac{5}{8} m_{(3)}
		+
		\tfrac{3(4 - n)}{4} m_{(2,1)}
		+
		\tfrac{7 - 3 n}{4} m_{(1^3)}
		+
		\tfrac{3}{4} m_{(2,1^2)}
		+
		\tfrac{3}{4} m_{(1^4)}$
	&  $
		\tfrac{1225 - 1632 n + 741 n^2 - 138 n^3 + 9 n^4}{288}
		+
		\tfrac{(105 - 98 n + 30 n^2 - 3 n^3)(n - p_0)}{24}
		+
		\tfrac{3(10 - 7 n + n^2)(n - p_0 - p_1)}{8}
		+
		\tfrac{(59 - 51 n + 9 n^2)(n - p_0)^{\underline{2}}}{48}
		+
		\tfrac{5(n - p_0 - p_1 - p_2)}{8}
		+
		\tfrac{3(4 - n)(n - p_0 - 1)(n - p_0 - p_1)}{4}
		+
		\tfrac{(7 - 3 n)(n - p_0)^{\underline{3}}}{24}
		+
		\tfrac{3(n - p_0 - 1)^{\underline{2}}(n - p_0 - p_1)}{48}
		+
		\tfrac{3(n - p_0)^{\underline{4}}}{96}$
	\\
	\bottomrule
\end{tabularx}
}
\caption{
	The polynomials $P_{k,n}(u_1,\dots,u_n)$ and the coefficients $\alpha_k$ for $k = 0,1,2$. Here $m_{\lambda}$ denotes the monomial symmetric polynomial in the variables $u_1^2,\dots,u_n^2$.
}
\label{table:Pkn:ck}
\end{table}

As consequence of the above lemma, we get a polynomiality statement for the coefficients of $U_{k,g,n}$ in terms of $n$ and the multiplicities of the exponents.

\begin{corollary}[Polynomial properties of subleading corrections: the coefficients]\label{cor:poly:subleading}
	For every $k \ge 0$, $n \ge 1$, $g$ large enough, and $d_1, \dots, d_n \ge 0$ satisfying $d_1 + \dots + d_n = 3g-3+n$, the coefficients of $U_{k,g,n}$ are of the form
	\begin{equation}\label{eqn:subleading:coeffs}
		\bigl[ x_1^{-d_1} \cdots x_n^{-d_n} \bigr] \,
		U_{k,g,n}(\bm{x})
		=
		\alpha_k\bigl( n, p_0, \dots, p_{\floor{\frac{3k}{2}}-1} \bigr) \,,
	\end{equation}
	where $\alpha_k$ is a polynomial in $n$ and $p_m \coloneqq \#\set{ d_i = m}$ for $m = 1, \dots, \floor{\frac{3k}{2}}-1$ that can be effectively computed. Moreover, the degree is $\le 3k-1$, under the assignment $\deg{n} = 1$ and $\deg{p_m} = m+1$.
\end{corollary}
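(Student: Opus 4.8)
The plan is to read off \cref{cor:poly:subleading} from the structural formula for $U_{k,g,n}$ furnished by \cref{lemma:poly:subleading}, by extracting the coefficient of a fixed monomial $x_1^{-d_1}\cdots x_n^{-d_n}$ (with $d_1 + \cdots + d_n = 3g-3+n$) and tracking its dependence on the multiplicities $p_m$.

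First I would expand the homogeneous components $P_{k,n}^{(j)}$ appearing in \cref{eqn:subleading:poly} in the monomial symmetric basis. Since each $P_{k,n}^{(j)}$ is even and of degree $\le 3k$ in each individual variable, it is a symmetric polynomial of degree $j$ in the variables $x_i^{-1}$ whose parts never exceed $\floor{\frac{3k}{2}}$, so we may write $P_{k,n}^{(j)}(\bm{x}^{-1/2}) = \sum_{\nu} c_{k,n}^{(\nu)} \, m_{\nu}(\bm{x}^{-1})$, the sum being over partitions $\nu$ of $j$ with all parts $\le \floor{\frac{3k}{2}}$; by the last assertion of \cref{lemma:poly:subleading}, $c_{k,n}^{(\nu)}$ is a polynomial in $n$ of degree $\le 3k-1-j$. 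It then remains to compute, for $D = (d_1+\cdots+d_n) - j$, the coefficient of $x_1^{-d_1}\cdots x_n^{-d_n}$ in $m_{\nu}(\bm{x}^{-1}) \, h_{D}(\bm{x}^{-1})$.

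The combinatorial core is the identity
\begin{equation*}
	\bigl[ x_1^{-d_1}\cdots x_n^{-d_n} \bigr] \, m_{\nu}(\bm{x}^{-1}) \, h_{D}(\bm{x}^{-1})
	=
	\# \bigl\{ \text{assignments of the parts of } \nu \text{ to distinct indices } i \text{, the part at } i \text{ being } \le d_i \bigr\} ,
\end{equation*}
which holds because $h_D(\bm{x}^{-1}) = \sum_{|\rho| = D} x_1^{-\rho_1}\cdots x_n^{-\rho_n}$ has all coefficients $1$ and because, given a rearrangement $\mu$ of the parts of $\nu$, the complement $(d_1-\mu_1,\dots,d_n-\mu_n)$ automatically sums to $D$ and is a legitimate exponent vector exactly when $\mu_i \le d_i$ for every $i$. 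Grouping the parts of $\nu$ by value and assigning them in decreasing order of value, this count equals a product of falling factorials evaluated at the quantities $q_t \coloneqq \#\set{ d_i \ge t } = n - (p_0 + \cdots + p_{t-1})$, with $t$ ranging over the distinct part-values of $\nu$; since these values are $\le \floor{\frac{3k}{2}}$, the count depends polynomially on $n$ and $p_0,\dots,p_{\floor{3k/2}-1}$ and on nothing else. Under the weighting $\deg n = \deg p_0 = 1$ and $\deg p_m = m+1$ we have $\deg q_t \le t$, so the product of falling factorials has weighted degree $\le \sum_t t \cdot (\text{multiplicity of } t \text{ in } \nu) = j$.

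Finally I would assemble the pieces: summing over $j$ and over $\nu \vdash j$ gives
\begin{equation*}
	\bigl[ x_1^{-d_1}\cdots x_n^{-d_n} \bigr] \, U_{k,g,n}(\bm{x})
	=
	\sum_{j \ge 0} \sum_{\nu \vdash j} c_{k,n}^{(\nu)} \cdot \# \bigl\{ \text{assignments of } \nu \bigr\} ,
\end{equation*}
an expression visibly independent of $g$ (for $g$ large enough that \cref{lemma:poly:subleading} applies and $D \ge 0$) and polynomial in $n, p_0, \dots, p_{\floor{3k/2}-1}$; calling it $\alpha_k$ establishes \cref{eqn:subleading:coeffs}. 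As $c_{k,n}^{(\nu)}$ has weighted degree $\le 3k-1-j$ and the assignment count has weighted degree $\le j$, each term has weighted degree $\le 3k-1$, which gives the claimed bound. I expect the only genuinely delicate point to be the degree bookkeeping in the final two paragraphs — verifying $\deg q_t \le t$ and that the part-values of $\nu$ are capped by $\floor{\frac{3k}{2}}$ — since the rest is a direct unwinding of \cref{lemma:poly:subleading} together with elementary symmetric-function combinatorics.
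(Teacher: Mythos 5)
Your proposal is correct and follows essentially the same route as the paper: expand $P_{k,n}$ in the monomial basis using the degree and polynomiality data from \cref{lemma:poly:subleading}, then evaluate $\bigl[ x_1^{-d_1}\cdots x_n^{-d_n}\bigr] m_{\nu}\,h_{D}$ combinatorially as a product of falling factorials in the quantities $n - (p_0+\cdots+p_{t-1})$. Your inline counting argument is precisely the content of \cref{prop:polynom:monomial:coeff} in the appendix (note only that when assigning parts of value $t$ the base of the falling factorial must also be shifted by the number of indices already consumed by larger parts of $\nu$, as in \cref{eqn:M:n:mu:nu}, which affects neither polynomiality nor the degree bound).
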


The first few polynomials $P_{k,n}$ and coefficients $\alpha_k$ are given in \cref{table:Pkn:ck}. The intuitive explanation of the above statement can be given as follows. \Cref{prop:large:g:corr:WK} shows that the $k$-th subleading correction in the large genus asymptotics of the correlators is proportional to
\begin{equation}
	\sum_{i=1}^n x_i^{-\frac{3}{2}(2g-2+n-k)} \, W_{k,n}^{(i)} \,.
\end{equation}
In the large genus limit, most of the degree is then coming from the factors $x_i^{-\frac{3}{2}(2g-2+n)}$, which in turn give rise to complete homogeneous polynomials of large degree times some polynomials of small degree. Thus, the extraction of coefficients depends only on the number of variables with a small degree, as all monomials appear in the complete homogeneous polynomial with coefficient $1$. We also emphasise that \cref{lemma:poly:subleading} gives a concrete algorithm to compute the subleading corrections. Indeed the proof shows that, for fixed $k$, the computation of $P_{k,n}$ for $n \le 3k-1$ gives a closed, explicit expression for $P_{k,n}$ for arbitrary $n$. In turn, this gives a closed, explicit expression for the coefficients $\alpha_k$, see \cref{fig:algorithm}.

\begin{figure}
\centering
\begin{tikzpicture}[yscale=.9]

	\node[punkt,style={text width=6.5cm}] (A1) at (0,0) {\small Compute $P_{k,1},\dots,P_{k,3k-1}$ \\ via determinantal formulae};
	\node[punkt,style={text width=6.5cm}] (A2) at (0,-2) {\small Compute $(P_{k,n})_{n \ge 1}$ in the elementary basis} edge[pil,<-] (A1);
	\node[punkt,style={text width=6.5cm}] (A3) at (0,-4) {\small Compute $(P_{k,n})_{n \ge 1}$ \\ in the monomial basis} edge[pil,<-] (A2);
	\node[punkt,style={text width=6.5cm}] (A4) at (0,-6) {\small Compute $\alpha_k$} edge[pil,<-] (A3);

	\node (a23) at (0,-3) {};
	\node (KN) at (3,-3) {\small Kostka numbers} edge[pil,->] (a23);

	\node (a34) at (0,-5) {};
	\node (KN) at (3,-5) {\small $M_{n,\mu,\nu}$ via \cref{prop:polynom:monomial:coeff}} edge[pil,->] (a34);

\end{tikzpicture}
\caption{
	The flowchart representing the algorithm for the computation of the subleading contributions $\alpha_k$.
}
\label{fig:algorithm}
\end{figure}
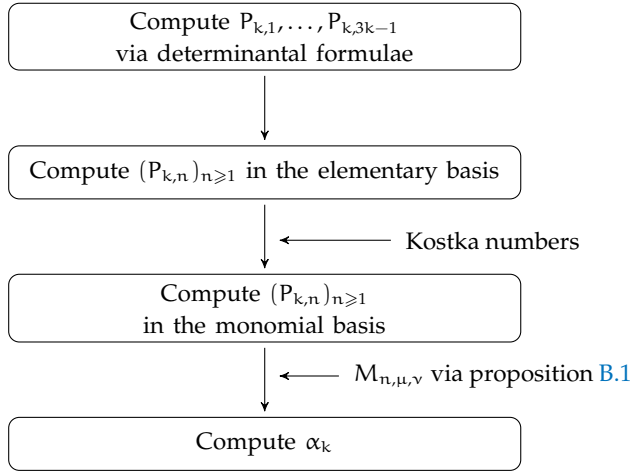

\begin{proof}[{Proof of \cref{lemma:poly:subleading}}]
	We first claim that the functions $W_{k,n}^{(i)} = [\hbar^k]W_{n}^{(i)}$ are of the form
	\begin{equation*}
		W_{k,n}^{(i)}(\bm{x})
		=
		-
		\frac{2^{-k-2}}{(x_1 \cdots x_n)^{1/2}}
		\frac{x_i^{n/2 - 1 - 3k/2}}{\prod_{j \ne i} (x_i - x_j)} \,
		P_{k,n-1} \biggl(
			\Bigl( \frac{x_i}{x_1} \Bigr)^{1/2}, \dots,
			\widehat{ \Bigl( \frac{x_i}{x_i} \Bigr)^{1/2} }, \dots,
			\Bigl( \frac{x_i}{x_n} \Bigr)^{1/2}
		\biggr) \,,
	\end{equation*}
	where $P_{k,n-1}$ is a symmetric polynomial in $n-1$ variables of total degree $\le \min\set{ 3k + n - 2, 2(3k - 1) }$, both even and of degree $\le 3k$ in each individual variable, and satisfying the two specialisation properties $P_{k,n+1}( \,\cdot\,, \pm 1) = P_{k,n}(\,\cdot\,)$. Indeed, from the homogeneity property and the parity relation satisfied by the Airy functions, i.e.~$\psi_{\pm}(x;\hbar) = x^{-1/4} \, \psi_{\pm}(1;\frac{\hbar}{2A(x)})$, $\psi_{+}(x;\hbar) = \psi_{-}(x;-\hbar)$, and similarly for the derivatives, we deduce that
	\[
		K_{\pm,-}(x,y;\hbar)
		=
		\pm \frac{1}{(x y)^{1/4}} \frac{1}{x^{1/2} \mp y^{1/2}}
		\sum_{m \ge 0} A_m\bigl( \mp x^{-1/2},y^{-1/2} \bigr) \hbar^m \,,
	\]
	where $A_m$ is a homogeneous symmetric polynomial of degree $3m$. From \cref{lemma:poles:minors} and a degree consideration, we conclude that
	\[
		(x_1 \cdots x_n)^{1/2} \, x_i^{3k/2 - n/2 + 1}
		\Biggl( \prod_{j \ne i} (x_i - x_j) \Biggr)
		W_{k,n}^{(i)}(\bm{x})
	\]
	is a symmetric polynomial in $t_j = \sqrt{x_i}/\sqrt{x_j}$ for $j \neq i$ of degree $\le 3k + n - 2$. In the $t_j$ variables the above polynomial is given by
	\[
		(-1)^n \prod_{j \neq i} (1 - t_j^2)
		\sum_{m \neq i}
		\sum_{\substack{k_1 + \dots + k_{n} = k \\ \sigma \in S_n^{\textup{cyc}}, \, \sigma(m) = i}}
			\frac{A_{k_i}(1, t_{\sigma(i)})}{1 + t_{\sigma(i)}}
			\frac{A_{k_m}(-t_{m},1)}{t_{m} - 1}
			\prod_{\substack{j \ne i, \, m}}
				\frac{A_{k_j}(-t_j, t_{\sigma(j)})}{t_j - t_{\sigma(j)}} \,.
	\]
	The polynomiality and the specialisation properties, which are not obvious from the above expression, are guaranteed by \cref{lemma:poles:minors}. It is clear that the degree in each individual variable is $\le 3k$. Moreover, from \cref{rem:minors:Z2symmetry} we deduce that the above expression is even in $t_j$. Thus, after normalising by $-2^{-k-2}$, we have that
	\[
		W_{k,n}^{(i)}(\bm{x})
		=
		-
		\frac{2^{-k-2}}{(x_1 \cdots x_n)^{1/2}}
		\frac{x_i^{n/2 - 1 - 3k/2}}{\prod_{j \ne i} (x_i - x_j)} \,
		P_{k,n-1} \biggl(
			\Bigl( \frac{x_i}{x_1} \Bigr)^{1/2}, \dots,
			\widehat{ \Bigl( \frac{x_i}{x_i} \Bigr)^{1/2} }, \dots,
			\Bigl( \frac{x_i}{x_n} \Bigr)^{1/2}
		\biggr) \,,
	\]
	where $P_{k,n-1}$ is a symmetric polynomial of total degree $\le 3k + n - 2$, both even and of degree $\le 3k$ in each individual variable. Our next goal is to prove that there exists a value $N_k$ such that
	\begin{equation*}
		\deg(P_{k,n+1}) = \deg(P_{k,n})
		\qquad\quad
		\text{ for all } n \geq N_k \,.
	\end{equation*}
	To this end, consider for $P_{k,n}(u_1, \dots, u_n)$ the two changes of variables (which naturally follows from the specialisation properties)
	\begin{equation*}
		\hat{u}_j \coloneqq u_j - 1 \,,
		\quad
		\hat{e}_s \coloneqq e_s(\hat{u}_1, \dots, \hat{u}_n) \,,
		\qquad\qquad
		\check{u}_j \coloneqq u_j + 1 \,,
		\quad
		\check{e}_s \coloneqq e_s(\check{u}_1, \dots, \check{u}_n) \,.
	\end{equation*}
	We also refer to $\hat{P}_{k,n}$ (resp. $\check{P}_{k,n}$) as to $P_{k,n}$ expanded in the basis of elementary symmetric polynomials $\hat{e}_s$ (resp. $\check{e}_s$). From the specialisation properties \labelcref{eqn:spec} and the fact that setting any of the $\hat{u}_j$ (resp. $\check{u}_j$) to zero is the same as setting $\hat{e}_{n+1}$ (resp. $\check{e}_{n+1}$) to zero, we obtain the two decompositions
	\begin{equation}\label{eqn:decomp}
		\hat{P}_{k,n+1} = \hat{P}_{k,n} + \hat{e}_{n+1}\hat{Q}_{k,n} \,,
		\qquad \qquad
		\check{P}_{k,n+1} = \check{P}_{k,n} + \check{e}_{n+1}\check{Q}_{k,n} \,,
	\end{equation}
	for some $\hat{Q}_{k,n}$ and $\check{Q}_{k,n}$, polynomials in $\hat{e}_s$ and $\check{e}_s$ respectively. With the natural degree assignment $\deg{\hat{e}_s} = \deg{\check{e}_s} = s$, we find $\deg(\hat{Q}_{k,n}) = \deg(\check{Q}_{k,n}) \leq 3k - 1$. Define the linear unitriangular ring isomorphism $\ms{T}_n$ performing the change of basis from $\hat{e}$ to $\check{e}$:
	\begin{align*}
		\ms{T}_n \colon \mb{Q}[\hat{e}] \longrightarrow \mb{Q}[\check{e}] \,,
		\qquad
		\hat{e}_s \longmapsto \sum_{m=0}^s \binom{n-s+m}{m} (-2)^m \check{e}_{s-m} \,.
	\end{align*}
	The factor $-2$ arises from the difference $\hat{u}_j - \check{u}_j$. Applying $\ms{T}_{n+1}$ to the first decomposition of \cref{eqn:decomp} we obtain
	\begin{equation*}
		\check{P}_{k,n+1} - \ms{T}_{n+1}(\hat{P}_{k,n})
		=
		\ms{T}_{n+1}(\hat{e}_{n+1} \hat{Q}_{k,n})\,.
	\end{equation*}
	Here we used the fact that $\ms{T}_{n+1}(\hat{P}_{k,n+1}) = \check{P}_{k,n+1}$. However, $\ms{T}_{n+1}(\hat{P}_{k,n})$ does not necessarily coincide with $\check{P}_{k,n}$, as the change of basis depends on the number of variables. Adding and subtracting $\check{P}_{k,n}$ to the above equation and applying the second decomposition of \cref{eqn:decomp} yields
	\begin{equation*}
		\check{P}_{k,n} - \ms{T}_{n+1}(\hat{P}_{k,n}) - \ms{T}_{n+1}(\hat{e}_{n+1}) \ms{T}_{n+1}(\hat{Q}_{k,n})
		=
		\check{e}_{n+1}\check{Q}_{k,n} \,.
	\end{equation*}
	Notice that, as the right-hand side is divisible by $\check{e}_{n+1}$, so does the left-hand side. The only factors that (might) contain $\check{e}_{n+1}$ are $\ms{T}_{n+1}(\hat{e}_{n+1})$ and $\ms{T}_{n+1}(\hat{Q}_{k,n})$. However, for $n \geq 3k - 1$ there are no such terms in $\ms{T}_{n+1}(\hat{Q}_{k,n})$ for degree reasons. Therefore, after removing the leading term from $\ms{T}_{n+1}(\hat{e}_{n+1})$, we find
	\begin{equation*}
		\check{P}_{k,n} - \ms{T}_{n+1}(\hat{P}_{k,n})
		=
		\left(\ms{T}_{n+1}(\hat{e}_{n+1}) - \check{e}_{n+1}\right) \ms{T}_{n+1}(\hat{Q}_{k,n}) \,.
	\end{equation*}
	By the unitriagularity of the operator $\ms{T}_n$, the degree of $\check{P}_{k,n} - \ms{T}_{n+1}(\hat{P}_{k,n})$ is lower by one compared to the degree of $\check{P}_{k,n}$, as the leading terms cancel each other out. Explicitly, we obtain that $\deg(\check{P}_{k,n} - \ms{T}_{n+1}(\hat{P}_{k,n})) \leq 3k + n - 2,$ so that $\deg(\hat{Q}_{k,n}) \leq 3k - 2$, which in turns forces 
	\begin{equation*}
		\deg(P_{k,n+1}) = \deg(P_{k,n})
		\qquad
		\text{for } n \geq 3k-1 \,.
	\end{equation*}
	In other words, we proved that for $n \ge 3k-1$ the degree of $P_{k,n}$ is independent on $n$, which in turn implies that $\deg{( P_{k,n} )} \le \min\{ 3k + n - 1, 2(3k - 1) \}$. Being even in all variables, we find that $\deg{( P_{k,n} )} \le 2 \min\{ \floor{\frac{3k + n - 1}{2}}, 3k - 1 \}$\footnote{
		We also remark that there is also a shortcut proof that applies for the Airy and Bessel cases, though it does not extend to the $r$-spin setting. Consider the homogeneous component of maximal degree in $P_{k,n}(u_1,\dots,u_n)$, denoted by $P_{k,n}^{\textup{max}}(u_1,\dots,u_n)$. Suppose that $u_n$ divides $P_{k,n}^{\textup{max}}$. By symmetry and parity, $u_1^2 \cdots u_n^2$ must divide $P_{k,n}^{\textup{max}}$. Hence, $2n \le \deg{( P_{k,n}^{\textup{max}} )} = \deg{( P_{k,n} )} \le 3k+n-1$. This gives a contradiction as soon as $n \ge 3k$. Thus, $u_n$ cannot divide $P_{k,n}^{\textup{max}}$ for $n \ge 3k$. In this case, we obtain that $\deg{( P_{k,n}^{\textup{max}} )} = \deg{( P_{k,n}^{\textup{max}}\big|_{u_n = 1} )} = \deg{( P_{k,n-1}^{\textup{max}} )}$ from the recursive relation $P_{k,n}( \,\cdot\,, 1) = P_{k,n-1}(\,\cdot\,)$, from which we conclude that
		\(
			\deg(P_{k,n+1})
			=
			\deg(P_{k,n})
			\text{ for any } n \ge 3k-1 \,.
		\)
	}.

	From the above proof it follows that, for any fixed $k$, the first few values of $n$ determine the whole sequence $(P_{k,n})_{n \ge 1}$. Moreover, the coefficients of $P_{k,n}$ in the monomial basis are polynomials of $n$. Indeed, as the degree of $P_{k,n}$ is constant in $n$ for $n$ large enough, we deduce from \cref{eqn:decomp} that eventually $\hat{P}_{k,n+1} = \hat{P}_{k,n}$. The change of basis from $\hat{e}_s = e_s(u_1-1,\dots,u_n-1)$ to $e_s = e_s(u_1,\dots,u_n)$, which is analogous to the transformation $\ms{T}_n$, depends polynomially in $n$. Furthermore, the change of basis from elementary to complete homogeneous polynomials is independent of $n$ (it involves Kostka numbers, see \cite{Mac98}). Hence, the coefficients of $P_{k,n}$ in the monomial basis are polynomials of $n$. As for the number of polynomials $P_{k,n}$ that determine the whole sequence, one can show that $P_{k,1}, \dots, P_{k,3k - 1}$ are sufficient by degree considerations and the parity property. Moreover, as polynomials of $n$, it can be shown that the degree of $m_{2\nu}$ is bounded by $3k-1-|\nu|$ by the explicit change of basis from $\hat{e}$ to $e$.

	We now proceed with the proof of \cref{eqn:subleading:poly} for $U_{k,g,n}$. From the definition of $U_{k,g,n}$ and the definition of $P_{k,n-1}$, we find
	\begin{equation*}
		U_{k,g,n}(\bm{x})
		=
		(-1)^{n-1} \, x_1 \cdots x_n
		\sum_{i=1}^n
			\frac{ x_i^{-(3g-3+n)-1} }{ \prod_{j \ne i} (x_i - x_j) } \,
			P_{k,n-1} \biggl(
				\Bigl( \frac{x_i}{x_1} \Bigr)^{1/2}, \dots,
				\widehat{ \Bigl( \frac{x_i}{x_i} \Bigr)^{1/2} }, \dots,
				\Bigl( \frac{x_i}{x_n} \Bigr)^{1/2}
			\biggr) \,.
	\end{equation*}
	We claim that, for $g$ large enough,
	\begin{multline*}
		\sum_{i=1}^n
			\frac{ x_i^{-(3g-3+n)-1} }{ \prod_{j \ne i} (x_i - x_j) } \,
			P_{k,n-1} \biggl(
				\Bigl( \frac{x_i}{x_1} \Bigr)^{1/2}, \dots,
				\widehat{ \Bigl( \frac{x_i}{x_i} \Bigr)^{1/2} }, \dots,
				\Bigl( \frac{x_i}{x_n} \Bigr)^{1/2}
			\biggr) \\
		=
		\frac{(-1)^{n-1}}{x_1 \cdots x_n}
		\sum_{d=0}^{\min\{ \floor{\frac{3k+n-1}{2}},\, 3k-1 \}}
			P_{k,n}^{(d)} ( \bm{x}^{-1/2} ) \,
			h_{3g-3+n-d} ( \bm{x}^{-1} )\,,
	\end{multline*}
	where $P_{k,n}^{(d)}$ is the homogeneous component of degree $2d$ in $P_{k,n}$ (notice the shift in $n$). This would complete the proof of the lemma. In order to prove the claimed formula, consider the function
	\[
		f(q)
		\coloneqq
		\frac{P_{k,n}(\frac{q}{\sqrt{x_1}}, \dots, \frac{q}{\sqrt{x_n}})}{\prod_{i=1}^n (q^2 - x_i)} \,.
	\]
	We omit its dependence on $x_1^{1/2},\dots,x_n^{1/2}$ for simplicity. As a function of $q$, it has simple poles at $q = \pm \sqrt{x_i}$ with residues
	\begin{equation*}
		\Res_{q = \pm\sqrt{x_i}} f(q)
		=
		\frac{P_{k,n}\bigl(
			\pm \frac{\sqrt{x_i}}{\sqrt{x_1}}, \dots,
			\pm \frac{\sqrt{x_i}}{\sqrt{x_i}}, \dots,
			\pm \frac{\sqrt{x_i}}{\sqrt{x_n}}
		\bigr)}{\pm 2\sqrt{x_i} \prod_{j \ne i} (x_i - x_j)}
		=
		\pm \frac{P_{k,n-1}\bigl(
			\frac{\sqrt{x_i}}{\sqrt{x_1}}, \dots,
			\widehat{ \frac{\sqrt{x_i}}{\sqrt{x_i}} }, \dots,
			\frac{\sqrt{x_i}}{\sqrt{x_n}}
		\bigr)}{2\sqrt{x_i} \prod_{j \ne i} (x_i - x_j)} \,.
	\end{equation*}
	Here we used the fact that $P_{k,n}$ is even and it satisfies $P_{k,n}( \,\cdot\,, \pm 1) = P_{k,n-1}(\,\cdot\,)$. As a consequence, we obtain
	\[
	\begin{split}
		\bigl[ q^{6g-6+2n} \bigr] f(q)
		& =
		\bigl[ q^{6g-6+2n} \bigr] \sum_{i=1}^n \left(
			\frac{P_{k,n-1}\bigl(
				\frac{\sqrt{x_i}}{\sqrt{x_1}}, \dots,
				\widehat{ \frac{\sqrt{x_i}}{\sqrt{x_i}} }, \dots,
				\frac{\sqrt{x_i}}{\sqrt{x_n}}
			\bigr)}{(q - \sqrt{x_i}) \, 2\sqrt{x_i} \prod_{j \ne i} (x_i - x_j)}
			-
			\frac{P_{k,n-1}\bigl(
				\frac{\sqrt{x_i}}{\sqrt{x_1}}, \dots,
				\widehat{ \frac{\sqrt{x_i}}{\sqrt{x_i}} }, \dots,
				\frac{\sqrt{x_i}}{\sqrt{x_n}}
			\bigr)}{(q + \sqrt{x_i}) \, 2\sqrt{x_i} \prod_{j \ne i} (x_i - x_j)}
		\right) \\
		& =
		\bigl[ q^{6g-6+2n} \bigr] \sum_{i=1}^n
			\frac{1}{q^2 - x_i}
			\frac{P_{k,n-1}\bigl(
				\frac{\sqrt{x_i}}{\sqrt{x_1}}, \dots,
				\widehat{ \frac{\sqrt{x_i}}{\sqrt{x_i}} }, \dots,
				\frac{\sqrt{x_i}}{\sqrt{x_n}}
			\bigr)}{\prod_{j \ne i} (x_i - x_j)} \\
		& =
		- \sum_{i=1}^n
			\frac{x_i^{-(3g-3+n)-1}}{\prod_{j \ne i} (x_i - x_j)} \,
			P_{k,n-1} \biggl(
				\Bigl( \frac{x_i}{x_1} \Bigr)^{1/2}, \dots,
				\widehat{ \Bigl( \frac{x_i}{x_i} \Bigr)^{1/2} }, \dots,
				\Bigl( \frac{x_i}{x_n} \Bigr)^{1/2}
			\biggr) \, .
	\end{split}
	\]
	On the other hand, extracting the coefficient of $q^{6g-6+2n}$ from the definition of $f$ gives
	\[
		\bigl[ q^{6g-6+2n} \bigr] f(q)
		=
		\frac{(-1)^{n}}{x_1 \cdots x_n}
		\sum_{d=0}^{\min\{ \floor{\frac{3k+n-1}{2}},\, 3k-1 \}}
			P_{k,n}^{(d)} ( \bm{x}^{-1/2} ) \,
			h_{3g-3+n-d} ( \bm{x}^{-1} ) \,.
	\]
	This completes the proof.
\end{proof}
	
\begin{proof}[{Proof of \cref{cor:poly:subleading}}]
	From \cref{lemma:poly:subleading}, we see that $P_{k,n}$ in the monomial basis reads
	\[
		P_{k,n}(u_1,\dots,u_n)
		=
		\sum_{\substack{
			|\nu| \le \min\{ \floor{\frac{3k+n-1}{2}},\, 3k-1 \} \\
			\nu_1 \le \floor{\frac{3k}{2}},\, \ell(\nu) \le n
		}}
			C_{k,n,\nu} m_{\mu}(\bm{u}^2) \,.
	\]
	The condition $|\nu| \le \min\{ \floor{\frac{3k+n-1}{2}},\, 3k-1 \}$ follows from the bound on the total degree, while $\nu_1 \le \floor{\frac{3k}{2}}$ follows from the bound on the degrees in each individual variable. Notice that we also employed the parity condition, together with the property $m_{2\nu}(\bm{u}) = m_{\nu}(\bm{u}^2)$, so that only monomial symmetric polynomials in the squared variables appear. Moreover, we know that $C_{k,n,\nu}$ is an (explicit) polynomial of $n$ of degree $\le 3k-1-|\nu|$. Inserting the above expansion in the formula for $U_{k,g,n}$, we obtain:
	\[
		U_{k,g,n}(\bm{x})
		=
		\sum_{\substack{
			|\nu| \le \min\{ \floor{\frac{3k+n-1}{2}},\, 3k-1 \} \\
			\nu_1 \le \floor{\frac{3k}{2}} ,\, \ell(\nu) \le n
		}}
			C_{k,n,\nu} \, m_{\nu}(\bm{x}^{-1}) \, h_{3g-3+n-|\nu|}(\bm{x}^{-1}) \,.
	\]
	Since we are interested in extracting coefficients, it is natural to decompose $U_{k,g,n}$ in the basis of monomial symmetric polynomials. For products of the form $m_{\nu} h_{D - |\nu|}$, this is discussed in \cref{app:symmetric:fncts}:
	\[
		m_{\nu} \, h_{D - |\nu|}
		=
		\sum_{|\mu| = D} M_{n,\mu,\nu} \, m_{\mu} \,,
	\]
	where $M_{n,\mu,\nu}$ are (explicit) polynomials of $n$ and $p_m = \#\set{ \mu_i = m}$ for $m = 0,\dots, \nu_1 - 1$. For a fixed tuple $d_1, \dots, d_n \ge 0$ satisfying $|d| = 3g-3+n$, consider the associated partition $\mu$ (since $U_{k,g,n}$ is symmetric, the order does not matter). Then
	\[
		\alpha_k\bigl( n, p_0, \dots, p_{\floor{\frac{3k}{2}}-1} \bigr)
		=
		\bigl[ x_1^{-d_1} \cdots x_n^{-d_n} \bigr] \, U_{k,g,n}(\bm{x})
		=
		\sum_{\substack{
			|\nu| \le \min\{ \floor{\frac{3k+n-1}{2}},\, 3k-1 \} \\
			\nu_1 \le \floor{\frac{3k}{2}} ,\, \ell(\nu) \le n}
		}
			C_{k,n,\nu} \, M_{n,\mu,\nu}
	\]
	is a polynomial of $n$ and $p_m = \#\set{ d_i = m}$ for $m = 1, \dots, \floor{\frac{3k}{2}}-1$. This concludes the proof.
\end{proof}

The three main ingredients that entered in the proof of the above results are:
\begin{enumerate}
	\item the Wronskian equation: $\psi'_+ \psi_- - \psi_+ \psi_-' = 1$;

	\item the homogeneity property and the parity relation satisfied by the formal Airy functions: $\psi_{\pm}(x;\hbar) = x^{-1/4} \, \psi_{\pm}(1;\frac{\hbar}{2A(x)})$, $\psi_{+}(x;\hbar) = \psi_{-}(x;-\hbar)$ and similarly for the derivatives;

	\item the exponent $3/2$ in the instanton action $A(x) = \frac{4}{3} x^{3/2}$, giving rise to the value $\floor{3k/2}$.
\end{enumerate}
This observation will allow us to obtain a similar result for $\Theta$-class and $r$-spin intersection numbers for which, mutatis mutandis, the same properties hold.

We can now extract coefficients from both sides of \cref{eqn:large:g:corr:Y} to get the large genus asymptotics of Witten--Kontsevich intersection numbers. The leading term, already computed in \cref{cor:leading:large:g:WK}, recovers Aggarwal's result. The polynomiality structure of the subleading corrections proved in the previous proposition confirms a conjecture of Guo--Yang \cite[conjecture~1]{GY22}.

\begin{theorem}[{Large genus asymptotics for $\psi$-class intersection numbers}] \label{thm:large:g:WK}
	For any given $n \ge 1$ and $K \ge 0$, uniformly in $d_1,\dots,d_n$ as $g \to \infty$:
	\begin{multline}
		\braket{\tau_{d_1} \cdots \tau_{d_n}} \prod_{i=1}^n (2d_i + 1)!! \\
		=
		\frac{2^n}{4\pi}  \frac{\Gamma(2g-2+n)}{( \frac{2}{3} )^{2g-2+n}}
		\left(
			\sum_{k = 0}^K
				\frac{( \frac{2}{3} )^{k}}{(2g-3+n)^{\underline{k}}} \,
				\alpha_k\bigl( n, p_0, \dots, p_{\floor{\frac{3k}{2}}-1} \bigr)
			+
			\bigO\biggl( \frac{1}{g^{K+1}} \biggr)
			\right) \,.
	\end{multline}
	Each term $\alpha_k$ in the asymptotic expansion is a polynomial in $n$ and $p_m = \#\set{ d_i = m}$ for $m = 1, \dots, \floor{\frac{3k}{2}}-1$ of degree $\le 3k-1$, with the assignment $\deg{n} = 1$ and $\deg{p_m} = m+1$. Moreover, the term $\alpha_k$ can be effectively computed from \cref{eqn:subleading:coeffs}.
\end{theorem}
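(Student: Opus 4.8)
The plan is to read off the statement by extracting the coefficient of $\prod_{i=1}^n x_i^{-d_i-3/2}$ from both sides of \eqref{eqn:large:g:corr:Y}. On the left, \cref{thm:genus:expns:Airy} identifies this coefficient with $(-2)^{-(2g-2+n)}2^{-n}\braket{\tau_{d_1}\cdots\tau_{d_n}}\prod_{i=1}^n(2d_i+1)!!$. On the right, \cref{cor:poly:subleading} gives $[\prod_i x_i^{-d_i}]\,U_{k,g,n}=\alpha_k(n,p_0,\dots,p_{\floor{3k/2}-1})$ for the main terms, leaving a contribution $\varepsilon^{(K)}_{g,n,\bm{d}}$ produced by the error. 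Matching the two sides and simplifying the elementary prefactors — using $(-1)^n(-2)^{2g-2+n}=2^{2g-2+n}$ and $2^{2g-2+n}(\tfrac43)^{-(2g-2+n)}=(\tfrac23)^{-(2g-2+n)}$ — reproduces exactly the asserted identity, with main term $\frac{2^n}{4\pi}\frac{\Gamma(2g-2+n)}{(2/3)^{2g-2+n}}\sum_{k=0}^K\frac{(2/3)^k}{(2g-3+n)^{\underline{k}}}\alpha_k$ and error $\frac{2^n}{4\pi}\frac{\Gamma(2g-2+n)}{(2/3)^{2g-2+n}}\varepsilon^{(K)}_{g,n,\bm{d}}$. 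The polynomiality of $\alpha_k$ in $n$ and the $p_m$, the weighted-degree bound $3k-1$ (with $\deg n=1$, $\deg p_m=m+1$), and its effective computability via \eqref{eqn:subleading:coeffs} are then nothing but \cref{cor:poly:subleading} restated (built on the algorithm of \cref{lemma:poly:subleading}); and the case $K=0$ recovers \cref{cor:leading:large:g:WK}.

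What genuinely remains — the point deferred from the discussion after \cref{cor:leading:large:g:WK} — is to show $\varepsilon^{(K)}_{g,n,\bm{d}}=\bigO(1/g^{K+1})$ \emph{uniformly in $d_1,\dots,d_n$}. The estimate of \cref{prop:large:g:corr:WK} came, via \cref{thm:large:order} and Watson's \cref{Watson:lemma}, with an implied constant depending on $\bm{x}$; to eliminate the dependence on $\bm{d}$ I would recover $\varepsilon^{(K)}_{g,n,\bm{d}}$ by Cauchy's formula on the polytorus $\{|x_i|=\rho\}$ for a fixed large $\rho$. Writing $\mc{R}^{(K)}_{g,n}(\bm{x})$ for $(x_1\cdots x_n)^{3/2}W_{g,n}(\bm{x})$ minus its order-$K$ truncation — a polynomial in $x_1^{-1},\dots,x_n^{-1}$ of total degree $3g-3+n$ — a Cauchy estimate turns a bound $\sup_{|x_i|=\rho}|\mc{R}^{(K)}_{g,n}|\le C\,\rho^{3n/2}\,\Gamma(2g-2+n)\,\bigl(\tfrac43\rho^{3/2}\bigr)^{-(2g-2+n)}g^{-(K+1)}$ into $|\varepsilon^{(K)}_{g,n,\bm{d}}|\le C'\rho^{\,n}g^{-(K+1)}$, in which the dependence on $\bm{d}$ has disappeared precisely because $\sum_i d_i=3g-3+n$ is fixed. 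So the whole matter reduces to that sup-bound, which is \cref{prop:large:g:corr:WK} on the generic part of the polytorus, where all instanton actions $A(x_i)=\tfrac43 x_i^{3/2}$ share the modulus $\tfrac43\rho^{3/2}$.

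The main obstacle I anticipate is the behaviour of this bound along the diagonals $\sqrt{x_i}=\pm\sqrt{x_j}$, where the Borel-method estimate superficially degenerates because the Taylor coefficients $W^{(i)}_{k,n}$ of the minors have simple poles there (\cref{lemma:poles:minors}). These poles must cancel once the terms for distinct $i$ are summed — as they must, since $\mc{R}^{(K)}_{g,n}$ is an honest polynomial, regular on all of the polytorus — and the plan is to make this cancellation quantitative at the level of the remainders, i.e.\ to apply \cref{lemma:poles:minors} to the full tails $\sum_{k>K}W^{(i)}_{k,n}\hbar^k$ (equivalently, to the minors $\widehat W^{(i)}_n$ themselves) rather than to finitely many coefficients, thereby producing a bound on $\sup|\mc{R}^{(K)}_{g,n}|$ with a constant independent both of the point on the polytorus and of $g$. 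Granting this, the coefficient extraction above closes the proof. The degenerate case $n=1$, where $\bm{d}=(3g-2)$ is forced and uniformity is vacuous, follows instead from the closed formula $\braket{\tau_{3g-2}}=1/(24^g\,g!)$ together with Stirling's expansion, which reproduces the series with the same data $\ms{A}=\tfrac23$ and $\ms{S}_{\ms{A}}=1$.
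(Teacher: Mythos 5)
Your reduction of the statement to coefficient extraction from \cref{eqn:large:g:corr:Y}, the bookkeeping of prefactors, and the appeal to \cref{cor:poly:subleading} for the polynomiality and computability of $\alpha_k$ all match the paper's argument. The gap is in the one step you yourself flag as the remaining obstacle: the uniform bound on the error term. You propose to run the Cauchy estimate on the polytorus $\{|x_i|=\rho\}$ with a \emph{common} radius, and then to repair the degeneration of the Borel-method estimate along the diagonals $\sqrt{x_i}=\pm\sqrt{x_j}$ by a quantitative cancellation among the tails of the minors. This is not a cosmetic difficulty: on a common-radius polytorus the instanton actions $\pm A(x_i)$ collide in pairs, so the very decomposition of \cref{thm:large:order} into separate Hankel contours (which requires the singularities to lie on distinct rays, cf.\ \cref{rem:princ:sheet}) breaks down, and the individual minors $W^{(i)}_n$ genuinely have poles there by \cref{lemma:poles:minors}. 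Making the cancellation of these poles quantitative \emph{at the level of the remainders}, with constants independent of the point and of $g$, is essentially a new estimate that you have not supplied, and it is precisely the content you would need to prove.

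The paper avoids this entirely by integrating over circles of \emph{distinct} radii $R_1<\cdots<R_n$, so that the contours never meet the diagonals and each term of the sum over $i$ can be estimated separately via Watson's \cref{Watson:lemma} (with constants uniform for $R_i$ in a compact set). The price is that the coefficient extraction produces the factor $R_2^{d_2}\cdots R_n^{d_n}/R_1^{d_1}$, which is then tamed by the specific $g$-dependent choice $R_i = 1+\tfrac{i-1}{n-1}\tfrac1g$: since $|d|=3g-3+n$, this factor is $\bigO(1)$ uniformly in $\bm{d}$, and the mismatch $(R_1/R_i)^{3g-3+n}$ in the subleading terms stays bounded as well. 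This choice of radii collapsing to a common value at rate $1/g$ is the key device your proposal is missing; with it, no cancellation along the diagonals ever needs to be invoked. Your observation that $n=1$ can be handled separately via $\braket{\tau_{3g-2}}=1/(24^g g!)$ is consistent with the paper (whose choice of radii is likewise only defined for $n\ge 2$), and your identification of the main terms is correct, but as written the proof of uniformity in $d_1,\dots,d_n$ is incomplete.
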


\begin{proof}
	Our starting point is the application of the Borel transform method, \cref{thm:large:order}, to the $n$-point function. From the proof of \cref{thm:large:order}, after the application of Cauchy's theorem and a deformation of the contour around the origin into several Hankel contours along the logarithmic branch-cuts, we find
	\begin{multline}\label{eqn:asympt:WK:start}
		\frac{W_{g,n}(\bm{x})}{\Gamma(2g-2+n)}
		-
		\sum_{i=1}^n \frac{S}{\pi} \frac{1}{A(x_i)^{2g-2+n}}
		\sum_{k=0}^K
			\frac{A(x_i)^{2g-2+n-k}}{(2g-3+n)^{\underline{k}}} \,
			W_{k,n}^{(i)}(\bm{x}) \\
		=
		(2g - 2 + n) \sum_{i=1}^n \frac{S}{\pi} \, \frac{1}{A(x_i)^{2g-2+n}}
			\int_{0}^{+\infty} e^{-t(2g-2+n)} \, \widehat{W}_{n}^{(i)} \bigl(\bm{x};A(x_i)(e^t-1);K \bigr) \, dt \,.
	\end{multline}
	Here we recall that $S=1$ and $A(x) = \frac{4}{3} x^{3/2}$ are the Stokes constant and the instanton action of the Airy functions, and $\widehat{W}_{n}^{(i)}(\bm{x};s;K)$ denotes the correlator $\widehat{W}_{n}^{(i)}(\bm{x};s)$ minus its Taylor expansion at $s = 0$ up to order $K$. We also notice that the application of the above theorem requires all singularities to be distinct. Since the singularities are located at $\pm A(x_i)$, we require $x_i \neq \pm x_j$ for all $i \ne j$. From \cref{eqn:asympt:WK:start}, it is clear that the dominant contribution to the asymptotics comes from the closest singularity to the origin (see \cref{app:sings} for a visualisation of the phenomenon). Without loss of generality, we can suppose that $R_1 < \cdots < R_n$, so that $i = 1$ is the dominant contribution.

	On the other hand, after multiplication by $(x_1 \cdots x_n)^{3/2}$, the left-hand side is a homogeneous symmetric polynomial in $\bm{x}^{-1}$ of degree $3g-3+n$ (thanks to \cref{thm:genus:expns:Airy,lemma:poly:subleading}). Our goal is to extract the coefficient of a fixed monomial $x_1^{-d_1} \cdots x_{n}^{-d_n}$ and show that the resulting value has the appropriate asymptotics as $g \to \infty$, uniform in $d_1,\dots,d_n$. As the dominant contribution comes from $i = 1$, it is natural to normalise the left-hand side of \cref{eqn:asympt:WK:start} as:
	\begin{multline*}
		\Biggl( \prod_{i=1}^{n} 2x_i^{3/2} \Biggr) (-2A(x_1))^{2g-2+n} \, x_1^{n/2}
		\Bigg[
			\frac{W_{g,n}(\bm{x})}{\Gamma(2g-2+n)} \\
			-
			\sum_{i=1}^n \frac{S}{\pi} \frac{1}{A(x_i)^{2g-2+n}}
			\sum_{k=0}^K
				\frac{A(x_i)^{2g-2+n-k}}{(2g-3+n)^{\underline{k}}} \,
				W_{k,n}^{(i)}(\bm{x})
		\Bigg] .
	\end{multline*}
	Indeed, a simple degree counting shows that the left-hand side is a homogeneous polynomial of degree $3g-3+n$ in $x_1,x_2^{-1},\dots,x_n^{-1}$. Extracting the coefficient of $x_1^{d_1} x_2^{-d_2} \cdots x_n^{-d_n}$ yields
	\begin{equation*}
		\mc{C}_{g,d,K}
		\coloneqq
		\frac{( \frac{2}{3} )^{2g-2+n}}{\Gamma(2g-2+n)}
		\braket{\tau_{d_1} \cdots \tau_{d_n}} \prod_{i=1}^n (2d_i + 1)!!
		-
		\frac{2^n}{4\pi}
		\sum_{k = 0}^K
			\frac{( \frac{2}{3} )^{k}}{(2g-3+n)^{\underline{k}}} \,
			\alpha_k\bigl( n, p_0, \dots, p_{\floor{\frac{3k}{2}}-1} \bigr)
		\,.
	\end{equation*}
	
	The thesis is equivalent to showing that $\mc{C}_{g,d,K} = \bigO( g^{-K-1} )$, uniformly in $d$. In other words, there must exist $g_0 \ge 0$ and $C > 0$, depending on $n$ and $K$ but not on $d$, such that for $g \ge g_0$ we have $|\mc{C}_{g,d,K}| \le C \, g^{-K-1}$. In order to prove such a claim, let us consider the appropriately normalised right-hand of \cref{eqn:asympt:WK:start}. The coefficient extraction can be achieved by applying the operator
	\begin{equation*}
		\mc{O}_{d}
		\colon
		Z \longmapsto
		\left(
			\prod_{i=1}^n \oint_{|x_i| = R_i} \frac{dx_i}{2\pi\iu} \, x_i^{\widetilde{d}_i-1}
		\right) Z(\bm{x}) \,.
	\end{equation*}
	Here $Z$ is any continuous function on $\set{ \bm{x} \in \C^n | |x_i| = R_i }$, $\widetilde{d}_1 = -d_1$ and $\widetilde{d}_j = d_j$ for all $j = 2,\dots,n$. Thus, looking at the right-hand side of \cref{eqn:asympt:WK:start}, we find
	\begin{equation*}
		\mc{C}_{g,d,K}
		=
		(2g-2+n) \, \mc{O}_{d} \biggl[
			\sum_{i=1}^n \left( \frac{x_1}{x_i} \right)^{3g-3+n}
			\int_{0}^{+\infty} e^{-t(2g-2+n)} \, \widehat{Z}_{n}^{(i)}(\bm{x};t;K) \, dt
		\biggr] \,,
	\end{equation*}
	where we have set
	\begin{equation*}
		\widehat{Z}_{n}^{(i)}(\bm{x};t;K)
		\coloneqq
		(-1)^n \frac{S}{\pi} \,
		\Biggl( \prod_{i=1}^{n} 2x_i^{3/2} \Biggr) \,
		\widehat{W}_{n}^{(i)} \bigl(\bm{x};A(x_i)(e^t-1);K \bigr) \,.
	\end{equation*}
	Notice that summing over $n$ we find a polynomial in $x_1,x_2^{-1},\dots,x_n^{-1}$, though the $j$-th summand has simple poles at $x_i = x_j$. However, since we are integrating along circles of different radii $R_1 < \cdots < R_n$, there is no issue with the application of $\mc{O}_{d}$. Furthermore, it should be noted that $\widehat{Z}_{n}^{(i)}$ satisfies the following properties as functions of $t$ for $|x_i| = R_i$ fixed:
	\begin{itemize}
		\item {Polynomial growth at the origin.} $\widehat{Z}_{n}^{(i)}(\bm{x};t;K) = \bigO(t^{K+1})$ as $t \to 0^+$, since the left hand-side is obtained from the truncation of the Taylor series of $\widehat{W}_{n}^{(i)}$ at the origin.

		\item {Exponential growth at infinity.} $\widehat{Z}_{n}^{(i)}(\bm{x};t;K) = \bigO(e^{t\nu})$ as $t \to +\infty$ for some exponent $\nu \in \R_+$, since $\widehat{W}_{n}^{(i)}(\bm{x};A(x_i)s)$ is a sum of convolutions of wave functions with polynomial growth at $s = \infty$. The latter property can be easily checked from the integral representation of the Airy functions.
	\end{itemize}
	From the homogeneity properties of the Airy function, it can be shown that both estimates are uniform in $R_i$, as long as all $R_i$ are fixed in a compact set, say $R_i \in [1,2]$. In order to prove the claim, let us analyse separately the term corresponding to $i = 1$ (the leading term), and the one corresponding to $i > 1$ (the exponentially subleading terms).

	{\sc Leading term.} Consider
	\begin{equation*}
		\mc{C}_{g,d,K}^{\textup{lead}}
		\coloneqq
		(2g-2+n) \,
		\mc{O}_{d} \biggl[
			\int_{0}^{+\infty} e^{-t(2g-2+n)} \, \widehat{Z}_{n}^{(1)}(\bm{x};t;K) \, dt
		\biggr] \,.
	\end{equation*}
	As the integrand is analytic in the integration domain, we can exchange the integral in $t$ with the one in $\bm{x}$ to get
	\begin{equation*}
		| \mc{C}_{g,d,K}^{\textup{lead}} |
		\le
		(2g-2+n)
		\int_{0}^{+\infty} e^{-t(2g-2+n)} \left|
			\mc{O}_{d} \Bigl[ \widehat{Z}_{n}^{(1)}(\bm{x};t;K) \Bigr]
		\right| dt \,.
	\end{equation*}
	Notice that the integrand $| \mc{O}_{d} [ \widehat{Z}_{n}^{(1)}(\bm{x};t;K) ] |$ can be bounded by
	\[
		\left| \mc{O}_{d} \Bigl[ \widehat{Z}_{n}^{(1)}(\bm{x};t;K) \Bigr] \right|
		\le
		\frac{R_{2}^{d_2} \cdots R_{n}^{d_n}}{R_1^{d_1}} \, f_{n,K}(t) \,,
	\]
	where $f_{n,K}$ satisfies the hypothesis of Watson's \cref{Watson:lemma}: it behaves like $\bigO(t^{K+1})$ as $t \to 0^+$ and it has exponential growth as $t \to + \infty$. Both estimates depend on $n$ and $K$, but not on $R_i$ and $d_i$. Applying Watson's \cref{Watson:lemma}, we find
	\[
		\mc{C}_{g,d,K}^{\textup{lead}}
		=
		(2g - 2 + n)
		\frac{R_{2}^{d_2} \cdots R_{n}^{d_n}}{R_1^{d_1}} \,
		\bigO\biggl( \frac{1}{(2g - 2 + n)^{K+2}} \biggr)
		=
		\frac{R_{2}^{d_2} \cdots R_{n}^{d_n}}{R_1^{d_1}} \,
		\bigO\biggl( \frac{1}{g^{K+1}} \biggr) \,.
	\]
	Again, $\bigO( g^{-K-1} )$ depends on $n$ and $K$ but not on $d$.

	{\sc Exponentially subleading terms.} With the same argument as before, we deduce that
	\begin{equation*}
	\begin{split}
		\mc{C}_{g,d,K}^{\textup{sub}}
		& \coloneqq
		(2g-2+n) \,
		\mc{O}_{d} \Biggl[
			\sum_{i=2}^n \left( \frac{x_1}{x_i} \right)^{3g-3+n}
			\int_{0}^{+\infty} e^{-t(2g-2+n)} \, \widehat{Z}_{n}^{(i)}(\bm{x};t;K) \, dt
		\Biggr] \\
		& =
		\frac{R_{2}^{d_2} \cdots R_{n}^{d_n}}{R_1^{d_1}} \,
		\sum_{i=2}^n \left( \frac{R_1}{R_i} \right)^{3g-3+n}
		\bigO\biggl( \frac{1}{g^{K+1}} \biggr) \,.
	\end{split}
	\end{equation*}
	Notice that $(R_1/R_i)^{3g-3+n}$ are exponentially suppressed terms.

	{\sc Conclusion.} To conclude, let us choose the radii $R_i = 1 + \frac{i-1}{n-1} \frac{1}{g}$. In the limit $g \to + \infty$ all radii coincide. It is then natural to expect that all terms would give a contribution behaving as $\bigO(g^{-K-1})$. Indeed, since $|d| = 3g-3+n$, we find
	\[
	\begin{split}
		\mc{C}_{g,d,K}^{\textup{lead}}
		& =
		\left(1 + \frac{1}{g} \right)^{3g-3+n}
		\bigO\biggl( \frac{1}{g^{K+1}} \biggr)
		=
		\bigO\biggl( \frac{1}{g^{K+1}} \biggr) \,, \\
		\mc{C}_{g,d,K}^{\textup{sub}}
		& =
		\left(1 + \frac{n-1}{g} \right)^{3g-3+n} \,
		(n-1) \left( \frac{(n-1)g}{1 + (n-1)g} \right)^{3g-3+n}
		\bigO\biggl( \frac{1}{g^{K+1}} \biggr)
		=
		\bigO\biggl( \frac{1}{g^{K+1}} \biggr) \,.
	\end{split}
	\]
	Here we used the fact that $(1 + \frac{1}{g} )^{3g-3+n} = (1 + \frac{n-1}{g} )^{3g-3+n} = (n-1) ( \frac{(n-1)g}{1 + (n-1)g} )^{3g-3+n} = \bigO(1)$. All estimates depend on $n$ and $K$, but not on $d$. All together, we find the thesis: $\mc{C}_{g,d,K} = \bigO(g^{-K-1})$ uniformly in $d$.
\end{proof}

\subsection{Trans-series structure}\label{subsec:transseries}
For the reader familiar with resurgence and trans-series analysis, it can be shown that the correlators have a full trans-series expansion that can be constructed as follows. Consider the matrix
\begin{equation}
	\ms{M}(x;\hbar)
	=
	\Psi(x;\hbar)
	\,\ms{E}\,
	\Psi^{-1}(x;\hbar) \,,
	\qquad\qquad
	\ms{E}
	=
	\begin{pmatrix}
		\frac{\sigma}{2} & -\sigma_- \\
		\sigma_+ & -\frac{\sigma}{2}
	\end{pmatrix} ,
\end{equation}
which depends on trans-series parameters $\sigma, \sigma_+, \sigma_-$ (but we omit its dependence for the sake of notation simplicity). Notice that $\ms{E}$ is a generic element of $\mf{sl}_2(\C)$, rather than a Cartan element as in \cref{eqn:M:matrix}. We decompose $\ms{M}$ as
\begin{equation}
	\ms{M}
	=
	\sigma \, M
	+
	\sigma_{+} \, e^{\frac{A(x)}{\hbar}} M_+
	+
	\sigma_{-} \, e^{-\frac{A(x)}{\hbar}} M_-
	\,,
\end{equation}
where $M$ and $M_{\pm}$ are the formal power series in $\hbar$ previously encountered. 
We can then define the $n$-point correlators with the exact same determinantal formula as in \cref{eqn:n:pnt:Airy}:
\begin{equation}
	\ms{W}_1(x_1;\hbar)
	\coloneqq
	- \frac{1}{\hbar} \Tr{\bigl( \mathcal{D}(x_1) \ms{M}(x_1;\hbar) \bigr)} \,,
	\qquad
	\ms{W}_n(\bm{x};\hbar)
	\coloneqq
	(-1)^{n-1} \sum_{\sigma \in S_n^{\textup{cyc}}}
	\frac{\Tr{ \bigl( \prod_{i=1}^n \ms{M}(x_{\sigma^i(1)};\hbar) } \bigr) }{\prod_{i=1}^n ( x_{i} - x_{\sigma(i)} )}
	\,.
\end{equation}
In the above definition we have chosen different constants $\sigma_{i}$, $\sigma_{i,+}$, $\sigma_{i,-}$ for different $i=1,\dots,n$. Thus, we have the trans-series expansion
\begin{equation}\label{eqn:transseries}
	\ms{W}_n(\bm{x};\hbar)
	=
	\sum_{I_{+} \sqcup I_{-} \subseteq [n]}
		\prod_{i \not \in I_{+} \sqcup I_{-} } \sigma_{i}
		\prod_{i \in I_{+} } \sigma_{i,+}
		\prod_{i \in I_{-} } \sigma_{i,-}
		\,
		e^{\frac{1}{\hbar} \bigl( \sum_{i \in I_{+}} A(x_i) - \sum_{i \in I_{-}} A(x_i) \bigr)}
		\,
		\ms{W}^{(I_{+},I_{-})}_n(\bm{x};\hbar) \,.
\end{equation}
Notice that, by definition, the perturbative sector $\ms{W}^{(\varnothing,\varnothing)}_{n}$ coincides with the $W_n$ defined in \cref{eqn:n:pnt:Airy} and stores $\psi$-class intersection numbers. Moreover, the $(1,0)$- and $(0,1)$-instanton sectors coincide with the correlators defined in \cref{eqn:1inst:corr}: $\ms{W}^{(\{i\},\varnothing)}_n = W^{(+,i)}_n$ and $\ms{W}^{(\varnothing,\{i\})}_n = W^{(-,i)}_n$. In particular, they govern the large genus asymptotics of the intersection numbers, but they do not have (to the best of our knowledge) an enumerative-geometric interpretation. It would be interesting to explore whether such a geometric interpretation actually exists for all multi-instanton sectors.

Finally, we remark that the trans-series expansion is in accordance with the fact that $\ms{W}_n$ satisfies a linear ODE of order $3^n$ (see \cite{EMO} for a proof). For instance, the $1$-point correlator satisfies
\begin{equation}
	\ms{W}^{'''}_1 - 4x \, \ms{W}'_1 + 2 \hbar \, \ms{W}_1 = 0 \,,
\end{equation}
where $f'$ stands for $\hbar \frac{d}{dx}f$. In particular, a perturbative ansatz of the form $W_1 = \sum_{g \ge 0} w_g \frac{\hbar^{2g-1}}{x^{3g-1/2}}$ would give rise to a recursive relation for the coefficients $w_g$ that can be explicitly solved, and is equivalent to $\braket{\tau_{3g-2}}_{g} = \frac{1}{24^g g!}$ after setting $w_0 = 1$.

\section{Large genus asymptotics of \texorpdfstring{$\Theta$}{Theta}-class intersection numbers}\label{sec:Norbury}

The goal of this section is to prove the large genus asymptotics of $\Theta$-class intersection numbers. To the best of our knowledge, their asymptotics is not known in the literature. The strategy is, mutatis mutandis, the same as the one employed in the previous section: we apply the Borel transform method to the $n$-point function, which in turn is built out of formal solutions of the Bessel differential equations through the determinantal formulae. We are going to skip most of the details, and highlight only the substantial changes compared to the previous section.

Before proceeding with the details of the determinantal formula, let us briefly recall the definition of these intersection numbers. The \textit{$\Theta$-class}, introduced by Norbury in \cite{Nor23}, is a collection of cohomology classes (more precisely, a cohomological field theory) of pure degree:
\begin{equation}
	\Theta_{g,n} \in H^{2(2g-2+n)}(\Mbar_{g,n},\Q) \,.
\end{equation}
Their construction goes through the moduli space of spin curves: let $\Mbar_{g,n}^{\textup{spin}}$ be the moduli space parametrising roots (also known as spin structures) of the form
\begin{equation}
	L^{\otimes 2} \cong \omega_C \left( \sum_{i=1}^n p_i \right) \,,
\end{equation}
where $[C,p_1,\dots,p_n] \in \Mbar_{g,n}$ and $\omega_C$ denotes the canonical line bundle on $C$. There is a forgetful map $\pi \colon \Mbar_{g,n}^{\textup{spin}} \to \Mbar_{g,n}$ that forgets the spin structure. Consider the vector bundle $\mc{E}_{g,n} \to \Mbar_{v,n}^{\textup{spin}}$ whose fibre over $[C,p_1,\dots,p_n,L]$ is $H^1(C,L^{\ast})^{\ast}$. The $\Theta$-class is then defined as the push-forward of the (normalised) top Chern class of $\mc{E}_{g,n}$:
\begin{equation}
	\Theta_{g,n}
	\coloneqq
	2^{g-1+n} \, \pi_{\ast} c_{\textup{top}}\left( \mc{E}_{g,n} \right) .
\end{equation}
Define the $\Theta$-class intersection numbers as
\begin{equation}
	\braket{\tau_{d_1} \cdots \tau_{d_n}}^{\Theta}
	\coloneqq
	\int_{\Mbar_{g,n}}
		\Theta_{g,n} \prod_{i=1}^n \psi_i^{d_i} \,.
\end{equation}
They are non-zero only for $|d| = g-1$. In \cite{Nor23}, Norbury conjectures that the partition function associated the $\Theta$-class coincides with Brézin--Gross--Witten solution of the KdV hierarchy, discovered in the '80s in the context of unitary matrix models \cite{BG80,GW80}. Since then, a proof of Norbury's conjecture has been proposed in \cite{CGG} through Givental formalism and its connection with topological recursion.

In this section we are interested in yet another method for computing $\Theta$-class intersection numbers, namely the determinantal formula with building blocks being the asymptotic solutions of the ($\hbar$-de\-pen\-dent) \textit{Bessel ODE}:
\begin{equation}
	\left( \hbar^2 \frac{d}{dx} x \frac{d}{dx} - 1 \right) \psi(x;\hbar) = 0 \,.
\end{equation}
The general solution is given by the Bessel integral: $\int_{\gamma} e^{-\frac{1}{\hbar}V(t,x)} \frac{dt}{t}$ with $V(t,x) \coloneqq -\frac{1}{t} - xt$ and $\gamma$ a properly chosen integration contour. The asymptotic solutions constructed through Lefschetz thimbles are given by
\begin{equation}
	\psi_{\pm}(x;\hbar)
	\coloneqq 
	\frac{e^{\mp \frac{V(x)}{\hbar}}}{\sqrt{2}} x^{-1/4}
	\sum_{k = 0}^{\infty}
		\frac{(\tfrac{1}{2})^{\overline{k}} (\tfrac{1}{2})^{\overline{k}}}{2^k k!} \left(\mp \frac{\hbar}{V(x)} \right)^{k} \,,
\end{equation}
where $\pm V(x) \coloneqq \mp 2 x^{1/2}$ are the critical values of the potential. Here $(x)^{\overline{k}} \coloneqq x (x+1) \cdots (x+k-1)$ denotes the rising factorial. Notice that $\psi_{\pm}$ correspond to the (properly renormalised) asymptotic expansions of the modified Bessel functions $\mathrm{K}_0$ and $\mathrm{I}_0$. The Bessel ODE can be re-written as a $2 \times 2$ system:
\begin{equation}
	\hbar \frac{d}{dx} \Psi_0(x;\hbar) = \mathcal{D}_0(x;\hbar) \Psi_0(x;\hbar) \,,
	\qquad
	\mathcal{D}_0(x;\hbar)
	=
	\begin{pmatrix}
		0 & 1 \\
		\tfrac{1}{x} & -\tfrac{\hbar}{x}
	\end{pmatrix} \,.
\end{equation}
However, we notice that $\det(\Psi_0(x;\hbar)) = x^{-1}$ is not constant. We can gauge-transform the above system so that the determinant of the wave matrix is normalised to $1$. Setting $\Psi(x;\hbar) \coloneqq \left(\begin{smallmatrix} 1 & 0 \\ 0 & x \end{smallmatrix}\right) \Psi_0(x;\hbar)$, the system becomes
\begin{equation}
	\hbar \frac{d}{dx} \Psi(x;\hbar) = \mathcal{D}(x) \Psi(x;\hbar) \,,
	\qquad\qquad
	\mathcal{D}(x)
	=
	\begin{pmatrix}
		0 & \tfrac{1}{x} \\
		1 & 0
	\end{pmatrix} \,.
\end{equation}
Again, we denote the rows of $\Psi$ as $\psi_{\pm}$ and $\psi_{\pm}'$. Because of the gauge transformation, we have $\psi_{\pm}' = x \hbar \frac{d}{dx}\psi_{\pm}$. After separating the exponential part, denoting the resulting formal power series with a tilde, and taking the Borel transforms, we find
\begin{equation}
	\widehat{\psi}_{\pm}(x;s)
	=
	\frac{x^{-1/4}}{\sqrt{2}} \, \pFq{2}{1} \left( \tfrac{1}{2},\tfrac{1}{2} ; 1 ; \pm \frac{s}{A(x)} \right) \,,
	\qquad
	\widehat{\psi}_{\pm}'(x;s)
	=
	\pm \frac{x^{1/4}}{\sqrt{2}} \, \pFq{2}{1} \left( \tfrac{3}{2},-\tfrac{1}{2} ; 1 ; \pm \frac{s}{A(x)} \right) \,,
\end{equation}
where $\pm A(x) \coloneqq \pm 4 x^{1/2}$ are the instanton actions for the Bessel ODE. In particular, they converge in a disc of positive radius centred at the origin, and can be extended analytically to functions with logarithmic singularities at $s = \pm A(x)$. One can check the following behaviour at the singularities:
\begin{equation}
\begin{aligned}
	\widehat{\psi}_{\pm}(x;s)
	& =
		- \frac{S}{2\pi} \,
		\widehat{\psi}_{\mp}\bigl( x;s \mp A(x) \bigr)
		\log\bigl( s \mp A(x) \bigr)
		+
		\text{holomorphic at } \pm A(x) \,, \\
	\widehat{\psi}_{\pm}'(x;s)
	& =
		- \frac{S}{2\pi} \,
		\widehat{\psi}_{\mp}'\bigl( x;s \mp A(x) \bigr)
		\log\bigl( s \mp A(x) \bigr)
		+
		\text{holomorphic at } \pm A(x) \,,
\end{aligned}
\end{equation}
where the Stokes constant is $S = 2$. Again, $\widehat{\psi}_{\pm}$ and $\widehat{\psi}_{\pm}'$ are simple resurgent functions whose singularity structure is fully under control.

Consider now the matrix $M \coloneqq \frac{1}{2} \, \Psi \left(\begin{smallmatrix} 1 & 0 \\ 0 & -1 \end{smallmatrix} \right) \Psi^{-1}$, and define the $n$-point \textit{Bessel correlators} via determinantal formulae:
\begin{equation}\label{eqn:n:pnt:Bessel}
	W_1(x_1;\hbar)
	\coloneqq
	- \frac{1}{\hbar} \Tr{\bigl( \mathcal{D}(x_1) M(x_1;\hbar) \bigr)} \,,
	\qquad
	W_n(\bm{x};\hbar)
	\coloneqq
	(-1)^{n-1} \sum_{\sigma \in S_n^{\textup{cyc}}}
	\frac{\Tr{ \bigl( \prod_{i=1}^n M(x_{\sigma^i(1)};\hbar) } \bigr) }{\prod_{i=1}^n ( x_{i} - x_{\sigma(i)} )} \,.
\end{equation}
Again, one finds a genus expansion of the correlators with the expansion coefficients storing interesting enumerative invariants, namely $\Theta$-class intersection numbers. The determinantal formula for such intersection numbers was deduced in \cite{DYZ21} assuming the validity of Norbury's conjecture \cite{Nor23}, later proved in \cite{CGG}.

\begin{theorem}[{Genus expansion of the Bessel correlators \cite{DYZ21}}]
	The $n$-point Bessel correlators $W_{n}$ admit the following $\hbar$-expansion:
	\begin{equation}\label{eqn:genus:expns:Bessel}
		W_{n}(\bm{x};\hbar)
		=
		\sum_{g = 0}^{\infty}
			\hbar^{2g-2+n} \, W_{g,n}(\bm{x}) \,.
	\end{equation}
	Moreover, $W_{g,n}$ stores $\Theta$-class intersection numbers: if $2g-2+n>0$, 
	\begin{equation}
		W_{g,n}(\bm{x})
		=
		(-2)^{-(2g-2+n)}
		\sum_{\substack{ d_{1},\dots,d_n \ge 0 \\ d_{1}+\cdots+d_n = g-1 }}
			\braket{ \tau_{d_1} \cdots \tau_{d_n} }^{\Theta}
			\prod_{i=1}^n \frac{(2d_i+1)!!}{2 \, x_i^{d_i+3/2}} \,.
	\end{equation}
\end{theorem}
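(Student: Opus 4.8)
The plan is to follow, \emph{mutatis mutandis}, the argument behind \cref{thm:genus:expns:Airy}, replacing the Airy wave functions by the Bessel ones at each step. \textbf{Step 1: $W_n$ is a formal power series in $\hbar$ with the stated grading.} Writing $\psi_{\pm} = e^{\mp V(x)/\hbar}\,\widetilde\psi_{\pm}$ and likewise for the derivatives, the exponential prefactors $e^{\pm V(x)/\hbar}$ cancel in every quadratic combination of a ``$+$'' and a ``$-$'' quantity, i.e.\ in each entry $\tfrac12(\widetilde\psi_+'\widetilde\psi_- + \widetilde\psi_+\widetilde\psi_-')$, $\widetilde\psi_+\widetilde\psi_-$, $\widetilde\psi_+'\widetilde\psi_-'$ of $M = \tfrac12\,\Psi\,\diag(1,-1)\,\Psi^{-1}$; hence $M$, and therefore each $W_n$, is a (matrix-valued, resp.\ scalar) formal power series in $\hbar$. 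The gauge twist $\Psi = \diag(1,x)\,\Psi_0$ was introduced precisely so that $\det\Psi = 1$, which makes the determinantal formula \eqref{eqn:n:pnt:Bessel} behave exactly as in the Airy case, and regularity of $W_n$ along the diagonals $x_i = x_j$ follows as in \cite{BBE15}. That only the powers $\hbar^{2g-2+n}$ occur is then a weight-homogeneity argument together with the parity relation $\psi_-(x;\hbar) = \psi_+(x;-\hbar)$ (and similarly for derivatives): assigning weight $1$ to $\hbar$ and weight $2$ to $x$ makes the Bessel ODE homogeneous, so $\widetilde\psi_{\pm}$ is a series in $\hbar/\sqrt x$, each entry of $M$ is weight-homogeneous, and the structure of \eqref{eqn:n:pnt:Bessel} forces $W_n(\bm x;\hbar) = \sum_{g\ge0}\hbar^{2g-2+n} W_{g,n}(\bm x)$ with $W_{g,n}$ a polynomial in $x_i^{-1}$ times $(x_1\cdots x_n)^{-3/2}$ of the expected degree.

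\textbf{Step 2: identification of $W_{g,n}$ with $\Theta$-class intersection numbers.} Here I would invoke the matrix-resolvent description of KdV tau functions: for any tau function of the KdV hierarchy, the connected $n$-point functions of $\partial_x^2\log\tau$ are computed by the determinantal formula built from the wave matrix of the associated Lax operator (the Bertola--Dubrovin--Yang theorem, which is how \cite{DYZ21} proceeds). The Bessel ODE is the quantum curve at the Brézin--Gross--Witten point of the KdV hierarchy, so the $W_n$ of \eqref{eqn:n:pnt:Bessel} are the generating series of connected BGW correlators. By Norbury's conjecture \cite{Nor23}, proved in \cite{CGG} via Givental formalism and its compatibility with topological recursion, the BGW tau function is the generating function of $\Theta$-class intersection numbers; feeding this into the matrix-resolvent identity and performing the standard Miwa-type substitution relating the KdV times $t_k$ to the variables $x_i^{-1}$ (which produces the factors $(2d_i+1)!!$ and $x_i^{-d_i-3/2}$), together with the $\hbar$-rescaling (which produces the overall $(-2)^{-(2g-2+n)}$), yields the displayed formula. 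A more self-contained variant bypasses tau functions: both the $W_n$ of \eqref{eqn:n:pnt:Bessel} and the topological recursion correlators $\omega_{g,n}$ of the Bessel spectral curve satisfy the same loop equations \cite{BE,BEO15}, hence coincide under $x_i = z_i^2$, and one then quotes the identification of Bessel topological recursion with $\Theta$-class intersection numbers from \cite{CGG}.

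\textbf{Main obstacle.} The conceptual content is entirely imported from \cite{DYZ21,CGG}; the real work is the bookkeeping of normalisations. One must simultaneously keep track of the gauge twist $\Psi = \diag(1,x)\Psi_0$, the Wronskian normalisation $\psi_-\psi_+' - \psi_-'\psi_+ = 1$, the normalisation $\Theta_{g,n} = 2^{g-1+n}\,\pi_*c_{\textup{top}}(\mc E_{g,n})$ from \cite{Nor23}, and the Miwa change of variables, and a single stray factor of $2$ or sign error invalidates the identity. I would therefore pin down the constants by independently verifying a couple of low-complexity cases --- for instance $W_{1,1}$ against $\braket{\tau_0}^{\Theta}$, and one of $W_{0,3}$ or $W_{1,2}$ --- as a consistency check.
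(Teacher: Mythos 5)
The paper gives no proof of this theorem — it is imported verbatim from \cite{DYZ21}, whose derivation rests on the matrix-resolvent description of KdV tau-function correlators at the Br\'ezin--Gross--Witten point together with Norbury's conjecture (since proved in \cite{CGG}) — and your Step 2 reconstructs exactly that route, while your Step 1 (cancellation of the exponential prefactors in the $+/-$ quadratic combinations, parity in $\hbar$, and weight-homogeneity) mirrors the argument the paper sketches for the Airy analogue. The proposal is therefore correct and takes essentially the same approach as the paper, with the sensible added caveat of pinning down normalisations against low-genus cases.
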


\begin{remark}[Connection with topological recursion]
	For the reader familiar with topological recursion, $W_{g,n}$ coincides with topological recursion differentials computed from the Bessel spectral curve $( \P^1, \, x(z) = z^2, \, y(z) = z^{-1}, \, B(z_1,z_2) = \frac{dz_1 dz_2}{(z_1 - z_2)^2} )$ -- see \cite{DN19}:
	\begin{equation}
		\omega_{g,n}(\bm{z})
		=
		W_{g,n}(\bm{x}) \, dx_1 \cdots dx_n \big|_{x_i = z_i^2} \,.
	\end{equation}
\end{remark}

Using the exact same strategy as before, we find the large genus behaviour of $\Theta$-class intersection numbers. In a nutshell:
\begin{itemize}
	\item Write the correlators in terms of Bessel kernels (see also \cite{TW94b}).

	\item Study the Borel plane singularity structure of the Bessel correlators. One finds that, on the principal sheet, the Borel transform of $W_n$ has $2n$ logarithmic singularities located at $s = \pm A(x_i)$ and such that
	\begin{equation}
		\widehat{W}_{n}(\bm{x};s)
		=
		- \frac{S}{2\pi} \,
		\widehat{W}^{(\pm,i)}_{n}\bigl( \bm{x}; s \mp A(x_i) \bigr) \,
		\log\bigl( s \mp A(x_i) \bigr)
		+\text{holomorphic at } \pm A(x_i) \,.
	\end{equation}
	Here $A(x) = 4 x^{1/2}$ and $S = 2$ are the instanton action and the Stokes constant associated to the Bessel functions respectively, and $\widehat{W}^{(\pm,i)}_{n}$ are defined through determinantal formulae as in \cref{eqn:1inst:corr}.

	\item Deduce the large genus asymptotics of the correlators through the Borel transform method, and get the large genus asymptotics of the intersection numbers by extracting coefficients.
\end{itemize}

The final result is the following asymptotic formula. We omit the proof, which is completely analogous to that of \cref{thm:large:g:WK}.

\begin{theorem}[{Large genus asymptotics for $\Theta$-class intersection numbers}] \label{thm:large:g:Nor}
	For any given $n \ge 1$ and $K \ge 0$, uniformly in $d_1,\dots,d_n$ as $g \to \infty$:
	\begin{multline}
		\braket{\tau_{d_1} \cdots \tau_{d_n}}^{\Theta} \prod_{i=1}^n (2d_i + 1)!! \\
		=
		\frac{2^n}{2\pi}  \frac{\Gamma(2g-2+n)}{2^{2g-2+n}}
		\left(
			\sum_{k = 0}^K
				\frac{2^{k}}{(2g-3+n)^{\underline{k}}} \,
				\beta_k \bigl( n, p_0, \dots, p_{\floor{\frac{k}{2}}-1} \bigr)
			+
			\bigO\biggl( \frac{1}{g^{K+1}} \biggr)
			\right) \,.
	\end{multline}
	Each term $\beta_k$ in the asymptotic expansion is a polynomial in $n$ and $p_m = \#\set{ d_i = m}$ for $m = 1, \dots, \floor{\frac{k}{2}}-1$ of degree $\le k-1$, with the assignment $\deg{n} = 1$ and $\deg{p_m} = m+1$, that can be effectively computed.
\end{theorem}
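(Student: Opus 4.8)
The plan is to transcribe the proof of \cref{thm:large:g:WK} essentially line by line, replacing the Airy data everywhere by the Bessel data. First I would apply the Borel transform method \cref{thm:large:order} — in the Gevrey-2 form of \cref{rem:Gevrey:2} — to the $n$-point Bessel correlator $W_n$ of \cref{eqn:n:pnt:Bessel}. Exactly as in \cref{sec:WK}, the singularity structure of the formal Bessel wave functions (logarithmic singularities at $s = \pm A(x)$ with $A(x) = 4x^{1/2}$ and Stokes constant $S = 2$) propagates, via \cref{theorem:prod}, \cref{lemma:conv} and \cref{rem:princ:sheet}, to the statement that $\widehat{W}_n(\bm{x};s)$ is simple resurgent with $2n$ logarithmic singularities at $s = \pm A(x_i)$, whose minors $W_n^{(\pm,i)}$ are given by the determinantal formula \cref{eqn:1inst:corr} with the Bessel kernels replacing the Airy ones, the two families being exchanged by $\psi_+(x;\hbar) = \psi_-(x;-\hbar)$. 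Cauchy's theorem followed by the deformation of the contour onto Hankel contours along the cuts then yields the exact analogue of \cref{eqn:asympt:WK:start}, now with $|d| = g-1$ in place of $3g-3+n$ and with the Bessel values $S = 2$, $A(x) = 4x^{1/2}$.

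Next I would establish the Bessel versions of \cref{lemma:poles:minors}, \cref{lemma:poly:subleading} and \cref{cor:poly:subleading}. Their proofs should go through unchanged, since they rely only on three ingredients: (i) the Wronskian normalisation $\psi_+'\psi_- - \psi_+\psi_-' = 1$, which the gauge transformation $\Psi = \left(\begin{smallmatrix}1&0\\0&x\end{smallmatrix}\right)\Psi_0$ arranges; (ii) the homogeneity and parity relations $\psi_\pm(x;\hbar) = x^{-1/4}\psi_\pm(1;\tfrac{\hbar}{2A(x)})$ and $\psi_+(x;\hbar) = \psi_-(x;-\hbar)$, which hold verbatim for the Bessel wave functions and their derivatives; and (iii) the exponent $1/2$ — rather than $3/2$ — in the instanton action $A(x) = 4x^{1/2}$. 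The only effect of (iii) is a uniform rescaling of all degree bounds: the coefficients in the $\hbar$-expansion of the Bessel kernels are homogeneous of degree $m$ in $\bm{x}^{-1/2}$ at order $\hbar^m$, so the symmetric polynomials $P_{k,n}$ have degree $\le k$ in each individual variable and total degree $\le \min\{k+n-1,\,2(k-1)\}$; being even, $\deg(P_{k,n}) \le 2\min\{\floor{\tfrac{k+n-1}{2}},\,k-1\}$, and $P_{k,1},\dots,P_{k,k-1}$ still determine the whole sequence by the same unitriangular change-of-basis argument. Consequently the defect in the decomposition into complete homogeneous polynomials $h_{g-1-d}(\bm{x}^{-1})$ runs up to $\floor{k/2}$, and $\beta_k$ comes out as a polynomial in $n$ and $p_0,\dots,p_{\floor{k/2}-1}$ of degree $\le k-1$ under $\deg n = 1$, $\deg p_m = m+1$, computable by the same algorithm as in \cref{cor:poly:subleading}.

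With those lemmas in hand, the coefficient-extraction and uniformity step is verbatim that of the proof of \cref{thm:large:g:WK}: multiply the analogue of \cref{eqn:asympt:WK:start} by $(x_1\cdots x_n)^{3/2}$ to obtain a homogeneous symmetric polynomial in $\bm{x}^{-1}$ of degree $g-1$, normalise by the contribution of the closest instanton action, extract the coefficient of $x_1^{d_1}x_2^{-d_2}\cdots x_n^{-d_n}$ by integrating over circles of radii $R_1<\dots<R_n$, split the resulting error $\mc{C}_{g,d,K}$ into the $i=1$ piece and the $i\ge2$ pieces suppressed by $(R_1/R_i)^{g-1}$, and bound each with Watson's \cref{Watson:lemma} using that the pruned minors $\widehat{Z}_n^{(i)}(\bm{x};t;K)$ are $\bigO(t^{K+1})$ as $t\to0^+$ and at most exponential as $t\to+\infty$; choosing $R_i = 1+\tfrac{i-1}{n-1}\tfrac1g$ makes every prefactor $\bigO(1)$ and gives $\mc{C}_{g,d,K}=\bigO(g^{-K-1})$ uniformly in $d$. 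The main obstacle will be exactly the same as in the Airy case, namely this uniformity in $d_1,\dots,d_n$: one must check that every constant entering Watson's lemma depends only on $n$ and $K$ and not on the radii $R_i\in[1,2]$ or on the $d_i$, which again uses the homogeneity relation of (ii) to reduce all estimates to single-variable functions on a fixed compact set. The one genuinely new — though entirely routine — check is the polynomial growth at infinity in the Borel plane of the Bessel wave functions, which follows from their integral representation $\int_\gamma e^{-V(t,x)/\hbar}\,\tfrac{dt}{t}$ with $V(t,x) = -\tfrac1t - xt$, exactly as the analogous bound for the generalised Airy integrals in \cref{subsec:exp:int}.
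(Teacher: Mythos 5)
Your proposal is correct and follows exactly the route the paper intends: the paper explicitly omits the proof of \cref{thm:large:g:Nor} on the grounds that it is ``completely analogous'' to that of \cref{thm:large:g:WK}, and your transcription supplies precisely the right substitutions — Wronskian normalisation via the gauge transformation, the Bessel values $S=2$ and $A(x)=4x^{1/2}$, and the exponent $1/2$ in place of $3/2$, which correctly rescales all degree bounds ($3k \to k$, hence $\floor{3k/2} \to \floor{k/2}$ and $\deg \beta_k \le k-1$). The only (immaterial) imprecision is that the exponential suppression factor is $(R_1/R_i)^{g-1+n/2}$ rather than $(R_1/R_i)^{g-1}$, since $A(x)^{2g-2+n} \propto x^{\frac{1}{2}(2g-2+n)}$; this changes nothing in the Watson-lemma estimates or the choice of radii.
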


The polynomials $Q_{k,n}$ (defined in complete analogy to the Witten--Kontsevich case) and the coefficients $\beta_k$ for the first few values of $k$ are given in \cref{table:Pkn:ck:Theta}. Notice that the complexity of $\beta_k$ is much lower than the corresponding one for the Witten--Kontsevich case. This is due to the fact that $\Theta$-class intersection numbers are computationally less involved that $\psi$-class intersection numbers. Indeed, the $\Theta$-class fills $2g-2+n$ cohomological degrees, which in turn are spared to $\psi$-classes.

\begin{table}
\centering
{\renewcommand{\arraystretch}{1.15}
\begin{tabularx}{\textwidth} { 
	c
	| >{\raggedright\arraybackslash}X 
	| >{\raggedright\arraybackslash}X }
	\toprule
	$k$ & $Q_{k,n}$ & $\beta_k(n,\bm{p})$ \\
	\midrule
	$0$ & $1$ & $1$
	\\
	\midrule
	$1$ & $- \frac{1}{4}$ & $- \frac{1}{4}$
	\\
	\midrule
	$2$
	& $
		\tfrac{9 - 4 n}{8} m_{\varnothing} + \tfrac{1}{8} m_{(1)}$
	&  $
		\tfrac{9 - 4 n}{8} + \tfrac{n - p_0}{8}$
	\\
	\midrule
	$3$
	& $
		- \tfrac{57 - 44 n + 8 n^2}{128} m_{\varnothing}
		- \tfrac{13 - 4 n}{32} m_{(1)}
		- \tfrac{1}{8} m_{(1^2)} $
	& $
		- \tfrac{57 - 44 n + 8 n^2}{128}
		- \tfrac{(13 - 4 n)(n - p_0)}{32}
		- \tfrac{(n - p_0)^{\underline{2}}}{16}
		$ \\
	\bottomrule
\end{tabularx}
}
\caption{
	The polynomials $Q_{k,n}(u_1,\dots,u_n)$ and the coefficients $\beta_k$ for $k = 0,1,2,3$. Here $m_{\lambda}$ denotes the monomial symmetric polynomial in the variables $u_1^2,\dots,u_n^2$.
}
\label{table:Pkn:ck:Theta}
\end{table}

\section{Large genus asymptotics of \texorpdfstring{$r$}{r}-spin intersection numbers}\label{sec:rspin}
The goal of this section is to prove the large genus asymptotics of $r$-spin intersection numbers, introduced for the first time by Witten \cite{Wit92,Wit93} in the context of topological gravity coupled to a Wess--Zumino--Witten theory. Compared to the previous sections, this case contains novel effects due to the underlying differential system being of order $r \times r$ (rather than $2 \times 2$). As a consequence, the number of singularities in the Borel plane for the wave function is $r-1$, which translates into $2(r-1)n$ singularities for the $n$-point function. Thus, the large genus asymptotics of $r$-spin intersection numbers manifests the novel feature of exponentially suppressed contributions coming from the singularities that are further away from the origin. In the literature, an asymptotic analysis of $r$-spin intersection numbers was recently considered only for $n = 1$ by Dubrovin--Yang--Zagier in \cite[theorem~5 (vii)]{DYZ}. We emphasise that their asymptotic formula only considers the exponentially dominant contribution and is valid at first order in $g$. In this section, we generalise their result to arbitrary $n$, and taking into account all subleading and exponentially subleading contributions. Once again, the strategy consists in analysing the determinantal formula through the Borel transform method.

Before proceeding with the large genus asymptotics, let us briefly recall the definition of these intersection numbers. Fix $r \ge 2$. The \textit{Witten $r$-spin class} is a collection of pure-dimensional cohomology classes (to be precise, a cohomological field theory) depending on some parameters $a = (a_1,\dots,a_n) \in \set{1,\dots,r-1}^n$ called \textit{primary fields}\footnote{
	In the literature, a different convention is often found, with all primary fields shifted by a unit: $a_i \mapsto a_i -1$.
}:
\begin{equation}
	W_{g,n}^{r}(a) \in H^{2 D_{g,n}^{r}(a)}(\Mbar_{g,n},\Q) \,.
\end{equation}
Here $D_{g,n}^{r}(a) = \frac{(r-2)(g-1)-n+|a|}{r}$, and the class is set to be zero if $(r-2)(g-1)-n+|a| \not\equiv 0 \pmod{r}$. In genus $0$, the construction was first carried out by Witten using $r$-spin structures, and we briefly recall it here. Let $\Mbar_{0,n}^{r}(a)$ be the moduli space parametrising $r$-th roots of the form
\begin{equation}
	L^{\otimes r} \cong \omega_{C} \biggl( - \sum_{i=1}^n a_i p_i \biggr) ,
\end{equation}
where $[C,p_1,\dots,p_n] \in \Mbar_{0,n}$ and $\omega_C$ denotes the canonical line bundle on $C$. There is a forgetful map $\pi \colon \Mbar_{0,n}^{r}(a) \to \Mbar_{0,n}$ that forgets the $r$-th root. Consider the vector bundle $\mc{V}_{0,n}^{r}(a) \to \Mbar_{0,n}^{r}(a)$ whose fibre over $[C,p_1,\dots,p_n,L]$ is $H^1(C,L)^{\ast}$. The Witten $r$-spin class\footnote{
	The reader might notice the similarity to the definition of the $\Theta$-class when $r$ is set to $2$. The main difference is in the definition of the fibres of the vector bundle: $H^1(C,L)^{\ast}$ for Witten, $H^1(C,L^{\ast})^{\ast}$ for Norbury. This similarity has been extensively studied in \cite{CGG}, where ``higher spin'' $\Theta$-classes have been defined.
} is defined as the push-forward of the (normalised) top Chern class of $\mc{V}_{0,n}^{r}(a)$:
\begin{equation}
	W_{0,n}^{r}(a)
	\coloneqq
	r \cdot \pi_{\ast} c_{\textup{top}}\left( \mc{V}_{0,n}^{r}(a) \right) .
\end{equation}
The definition of the Witten class in higher genera is much more complicated, since the fibres $H^1(C,L)^{\ast}$ no longer glue together to form a vector bundle. The construction was first carried out by Polishchuk--Vaintrob \cite{PV00}, and was later simplified by Chiodo \cite{Chi06}. We refer the reader to \cite{PPZ15} for more details on the Witten classes, and its connection to (the conjecturally full set of) relations on the tautological ring of the moduli space of curves.

The $r$-spin intersection numbers are defined as
\begin{equation}
	\braket{\tau_{d_1,a_1} \cdots \tau_{d_n,a_n}}^{r\textup{-spin}}
	\coloneqq
	\int_{\Mbar_{g,n}}
		W_{g,n}^{r}(a_1,\dots,a_n)
		\prod_{i=1}^n \psi_i^{d_i} \,.
\end{equation}
Notice that the numbers $\braket{\tau_{d_1,a_1} \cdots \tau_{d_n,a_n}}^{r\textup{-spin}}$ are set to be zero unless $D_{g,n}^{r}(a) + |d| = 3g-3+n$. Similar to the cases of $\psi$- and $\Theta$-class intersection numbers, $r$-spin intersection numbers are also connected to integrable hierarchies. Specifically, the associated partition function is the unique tau function of the $r$-KdV hierarchy satisfying a particular initial condition, called the string equation, as demonstrated by Faber--Shadrin--Zvonkine in \cite{FSZ10}. Another approach to computing $r$-spin intersection numbers is through the determinantal formula \cite{BDY18,BDY21}, which relies on the asymptotic solution of the $r$-Airy ODE as its fundamental component.

\subsection{Higher Airy functions}
Consider the ($\hbar$-dependent) \textit{$r$-Airy ODE}:
\begin{equation}
	\left( \left( \hbar \frac{d}{dx} \right)^r - x \right) \psi(x;\hbar) = 0 \,.
\end{equation}
The general solution is given by the $r$-Airy integral: $\int_{\gamma} e^{-\frac{1}{\hbar}V(t,x)} dt$ with $V(t,x) \coloneqq \frac{t^{r+1}}{r+1} - xt$ and $\gamma$ a properly chosen integration contour. Again, we are interested in asymptotic solutions constructed through Lefschetz thimbles: $\psi_{\alpha}(x;\hbar)$ for $\alpha = 1, \dots, r$. Their expression, together with that of their derivatives, is given by
\begin{equation}
	\psi_{\alpha}^{(m)}(x;\hbar)
	\coloneqq 
	(-1)^{\frac{r-\alpha+2}{2}}
	\zeta^{\alpha(m + \frac{1}{2})}
	\frac{e^{-\frac{V_{\alpha}(x)}{\hbar}}}{\sqrt{r}}
	x^{-\frac{r-1-2m}{2r}}
	\sum_{k = 0}^{\infty}
		a_k^{(m)} \left( - \frac{\hbar}{V_{\alpha}(x)} \right)^{k} \,,
\end{equation}
where $V_{\alpha}(x) \coloneqq - \zeta^{\alpha} \frac{r}{r+1} x^{(r+1)/r}$ and $\zeta \coloneqq e^{\frac{2\pi\iu}{r}}$. The coefficients $a_{k}^{(m)}$ are independent of $\alpha$ and are computed recursively from the differential equation (see for instance \cite{CCGG}) as
\begin{equation}
\begin{cases}
	a_{k}^{(m)} = a_{k}^{(m-1)} - \left( k - \frac{1}{2} - \frac{m}{r+1} \right) a_{k-1}^{(m-1)}
	& \text{if $m = 1,\dots, r$} \\
	a_{k}^{(0)} = a_{k}^{(r)}
\end{cases}
\end{equation}
with $a_{0}^{(m)} = 1$. With the above normalisation, we can see that the Wronskian is constantly $1$. Indeed, the ODE implies that the Wronskian is constant, and we can compute its value at $x \to \infty$ as
\begin{equation}
	\det\left(
		(-1)^{\frac{r-\alpha+2}{2}}
		\zeta^{\alpha(m + \frac{1}{2})}
		\frac{ e^{-\frac{V_{\alpha}(x)}{\hbar}} }{ \sqrt{r} }
		x^{-\frac{r-1-2m}{2r}}
	\right)_{\substack{ m = 0,\dots,r-1 \\ \alpha = 1,\dots,r}}
	=
	\frac{(-1)^{\frac{(r-1)(r-2)}{4}}}{r^{r/2}}
	\det\bigl(
		\zeta^{\alpha m}
	\bigr)_{\substack{ m = 0,\dots,r-1 \\ \alpha = 1,\dots,r}} \,.
\end{equation}
The Vandermonde determinant on roots of unity was computed by Schur \cite{Sch21} and it reads $\det( \zeta^{\alpha m} ) = r^{r/2} (-1)^{\frac{(r-1)(3r+2)}{4}}$, so that the Wronskian simplifies to $1$.

Following the prescription of \cref{subsec:exp:int} we find that, after separating the exponential part as $\psi_{\alpha}^{(m)} \eqqcolon e^{-V_{\alpha}/\hbar} \, \widetilde{\psi}_{\alpha}^{(m)}$, the Borel transform of $\widetilde{\psi}_{\alpha}^{(m)}$ is simple resurgent with logarithmic singularities at $s = A_{\alpha,\beta}(x)$ for $\beta \neq \alpha$, where
\begin{equation}
	A_{\alpha,\beta}(x) \coloneqq V_{\beta}(x) - V_{\alpha}(x)
	=
	\frac{r}{r+1} \bigl( \zeta^{\alpha} - \zeta^{\beta} \bigr) x^{(r+1)/r} \,.
\end{equation}
The behaviour at the singularities is given by
\begin{equation}
\begin{aligned}
	\widehat{\psi}_{\alpha}(x;s)
	& =
	- \frac{S_{\alpha,\beta}}{2\pi} \,
	\widehat{\psi}_{\beta}\bigl( x;s - A_{\alpha,\beta}(x) \bigr)
	\log\bigl( s - A_{\alpha,\beta}(x) \bigr)
	+ \text{holomorphic at } A_{\alpha,\beta}(x)
	 \,,
\end{aligned}
\end{equation}
where the Stokes constants $S_{\alpha,\beta}$ are given in terms of constants $\ms{S}_{\alpha,\beta}$ computed as Lefschetz thimble intersection numbers (see \cref{eqn:Stokes:rAiry}) and are worth
\begin{equation}
	S_{\alpha,\beta}
	=
	(-1)^{\frac{\beta-\alpha+1}{2}} \, \ms{S}_{\alpha,\beta}
	= 
	\begin{cases}
		+(-1)^{\frac{\beta-\alpha+1}{2}} & \text{if } \alpha > \beta \,,\\
		-(-1)^{\frac{\beta-\alpha+1}{2}} & \text{if } \alpha < \beta \,.
	\end{cases}
\end{equation}
The above ODE can be re-written as a first order $r \times r$ system:
\begin{equation}
	\hbar \frac{d}{dx} \Psi(x;\hbar) = \mathcal{D}(x) \Psi(x;\hbar) \,,
	\qquad\qquad
	\mathcal{D}(x;\hbar)
	=
	\begin{pmatrix}
		0 & 1 & & \\
		& \ddots & \ddots & \\
		& & \ddots & 1 \\
		x & & & 0
	\end{pmatrix} \,.
\end{equation}
A simple computation shows that the matrix $\Phi \coloneqq \Psi^{-t}$, called the dual wave matrix, satisfies the differential system $-\hbar \frac{d}{dx}\Phi = \mc{D}^{t} \Phi$. In other words, $\Phi$ is a (permutation of the) companion matrix associated to the higher Airy ODE, with $\hbar \mapsto - \hbar$. In order to fix the normalisation, it is sufficient to compute $\Phi$ at $x \to \infty$. We find that
\begin{equation}
	\Phi(x;\hbar)
	=
	\left(
		(-1)^{r-\alpha} \phi_{\alpha}^{(r-m-1)}
	\right)_{\substack{ m = 0,\dots,r-1 \\ \alpha = 1,\dots,r}} ,
	\qquad
	\phi_{\alpha}^{(k)}(x;\hbar)
	\coloneqq
	\psi_{\alpha}^{(k)}(x;-\hbar) \,.
\end{equation}
Since the $\phi$'s are obtained from the $\psi$'s simply by substituting $\hbar$ with $-\hbar$, they enjoy similar properties. In particular we can deduce that, after separating the exponential part as $\phi_{\alpha}^{(m)} \eqqcolon e^{+V_{\alpha}/\hbar} \, \widetilde{\phi}_{\alpha}^{(m)}$, the Borel transform of $\widetilde{\psi}_{\alpha}^{(m)}$ is simple resurgent with logarithmic singularities at $s = - A_{\alpha,\beta}$ for $\beta \neq \alpha$. Its behaviour is given by
\begin{equation}
\begin{aligned}
	\widehat{\phi}_{\alpha}(x;s)
	& =
	- \frac{S_{\alpha,\beta}}{2\pi} \,
	\widehat{\phi}_{\beta}\bigl( x;s + A_{\alpha,\beta}(x) \bigr)
	\log\bigl( s + A_{\alpha,\beta}(x) \bigr)
	+ \text{holomorphic at } -A_{\alpha,\beta}(x)
	\,,
\end{aligned}
\end{equation}
where the Stokes constants are the same as before.

\subsection{Higher Airy correlators}
The Lie algebra $\mf{sl}_r(\C)$ admits the root space decomposition into traceless diagonal matrices, upper-diagonal matrices, and lower-diagonal matrices: $\mf{sl}_r(\C) = \mf{h} \oplus \mf{n}_+ \oplus \mf{n}_-$. A basis of $\mf{h}$ is given in terms of elementary matrices $e_{i,j} = (\delta_{i,k} \delta_{j,l})$ by 
\begin{equation}
	E_a \coloneqq e_{a,a} - e_{a+1,a+1} \in \mf{h} \,,
	\qquad\qquad
	a = 1,\dots,r-1 \,.
\end{equation}
Define the matrix
\begin{equation}
	M(x;\hbar)
	\coloneqq
	\Psi(x;\hbar) E \Psi^{-1}(x;\hbar) \,,
	\qquad\qquad
	E
	\coloneqq
	\frac{1}{r} \sum_{a=1}^{r-1} a E_a \,,
\end{equation}
which is an $\mf{h}$-valued formal power series in $\hbar$. Indeed, as in the Airy case, $M$ contains only quadratic expressions in the functions $\psi$'s and $\phi$'s with opposite exponential parts. The result is then a formal power series in $\hbar$. As before, one can consider the $n$-point \textit{$r$-Airy correlators} defined through determinantal formulae:
\begin{equation}\label{eqn:n:pnt:rspin}
\begin{aligned}
	& W_1(x_1;\hbar)
	\coloneqq
	- \frac{1}{\hbar} \Tr{\bigl( \mathcal{D}(x_1) M(x_1;\hbar) \bigr)} \,, \\
	& W_n(x_1,\dots,x_n;\hbar)
	\coloneqq
	(-1)^{n-1} \sum_{\sigma \in S_n^{\textup{cyc}}}
	\frac{\Tr{ \bigl( \prod_{i=1}^n M(x_{\sigma^i(1)};\hbar) } \bigr) }{\prod_{i=1}^n ( x_{i} - x_{\sigma(i)} )}
	\qquad\quad
	\text{for }n \ge 2 \,.
\end{aligned}
\end{equation}
Again, the genus expansion of the correlators stores interesting enumerative invariants, the $r$-spin intersection numbers. The determinantal formula for such intersection numbers was proved by Bertola--Dubrovin--Yang \cite{BDY18,BDY21} as a consequence of the Faber--Shadrin--Zvonkine result \cite{FSZ10}. Adjusted to our chosen normalisation, and with the insertion of $\hbar$ by homogeneity, the result of Bertola--Dubrovin--Yang reads as follows.

\begin{theorem}[{Genus expansion of the higher Airy correlators \cite{BDY18,BDY21}}]
	The higher Airy correlators $W_{n}$ admit the following $\hbar$-expansion:
	\begin{equation}\label{eqn:genus:expns:rAiry}
		W_{n}(\bm{x};\hbar)
		=
		\sum_{g = 0}^{\infty}
			\hbar^{2g-2+n} \, W_{g,n}(\bm{x}) \,.
	\end{equation}
	Moreover, $W_{g,n}$ stores $r$-spin intersection numbers: if $2g-2+n>0$, 
	\begin{equation}
		W_{g,n}(\bm{x})
		=
		\!\!\!
		\sum_{\substack{ d_{1},\dots,d_n \ge 0 \\ a_1,\dots,a_n \in \{1,\dots,r-1\} \\ r|d|+|a| = (r+1)(2g-2+n) }}
		\!\!\!
			(-r)^{g-1-|d|}
			\braket{ \tau_{d_1,a_1} \cdots \tau_{d_n,a_n} }^{r\textup{-spin}}
			\prod_{i=1}^n \frac{(rd_i+a_i)!_{(r)}}{r \, x_i^{d_i+\frac{a_i}{r}+1}} \,.
	\end{equation}
	Here $m!_{(r)}$ denotes the $r$-factorial, see \cref{eqn:r:fact}. The degree condition is equivalent to $D_{g,n}^r(a) + |d| = 3g-3+n$, where $D_{g,n}^r(a)$ is the complex cohomological degree of the Witten class.
\end{theorem}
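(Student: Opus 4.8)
The plan is to reduce the statement to two established results and then supply the $\hbar$-bookkeeping, in close analogy with the Airy case of \cref{thm:genus:expns:Airy}. The first ingredient is the theorem of Faber--Shadrin--Zvonkine \cite{FSZ10}: the total descendant potential built from the $r$-spin intersection numbers is, after the Witten--Gelfand--Dickey change of variables, the tau function of the $r$-th Gelfand--Dickey hierarchy singled out by the string equation. The second ingredient is the matrix-resolvent formalism of Bertola--Dubrovin--Yang \cite{BDY18,BDY21}: for any tau function of the Gelfand--Dickey hierarchy, the connected correlators obtained by differentiating $\log\tau$ and restricting to a single time variable are computed by a cyclic-sum trace formula in the matrix resolvent $R$ of the associated Lax operator $L$. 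I would evaluate this formula at the point of times corresponding to the bare $r$-spin numbers (all descendant couplings switched off apart from the canonical dilaton shift), where $L$ degenerates to the $r$-Airy operator $(\hbar\frac{d}{dx})^r-x$ and its matrix resolvent is precisely the matrix $M=\Psi E\Psi^{-1}$ introduced above. This last identification rests on the normalisation of the thimble solutions $\psi_\alpha$ that makes the Wronskian equal to $1$ (checked above via Schur's Vandermonde evaluation \cite{Sch21}) and on the choice $E=\frac1r\sum_{a=1}^{r-1}aE_a=\frac1r\Id-e_{r,r}$, which is exactly the Cartan element encoding the canonical normalisation of the resolvent.

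The $\hbar$-grading and the selection rule I would establish directly. First, $M$ is a formal power series in $\hbar$: after separating the exponentials, each entry of $M$ is a product $\widetilde\psi_\alpha^{(m)}\widetilde\phi_\beta^{(k)}$ whose prefactors $e^{\mp V_\alpha/\hbar}$ cancel, so $W_n\in\C\bbraket{\hbar}$ with coefficients in $\Q[x_i^{\pm 1/r}]$ localised along the diagonals. Second, the scaling identity $\psi_\alpha^{(m)}(x;\hbar)=x^{-\frac{r-1-2m}{2r}}\psi_\alpha^{(m)}(1;\hbar x^{-(r+1)/r})$ gives $M(x;\hbar)=D(x)M(1;\hbar x^{-(r+1)/r})D(x)^{-1}$ with $D(x)=\diag(x^{-\frac{r-1-2m}{2r}})_m$ diagonal and $\hbar$-independent; since $D(\lambda x)=\Lambda D(x)$ for a constant $\Lambda$, the determinantal formula yields the homogeneity $W_n(\lambda\bm{x};\lambda^{(r+1)/r}\hbar)=\lambda^{-n}W_n(\bm{x};\hbar)$. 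Writing $W_n=\sum_j\hbar^j W_n^{(j)}(\bm{x})$, homogeneity forces $W_n^{(j)}$ to be supported on monomials $\prod_i x_i^{-d_i-\frac{a_i}{r}-1}$ with $r|d|+|a|=j(r+1)$, which is the selection rule $D^r_{g,n}(a)+|d|=3g-3+n$ once one sets $j=2g-2+n$. The regularity of $W_n$ along $x_i=x_j$ (the loop-equation argument of \cite{BBE15}) together with the rank-one structure $M=\frac1r\Id-vw^{t}$, where $v$ (resp.\ $w^{t}$) is the last column of $\Psi$ (resp.\ last row of $\Psi^{-1}$) and $w^{t}v=1$ --- the $r\times r$ analogue of $M-\frac12\Id$ being an outer product --- gives the kernel form of the determinantal formula, hence the fact that the trace-sum starts at order $\hbar^{n-2}$ and that only the values $j=2g-2+n$, $g\ge0$, occur.

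With the grading in place, what remains is to match the coefficient of $\prod_i x_i^{-d_i-\frac{a_i}{r}-1}$ in $W_{g,n}$ with $(-r)^{g-1-|d|}\,r^{-n}\prod_i(rd_i+a_i)!_{(r)}\cdot\braket{\tau_{d_1,a_1}\cdots\tau_{d_n,a_n}}^{r\textup{-spin}}$. This is exactly what \cite{BDY18,BDY21} prove in their normalisation; I would only have to carry their identification through the change of normalisation (their resolvent versus our $M$) and reinsert $\hbar$ by the scaling above --- the single feature absent from their $\hbar$-free statement. As a consistency check I would specialise to $n=1$, where the $1$-point correlator satisfies the scalar linear ODE obtained from the $r$-Airy operator (for $r=2$ it reads $\ms{W}_1'''-4x\,\ms{W}_1'+2\hbar\,\ms{W}_1=0$ with $'=\hbar\frac{d}{dx}$, and there is an order-$r$ analogue in general); solving the resulting recursion recovers the closed form and, via \cite{DYZ}, the Dubrovin--Yang--Zagier asymptotics.

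The main obstacle is the step flagged in the second paragraph: proving, rigorously and in the $r\times r$ setting, that the $\hbar$-free trace-sum in \cref{eqn:n:pnt:rspin} begins at order $\hbar^{n-2}$ with only even increments. Termwise this is invisible; it comes from the vanishing of the residues of the trace-sum along the diagonals combined with the rank-one factorisation of $M-\frac1r\Id$, i.e.\ from the $\mf{sl}_r$ kernel form of the determinantal formula and the fact that $E$ is a rank-one shift of the identity. Setting up this kernel form carefully (the analogue for $r$-Airy of the kernel reformulation used in the Airy section) is the technical heart of the argument; alternatively one may bypass it by quoting \cite{BDY18,BDY21} directly, in which case the only remaining work is the normalisation and $\hbar$-reinsertion bookkeeping.
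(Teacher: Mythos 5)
Your proposal matches the paper's treatment: the paper does not prove this theorem but imports it directly from Bertola--Dubrovin--Yang \cite{BDY18,BDY21} (itself resting on Faber--Shadrin--Zvonkine \cite{FSZ10}), ``adjusted to our chosen normalisation, and with the insertion of $\hbar$ by homogeneity'' --- exactly the reduction and bookkeeping you describe. Your additional details (the scaling identity forcing the selection rule, the rank-one structure $M-\tfrac1r\Id=-\Psi e_{r,r}\Psi^{-1}$ underlying the kernel form) are correct elaborations of that same route rather than a different argument.
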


\begin{remark}[Connection with topological recursion]
	For the reader familiar with topological recursion, $W_{g,n}$ coincides with the topological recursion differentials computed from the $r$-Airy spectral curve $( \P^1, \, x(z) = z^r, \, y(z) = z, \, B(z_1,z_2) = \frac{dz_1 dz_2}{(z_1 - z_2)^2} )$ -- see \cite{BE17}:
	\begin{equation}
		\omega_{g,n}(\bm{z})
		=
		W_{g,n}(\bm{x}) \, dx_1 \cdots dx_n \big|_{x_i = z_i^r} \,.
	\end{equation}
	Throughout this section, we will work with a fixed choice of $r$-th root of $x$.
\end{remark}

The $n$-point correlators can be expressed through determinantal formulae in the kernel form. We omit the proof, which is completely analogous to \cref{lemma:kernel}.

\begin{lemma}
	For $n \ge 2$, the $n$-point correlators are given by the two equivalent expressions
	\begin{equation}
		W_n(\bm{x};\hbar)
		=
		(-1)^{n-1} \sum_{\sigma \in S_n^{\textup{cyc}}} \prod_{i=1}^n K_{+,-}(x_i,x_{\sigma(i)};\hbar)
		= 
		(-1)^{n-1} \sum_{\sigma \in S_n^{\textup{cyc}}} \prod_{i=1}^n K_{-,+}(x_i,x_{\sigma(i)};\hbar) \,,
	\end{equation}
	where the kernels $K_{\pm}$ are defined by
	\begin{equation}
	\begin{aligned}
		K_{+,-}(x,y;\hbar)
		& \coloneqq
		\frac{
			\sum_{m=0}^{r-1} \widetilde{\psi}_{r}^{(m)}(x;\hbar) \, \widetilde{\phi}_{r}^{(r-1-m)}(y;\hbar)
		}{x-y} \,, \\
		K_{-,+}(x,y;\hbar)
		& \coloneqq
		- \frac{
			\sum_{m=0}^{r-1} \widetilde{\phi}_{r}^{(m)}(x;\hbar) \, \widetilde{\psi}_{r}^{(r-1-m)}(y;\hbar)
		}{x-y} \,.
	\end{aligned}
	\end{equation}
	Moreover, the kernels are related by the parity relation $K_{-,+}(x,y;\hbar) = - K_{+,-}(x,y;-\hbar)$.
\end{lemma}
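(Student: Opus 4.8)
The plan is to follow the proof of \cref{lemma:kernel} line by line, the only genuinely new input being the identification of the relevant rank-one projection. First I would observe that $E = \frac{1}{r}\sum_{a=1}^{r-1} a E_a = \frac{1}{r}\Id - e_{r,r}$, so that $E - \frac{1}{r}\Id = -e_{r,r} = -\epsilon_r\otimes\epsilon_r$ has rank one, where $\epsilon_r$ denotes the $r$-th standard basis vector of $\C^r$. Since the determinantal formula \labelcref{eqn:n:pnt:rspin} is invariant under shifts of $M$ by scalar multiples of the identity \cite[appendix~A]{EM}, we may replace $M$ by
\[
	M - \tfrac{1}{r}\Id
	=
	\Psi\bigl(E - \tfrac{1}{r}\Id\bigr)\Psi^{-1}
	=
	-\,\Psi\, e_{r,r}\, \Psi^{-1}
	=
	-\,(\Psi\epsilon_r)\otimes(\Phi\epsilon_r)\,,
\]
using $\Psi^{-1} = \Phi^{t}$. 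By the explicit form of the wave matrices (which relies on the normalisation $\det\Psi = 1$), $\Psi\epsilon_r = \bigl(\psi_r^{(m)}\bigr)_{m=0}^{r-1}$ and $\Phi\epsilon_r = \bigl(\phi_r^{(r-1-m)}\bigr)_{m=0}^{r-1}$ are the last columns of $\Psi$ and $\Phi$; moreover in each entry of the outer product the exponential factors $e^{-V_r/\hbar}$ and $e^{+V_r/\hbar}$ cancel, so that $M - \frac{1}{r}\Id = -\bigl(\widetilde\psi_r^{(m)}\bigr)_m\otimes\bigl(\widetilde\phi_r^{(r-1-m)}\bigr)_m$ is the outer product of two vectors of formal power series in $\hbar$.

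Next I would substitute this into the determinantal formula and apply the identity $\Tr\bigl(\prod_{k=1}^n u_k\otimes v_k\bigr) = \prod_{k=1}^n \langle v_k, u_{k+1}\rangle$ (indices modulo $n$), exactly as in \cref{lemma:kernel}. Writing the cyclic permutation as $j_k \mapsto j_{k+1}$ with $j_k = \sigma^k(1)$, each pairing $\langle v(x_{j_k}), u(x_{j_{k+1}})\rangle$ equals, up to the overall sign, $(x_{j_k} - x_{j_{k+1}})\sum_{m=0}^{r-1}\widetilde\psi_r^{(m)}(x_{j_{k+1}})\,\widetilde\phi_r^{(r-1-m)}(x_{j_k})$, so that together with the denominator $\prod_k(x_{j_k} - x_{j_{k+1}})$ it yields $\prod_k K_{+,-}(x_{j_{k+1}}, x_{j_k};\hbar)$. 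Reindexing the sum over $S_n^{\textup{cyc}}$ by $\sigma\mapsto\sigma^{-1}$ then gives the first equality, $W_n = (-1)^{n-1}\sum_\sigma\prod_i K_{+,-}(x_i, x_{\sigma(i)};\hbar)$. The second equality follows from the cyclicity of the sum together with the symmetry $K_{+,-}(x,y;\hbar) = K_{-,+}(y,x;\hbar)$, which is immediate from the definitions after relabelling $m\mapsto r-1-m$. Finally, the parity relation $K_{-,+}(x,y;\hbar) = -K_{+,-}(x,y;-\hbar)$ follows at once from $\phi_\alpha^{(k)}(x;\hbar) = \psi_\alpha^{(k)}(x;-\hbar)$ and the $\hbar$-independence of $V_\alpha$, which together give $\widetilde\psi_\alpha^{(k)}(x;-\hbar) = \widetilde\phi_\alpha^{(k)}(x;\hbar)$.

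The hard part will be the bookkeeping of signs and of the derivative index. Several sign contributions enter — the $(-1)^{r-\alpha}$ in the definition of $\Phi$, the factors $(-1)^{(r-\alpha+2)/2}$ and $\zeta^{\alpha(m+1/2)}$ appearing in $\psi_\alpha^{(m)}$, the $(-1)^{n-1}$ in the determinantal formula, and the orientation of the cyclic product — while the pairing swaps $m\leftrightarrow r-1-m$; one must check that all of these combine so that each pairing over its denominator yields precisely $K_{+,-}$ with no residual sign. Since for $r = 2$ the argument degenerates to that of \cref{lemma:kernel}, the only real work is verifying that the extra signs carried by the higher Airy normalisation cancel as claimed.
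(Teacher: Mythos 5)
Your proposal is correct and follows exactly the route the paper intends: the paper omits this proof, stating it is ``completely analogous to \cref{lemma:kernel}'', and your argument is precisely that analogue, with the one genuinely new ingredient — the rank-one identity $E - \tfrac{1}{r}\Id = -e_{r,r}$ and the resulting outer product of the last columns of $\Psi$ and $\Phi = \Psi^{-t}$ — correctly identified and verified. The sign bookkeeping you flag at the end is in fact harmless, since the normalisation factors $(-1)^{(r-\alpha+2)/2}$ and $\zeta^{\alpha(m+1/2)}$ are absorbed into the definition of $\psi_\alpha^{(m)}$ itself and the only residual sign, $(-1)^{r-\alpha}$ in $\Phi$, equals $+1$ at $\alpha = r$.
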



\subsection{Singularity structure and large genus asymptotics}
From the usual properties of the Borel transform, we immediately find the singularity structure of $W_n$ in the Borel plane from the knowledge of the singularity structure of the functions $\widehat{\psi}$ and $\widehat{\phi}$. Again, we assume $(x_1,\dots,x_n)$ in a generic position.

\begin{proposition}[{Singularity structure of the higher Airy correlators}]
	The Borel transform $\widehat{W}_n(\bm{x};s)$ of the $n$-point $r$-Airy correlator is simple resurgent. More precisely, on the principal sheet, $\widehat{W}_n$ has $2(r-1)n$ logarithmic singularities located at $s = \pm A_{r,\alpha}(x_i)$ (with $\alpha = 1,\dots,r-1$ and $i = 1,\dots,n$) and such that
	\begin{equation}
		\widehat{W}_{n}(\bm{x};s)
		=
		- \frac{S_{r,\alpha}}{2\pi} \,
		\widehat{W}_{n}^{(\pm\alpha,i)}\bigl( \bm{x}; s \mp A_{r,\alpha}(x_i) \bigr)
		\log\bigl( s \mp A_{r,\alpha}(x_i) \bigr)
		+ \text{holomorphic at } \pm A_{r,\alpha}(x_i) \,,
	\end{equation}
	where:
	\begin{itemize}
		\item $A_{r,\alpha}(x) = \frac{r}{r+1} (1 - \zeta^{\alpha}) x^{(r+1)/r}$ and $S_{r,\alpha} = (-1)^{\frac{\alpha-r+1}{2}}$ are the instanton actions and the Stokes constants associated to the $r$-Airy functions;

		\item the minor $\widehat{W}^{(\pm \alpha,i)}_{n}$ is the Borel transform of the formal power series
		\begin{equation}\label{eqn:1inst:corr:rspin}
			W_{n}^{(\pm \alpha,i)}( \bm{x}; \hbar )
			\coloneqq
			(-1)^{n-1}
			\sum_{\sigma \in S_n^{\textup{cyc}}}
				K_{\pm\alpha,\mp}(x_i,x_{\sigma(i)};\hbar)
				\prod_{j \ne i} K_{\pm,\mp}(x_j,x_{\sigma(j)};\hbar) \,,
		\end{equation}
		where the kernels $K_{\pm \alpha,\mp}$ are given by
		\begin{equation}
		\begin{aligned}
			K_{+\alpha,-}(x,y;\hbar)
			& \coloneqq
			\frac{
				\sum_{m=0}^{r-1} \widetilde{\psi}_{\alpha}^{(m)}(x;\hbar) \, \widetilde{\phi}_{r}^{(r-1-m)}(y;\hbar)
			}{x-y} \,, \\
			K_{-\alpha,+}(x,y;\hbar)
			& \coloneqq
			- \frac{
				\sum_{m=0}^{r-1} \widetilde{\phi}_{\alpha}^{(m)}(x;\hbar) \, \widetilde{\psi}_{r}^{(r-1-m)}(y;\hbar)
			}{x-y} \,.
		\end{aligned}
		\end{equation}
	\end{itemize}
	Moreover, the kernels are related by the parity relation $K_{-\alpha,-}(x,y;\hbar) = - K_{+\alpha,+}(x,y;-\hbar)$. Hence, the analogous relation holds for the correlators: $W^{(-\alpha,i)}_{n}( \bm{x}; \hbar ) = (-1)^n \, W^{(+\alpha,i)}_{n}( \bm{x}; -\hbar )$.
\end{proposition}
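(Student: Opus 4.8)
The plan is to follow, \emph{mutatis mutandis}, the proof of the corresponding statement for the Airy correlators, since the only genuinely new feature here is that each wave function carries $r-1$ Borel singularities per sign rather than one. First I would pass to the kernel form of the determinantal formula (for $n=1$ one instead works directly with $W_1 = -\hbar^{-1}\Tr(\mathcal{D} M)$, reaching the same conclusion), so that $W_n$ is displayed as a finite sum over $S_n^{\textup{cyc}}$ of products of the kernels $K_{+,-}(x_i,x_{\sigma(i)};\hbar)$, and equivalently of the kernels $K_{-,+}$. Each such kernel is itself a finite sum of products of the building blocks $\widetilde\psi_r^{(m)}$ and $\widetilde\phi_r^{(m)}$, and from the subsection on higher Airy functions I already know their Borel plane singularity structure: $\widehat\psi_r^{(m)}(x;\cdot)$ has logarithmic singularities at $s = A_{r,\alpha}(x)$, $\alpha = 1,\dots,r-1$, with Stokes constant $S_{r,\alpha}$ and minor $\widehat\psi_\alpha^{(m)}(x;\cdot)$, while $\widehat\phi_r^{(m)}(y;\cdot)$ has logarithmic singularities at $s = -A_{r,\alpha}(y)$ with the same Stokes constants and minors $\widehat\phi_\alpha^{(m)}(y;\cdot)$. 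So the whole statement should reduce to a repeated use of the algebraic properties of simple resurgent series, namely \cref{theorem:prod} together with \cref{lemma:conv} and \cref{rem:princ:sheet}.

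Concretely I would argue in three steps. \emph{Kernels:} applying \cref{theorem:prod} to the product defining $K_{+,-}(x,y;\hbar)$ shows $\widehat K_{+,-}(x,y;s)$ is simple resurgent; then \cref{lemma:conv} and \cref{rem:princ:sheet} — using that $\bm x$ is generic, so that all candidate singularities $\pm A_{r,\alpha}(x_i)$ lie on pairwise distinct rays issuing from the origin — remove the $\mc A + \mc B$-type singularities from the principal sheet, leaving exactly $s = A_{r,\alpha}(x)$ and $s = -A_{r,\alpha}(y)$; by the minor rule in \cref{theorem:prod}, the minor at $A_{r,\alpha}(x)$ is obtained by replacing each $\widetilde\psi_r^{(m)}(x;\hbar)$ by $\widetilde\psi_\alpha^{(m)}(x;\hbar)$, i.e.\ it is the Borel transform of $K_{+\alpha,-}(x,y;\hbar)$, with Stokes constant $S_{r,\alpha}$. \emph{$n$-point correlators:} iterating \cref{theorem:prod} over the $n$ factors of $\prod_i K_{+,-}(x_i,x_{\sigma(i)};\hbar)$ and summing over $S_n^{\textup{cyc}}$, I get that $\widehat W_n$ is simple resurgent with principal-sheet singularities contained in the $2(r-1)n$ points $\pm A_{r,\alpha}(x_i)$; at $s = A_{r,\alpha}(x_i)$ the only singular factor of each product is the $i$-th one (for generic $\bm x$ the only other candidate, coming from $K_{+,-}(x_{\sigma^{-1}(i)},x_i;\hbar)$, lands at $-A_{r,\alpha}(x_i)\ne A_{r,\alpha}(x_i)$), so replacing that factor by its minor and summing over $\sigma$ yields exactly $\widehat W_n^{(+\alpha,i)}$ of \cref{eqn:1inst:corr:rspin} with Stokes constant $S_{r,\alpha}$; running the same argument from the $K_{-,+}$ form gives the minors $\widehat W_n^{(-\alpha,i)}$ at $s = -A_{r,\alpha}(x_i)$. \emph{Parity:} the relations $\phi_\alpha^{(m)}(x;\hbar) = \psi_\alpha^{(m)}(x;-\hbar)$ and the explicit signs relating $\Phi$ to $\Psi$ give $K_{-\alpha,-}(x,y;\hbar) = -K_{+\alpha,+}(x,y;-\hbar)$, and hence $W_n^{(-\alpha,i)}(\bm x;\hbar) = (-1)^n W_n^{(+\alpha,i)}(\bm x;-\hbar)$ after accounting for the $n$ sign flips.

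The step I expect to be the main obstacle is the genericity bookkeeping underlying both previous steps: one must check that, for $(x_1,\dots,x_n)$ in a suitable open dense subset, the $2(r-1)n$ points $\pm A_{r,\alpha}(x_i)$ are mutually distinct and pairwise non-collinear with the origin, so that (a) the convolution never produces a singularity of $\mc A + \mc B$ type on the principal sheet and (b) at each principal-sheet singularity precisely one factor in each summand of the determinantal formula contributes. The analytic kernel of this — that the arguments of $1 - \zeta^{\alpha}$ for $\alpha = 1,\dots,r-1$ are pairwise distinct, so that the $A_{r,\alpha}(x)$ occupy $r-1$ genuinely distinct rays for fixed $x$ — is elementary, but organising it uniformly over all pairs $(i,\alpha)$ and both signs takes some care; the remainder is a faithful transcription of the Airy argument.
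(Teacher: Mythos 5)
Your proposal is correct and follows essentially the same route as the paper: the paper derives this proposition directly from the kernel form of the determinantal formula, the known Borel singularity structure of $\widetilde{\psi}_\alpha^{(m)}$ and $\widetilde{\phi}_\alpha^{(m)}$, and the algebraic properties of simple resurgent series (\cref{theorem:prod}, \cref{lemma:conv}, \cref{rem:princ:sheet}) under the genericity assumption on $(x_1,\dots,x_n)$, with the parity relation coming from $\phi_\alpha^{(m)}(x;\hbar)=\psi_\alpha^{(m)}(x;-\hbar)$. Your extra bookkeeping (e.g.\ that the $r-1$ rays through $1-\zeta^\alpha$ are distinct and that no $A_{r,\alpha}(x_i)$ coincides with $-A_{r,\beta}(x_i)$) is a correct elaboration of what the paper compresses into the phrase ``generic position.''
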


Again, since $W^{(\pm\alpha, i)}_{n}$ are related by the parity relation, we can simply use $W_n^{(+\alpha,i)}$ and drop the `$+$' symbol from the superscript. From the analysis of the singularity structure of $W_n$ in the Borel plane, we find the large genus asymptotics of its coefficients through the Borel transform method.

\begin{proposition}[{Large genus asymptotics of the $r$-Airy correlators}]\label{prop:large:g:corr:rspin}
	The large genus asymptotics of the expansion coefficients of the $n$-point $r$-Airy correlators is given by
	\begin{equation}\label{eqn:large:g:corr:rspin}
		W_{g,n}(\bm{x})
		=
		\sum_{\alpha=1}^{r-1} \frac{S_{r,\alpha}}{\pi} \sum_{i=1}^n
			\frac{\Gamma(2g-2+n)}{A_{r,\alpha}(x_i)^{2g-2+n}}
			\left(
				\sum_{k = 0}^K
				\frac{A_{r,\alpha}(x_i)^{k}}{(2g-3+n)^{\underline{k}}} W^{(\alpha,i)}_{k,n}(\bm{x})
				+\bigO\biggl( \frac{1}{g^{K+1}} \biggr)
			\right) \,.
	\end{equation}
	Moreover, the leading term is explicitly given by
	\begin{multline}\label{eqn:large:g:corr:leading:rspin}
		W_{g,n}(\bm{x})
		=
		\frac{(-1)^{g-1+n}}{\pi}
		\frac{ \Gamma(2g-2+n) }{ \left( \frac{r}{r+1} \right)^{2g-2+n} }
		\frac{2^n}{r^n x_1 \cdots x_n} \\
		\times \Bigg[
		\sum_{\alpha = 1}^{\floor{\frac{r-1}{2}}}
			\frac{(-1)^{\alpha n}}{\left( 2\sin(\frac{\alpha}{r} \pi) \right)^{2g-1+n}}
			\Bigl(
				h_{(r+1)(2g-2+n)}^{(r,\alpha)}(\bm{x}^{-1/r}) + \bigO\bigl( g^{-1} \bigr)
			\Bigr) \\
			+
			\frac{\delta_{r}^{\textup{even}}}{2} \,
			\frac{(-1)^{\frac{r}{2}n}}{2^{2g-1+n}}
			\Bigl(
				h_{(r+1)(2g-2+n)}^{(r,\frac{r}{2})}(\bm{x}^{-1/r}) + \bigO\bigl( g^{-1} \bigr)
			\Bigr) \Bigg] \,.
	\end{multline}
	Here $\delta_{r}^{\textup{even}}$ gives one if $r$ is even and zero otherwise, and $h_{D}^{(r,\alpha)}(\bm{u})$ denotes the polynomial
	\begin{equation}\label{eqn:h:rspin}
		h_{D}^{(r,\alpha)}(\bm{u})
		\coloneqq
		\sum_{ \substack{k_1,\dots,k_n \ge 0 \\ |k| = D} }
		\prod_{i=1}^n
			\sin\Bigl( \frac{\alpha k_i}{r}\pi \Bigr) u_i^{k_i}
		=
		\sum_{ \substack{d_1,\dots,d_n \ge 0 \\ a_1,\dots,a_n \in \{1,\dots,r-1\} \\ r|d|+|a| = D} }
		(-1)^{\alpha|d|}
		\prod_{i=1}^n
			\sin\left( \frac{\alpha a_i}{r}\pi \right) u_i^{r d_i+a_i} \,.
	\end{equation}
\end{proposition}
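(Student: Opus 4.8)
The plan is to transport, essentially verbatim, the argument used for the Airy correlators in \cref{prop:large:g:corr:WK} to the higher-rank setting. First I would apply the Borel transform method \cref{thm:large:order} (in its Gevrey-2 incarnation, \cref{rem:Gevrey:2}) to each of the $2(r-1)n$ logarithmic singularities $s = \pm A_{r,\alpha}(x_i)$ of $\widehat{W}_n$ identified in the preceding proposition. Since all these singularities lie on distinct rays for $\bm{x}$ in generic position, \cref{rem:princ:sheet} guarantees that no spurious singularities from sums of instanton actions contribute on the principal sheet, and the method yields
\begin{equation*}
	W_{g,n}(\bm{x})
	=
	\sum_{\alpha=1}^{r-1} \frac{S_{r,\alpha}}{2\pi} \sum_{i=1}^n
		\frac{\Gamma(2g-2+n)}{A_{r,\alpha}(x_i)^{2g-2+n}}
		\left(
			\sum_{k=0}^K \frac{A_{r,\alpha}(x_i)^k}{(2g-3+n)^{\underline{k}}}
			\Bigl( W^{(+\alpha,i)}_{k,n}(\bm{x}) + (-1)^k W^{(-\alpha,i)}_{k,n}(\bm{x}) \Bigr)
			+ \bigO\bigl( g^{-K-1} \bigr)
		\right) \,.
\end{equation*}
The parity relation $W^{(-\alpha,i)}_{n}(\bm{x};\hbar) = (-1)^n W^{(+\alpha,i)}_{n}(\bm{x};-\hbar)$ gives $W^{(-\alpha,i)}_{k,n} = (-1)^{n+k} W^{(+\alpha,i)}_{k,n}$, so the bracketed combination collapses to $2 \cdot (-1)^{(n+k)(\text{something})}$ times $W^{(\alpha,i)}_{k,n}$, absorbing the factor of $2$ and turning $\frac{S_{r,\alpha}}{2\pi}$ into $\frac{S_{r,\alpha}}{\pi}$, which is \cref{eqn:large:g:corr:rspin}. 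One must be slightly careful: the parity only directly pairs $+\alpha$ with $-\alpha$, not $\alpha$ with $r-\alpha$; the grouping by modulus $|\ms{A}_{r,\alpha}| = |\ms{A}_{r,r-\alpha}|$ happens afterward at the level of the final intersection-number formula, where $W^{(\alpha,i)}$ and $W^{(r-\alpha,i)}$ contribute the same exponential weight.

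Next I would compute the leading coefficient $W^{(\alpha,i)}_{0,n}$. As in the Airy case, this is governed purely by the $\hbar^0$ term of the kernels. From the explicit $r$-Airy asymptotic expansion, $\widetilde\psi^{(m)}_\alpha(x;\hbar)$ at order $\hbar^0$ is $(-1)^{\frac{r-\alpha+2}{2}} \zeta^{\alpha(m+1/2)} r^{-1/2} x^{-\frac{r-1-2m}{2r}}$, and similarly for $\widetilde\phi^{(m)}_r$ with $\hbar \mapsto -\hbar$; summing $\sum_{m=0}^{r-1} \widetilde\psi^{(m)}_\alpha \widetilde\phi^{(r-1-m)}_r$ gives a geometric-type sum in $\zeta^\alpha$ that evaluates, after dividing by $x-y$, to something proportional to $\frac{1}{x^{1/2}y^{1/2}} \cdot \frac{1}{(x^{1/r})^{?} - \zeta^{?}(y^{1/r})^{?}}$. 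The upshot should be a closed form for $K_{+\alpha,-}(x,y;\hbar)$ at order $\hbar^0$ analogous to the Airy kernel but with $x^{1/2}\mp y^{1/2}$ replaced by a factor involving roots of unity, which after taking the $\sin(\frac{\alpha}{r}\pi)$ normalisation produces the factor $2\sin(\frac\alpha r\pi)$ in the denominator. Plugging this into the cyclic sum defining $W^{(\alpha,i)}_{0,n}$ and applying the rational-function identity \cref{tech:lemma} (the same combinatorial lemma used in \cref{prop:large:g:corr:WK}) collapses the sum over $S_n^{\textup{cyc}}$ to a single term $\propto x_i^{n/2-1}/\prod_{j\ne i}(x_i-x_j)$. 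Summing over $i$ and using the symmetric-function identity $\sum_i \frac{x_i^{-D-1}}{\prod_{j\ne i}(x_i-x_j)} = \frac{(-1)^{n-1}}{x_1\cdots x_n} h_D(\bm{x}^{-1})$ — generalised so that the weights $\sin(\frac{\alpha a_i}{r}\pi)$ survive — produces exactly the twisted complete-homogeneous polynomial $h^{(r,\alpha)}_{(r+1)(2g-2+n)}$ of \cref{eqn:h:rspin}. Reassembling the $\alpha$ and $r-\alpha$ contributions with their common modulus, and isolating the self-paired term $\alpha = r/2$ for even $r$ (which carries the extra $\frac12$ and $2^{-(2g-1+n)}$), yields \cref{eqn:large:g:corr:leading:rspin}; the equality of the two forms of $h^{(r,\alpha)}_D$ in \cref{eqn:h:rspin} is the elementary identity $\sin(\frac{\alpha k}{r}\pi) = (-1)^{\alpha \lfloor k/r\rfloor}\sin(\frac{\alpha (k \bmod r)}{r}\pi)$ applied termwise.

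The step I expect to be the main obstacle is the bookkeeping of the phases. Unlike the rank-$2$ case, the prefactors $(-1)^{\frac{r-\alpha+2}{2}}$, $\zeta^{\alpha(m+1/2)}$, the Stokes sign $S_{r,\alpha} = (-1)^{\frac{\alpha-r+1}{2}}$, the sign $(-r)^{g-1-|d|}$ from the genus-expansion normalisation, and the parity factor $(-1)^{n}$ all conspire, and getting the overall sign $(-1)^{g-1+n}$ together with the $(-1)^{\alpha n}$ factor in \cref{eqn:large:g:corr:leading:rspin} right requires careful tracking — in particular the half-integer powers $\zeta^{\alpha/2}$ mean one must fix branches of the roots consistently between $\psi_\alpha$ and the kernel normalisation. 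A secondary technical point is justifying that the order-$\hbar^0$ kernel sum genuinely telescopes to the clean $2\sin(\frac\alpha r\pi)$ form; this is a finite geometric sum on $r$-th roots of unity but needs the Schur Vandermonde evaluation already recalled in the text to keep the normalisation consistent with the Wronskian being $1$. Once these sign and branch issues are settled, the remainder is a direct transcription of the Airy argument with $\frac{2}{3}x^{3/2}$ replaced by $\frac{r}{r+1}x^{(r+1)/r}$ and the single pair of singularities replaced by the $(r-1)$ pairs, so no genuinely new analytic input beyond \cref{thm:large:order} is required.
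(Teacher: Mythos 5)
Your proposal follows essentially the same route as the paper's proof: apply the Borel transform method (\cref{thm:large:order}) to the $2(r-1)n$ singularities, use the parity relation to fold $-\alpha$ into $+\alpha$, compute the order-$\hbar^0$ kernels as geometric sums over $r$-th roots of unity, collapse the cyclic sum via \cref{tech:lemma}, sum over $i$ with the weighted symmetric-function identity (\cref{lemma:poly}), and regroup $\alpha$ with $r-\alpha$ plus the self-paired $\alpha = r/2$ term. The sign and branch bookkeeping you flag as the main obstacle is indeed where the paper spends its effort, and your outline contains no gaps beyond that acknowledged care.
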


\begin{proof}
	\Cref{eqn:large:g:corr:rspin} follows from \cref{thm:large:order} and the parity relation. For the leading term, notice that $\widetilde{\psi}_r^{(m)}(x;\hbar) = \frac{1}{\sqrt{r}} x^{-\frac{r-1-2m}{2r}} + \bigO(\hbar)$ and $\widetilde{\phi}_r^{(r-1-m)}(y;\hbar) = \frac{1}{\sqrt{r}} y^{\frac{r-1-2m}{2r}} + \bigO(\hbar)$. Thus,
	\[
		K_{+,-}(x,y;\hbar)
		=
		\frac{1}{r} \frac{1}{x-y} \left( \frac{x}{y} \right)^{\frac{1-r}{2m}}
		\sum_{m=0}^{r-1} \left( \frac{x}{y} \right)^{\frac{m}{r}}
		+ \bigO(\hbar)
		=
		\frac{1}{r} \frac{(xy)^{\frac{1-r}{2r}}}{x^{1/r} - y^{1/r}}
		+ \bigO(\hbar) \,.
	\]
	A similar computation shows that $K_{+\alpha,-}(x,y;\hbar)
		=
		- \frac{(-1)^{\frac{r-\alpha}{2}}}{r}
		\frac{(xy)^{\frac{1-r}{2r}}}{\zeta^{\alpha/2} x^{1/r} - \zeta^{-\alpha/2} y^{1/r}}
		+ \bigO(\hbar)$. Inserting the above expressions in the definition of $W^{(\alpha,i)}_{n}$, we find
	\[
	\begin{split}
		W^{(\alpha,i)}_{0,n}(\bm{x})
		&=
		(-1)^{n-1} \frac{- (-1)^{\frac{r-\alpha}{2}}}{r^n (x_1 \cdots x_n)^{\frac{r-1}{r}}}
		\sum_{\sigma \in S_n^{\textup{cyc}}}
			\frac{1}{\zeta^{\alpha/2} x_i^{1/r} - \zeta^{-\alpha/2} x_{\sigma(i)}^{1/r}}
			\prod_{j \ne i} \frac{1}{ x_j^{1/r} - x_{\sigma(j)}^{1/r}} \\
		&=
		(-1)^{n-1} \frac{(-1)^{\frac{r-\alpha}{2}}}{r^n (x_1 \cdots x_n)^{\frac{r-1}{r}}}
		\frac{ \zeta^{\frac{\alpha}{2}} (1-\zeta^{\alpha})^{n-2} \, x_i^{\frac{n-2}{r}} }{ \prod_{j \ne i} (x_i^{1/r} - x_j^{1/r})(\zeta^{\alpha} x_i^{1/r} - x_j^{1/r}) } \,.
	\end{split}
	\]
	In the second line, we applied \cref{tech:lemma} to simplify the sum over permutations. Recalling the values of $S_{r,\alpha} = (-1)^{\frac{\alpha-r+1}{2}}$ and $A_{r,\alpha}(x) = \frac{r}{r+1} (1 - \zeta^{\alpha}) x^{(r+1)/r}$, we find
	{\allowdisplaybreaks
	\begin{align*}   
		W_{g,n}(\bm{x})
		& =
		\frac{(-1)^{n}}{\pi\iu \, r^n (x_1 \cdots x_n)^{\frac{r-1}{r}}}
		\frac{\Gamma(2g-2+n)}{\left( \frac{r}{r+1} \right)^{2g-2+n}}
		\sum_{\substack{\alpha = 1,\dots,r-1 \\ i = 1,\dots,n}}
		\left(
			\frac{
				\zeta^{\frac{\alpha}{2}} (1-\zeta^{\alpha})^{-2g} \,
				x_i^{-\frac{(r+1)(2g-2+n)+n-2}{r}}
				}{
				\prod_{j \ne i} (x_i^{1/r} - x_j^{1/r})(\zeta^{\alpha} x_i^{1/r} - x_j^{1/r})
				}
			+
			\bigO\bigl( g^{-1} \bigr)
		\right) \\
		& =
		\frac{(-1)^n}{\pi \iu \, r^n (x_1 \cdots x_n)^{\frac{r-1}{r}}}
		\frac{\Gamma(2g-2+n)}{\left( \frac{r}{r+1} \right)^{2g-2+n}}
		\sum_{i=1}^n
		\Bigg(
		\sum_{\alpha = 1}^{\floor{\frac{r-1}{2}}}
		\Bigg(
			\frac{
				\zeta^{\frac{\alpha}{2}} (1-\zeta^{\alpha})^{-2g}
				}{
				\prod_{j \ne i} (x_i^{1/r} - x_j^{1/r})(\zeta^{\alpha} x_i^{1/r} - x_j^{1/r})
				} \\
		&\qquad\qquad\qquad\qquad
			-
			\frac{
				\zeta^{-\frac{\alpha}{2}} (1-\zeta^{-\alpha})^{-2g}
				}{
				\prod_{j \ne i} (x_i^{1/r} - x_j^{1/r})(\zeta^{-\alpha} x_i^{1/r} - x_j^{1/r})
				}
		\Bigg) x_i^{-\frac{(r+1)(2g-2+n)+n-2}{r}} \\
		&\qquad\qquad\qquad\qquad
		+
		\delta_{r}^{\textup{even}}
		\frac{
			\iu \, 2^{-2g}
			}{
			\prod_{j \ne i} (x_i^{1/r} - x_j^{1/r})(- x_i^{1/r} - x_j^{1/r})
		} x_i^{-\frac{(r+1)(2g-2+n)+n-2}{r}}
		+
		\bigO\bigl( g^{-1} \bigr)
		\Bigg) \,.
	\end{align*}
	}
	In the second line, we have re-arranged the sum as
	\[
		\sum_{\alpha=1}^{r-1} w_{\alpha}
		=
		\sum_{\alpha=1}^{\floor{\frac{r-1}{2}}} ( w_{\alpha} + w_{r-\alpha})
		+
		\delta_{r}^{\textup{even}} \, w_{\frac{r}{2}} \,.
	\]
	The reason behind it is that the singularities corresponding to $(1-\zeta^{\alpha})$ and $(1-\zeta^{r-\alpha})$ are complex conjugate, hence at the same distance from the origin (cf. \cref{fig:actions:rspin}). It is natural to combine them, since they contribute to the same leading asymptotic. We can now perform the sum over $i$ employing \cref{lemma:poly} to get the thesis.
\end{proof}

As a consequence, we obtain the following asymptotic formula for $r$-spin intersection numbers, expressing all $\bigO(1)$ terms for each exponential contribution.

\begin{corollary}[{Leading large genus asymptotics for $r$-spin intersection numbers}] \label{cor:large:g:rspin}
	For any given $n \ge 1$ and $a_1,\dots,a_n \in \set{1,\dots,r-1}$, uniformly in $d_1,\dots,d_n$ as $g \to \infty$:
	\begin{multline}
		\braket{\tau_{d_1,a_1} \cdots \tau_{d_n,a_n}}^{r\textup{-spin}}
		\prod_{i=1}^n (rd_i + a_i)!_{(r)} \\
		=
		\frac{2^n}{2\pi} \, \frac{\Gamma(2g-2+n)}{r^{g-1-|d|}}
		\Bigg[
			\sum_{\alpha = 1}^{\floor{\frac{r-1}{2}}}
				\frac{
					(-1)^{(\alpha-1)(|d|+n)}
				}{
					\left( \frac{2r}{r+1} \sin(\frac{\alpha}{r}\pi) \right)^{2g-2+n}
				}
				\Biggl(
					\frac{\prod_{i=1}^n
						\sin(\frac{\alpha a_i}{r}\pi)}{\sin(\frac{\alpha}{r}\pi)}
					+
					\bigO\bigl( g^{-1} \bigr)
				\Biggr) \\
			+
			\frac{\delta_{r}^{\textup{even}}}{2} \,
			\frac{
				(-1)^{(\frac{r}{2} - 1)(|d|+n)}
			}{
				\left( \frac{2r}{r+1} \right)^{2g-2+n}
			}
			\Biggl(
				\prod_{i=1}^n \sin\left( \tfrac{a_i}{2}\pi \right)
				+
				\bigO\bigl( g^{-1} \bigr)
			\Biggr)
		\Bigg] \,.
	\end{multline}
\end{corollary}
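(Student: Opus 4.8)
The plan is to read off the $r$-spin intersection numbers from the leading large genus asymptotics of the correlators established in \cref{prop:large:g:corr:rspin} by a coefficient extraction, and then to promote the resulting pointwise statement (in $\bm{d} = (d_1,\dots,d_n)$) to a uniform one by repeating the contour-integral argument from the proof of \cref{thm:large:g:WK}.

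\emph{Step 1: coefficient extraction and constant bookkeeping.} By the genus expansion of the higher Airy correlators, $x_1 \cdots x_n \, W_{g,n}(\bm{x})$ is, after the substitution $u_i = x_i^{-1/r}$, a polynomial in the $u_i$, and the coefficient of $\prod_{i=1}^n x_i^{-d_i - a_i/r - 1}$ in $W_{g,n}(\bm{x})$ equals $(-r)^{g-1-|d|} r^{-n} \braket{\tau_{d_1,a_1}\cdots\tau_{d_n,a_n}}^{r\textup{-spin}} \prod_{i=1}^n (rd_i+a_i)!_{(r)}$, provided the degree condition $r|d| + |a| = (r+1)(2g-2+n)$ holds. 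On the other hand, reading the same coefficient off the right-hand side of \cref{eqn:large:g:corr:leading:rspin} and using the closed form \cref{eqn:h:rspin}, one finds for each $\alpha = 1,\dots,\floor{\frac{r-1}{2}}$ the contribution $\frac{(-1)^{g-1+n}}{\pi}\,\frac{\Gamma(2g-2+n)}{(\frac{r}{r+1})^{2g-2+n}}\,\frac{2^n}{r^n}\cdot\frac{(-1)^{\alpha n}(-1)^{\alpha|d|}}{(2\sin(\frac{\alpha}{r}\pi))^{2g-1+n}}\prod_{i=1}^n\sin(\frac{\alpha a_i}{r}\pi)$, plus an error which is $\bigO(g^{-1})$ relative to it, and an analogous $\frac12$-weighted term for $\alpha = \frac r2$ when $r$ is even (where $\sin(\frac\alpha r\pi) = 1$). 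Dividing by $(-r)^{g-1-|d|}r^{-n}$ turns the power of $r$ into $r^{-(g-1-|d|)}$ and produces the overall sign $(-1)^{|d|+n}$; absorbing it into $(-1)^{\alpha n}(-1)^{\alpha|d|}$ and using $(-1)^{(\alpha+1)(|d|+n)} = (-1)^{(\alpha-1)(|d|+n)}$ gives the sign $(-1)^{(\alpha-1)(|d|+n)}$; splitting off one factor $2\sin(\frac\alpha r\pi)$ from the denominator and merging $(2\sin(\frac\alpha r\pi))^{2g-2+n}$ with $(\frac r{r+1})^{-(2g-2+n)}$ produces $(\frac{2r}{r+1}\sin(\frac\alpha r\pi))^{-(2g-2+n)}$, a leftover $\frac1{\sin(\frac\alpha r\pi)}$, and converts $\frac1\pi$ into $\frac1{2\pi}$. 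This reproduces the stated formula as a pointwise-in-$\bm{d}$ asymptotic expansion.

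\emph{Step 2: uniformity in $\bm{d}$.} It remains to show that each $\bigO(g^{-1})$ above is uniform in $d_1,\dots,d_n$. Rather than extracting coefficients from the truncated \cref{eqn:large:g:corr:leading:rspin}, one returns to the exact Borel representation: as in the proof of \cref{thm:large:order}, $W_{g,n}(\bm{x})/\Gamma(2g-2+n)$ minus its $K=0$ approximation equals a finite sum over the singularities $\pm A_{r,\alpha}(x_i)$ of Laplace-type integrals $\int_0^{+\infty} e^{-t(2g-2+n)}\,\widehat{W}_n^{(\pm\alpha,i)}\bigl(\bm{x};A_{r,\alpha}(x_i)(e^t-1);0\bigr)\,dt$. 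One then applies the coefficient-extraction operator built from contour integrals over circles $|x_i| = R_i$ with $R_1 < \cdots < R_n$ slightly spread (say $R_i = 1 + \frac{i-1}{n-1}\frac1g$), exchanges the $t$- and $\bm{x}$-integrations, and estimates with Watson's \cref{Watson:lemma}; its hypotheses hold because each $\widehat{W}_n^{(\pm\alpha,i)}$ is a finite sum of convolutions of $r$-Airy wave functions of polynomial growth at infinity, so the integrand vanishes as $t \to 0^+$ and grows at most exponentially as $t \to +\infty$, and by the homogeneity of $\psi_\alpha^{(m)}(x;\hbar)$ in $x$ these bounds are uniform for $R_i$ in a compact set. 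The summand realising the smallest $|A_{r,\alpha}(x_i)|$ on its circle yields the stated main term for the given $\alpha$ with a uniform $\bigO(g^{-1})$ remainder, while all other $(i,\alpha')$ summands carry exponentially suppressed ratios of radii (or ratios $|A_{r,\alpha}|/|A_{r,\alpha'}|$), handled exactly as the estimate of $\mc{C}_{g,d,K}^{\textup{sub}}$ in the proof of \cref{thm:large:g:WK}. Summing over $\alpha$, pairing $\alpha$ with $r-\alpha$ and treating $\alpha = \frac r2$ separately for even $r$ as in \cref{prop:large:g:corr:rspin}, gives the claim.

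\emph{Main obstacle.} The delicate step is Step 2: keeping all error terms $\bigO(g^{-1})$ \emph{uniformly} in $\bm{d}$ after the coefficient extraction. Compared with the $\psi$- and $\Theta$-class cases, the new difficulty is purely the presence of $2(r-1)$ instanton actions $\pm A_{r,\alpha}(x_i)$ per variable, which must be separated --- with the chosen spread of radii and the genericity of $\bm{x}$ --- so that every cross-term is genuinely exponentially small, uniformly in $\bm{d}$, while the homogeneity of the $r$-Airy functions makes the Watson-lemma bounds independent of the radii. Step 1, by contrast, is purely algebraic and uses only the leading minors $W_{0,n}^{(\alpha,i)}$ already computed in the proof of \cref{prop:large:g:corr:rspin} together with elementary identities for roots of unity.
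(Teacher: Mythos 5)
Your proposal is correct and follows the same route as the paper: the corollary is obtained by extracting the coefficient of $\prod_i x_i^{-d_i-a_i/r-1}$ from the leading term of \cref{prop:large:g:corr:rspin} (your constant and sign bookkeeping in Step 1, including $(-1)^{(\alpha+1)(|d|+n)}=(-1)^{(\alpha-1)(|d|+n)}$ and the splitting of one factor $2\sin(\tfrac{\alpha}{r}\pi)$, checks out), while the uniformity in $\bm{d}$ is established exactly by the contour-integral/Watson's-lemma argument of \cref{thm:large:g:WK}, which the paper explicitly declares to carry over mutatis mutandis to the $r$-spin setting. The only addition you make is to spell out how the $2(r-1)$ instanton actions are separated and how the $\alpha$ and $r-\alpha$ contributions are paired, which is precisely the content the paper leaves implicit.
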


\begin{remark}
	At a practical level, only the term corresponding to $\alpha = 1$ contributes in the asymptotic formula (for $r > 2$), since all other values of $\alpha$ give exponentially suppressed terms. In this case, the formula simplifies to
	\begin{multline}
		\braket{\tau_{d_1,a_1} \cdots \tau_{d_n,a_n}}^{r\textup{-spin}}
		\prod_{i=1}^n (rd_i + a_i)!_{(r)} \\
		=
		\frac{2^n}{2\pi \sin(\frac{\pi}{r})} \,
		\frac{
			\Gamma(2g-2+n)
		}{
			r^{g-1-|d|} \, \left( \frac{2r}{r+1} \sin(\frac{\pi}{r}) \right)^{2g-2+n}
		}
		\Biggl(
			\prod_{i=1}^n
				\sin\left( \tfrac{a_i}{r}\pi \right)
			+
			\bigO\bigl( g^{-1} \bigr)
		\Biggr) \,.
	\end{multline}
	Specialised to $n = 1$, the above asymptotic formula retrieves the one proved by Dubrovin--Yang--Zagier in \cite[theorem 5 (vii)]{DYZ}. We also emphasise that, despite being exponentially suppressed, the terms corresponding to $\alpha > 1$ can be detected numerically as explained in \cref{app:exp}.
\end{remark}

\subsubsection{Subleading contributions}
Similarly to the $r = 2$ case, the minors exhibit a distinct pole structure and polynomiality structure. We provide a brief outline of the proof, as it resembles the $r = 2$ case with a few minor variations that we outline below.

\begin{proposition}\label{prop:minors:rspin}
	For any $k \ge 0$, $n \ge 2$, and $\alpha = 1,\dots, r-1 $:
	\begin{multline}\label{eqn:minors:rspin}
		W_{k,n}^{(\alpha,i)}(\bm{x})
		=
		\frac{(-1)^{n-1+\frac{r-\alpha}{2}}}{r^n (x_1 \cdots x_n)^{\frac{r-1}{r}}}
		\frac{
			\zeta^{\alpha/2} (1-\zeta^{\alpha})^{n-2} \, x_i^{\frac{n-2-(r+1)k}{r}}
		}{
			\prod_{j \ne i} (x_i^{1/r} - x_j^{1/r})(\zeta^{\alpha}x_i^{1/r} - x_j^{1/r})
		} \\
		\times
		R_{k,n-1}^{(\alpha)}\left(
			\Big( \frac{x_i}{x_1} \Big)^{1/r} , \dots,
			\widehat{ \Big( \frac{x_i}{x_i} \Big)^{1/r} } , \dots,
			\Big( \frac{x_i}{x_n} \Big)^{1/r}
		\right),
	\end{multline}
	where $R_{k,n-1}^{(\alpha)}$ is a symmetric polynomial of degree $\le \min\set{ (r+1)k + n - 2, 2((r+1)k - 1)}$ and degree $\le (r+1)k$ in each individual variable satisfying the $\Z/r\Z$-symmetry
	\begin{equation}
		(-\zeta^{\alpha})^k \cdot R_{k,n}^{(\alpha)}(\zeta^{-\alpha}u_1, \dots, \zeta^{-\alpha}u_n)
		=
		R_{k,n}^{(r-\alpha)}(u_1, \dots, u_n) \,.
	\end{equation}
	The sequence of polynomials $(R_{k,n}^{(\alpha)})_{n \ge 1}$ satisfies the following specialisation properties:
	\begin{equation}\label{eqn:spec:rspin}
	\begin{aligned}
		& R_{k,n+1}^{(\alpha)}(u_1,\dots, u_n, u_{n+1}) \big|_{u_{n+1} = 1}
		=
		R_{k,n}^{(\alpha)}(u_1,\dots,u_n) \,, \\
		& R_{k,n+1}^{(\alpha)}(u_1,\dots, u_n, u_{n+1}) \big|_{u_{n+1}
		=
		\zeta^{-\alpha}} = R_{k,n}^{(\alpha)}(u_1,\dots,u_n) \,.
	\end{aligned}
	\end{equation}
	In turn, the degree condition and the specialisation properties uniquely determine $R_{k,n}^{(\alpha)}$ for all $n$ by an explicit algorithm from the first terms of the sequence\footnote{
		Experimentally, we can see that the degree of $R_{k,n}^{(\alpha)}$ is actually $\le \min\set{ (r+1)k + n - 1, (r+2)k}$. As in the Witten--Kontsevich case, this lower bound would improve the efficiency of the algorithm.
	}. Moreover, the coefficient of $R_{k,n}^{(\alpha)}$ in the monomial basis is a polynomial of $n$.
\end{proposition}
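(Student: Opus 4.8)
The plan is to transpose the argument of \cref{lemma:poly:subleading} line by line, replacing its three structural inputs — the Wronskian identity, the homogeneity and parity of the formal Airy functions, and the exponent $3/2$ in the instanton action — by their $r$-Airy counterparts, and adapting the combinatorial bookkeeping to the facts that the natural coordinate is now $x^{1/r}$ and that the minors have poles along two loci, governed by the roots of unity $1$ and $\zeta^{-\alpha}$ rather than by $\pm 1$.

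\emph{Pole structure.} First I would establish the $r$-Airy analogue of \cref{lemma:poles:minors}. Using the homogeneity relation $\psi_\alpha^{(m)}(x;\hbar) = x^{-\frac{r-1-2m}{2r}}\,\psi_\alpha^{(m)}\!\left(1;\hbar/(-V_\alpha(x))\right)$ together with $\phi_\alpha^{(m)}(x;\hbar) = \psi_\alpha^{(m)}(x;-\hbar)$, one writes
\[
	K_{+,-}(x,y;\hbar) = \frac{1}{r}\,\frac{(xy)^{\frac{1-r}{2r}}}{x^{1/r}-y^{1/r}}\sum_{m\ge 0}B_m\!\left(x^{-1/r},y^{-1/r}\right)\hbar^m ,
\]
and likewise for $K_{+\alpha,-}$ with $x^{1/r}-y^{1/r}$ replaced by $\zeta^{\alpha/2}x^{1/r}-\zeta^{-\alpha/2}y^{1/r}$ and an extra constant $-(-1)^{(r-\alpha)/2}$, where each $B_m$, $B_m^{(\alpha)}$ is a homogeneous polynomial of degree $(r+1)m$; the cancellation of the remaining $r-1$ Vandermonde factors in the numerators is precisely the $r$-Airy Wronskian identity, whose value $1$ comes from the Schur evaluation of the Vandermonde on roots of unity used above. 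Exactly as for $r=2$, only the permutations $\sigma$ with $\sigma(k)=l$ or $\sigma(l)=k$ contribute to a residue of $W_n^{(\alpha,i)}$ along $x_k^{1/r}=x_l^{1/r}$ (resp.\ along $x_j^{1/r}=\zeta^\alpha x_i^{1/r}$): the two contributions cancel when $k,l\ne i$, giving regularity, whereas when one index equals $i$ one term survives and its residue, evaluated via the Wronskian, equals a constant multiple of $W_{n-1}^{(\alpha,i)}$. A degree count on the $\hbar^k$-coefficient then yields \cref{eqn:minors:rspin} with $R_{k,n-1}^{(\alpha)}$ symmetric, of degree $\le (r+1)k$ in each variable and total degree $\le (r+1)k+n-2$; the specialisation properties \cref{eqn:spec:rspin} are the residue identity rewritten in the variables $u_j=(x_i/x_j)^{1/r}$, since $u_{n+1}=1$ and $u_{n+1}=\zeta^{-\alpha}$ hit the two pole loci, and the $\Z/r\Z$-symmetry is read off from the relation between $K_{+\alpha,-}$ and $K_{+(r-\alpha),-}$ induced by $\zeta\mapsto\zeta^{-1}$, mirroring the parity statement of \cref{rem:minors:Z2symmetry}.

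\emph{Degree stabilisation.} The crux — and the step I expect to be the main obstacle — is showing that $\deg R_{k,n+1}^{(\alpha)}=\deg R_{k,n}^{(\alpha)}$ for $n$ above a threshold of order $(r+1)k$, which is what forces the uniform bound $2((r+1)k-1)$ and makes the whole sequence $(R_{k,n}^{(\alpha)})_{n}$ computable from finitely many terms. I would reproduce the unitriangular-operator argument of \cref{lemma:poly:subleading} verbatim, now with the two shifted elementary bases $\hat e_s=e_s(u_1-1,\dots,u_n-1)$ and $\check e_s=e_s(u_1-\zeta^{-\alpha},\dots,u_n-\zeta^{-\alpha})$ in place of the $u_j\mp 1$ bases: the specialisation properties give $\hat P_{k,n+1}=\hat P_{k,n}+\hat e_{n+1}\hat Q_{k,n}$ and $\check P_{k,n+1}=\check P_{k,n}+\check e_{n+1}\check Q_{k,n}$ with $\deg\hat Q_{k,n}=\deg\check Q_{k,n}\le (r+1)k-1$, the change-of-basis operator $\ms{T}_n$ carries the factor $\hat u_j-\check u_j=\zeta^{-\alpha}-1$ in place of $-2$, and the divisibility-by-$\check e_{n+1}$ argument goes through once $n\ge (r+1)k-1$, forcing $\deg\hat Q_{k,n}\le (r+1)k-2$ and hence stabilisation. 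The only genuinely new point is that the two specialisation values are distinct, i.e.\ $\zeta^{-\alpha}\ne 1$, which holds because $\alpha\not\equiv 0\pmod r$, so that $\ms{T}_n$ remains an invertible unitriangular isomorphism; beyond that I do not expect the root-of-unity arithmetic to cause trouble. Finally, polynomiality of the monomial-basis coefficients of $R_{k,n}^{(\alpha)}$ in $n$ follows as before: once the expansion in $\hat e$ stabilises, passing from $\hat e_s$ to $e_s$ is a change of basis depending polynomially on $n$, while the elementary-to-monomial change of basis (Kostka numbers) is independent of $n$.
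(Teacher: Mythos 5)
Your proposal follows essentially the same route as the paper's (itself only sketched) proof: transpose \cref{lemma:poles:minors} and the degree-stabilisation argument of \cref{lemma:poly:subleading} to the $r$-Airy setting, with the two shifted elementary bases now centred at $1$ and $\zeta^{-\alpha}$ and the change-of-basis factor $\zeta^{-\alpha}-1$ replacing $-2$. The one point the paper singles out as genuinely new is that for $r>2$ the residue computation cannot rest on the Wronskian $\det\Psi=1$ (matrix entries of $\Psi^{-1}$ are no longer linear in those of $\Psi$); the correct input is the full matrix relation $\Phi^{t}\Psi=\Id$, whose entries are precisely the quadratic sums $\sum_{m}\psi_{\alpha}^{(m)}\phi_{\beta}^{(r-1-m)}$ that you evaluate at coincident and $\zeta^{\alpha}$-rotated points --- so your argument is right in substance, but you should invoke that matrix identity by name rather than ``the Wronskian''.
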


\begin{proof}[{Sketch of the proof}]
	The Wronskian condition appearing in the residue computation of the kernels (cf. \cref{lemma:poles:minors}) is no longer the key fact. This is because, for $\Psi \in \SL(2,\C)$, matrix elements of $\Psi^{-1}$ are linear in the matrix elements of $\Psi$. However, this linearity does not hold for elements of $\SL(r,\C)$ with $r > 2$. Nonetheless, we can employ the relation $\Phi^t \Psi = \Id$, which indeed yields quadratic relations that generalise the Wronskian condition. This, together with homogeneity arguments, yields \cref{eqn:minors:rspin} and the specialisation properties. From the specialisation properties, one can prove that the degree of $R_{k,n}^{(\alpha)}$ is eventually independent of $n$, and more precisely
	\[
		\deg{( R_{k,n}^{(\alpha)} )} \le \min\Set{ (r+1)k + n - 1, 2(r+1)k - 2 } \,.
	\]
	Thus, from the degree bound and the specialisation properties, we get an algorithm for computing $(R_{k,n}^{(\alpha)})_{n \ge 1}$ from the first few values of $n$, together with the polynomiality statement for the expansion coefficients in the monomial basis. As for the $\Z/r\Z$-symmetry, which substitutes evenness for the corresponding polynomials in the $r=2$ case, it follows from analogous relations between the wave functions.
\end{proof}

As a consequence of the above result, one can prove that the subleading corrections to $W_{g,n}$ exhibit the following structure. Firstly, let us group together subleading corrections in accordance with the absolute value of the instanton actions. In other words, let us group together terms corresponding to $\alpha$ and $r-\alpha$:
\begin{multline}
	W_{g,n}(\bm{x})
	=
	\frac{(-1)^n}{\pi} \Gamma(2g-2+n)
	\Bigg[
		\sum_{\alpha=1}^{\floor{\frac{r-1}{2}}}
		\Bigg(
		\sum_{k=0}^K
			\frac{1}{(2g-3+n)^{\underline{k}}}
			\frac{V_{k,g,n}^{(\alpha)}(\bm{x})}{x_1 \cdots x_n}
			+
			\bigO\biggl( \frac{1}{|A_{r,\alpha}(x_i)|^{2g} g^{K+1}} \biggr)
		\Bigg) \\
	+
	\frac{\delta_{r}^{\textup{even}}}{2}
	\Bigg(
		\sum_{k=0}^K
			\frac{1}{(2g-3+n)^{\underline{k}}}
			\frac{V_{k,g,n}^{(\frac{r}{2})}(\bm{x})}{x_1 \cdots x_n}
		+
		\bigO\biggl( \frac{1}{|A_{r,\frac{r}{2}}(x_i)|^{2g} g^{K+1}} \biggr)
	\Bigg)
	\Bigg],
\end{multline}
where we have set
\begin{equation}
	V_{k,g,n}^{(\alpha)}(\bm{x})
	\coloneqq
	(-1)^n \, x_1 \cdots x_n
	\sum_{i=1}^n
	\left(
		S_{r,\alpha} \frac{W_{k,n}^{(\alpha,i)}(\bm{x})}{A_{r,\alpha}(x_i)^{2g-2+n-k}} 
		+
		S_{r,r-\alpha} \frac{W_{k,n}^{(r-\alpha,i)}(\bm{x})}{A_{r,r-\alpha}(x_i)^{2g-2+n-k}} 
	\right) \,.
\end{equation}
With the residue strategy of \cref{lemma:poly:subleading,lemma:poly}, but taking $f_{\alpha}(q) = \frac{1}{\pi\iu} \frac{\zeta^{\alpha/2} (1-\zeta^{\alpha})^{k+1-2g}}{\prod_{i=1}^n (q-z_i)(\zeta^{\alpha}q-z_i)} R_{k,n}^{(\alpha)}(q \bm{z}^{-1})$, we find
\begin{multline}
	V_{k,g,n}^{(\alpha)}(\bm{x})
	=
	(-1)^{g-1+\alpha n}
	\frac{2^n}{2 \sin(\frac{\alpha}{r}\pi)}
	\frac{1}{r^n \left( \frac{2r}{r+1} \sin(\frac{\alpha}{r}\pi) \right)^{2g-2+n-k}} \\
	\sum_{d= 0}^{\min{\set{ (r+1)k + n - 1, 2(r+1)k - 2 }} }
		\bar{R}_{k,n}^{(\alpha,d)}(\bm{x}^{-1/r}) \,
		h^{(r,\alpha)}_{(r+1)(2g-2+n)-d}(\bm{x}^{-1/r})\,,
\end{multline}
where $h^{(r,\alpha)}_{D}$ is the polynomial defined in \cref{eqn:h:rspin}, $\bar{R}_{k,n}^{(\alpha,d)}$ is the homogeneous component of $R_{k,n}^{(\alpha)}$ of degree $d$, normalised according to the following formula:
\begin{equation}
	R_{k,n}^{(\alpha)}(q\bm{u})
	=
	(-1)^{\frac{k}{2}}
	\sum_{d \ge 0} q^d \, \zeta^{\frac{\alpha (d-k)}{2}} \bar{R}_{k,n}^{(\alpha,d)}(\bm{u}) \,.
\end{equation}
Except for having to group together instanton actions according to their distance from the origin, we highlight two more differences between the $r = 2$ case and the general $r$ case. Firstly, the $\Z/r\Z$-symmetry, which substitutes the evenness property, is a much weaker constraint. Thus, some simplifications no longer occur. Secondly, the complete homogeneous polynomial is essentially substituted by the polynomials $h^{(r,\alpha)}_D$. While in the complete homogeneous polynomial $h_D$ all monomials of degree $D$ appear with coefficient $1$, in $h^{(r,\alpha)}_D$ all monomials appear with coefficient $\sin(\frac{\alpha k}{r}\pi)$, where $k$ is the exponent of the corresponding variable. The latter depends on the (parity of the) quotient and on the reminder of the Euclidean division of $k$ by $r$: writing $k = r[k] + \braket{k}$ for $[k] \ge 0$ and $\braket{k} \in \set{ 0,\dots,r-1 }$,
\begin{equation}
	\sin\left( \frac{\alpha k}{r}\pi \right)
	=
	(-1)^{\alpha [k]} \sin\left( \frac{\alpha \! \braket{k}}{r}\pi \right) \,.
\end{equation}
This last difference complicates the extraction of coefficients from products of the form $m_{\nu} h^{(r,\alpha)}_{D}$. Nonetheless, the strategy employed in \cref{app:symmetric:fncts} can still be adapted to obtain the following formula. For all partitions $\mu = (\mu_1,\dots,\mu_n)$ and $\nu = (\nu_1,\dots,\nu_n)$ (zero parts are allowed):
\begin{equation}
	\bigl[ u_1^{\mu_1} \cdots u_n^{\mu_n} \bigr] m_{\nu} h^{(r,\alpha)}_{|\mu| - |\nu|}
	=
	(-1)^{\alpha ([\mu] - [\nu])}
	\sum_{ A \in \{-(r-1),\dots,r-1\}^n }
		M_{n,\mu,\nu}^{(r,\alpha)}(A) \,
		\prod_{i=1}^n \sin\left( \frac{\alpha A_i}{r}\pi \right) \,.
\end{equation}
Here $[\mu] = \sum_{i=1}^n [\mu_i]$ is the sum of the quotients of the Euclidean division of the parts $\mu_i$ by $r$, the quantity $M_{n,\mu,\nu}^{(r,\alpha)}(A)$ is the explicit polynomial in the multiplicities $p_{m}(\mu)$ given by
\begin{equation}\label{eqn:M:mu:nu:rspin}
	M_{n,\mu,\nu}^{(r,\alpha)}
	\coloneqq
	\frac{1}{z_{\nu}}
	\prod_{k=0}^{\nu_1} \prod_{s = -(r-1)}^{r-1}
	\left(
		n -
		\sum_{i=0}^{k-1} p_i(\mu)
		-
		\sum_{\substack{ i=k, \dots, \mu_1 \\ i \not\equiv s-k \pmod{r}}} p_{i}(\mu)
		-
		\sum_{j = k+1}^{\nu_1} p_{j,s-k-j}(\nu,A)
	\right)^{\underline{p_{k,s}(\nu,A)}} \,,
\end{equation}
and $p_{k,s}(\nu,A) \coloneqq \#\set{ i | \nu_i = k \,\wedge\, A_i = s }$ is the joint multiplicity of $(k,s)$ in $(\nu,A)$.

As a consequence, we obtain the following asymptotic formula for the $r$-spin intersection numbers. We omit the proof, which is parallel to that of \cref{thm:large:g:WK}.

\begin{theorem}[{Large genus asymptotics for $r$-spin intersection numbers}] \label{thm:large:g:rspin}
	For any given $n \ge 1$ and $a_1,\dots,a_n \in \set{1,\dots,r-1}$, uniformly in $d_1,\dots,d_n$ as $g \to \infty$:
	\begin{multline}
		\braket{\tau_{d_1,a_1} \cdots \tau_{d_n,a_n}}^{r\textup{-spin}}
		\prod_{i=1}^n (rd_i + a_i)!_{(r)} \\
		=
		\frac{2^n}{2\pi} \, \frac{\Gamma(2g-2+n)}{r^{g-1-|d|}}
		\Bigg[
			\sum_{\alpha = 1}^{\floor{\frac{r-1}{2}}}
				\frac{
					1
				}{
					\left( \frac{2r}{r+1} \sin(\frac{\alpha}{r}\pi) \right)^{2g-2+n}
				}
				\Biggl(
					\sum_{k=0}^K
						\frac{ ( \frac{2r}{r+1} \sin(\frac{\alpha}{r}\pi) )^{k} }{(2g-3+n)^{\underline{k}}} \,
						\gamma^{(r,\alpha)}_{k}
					+
					\bigO\biggl( \frac{1}{g^{K+1}} \biggr)
				\Biggr) \\
			+
			\frac{\delta_{r}^{\textup{even}}}{2} \,
			\frac{
				1
			}{
				\left( \frac{2r}{r+1} \right)^{2g-2+n}
			}
			\Biggl(
				\sum_{k=0}^K
					\frac{ ( \frac{2r}{r+1} )^{k} }{(2g-3+n)^{\underline{k}}} \,
						\gamma^{(r,\frac{r}{2})}_{k}
					+
					\bigO\biggl( \frac{1}{g^{K+1}} \biggr)
			\Biggr)
		\Bigg] \,.
	\end{multline}
	Each $\gamma^{(r,\alpha)}_{k}$ is a function of $n$ and the multiplicities $p_{m} = \#\set{i | rd_i + a_i = m}$. Moreover, it depends polynomially in the multiplicities, up to an overall factor of $(-1)^{\alpha |d|}$.
\end{theorem}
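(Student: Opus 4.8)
The plan is to run the argument of \cref{thm:large:g:WK} almost verbatim, substituting the Airy data by the $r$-Airy data established in \cref{prop:large:g:corr:rspin} and \cref{prop:minors:rspin}. First I would apply the Borel transform method (\cref{thm:large:order}, in the Gevrey-2 form of \cref{rem:Gevrey:2}) to the $n$-point $r$-Airy correlator $W_n(\bm{x};\hbar)$, whose Borel transform is simple resurgent with $2(r-1)n$ logarithmic singularities at $s = \pm A_{r,\alpha}(x_i)$, $\alpha = 1,\dots,r-1$, $i = 1,\dots,n$ (this is where genericity of $(x_1,\dots,x_n)$, ensuring the instanton actions lie on distinct rays as in \cref{rem:princ:sheet}, is used). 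Deforming the Cauchy contour around the origin in the Borel plane into partial Hankel contours along the $2(r-1)n$ cuts, using the parity relation $W^{(-\alpha,i)}_{n}(\bm{x};\hbar) = (-1)^n W^{(+\alpha,i)}_{n}(\bm{x};-\hbar)$ to pair opposite singularities, and grouping the $\alpha$ and $r-\alpha$ contributions since $|\ms{A}_{r,\alpha}| = |\ms{A}_{r,r-\alpha}|$, one obtains for each $K$ an exact identity expressing $W_{g,n}(\bm{x})/\Gamma(2g-2+n)$ minus its truncated asymptotic expansion as a sum over $(\alpha,i)$ of Watson-type integrals $\int_{0}^{+\infty} e^{-t(2g-2+n)}\,\widehat{W}^{(\alpha,i)}_{n}\bigl(\bm{x};A_{r,\alpha}(x_i)(e^t-1);K\bigr)\,dt$ weighted by the exponential prefactor $A_{r,\alpha}(x_i)^{-(2g-2+n)}$, exactly as in \cref{eqn:asympt:WK:start}.

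Next I would use \cref{prop:minors:rspin} to rewrite, inside each exponential sector, the combination $\sum_i \bigl( S_{r,\alpha} W^{(\alpha,i)}_{k,n}/A_{r,\alpha}(x_i)^{2g-2+n-k} + S_{r,r-\alpha} W^{(r-\alpha,i)}_{k,n}/A_{r,r-\alpha}(x_i)^{2g-2+n-k}\bigr)$ as $V^{(\alpha)}_{k,g,n}(\bm{x})/(x_1\cdots x_n)$. As recalled before the theorem, the residue-computation trick with $f_{\alpha}(q) = \frac{1}{\pi\iu}\frac{\zeta^{\alpha/2}(1-\zeta^{\alpha})^{k+1-2g}}{\prod_i (q-z_i)(\zeta^{\alpha}q-z_i)} R^{(\alpha)}_{k,n}(q\bm{z}^{-1})$ (the $\mathbb{Z}/r\mathbb{Z}$-symmetric analogue of the even polynomial $P_{k,n}$) identifies $V^{(\alpha)}_{k,g,n}$ with a linear combination of products $\bar{R}^{(\alpha,d)}_{k,n}(\bm{x}^{-1/r})\, h^{(r,\alpha)}_{(r+1)(2g-2+n)-d}(\bm{x}^{-1/r})$, where $h^{(r,\alpha)}_D$ plays the role that the complete homogeneous polynomial $h_D$ played in the $r=2$ case, its monomials carrying the explicit sine weights $\prod_i \sin(\tfrac{\alpha k_i}{r}\pi)$. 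Thus, up to the prefactor $(x_1\cdots x_n)^{-(r-1)/r}$, both $W_{g,n}$ and the $k$-th term of its asymptotic expansion are, within each exponential sector, polynomials in $\bm{x}^{-1/r}$ of controlled degree. Coefficient extraction is then performed by the operator $\mc{O}_d$ integrating each $x_i$ over a circle of radius $R_i$ with $R_1 < \cdots < R_n$, so that the spurious poles at $x_i = x_j$ cause no harm; this produces, in the $\alpha$-sector, the difference $\mc{C}^{(\alpha)}_{g,d,K}$ between $\braket{\tau_{d_1,a_1}\cdots\tau_{d_n,a_n}}^{r\textup{-spin}}\prod_i(rd_i+a_i)!_{(r)}$ (suitably normalised) and its expansion with coefficients $\gamma^{(r,\alpha)}_k$; expanding $m_{\nu}h^{(r,\alpha)}_{|\mu|-|\nu|}$ via the formula involving $M^{(r,\alpha)}_{n,\mu,\nu}$ stated before the theorem shows $\gamma^{(r,\alpha)}_k$ equals $(-1)^{\alpha|d|}$ times a polynomial in $n$ and the multiplicities $p_m = \#\set{i \mid rd_i + a_i = m}$.

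The uniform estimate then proceeds exactly as in the proof of \cref{thm:large:g:WK}: exchanging the $t$-integral with $\mc{O}_d$, bounding $|\mc{O}_d[\widehat{Z}^{(\alpha,i)}_n(\bm{x};t;K)]| \le \frac{R_2^{d_2}\cdots R_n^{d_n}}{R_1^{d_1}}\, f^{(\alpha)}_{n,K}(t)$ with $f^{(\alpha)}_{n,K}(t) = \bigO(t^{K+1})$ as $t\to 0^+$ and at most exponential as $t\to+\infty$, uniformly in $R_i \in [1,2]$ and in $d$ (using the homogeneity of the $r$-Airy functions and the polynomial growth at infinity of their Borel transforms, checked from the integral representation), applying Watson's \cref{Watson:lemma} to get $\bigO(g^{-K-2})$, multiplying by $2g-2+n$, separating the $i=1$ leading term from the exponentially suppressed $(R_1/R_i)^{(r+1)(2g-2+n)}$ terms, and finally choosing $R_i = 1 + \frac{i-1}{n-1}\frac{1}{g}$, so that — since $r|d|+|a| = (r+1)(2g-2+n)$ — the geometric factors stay $\bigO(1)$. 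This gives $\mc{C}^{(\alpha)}_{g,d,K} = \bigO(g^{-K-1})$ uniformly in $d$; summing over $\alpha = 1,\dots,\floor{\frac{r-1}{2}}$ and adding the $\alpha = r/2$ contribution for even $r$ yields the theorem.

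\textbf{Main obstacle.} As in the Witten--Kontsevich case, the delicate point is the uniformity in $d_1,\dots,d_n$: one must verify that the constants implicit in Watson's lemma depend only on $n$ and $K$, never on the $d_i$ or on the radii $R_i$, and that the apparently exponentially growing geometric prefactors are tamed by the $g^{-K-1}$ decay via the chosen radii. The genuinely new $r$-spin feature is bookkeeping rather than conceptual: one must keep the $\floor{\frac{r-1}{2}}$ (or $\frac{r}{2}$) distinct exponential sectors rigorously separated, and correctly match $\gamma^{(r,\alpha)}_k$ with the sine-weighted coefficient extraction — the $\mathbb{Z}/r\mathbb{Z}$-symmetry of $R^{(\alpha)}_{k,n}$ being weaker than evenness, fewer cancellations occur and the combinatorics of the $M^{(r,\alpha)}_{n,\mu,\nu}$ is heavier — but once the \cref{thm:large:g:WK} template is in place these are all routine.
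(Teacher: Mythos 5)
Your proposal is correct and follows essentially the same route as the paper, which itself omits the proof of this theorem with the remark that it is parallel to that of \cref{thm:large:g:WK}; you have correctly identified all the required substitutions (the $2(r-1)n$ singularities, the pairing of $\alpha$ with $r-\alpha$, the polynomials $R^{(\alpha)}_{k,n}$ and $h^{(r,\alpha)}_D$ in place of $P_{k,n}$ and $h_D$, the coefficient extraction via $M^{(r,\alpha)}_{n,\mu,\nu}$, and the choice of radii for the uniform estimate). Nothing is missing.
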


\begin{remark}
	Contrary to the case $r = 2$, it is not clear whether the subleading corrections $\gamma^{(r,\alpha)}_{k}$ have a polynomiality behaviour in $n$. Moreover, from \cref{eqn:M:mu:nu:rspin}, there seems to be no bound on the number of multiplicities appearing in $\gamma^{(r,\alpha)}_{k}$ (unless some vanishing condition would hold). Nonetheless, our approach provides an algorithm to compute the subleading corrections for any value of $k$.
\end{remark}


\appendix
\section*{Appendices}
\renewcommand{\thesubsection}{\Alph{subsection}}
\counterwithin{theorem}{subsection}     
\numberwithin{equation}{subsection}     

\subsection{Some technical lemmas}
\begin{lemma}\label{tech:lemma}
	Let $n$ be a positive integer and let $z_1, \dots, z_n$ be formal variables. Let $\tau$ be any function. Then:
	\begin{equation}
		\sum_{\sigma \in S_n^{\textup{cyc}}}
		\frac{1}{z_{\sigma(1)} - \tau(z_{1})} \prod_{i=2}^n \frac{1}{z_{i} - z_{\sigma(i)}}
		=
	 \frac{(z_1 - \tau(z_1))^{n-2} }{\prod_{i=2}^{n}(z_1 - z_i)(\tau(z_1) - z_i)} \,.
	\end{equation}
\end{lemma}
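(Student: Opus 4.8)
The plan is to recognise the left-hand side as a sum over Hamiltonian paths in the variables $z_2,\dots,z_n$ and to prove the resulting identity by induction on the number of variables, the inductive step collapsing to a one-line partial-fraction computation. First I would use the standard bijection between $S_n^{\textup{cyc}}$ and the orderings of $\{2,\dots,n\}$: to $\sigma$ one associates the orbit $1=j_0\to j_1\to\cdots\to j_{n-1}\to j_0$ of the element $1$, so that $(j_1,\dots,j_{n-1})$ ranges over all orderings of $\{2,\dots,n\}$. Writing $j_n:=1$ and $t:=\tau(z_1)$, the relations $\sigma(1)=j_1$ and $\sigma(j_k)=j_{k+1}$ rewrite the summand as
\[
\frac{1}{z_{\sigma(1)}-\tau(z_1)}\prod_{i=2}^n\frac{1}{z_i-z_{\sigma(i)}}
=\frac{1}{z_{j_1}-t}\,\Biggl(\prod_{k=1}^{n-2}\frac{1}{z_{j_k}-z_{j_{k+1}}}\Biggr)\frac{1}{z_{j_{n-1}}-z_1}\,.
\]
Introducing for a finite index set $I$ of size $m$ and two auxiliary symbols $p,q$ the ``chain sum''
\[
\Phi\bigl(p;(w_i)_{i\in I};q\bigr):=\sum_{(i_1,\dots,i_m)}\frac{1}{w_{i_1}-p}\Biggl(\prod_{k=1}^{m-1}\frac{1}{w_{i_k}-w_{i_{k+1}}}\Biggr)\frac{1}{w_{i_m}-q}\,,
\]
where the sum is over all orderings of $I$ and $\Phi(p;\varnothing;q):=\tfrac1{q-p}$ when $m=0$, the left-hand side of the lemma is exactly $\Phi\bigl(\tau(z_1);(z_i)_{i=2}^n;z_1\bigr)$.

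The core claim is then that $\Phi\bigl(p;(w_i)_{i\in I};q\bigr)=\dfrac{(q-p)^{m-1}}{\prod_{i\in I}(w_i-p)(w_i-q)}$ for every $m\ge 0$; since $(z_1-z_i)(\tau(z_1)-z_i)=(z_i-z_1)(z_i-\tau(z_1))$ and $m-1=n-2$, this is precisely the assertion of the lemma. I would prove it by induction on $m$, the cases $m=0,1$ being immediate from the definition. For $m\ge1$, splitting the sum according to the last index $i_m=\ell$ of the ordering yields the recursion
\[
\Phi\bigl(p;(w_i)_{i\in I};q\bigr)=\sum_{\ell\in I}\Phi\bigl(p;(w_i)_{i\in I\setminus\{\ell\}};w_\ell\bigr)\cdot\frac{1}{w_\ell-q}\,,
\]
so that, applying the inductive hypothesis and clearing the common factor $\prod_{i\in I}(w_i-p)$, the claim reduces to the elementary identity
\[
\sum_{\ell\in I}\frac{(w_\ell-p)^{m-1}}{(w_\ell-q)\prod_{i\in I\setminus\{\ell\}}(w_i-w_\ell)}=\frac{(q-p)^{m-1}}{\prod_{i\in I}(w_i-q)}\,.
\]

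To close the induction I would observe that this last display is exactly the statement that the residues of the rational function $R(x):=\dfrac{(x-p)^{m-1}}{(x-q)\prod_{i\in I}(x-w_i)}$ sum to zero. Indeed its numerator has degree $m-1=(m+1)-2$, hence $R(x)=\bigO(x^{-2})$ as $x\to\infty$ and $\Res_{x=\infty}R=0$; by the residue theorem $\Res_{x=q}R+\sum_{\ell\in I}\Res_{x=w_\ell}R=0$, and substituting $\Res_{x=q}R=(q-p)^{m-1}/\prod_{i\in I}(q-w_i)$ together with $\Res_{x=w_\ell}R=(w_\ell-p)^{m-1}/\bigl((w_\ell-q)\prod_{i\ne\ell}(w_\ell-w_i)\bigr)$ and rearranging the signs via $\prod_{i\ne\ell}(w_i-w_\ell)=(-1)^{m-1}\prod_{i\ne\ell}(w_\ell-w_i)$ and $\prod_{i\in I}(q-w_i)=(-1)^m\prod_{i\in I}(w_i-q)$ gives the desired identity.

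The argument is elementary throughout, so there is no genuine obstacle; the only points requiring care are getting the cyclic-permutation/chain dictionary and the $m=0,1$ conventions exactly right in the first step, and keeping track of signs when identifying the reduced identity with a sum of residues in the last step. (Step 3 could equally be phrased as a Lagrange-interpolation statement for the polynomial $(x-p)^{m-1}$, but the residue formulation seems the most compact.)
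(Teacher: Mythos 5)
Your proof is correct, but it takes a genuinely different route from the paper's. The paper also argues by induction on the number of variables, but its inductive step runs in the opposite direction: starting from $P_n$, it inserts the new variable $z_{n+1}$ into every possible position of each cyclic permutation, factors out the common summand, and observes that the sum over insertion positions telescopes (because $\sigma$ permutes $\{z_1,\dots,z_n\}$) to the $\sigma$-independent factor $\tfrac{z_1-\tau(z_1)}{(z_1-z_{n+1})(\tau(z_1)-z_{n+1})}$, which is exactly the ratio $P_{n+1}/P_n$ predicted by the closed form. You instead unfold each cyclic permutation into a chain with free endpoints $p=\tau(z_1)$ and $q=z_1$, peel off the \emph{last} vertex of the chain, and close the induction with the vanishing of the sum of residues of $(x-p)^{m-1}/\bigl((x-q)\prod_{i\in I}(x-w_i)\bigr)$. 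Both arguments are elementary and of comparable length; your dictionary between $S_n^{\textup{cyc}}$ and orderings of $\{2,\dots,n\}$, the conventions for $m=0,1$, and the sign bookkeeping in the residue step all check out. What each approach buys: the paper's telescoping step is slightly more self-contained (pure cancellation of a permuted sum, no residue theorem), whereas your chain-sum reformulation isolates a cleaner, manifestly symmetric auxiliary identity for $\Phi(p;\cdot\,;q)$ --- depending only on the set $\{w_i\}$ and the two endpoints --- and makes the underlying mechanism (Lagrange interpolation of $(x-p)^{m-1}$) transparent and potentially reusable.
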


\begin{proof}
	We prove this by induction on $n$. The base case is easily checked. Call the left-hand side of the statement $P_n(\bm{z})$. The statement is equivalent to proving that
	\begin{equation*}
		P_{n+1}(\bm{z}) = P_n(\bm{z}) \frac{z_1 - \tau(z_1)}{(z_1 - z_{n+1})(\tau(z_1) - z_{n+1})}.
	\end{equation*}
	Observe that $P_{n+1}(\bm{z})$ can also be written as
	\begin{multline*}
		\sum_{\sigma \in S_n^{\textup{cyc}}} \Biggl(
			\frac{1}{(z_{n+1} - \tau(z_1)) (z_{n+1} - z_{\sigma(1)})}
			\prod_{i=2}^{n} \frac{1}{z_{i} - z_{\sigma(i)}} \\
		+
		\sum_{j=2}^n
			\frac{1}{(z_{\sigma(1)} - \tau(z_1)) (z_j - z_{n+1}) (z_{n+1} - z_{\sigma(j)})}
			\prod_{\substack{i=2,\dots,n \\ i \ne j}} \frac{1}{z_i - z_{\sigma(i)}} \Biggr),
	\end{multline*}
	since every cyclic permutation of $n+1$ elements can be obtained from a cyclic permutation of $n$ elements by inserting the $(n+1)$-st element in all possible positions. Factoring out $\frac{1}{z_{\sigma(1)} - \tau(z_1)} \prod_{i=2}^{n} \frac{1}{z_{i} - z_{\sigma(i)}}$, we find
	\begin{multline*}
		\sum_{\sigma \in S_n^{\textup{cyc}}} 
		\frac{1}{z_{\sigma(1)} - \tau(z_1)} 
		\prod_{i=2}^{n} \frac{1}{z_{i} - z_{\sigma(i)}}
			\left(
				\frac{z_{\sigma(1)} - \tau(z_1)}{(z_{n+1} - z_{\sigma(1)}) (z_{n+1} - \tau(z_1))}
				+ \sum_{j=2}^{n}
					\frac{z_j - z_{\sigma(j)}}{(z_{n+1} - z_{\sigma(j)}) (z_j - z_{n+1})}
		\right) \\
		=
		\sum_{\sigma \in S_n^{\textup{cyc}}} 
		\frac{1}{z_{\sigma(1)} - \tau(z_1)} 
		\prod_{i=2}^{n} \frac{1}{z_{i} - z_{\sigma(i)}}
			\left(
				\frac{1}{z_{n+1} - z_{\sigma(1)}} - \frac{1}{z_{n+1} - \tau(z_1)}
				+ \sum_{j=2}^{n}
				\left(
					\frac{1}{z_{n+1} - z_{\sigma(j)}} - \frac{1}{z_{n+1} - z_j}
				\right)
		\right).
	\end{multline*}
	Finally, notice that
	\begin{equation*}
		\frac{1}{z_{n+1} - z_{\sigma(1)}} - \frac{1}{z_{n+1} - \tau(z_1)}
			+ \sum_{j=2}^{n}
			\left(
				\frac{1}{z_{n+1} - z_{\sigma(j)}} - \frac{1}{z_{n+1} - z_j}
			\right)
		=
		\frac{z_1 - \tau(z_1)}{(z_1 - z_{n+1})(\tau(z_1) - z_{n+1})}
	\end{equation*}
	does not depend on $\sigma$ and can therefore be brought outside the sum over permutations, providing the wanted prefactor. This concludes the proof of the lemma.
\end{proof}

\begin{lemma}\label{lemma:poly}
	Let $n$ be a positive integer and let $z_1, \dots, z_n$ be formal variables. Set $\zeta = e^{\frac{2\pi\iu}{r}}$ for $r \ge 2$. Then for every $\alpha = 1,\dots, r-1$:
	\begin{multline}
		\frac{1}{\pi\iu}
		\sum_{i=1}^n \left(
			\frac{\zeta^{\alpha/2} (1-\zeta^{\alpha})^{-2g} }{\prod_{j \ne i} (z_i - z_j)(\zeta^{\alpha} z_i - z_j) }
			- \frac{\zeta^{-\alpha/2} (1-\zeta^{-\alpha})^{-2g} }{\prod_{j \ne i} (z_i - z_j)(\zeta^{-\alpha} z_i - z_j) }
		\right) z_i^{-(r+1)(2g-2+n)+n-2} \\
		=
		(-1)^{g-1+\alpha n}
		\frac{2^n}{2\pi \sin(\frac{\alpha}{r} \pi)}
		\frac{1}{\left( 2 \sin(\frac{\alpha}{r} \pi) \right)^{2g-2+n}}
		\sum_{ \substack{k_1,\dots,k_n \ge 0 \\ |k| = (r+1)(2g-2+n)}}
			\prod_{i=1}^n \frac{\sin(\frac{\alpha k_i}{r}\pi)}{z_i^{k_i+1}} \,.
	\end{multline}
\end{lemma}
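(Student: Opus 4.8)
The plan is to prove \cref{lemma:poly} by a coefficient-extraction argument in an auxiliary variable $q$, in the spirit of the residue computations appearing in the proofs of \cref{prop:large:g:corr:rspin} and \cref{lemma:poly:subleading}. Set $\theta \coloneqq \tfrac{\alpha}{r}\pi$ and $\omega \coloneqq e^{\iu\theta}$; with this choice of square root one has $\omega = \zeta^{\alpha/2}$, $\bar\omega = \omega^{-1}$, $\bar\omega/\omega = \zeta^{-\alpha}$ and $1-\zeta^{\pm\alpha} = \mp 2\iu\,\omega^{\pm1}\sin\theta$. The starting point is the elementary identity $\sum_{k\ge 0}\sin\!\big(\tfrac{\alpha k}{r}\pi\big)\,t^k = \frac{\sin\theta\, t}{(1-\omega t)(1-\bar\omega t)}$, obtained by taking imaginary parts of a geometric series. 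Writing $D \coloneqq (r+1)(2g-2+n)$ and
\[
	g(q) \coloneqq \frac{q^n}{\prod_{i=1}^n (z_i-\omega q)(z_i-\bar\omega q)}\,,
\]
this identity together with \cref{eqn:h:rspin} yields $\tfrac{1}{z_1\cdots z_n}\,h^{(r,\alpha)}_D(\bm z^{-1}) = \sin^n\!\theta \cdot [q^D]\,g(q)$; since $\frac{2^n\sin^n\theta}{(2\sin\theta)^{2g-2+n}} = (2\sin\theta)^{2-2g}$, the right-hand side of the lemma equals $(-1)^{g-1+\alpha n}\,\tfrac{(2\sin\theta)^{2-2g}}{2\pi\sin\theta}\,[q^D]g(q)$.

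Next, I would compute $[q^D]g(q)$ by partial fractions. As $\deg_q(\text{numerator}) < \deg_q(\text{denominator})$, and for $\bm z$ in general position all the poles $q = z_i/\omega$ and $q = z_i/\bar\omega$ are simple, one has $g(q) = \sum_{i=1}^n\big(\tfrac{\Res_{q=z_i/\omega}g}{q - z_i/\omega} + \tfrac{\Res_{q=z_i/\bar\omega}g}{q - z_i/\bar\omega}\big)$ with no polynomial part. In $\Res_{q=z_i/\omega}g(q)$ the only vanishing factor is $z_i-\omega q = -\omega(q-z_i/\omega)$; evaluating the remaining product at $q=z_i/\omega$ and using $\prod_{j=1}^n(z_j - \zeta^{-\alpha}z_i) = z_i(1-\zeta^{-\alpha})\prod_{j\ne i}(z_j-\zeta^{-\alpha}z_i)$ together with $\prod_{j\ne i}(z_j - c z_i) = (-1)^{n-1}\prod_{j\ne i}(c z_i - z_j)$ gives
\[
	\Res_{q=z_i/\omega}g(q)
	=
	-\,\frac{z_i^{\,n-1}}{\omega^{\,n+1}(1-\zeta^{-\alpha})\,\prod_{j\ne i}(z_i-z_j)(\zeta^{-\alpha}z_i-z_j)}\,,
\]
and $\Res_{q=z_i/\bar\omega}g$ follows by the substitution $\omega\leftrightarrow\bar\omega$. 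Using $[q^D]\tfrac{c}{q-a} = -c\,a^{-D-1}$ and summing over $i$ then produces exactly a sum of the shape on the left-hand side of the lemma: the $i$-th summand carries the factors $z_i^{\,n-2-D}/\prod_{j\ne i}(z_i-z_j)(\zeta^{\mp\alpha}z_i-z_j)$, with scalar coefficients $\tfrac{\omega^{D-n}}{1-\zeta^{-\alpha}}$ and $\tfrac{\bar\omega^{D-n}}{1-\zeta^{\alpha}}$ respectively.

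It then remains to match the overall constants, i.e.\ to verify the scalar identity
\[
	(-1)^{g-1+\alpha n}\,\frac{(2\sin\theta)^{2-2g}}{2\pi\sin\theta}\cdot\frac{\bar\omega^{\,D-n}}{1-\zeta^{\alpha}}
	=
	\frac{1}{\pi\iu}\,\zeta^{\alpha/2}(1-\zeta^{\alpha})^{-2g}
\]
and its complex conjugate (governing the $\zeta^{-\alpha}$-term). Substituting $1-\zeta^{\alpha} = -2\iu\,\omega\sin\theta$ and cancelling the powers of $2\iu$, both identities reduce to the single phase relation $e^{\iu\theta(D-n+2-2g)} = (-1)^{\alpha n}$, which holds because $D - n + 2 - 2g = (r+1)(2g-2+n) - n + 2 - 2g = r(2g-2+n)$, whence $e^{\iu\theta\,r(2g-2+n)} = e^{\iu\alpha\pi(2g-2+n)} = (-1)^{\alpha(2g-2+n)} = (-1)^{\alpha n}$, the last step since $\alpha(2g-2+n)\equiv \alpha n \pmod{2}$. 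Both sides of the asserted identity being rational functions of $z_1,\dots,z_n$ that agree on a dense set, they agree identically.

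I expect the only real difficulty to be organisational rather than conceptual: keeping track of the branch $\omega = \zeta^{\alpha/2}$ and of the accumulated signs and roots of unity through the residue computation, so that the prefactors collapse to the clean phase identity above. An alternative proof by induction on $n$, inserting one variable at a time in the style of \cref{tech:lemma}, is also possible but yields considerably messier intermediate expressions.
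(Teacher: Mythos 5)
Your proof is correct and is essentially the paper's own argument: the paper likewise introduces an auxiliary rational function $f_\alpha(q)=\frac{1}{\pi\iu}\frac{\zeta^{\alpha/2}(1-\zeta^\alpha)^{1-2g}}{\prod_i(q-z_i)(\zeta^\alpha q-z_i)}$ and extracts the coefficient of $q^{D-n}$ twice, once via partial fractions over the simple poles (yielding the left-hand side) and once via the geometric-series expansion of the denominator (yielding the right-hand side). The only differences are cosmetic — your $g(q)$ is the paper's $f_\alpha$ after the rescaling $q\mapsto\zeta^{\alpha/2}q$ that symmetrises the two families of poles, and you package the expansion coefficients as $\sin(\tfrac{\alpha k}{r}\pi)$ from the start instead of as $\tfrac{1-\zeta^{\alpha k}}{1-\zeta^{\alpha}}$ — and your final phase check $\omega^{r(2g-2+n)}=(-1)^{\alpha n}$ is the same simplification the paper performs at the end.
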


\begin{proof}
	Consider the function $f_{\alpha}(q) \coloneqq \frac{1}{\pi\iu} \frac{\zeta^{\alpha/2} (1-\zeta^{\alpha})^{1-2g}}{\prod_{i=1}^n (q - z_i)(\zeta^{\alpha} q - z_i)}$. We omit its dependence on $z_1, \dots, z_n$ for simplicity. As a function of $q$, it has simple poles at $q = z_i$ and $q = \zeta^{-\alpha} z_i$ for $i = 1, \dots, n$ and no other poles. Moreover, its residues are computed as
	\begin{align*}
		\Res_{q = z_i} f_{\alpha}(q)
		& =
		- \frac{1}{\pi\iu}
		\frac{\zeta^{\alpha/2} (1-\zeta^{\alpha})^{1-2g}}{(1-\zeta^{\alpha}) \prod_{j \ne i}(z_i - z_j)(\zeta^{\alpha} z_i - z_j)}
		z_i^{-1} \,, \\
		\Res_{q = \zeta^{-\alpha} z_i} f_{\alpha}(q)
		& =
		- \frac{1}{\pi\iu}
		\frac{\zeta^{-\alpha/2} (1-\zeta^{\alpha})^{1-2g}}{(1-\zeta^{-\alpha}) \prod_{j \ne i}(z_i - z_j)(\zeta^{-\alpha} z_i - z_j)}
		z_i^{-1} \,.
	\end{align*}
	Thus we find
	\[
	\begin{split}
		f_{\alpha}(q)
		& =
		\frac{1}{\pi\iu} \sum_{i=1}^n \left(
			\frac{1}{z_i-q}
				\frac{\zeta^{\alpha/2} (1-\zeta^{\alpha})^{1-2g}}{(1-\zeta^{\alpha}) \prod_{j \ne i}(z_i - z_j)(\zeta^{\alpha} z_i - z_j) }
		\right. \\
		& \qquad \qquad \qquad \left.
			+
			\frac{1}{\zeta^{-\alpha}z_i-q}
				\frac{\zeta^{-\alpha/2} (1-\zeta^{\alpha})^{1-2g}}{(1-\zeta^{-\alpha}) \prod_{j \ne i}(z_i - z_j)(\zeta^{-\alpha} z_i - z_j) }
		\right) z_i^{-1} \,.
	\end{split}
	\]
	Let $D \coloneqq (r+1)(2g-2+n)$. Expanding around $q = 0$ and taking the coefficient of $q^{D-n}$, we obtain
	\[
	\begin{split}
		\bigl[ q^{D-n} \bigr] f_{\alpha}(q)
		=
		\frac{1}{\pi\iu}
		\sum_{i=1}^n \left(
			\frac{\zeta^{\alpha/2} (1-\zeta^{\alpha})^{-2g}}{\prod_{j \ne i}(z_i - z_j)(\zeta^{\alpha} z_i - z_j) }
			-
			\frac{\zeta^{-\alpha/2} (1-\zeta^{-\alpha})^{-2g}}{\prod_{j \ne i}(z_i - z_j)(\zeta^{-\alpha} z_i - z_j) }
		\right) z_i^{-D+n-2} \, .
	\end{split}
	\]
	On the other hand, we can compute the expansion of $f_{\alpha}$ around $q = 0$ starting from its definition. Start by considering the expansion
	\[
		\prod_{i=1}^n \frac{1}{(q - z_i)(\zeta^{\alpha} q - z_i)}
		=
		q^{-n} z_1^{-1} \cdots z_n^{-1}
		\sum_{k_1,\dots,k_n \ge 0} \prod_{i=1}^n
			\frac{1-\zeta^{\alpha k_i}}{1-\zeta^{\alpha}} \left( \frac{q}{z_i} \right)^{k_i} \, .
	\]
	Taking the coefficient of $q^{D-n}$, we obtain
	\begin{multline*}
		\bigl[ q^{D-n} \bigr] f_{\alpha}(q)
		=
		\frac{1}{\pi\iu} \zeta^{\alpha/2} (1-\zeta^{\alpha})^{1-2g}
		\sum_{ \substack{k_1,\dots,k_n \ge 0 \\ |k| = D}}
		\prod_{i=1}^n
			\frac{1-\zeta^{\alpha k_i}}{1-\zeta^{\alpha}} \, z_i^{-k_i-1} \\
		=
		- \frac{1}{\pi\iu}
		\frac{1}{(\zeta^{\alpha/2} - \zeta^{-\alpha/2})^{2g-1+n}}
		\sum_{ \substack{k_1,\dots,k_n \ge 0 \\ |k| = D}}
		\zeta^{\alpha\frac{|k|- (2g-2+n)}{2}}
		\prod_{i=1}^n (\zeta^{\alpha k_i/2} - \zeta^{-\alpha k_i/2}) \, z_i^{-k_i-1} \,.
	\end{multline*}
	Notice that $\zeta^{\beta/2} - \zeta^{-\beta/2} = 2 \iu \sin(\frac{\beta}{r}\pi)$. Moreover, the condition $|k| = D =(r+1)(2g-2+n)$ implies that $\zeta^{\alpha \frac{|k|- (2g-2+n)}{2}} = (-1)^{\alpha n}$. Simplifying the remaining powers of $\iu$ yields the thesis.
\end{proof}

\subsection{Symmetric functions and polynomiality properties}
\label{app:symmetric:fncts}
The goal of this appendix is to prove some polynomiality results through the theory of symmetric functions \cite{Mac98}. Throughout this section, we consider partitions $\lambda = (\lambda_1, \dots, \lambda_n)$ consisting of $n$ ordered parts $\lambda_1 \ge \cdots \ge \lambda_n \ge 0$ (notice that we allow empty parts). Define the weight and the length of a partition $\lambda$ as
\begin{equation}
	|\lambda| \coloneqq \sum_{i=1}^n \lambda_i \,,
	\qquad\qquad
	\ell(\lambda) \coloneqq \max\Set{ i | \lambda_i > 0 } \,.
\end{equation}
Define the multiplicities and the automorphism factor as
\begin{equation}
	p_k(\lambda) \coloneqq \#\set{ \lambda_i = k } \,,
	\qquad\qquad
	z_{\lambda} \coloneqq \prod_{k = 0}^{\lambda_1} p_k(\lambda)! \,.
\end{equation}
In this paper, we consider three different bases of symmetric polynomials in the variables $\bm{u} = (u_1, \dots, u_n)$. For any integer $k \ge 0$, the elementary and complete homogeneous symmetric polynomials are defined as
\begin{equation}\label{eqn:elementary:complete}
	e_k(\bm{u}) \coloneqq \sum_{1 \le i_1 < \cdots < i_k \le n} u_{i_1} \cdots u_{i_k} \,,
	\qquad\qquad
	h_k(\bm{u}) \coloneqq \sum_{1 \le i_1 \le \cdots \le i_k \le n} u_{i_1} \cdots u_{i_k} \,,
\end{equation}
respectively. For any partition $\lambda$ as above, set $e_{\lambda} \coloneqq \prod_{i=1}^n e_{\lambda_i}$ and $h_{\lambda} \coloneqq \prod_{i=1}^n h_{\lambda_i}$. We also define the monomial symmetric polynomial as
\begin{equation}\label{eqn:monomial}
	m_{\lambda}(\bm{u})
	\coloneqq
	\frac{1}{z_{\lambda}} \sum_{\sigma \in S_n} \prod_{i=1}^n u_i^{\lambda_{\sigma(i)}} \,.
\end{equation}
Let $\mu = (\mu_1,\dots,\mu_n)$ and $\nu = (\nu_1,\dots,\nu_n)$ be partitions with $|\mu| \ge |\nu|$. The goal of this appendix is to study the coefficients of a generic monomial in the product of a monomial and a complete homogeneous polynomial:
\begin{equation}
	M_{n,\mu,\nu}
	\coloneqq
	\bigl[ u_1^{\mu_1} \cdots u_n^{\mu_n} \bigr] m_{\nu} \, h_{|\mu| - |\nu|} \,.
\end{equation}


\begin{proposition} \label{prop:polynom:monomial:coeff}
	$M_{n,\mu,\nu}$ is a polynomial in $n$ and the multiplicities $p_0(\mu), \dots, p_{\nu_1 - 1}(\mu)$, explicitly given by
	\begin{equation}\label{eqn:M:n:mu:nu}
		M_{n,\mu,\nu}
		=
		\frac{1}{z_{\nu}} \prod_{k=0}^{\nu_1}
		\Biggl(
			n
			-
			\sum_{i=0}^{k-1} p_i(\mu)
			-
			\sum_{j = k+1}^{\nu_1} p_j(\nu)
		\Biggr)^{\!\! \underline{p_k(\nu)}} \,.
	\end{equation}
	Here $x^{\underline{m}} = x (x-1) \cdots (x-m+1)$ denotes falling factorial. Moreover, the degree is equal to $|\nu|$ if we set $\deg{n} = 1$ and $\deg{p_k(\mu)} = k + 1$.
\end{proposition}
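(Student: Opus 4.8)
The plan is to prove \cref{prop:polynom:monomial:coeff} by a direct combinatorial analysis of the coefficient extraction, organising the computation according to how the ``extra'' variables introduced by $h_{|\mu|-|\nu|}$ are distributed. First I would unpack the definitions: expanding $m_\nu h_{|\mu|-|\nu|}$, the coefficient $M_{n,\mu,\nu} = [u_1^{\mu_1}\cdots u_n^{\mu_n}]\, m_\nu\, h_{|\mu|-|\nu|}$ counts the ways to write the exponent vector $\mu$ (as a composition, i.e.\ up to reordering) as a sum of an exponent vector coming from a monomial in $m_\nu$ and an exponent vector coming from a monomial in $h_{|\mu|-|\nu|}$. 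Since $h_D = \sum_{|\lambda|=D} m_\lambda$ has \emph{every} monomial of degree $D$ with coefficient $1$, the second summand is completely unconstrained. Thus $M_{n,\mu,\nu}$ equals $\frac{1}{z_\nu}$ times the number of functions assigning to each of the $n$ slots a part of $\nu$ (with the parts of $\nu$ used with their prescribed multiplicities $p_k(\nu)$) such that this assignment is ``compatible'' with $\mu$, meaning: for each slot $i$, the assigned part of $\nu$ is $\le \mu_i$. Indeed the remaining $\mu_i - (\text{assigned part})\ge 0$ is automatically realisable by a (unique choice of) monomial in $h$.

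Next I would make this count explicit. Reordering so that $\mu_1\ge\cdots\ge\mu_n$, the number of compatible assignments is the number of ways to place the parts of $\nu$ into the $n$ slots subject to ``part $k$ of $\nu$ can only go into a slot $i$ with $\mu_i\ge k$''. I would count this greedily, processing the distinct values $k=\nu_1,\nu_1-1,\dots,0$ of $\nu$ in decreasing order: when placing the $p_k(\nu)$ copies of the value $k$, the slots available are those $i$ with $\mu_i\ge k$, minus the slots with $\mu_i<k$ excluded (there are $\sum_{i=0}^{k-1}p_i(\mu)$ of these), minus the slots already occupied by strictly larger parts of $\nu$ (there are $\sum_{j=k+1}^{\nu_1}p_j(\nu)$ of these). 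Hence the number of ways to place the $p_k(\nu)$ copies is the falling factorial
\[
	\Bigl( n - \sum_{i=0}^{k-1}p_i(\mu) - \sum_{j=k+1}^{\nu_1}p_j(\nu) \Bigr)^{\!\underline{p_k(\nu)}},
\]
and multiplying over $k$ and dividing by $z_\nu = \prod_k p_k(\nu)!$ (the overcounting from permuting equal parts of $\nu$ among themselves) gives exactly \cref{eqn:M:n:mu:nu}. One subtlety to address carefully is that the $\frac{1}{z_\nu}$ normalisation in the definition \labelcref{eqn:monomial} of $m_\nu$ and the ``functions vs.\ set partitions'' bookkeeping must be reconciled; I would phrase the count in terms of ordered placements (so the $z_\nu$ cancels transparently) to avoid double counting. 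I should also check the boundary bookkeeping — the product over $k$ runs up to $\nu_1$, and for $k$ larger than $\nu_1$ one has $p_k(\nu)=0$ so those factors are empty, consistent with the stated formula.

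Finally, for the degree statement: each factor $\bigl(n - \sum_{i<k}p_i(\mu) - \sum_{j>k}p_j(\nu)\bigr)^{\underline{p_k(\nu)}}$ is a polynomial in $n$ and the $p_i(\mu)$ of degree $p_k(\nu)$ under $\deg n = 1$, $\deg p_i(\mu) = i+1$; the leading term in this grading comes from the monomials $n^{p_k(\nu)}$ and products of $p_i(\mu)$'s, and one checks that the maximal graded degree of the $k$-th factor is $\max(1,\, i+1)\cdot$ (appropriately) — more precisely, expanding and using $\sum_k p_k(\nu) = n$ (the length) together with $\sum_k k\, p_k(\nu) = |\nu|$, the total graded degree of the product is $\sum_k p_k(\nu)\cdot$(degree of the entry). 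I would argue the entry $n - \sum_{i<k}p_i(\mu)-\cdots$ has graded degree $k$ when $k\ge 1$ (achieved by the term $p_{k-1}(\mu)$, of degree $k$, dominating $n$ of degree $1$) and degree $1$ when $k=0$ (just $n$), so the total is $\sum_{k\ge 1} k\, p_k(\nu) + 1\cdot p_0(\nu)\cdot[\text{correction}]$; a short check shows this equals $|\nu|$ exactly. The main obstacle I anticipate is not any single step but getting the combinatorial bookkeeping of the greedy placement exactly right — in particular correctly identifying which slots are forbidden at stage $k$ (the interplay between ``slot too small for value $k$'', governed by $p_i(\mu)$ for $i<k$, and ``slot already used'', governed by $p_j(\nu)$ for $j>k$) and verifying that the remainder to be filled by $h$ is always non-negative and uniquely determined, so that no overcounting or undercounting creeps in.
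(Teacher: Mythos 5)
Your proposal is correct and follows essentially the same route as the paper: reduce the coefficient extraction to counting rearrangements $\sigma$ of $\nu$ with $\mu_i\ge\nu_{\sigma(i)}$ (possible because every monomial of $h_{|\mu|-|\nu|}$ appears with coefficient $1$), then count these greedily by placing the parts of $\nu$ in decreasing order of value, which yields exactly the falling-factorial product of \cref{eqn:M:n:mu:nu}. Your treatment of the degree claim is in fact more explicit than the paper's (which omits it), though you should note that the $k=0$ factor cancels against $p_0(\nu)!$ in $z_\nu$, which is what brings the total graded degree down to exactly $|\nu|$.
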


\begin{proof}
	Notice that extracting the coefficient of $u_1^{\mu_1} \cdots u_n^{\mu_n}$ means selecting a submonomial $u_1^{\nu_1} \cdots u_n^{\nu_n}$ from $m_{\nu}$ and complete it to $u_1^{\mu_1} \cdots u_n^{\mu_n}$ by extracting the missing powers from $h_{|\mu| - |\nu|}$. From the definition of complete homogeneous and monomial symmetric polynomials, \cref{eqn:elementary:complete,eqn:monomial} respectively, we find
	\[
		M_{n,\mu,\nu} = \frac{1}{z_{\nu}} \sum_{\sigma \in S_n} \prod_{i=1}^n \Theta(\mu_{i} - \nu_{\sigma(i)})  \,,
	\]
	where $\Theta$ is the Heaviside function. We can re-interpret combinatorially the formula above as follows: $M_{n,\mu,\nu}$ is the number of rearrangements $(\nu_{\sigma(1)},\dots,\nu_{\sigma(n)})$ of $\nu$, for $\sigma \in S_n$, such that $\mu_i \geq \nu_{\sigma(i)}$ for all $i = 1,\dots,n$ (up to the automorphism factor $z_{\nu}$). 
	Intuitively speaking, we are counting in how many ways the Young tableau $\mu$ can contain some rearrangement of the Young tableau $\nu$ (where both tableaux have $n$ rows with zero parts allowed). For instance, when $\mu = (4,3,2,1)$ and $\nu = (3,2,1^2)$ there are $8$ such rearrangements, up to the symmetry factor $z_{\nu} = 2!$ that permutes the last two rows of $\nu$. In other words: $M_{n,\mu,\nu} = \frac{8}{2!} = 4$. The $8$ rearrangements are shown below.
	\begin{center}
	\begin{tikzpicture}[scale=.3]
		\draw (0,0) -- (0,4) -- (4,4) -- (4,3) -- (3,3) -- (3,2) -- (2,2) -- (2,1) -- (1,1) -- (1,0) --cycle; 
		\draw (1,4) -- (1,1) -- (0,1);
		\draw (2,4) -- (2,2) -- (0,2);
		\draw (3,4) -- (3,3) -- (0,3);
		\node at (-1.75,2) {$\mu =$};
		\begin{scope}[xshift=9cm]
			\filldraw[fill=NavyBlue, fill opacity=.6] (0,4) -- (3,4) -- (3,3) -- (0,3) --cycle;
			\draw (1,4) -- (1,3);
			\draw (2,4) -- (2,3);
			\filldraw[fill=YellowOrange, fill opacity=.6] (0,3) -- (2,3) -- (2,2) -- (0,2) --cycle;
			\draw (1,3) -- (1,2);
			\filldraw[fill=ForestGreen, fill opacity=.6] (0,2) -- (1,2) -- (1,1) -- (0,1) --cycle; 
			\filldraw[fill=Red, fill opacity=.6] (0,1) -- (1,1) -- (1,0) -- (0,0) --cycle; 
			\node at (-1.75,2) {$\nu =$};
		\end{scope}


		\begin{scope}[xshift= 20cm,yshift=2.5cm]
			\filldraw[fill=NavyBlue, fill opacity=.6] (0,4) -- (3,4) -- (3,3) -- (0,3) --cycle;
			\filldraw[fill=YellowOrange, fill opacity=.6] (0,3) -- (2,3) -- (2,2) -- (0,2) --cycle;
			\filldraw[fill=ForestGreen, fill opacity=.6] (0,2) -- (1,2) -- (1,1) -- (0,1) --cycle; 
			\filldraw[fill=Red, fill opacity=.6] (0,1) -- (1,1) -- (1,0) -- (0,0) --cycle; 

			\draw (0,0) -- (0,4) -- (4,4) -- (4,3) -- (3,3) -- (3,2) -- (2,2) -- (2,1) -- (1,1) -- (1,0) --cycle; 
			\draw (1,4) -- (1,1) -- (0,1);
			\draw (2,4) -- (2,2) -- (0,2);
			\draw (3,4) -- (3,3) -- (0,3);      
		\end{scope}
		\begin{scope}[xshift= 20cm, yshift=-2.5cm]
			\filldraw[fill=NavyBlue, fill opacity=.6] (0,3) -- (3,3) -- (3,2) -- (0,2) --cycle;
			\filldraw[fill=YellowOrange, fill opacity=.6] (0,4) -- (2,4) -- (2,3) -- (0,3) --cycle;
			\filldraw[fill=ForestGreen, fill opacity=.6] (0,2) -- (1,2) -- (1,1) -- (0,1) --cycle; 
			\filldraw[fill=Red, fill opacity=.6] (0,1) -- (1,1) -- (1,0) -- (0,0) --cycle; 

			\draw (0,0) -- (0,4) -- (4,4) -- (4,3) -- (3,3) -- (3,2) -- (2,2) -- (2,1) -- (1,1) -- (1,0) --cycle; 
			\draw (1,4) -- (1,1) -- (0,1);
			\draw (2,4) -- (2,2) -- (0,2);
			\draw (3,4) -- (3,3) -- (0,3);      
		\end{scope}
		\begin{scope}[xshift= 25cm,yshift=2.5cm]
			\filldraw[fill=NavyBlue, fill opacity=.6] (0,4) -- (3,4) -- (3,3) -- (0,3) --cycle;
			\filldraw[fill=YellowOrange, fill opacity=.6] (0,3) -- (2,3) -- (2,2) -- (0,2) --cycle;
			\filldraw[fill=Red, fill opacity=.6] (0,2) -- (1,2) -- (1,1) -- (0,1) --cycle; 
			\filldraw[fill=ForestGreen, fill opacity=.6] (0,1) -- (1,1) -- (1,0) -- (0,0) --cycle; 

			\draw (0,0) -- (0,4) -- (4,4) -- (4,3) -- (3,3) -- (3,2) -- (2,2) -- (2,1) -- (1,1) -- (1,0) --cycle; 
			\draw (1,4) -- (1,1) -- (0,1);
			\draw (2,4) -- (2,2) -- (0,2);
			\draw (3,4) -- (3,3) -- (0,3);      
		\end{scope}
		\begin{scope}[xshift= 25cm, yshift=-2.5cm]
			\filldraw[fill=NavyBlue, fill opacity=.6] (0,3) -- (3,3) -- (3,2) -- (0,2) --cycle;
			\filldraw[fill=YellowOrange, fill opacity=.6] (0,4) -- (2,4) -- (2,3) -- (0,3) --cycle;
			\filldraw[fill=Red, fill opacity=.6] (0,2) -- (1,2) -- (1,1) -- (0,1) --cycle; 
			\filldraw[fill=ForestGreen, fill opacity=.6] (0,1) -- (1,1) -- (1,0) -- (0,0) --cycle; 

			\draw (0,0) -- (0,4) -- (4,4) -- (4,3) -- (3,3) -- (3,2) -- (2,2) -- (2,1) -- (1,1) -- (1,0) --cycle; 
			\draw (1,4) -- (1,1) -- (0,1);
			\draw (2,4) -- (2,2) -- (0,2);
			\draw (3,4) -- (3,3) -- (0,3);      
		\end{scope}
		\begin{scope}[xshift= 30cm,yshift=2.5cm]
			\filldraw[fill=NavyBlue, fill opacity=.6] (0,4) -- (3,4) -- (3,3) -- (0,3) --cycle;
			\filldraw[fill=YellowOrange, fill opacity=.6] (0,2) -- (2,2) -- (2,1) -- (0,1) --cycle;
			\filldraw[fill=ForestGreen, fill opacity=.6] (0,3) -- (1,3) -- (1,2) -- (0,2) --cycle; 
			\filldraw[fill=Red, fill opacity=.6] (0,1) -- (1,1) -- (1,0) -- (0,0) --cycle; 

			\draw (0,0) -- (0,4) -- (4,4) -- (4,3) -- (3,3) -- (3,2) -- (2,2) -- (2,1) -- (1,1) -- (1,0) --cycle; 
			\draw (1,4) -- (1,1) -- (0,1);
			\draw (2,4) -- (2,2) -- (0,2);
			\draw (3,4) -- (3,3) -- (0,3);      
		\end{scope}
		\begin{scope}[xshift= 30cm, yshift=-2.5cm]
			\filldraw[fill=NavyBlue, fill opacity=.6] (0,3) -- (3,3) -- (3,2) -- (0,2) --cycle;
			\filldraw[fill=YellowOrange, fill opacity=.6] (0,2) -- (2,2) -- (2,1) -- (0,1) --cycle;
			\filldraw[fill=ForestGreen, fill opacity=.6] (0,4) -- (1,4) -- (1,3) -- (0,3) --cycle; 
			\filldraw[fill=Red, fill opacity=.6] (0,1) -- (1,1) -- (1,0) -- (0,0) --cycle; 

			\draw (0,0) -- (0,4) -- (4,4) -- (4,3) -- (3,3) -- (3,2) -- (2,2) -- (2,1) -- (1,1) -- (1,0) --cycle; 
			\draw (1,4) -- (1,1) -- (0,1);
			\draw (2,4) -- (2,2) -- (0,2);
			\draw (3,4) -- (3,3) -- (0,3);      
		\end{scope}
		\begin{scope}[xshift= 35cm,yshift=2.5cm]
			\filldraw[fill=NavyBlue, fill opacity=.6] (0,4) -- (3,4) -- (3,3) -- (0,3) --cycle;
			\filldraw[fill=YellowOrange, fill opacity=.6] (0,2) -- (2,2) -- (2,1) -- (0,1) --cycle;
			\filldraw[fill=Red, fill opacity=.6] (0,3) -- (1,3) -- (1,2) -- (0,2) --cycle; 
			\filldraw[fill=ForestGreen, fill opacity=.6] (0,1) -- (1,1) -- (1,0) -- (0,0) --cycle; 

			\draw (0,0) -- (0,4) -- (4,4) -- (4,3) -- (3,3) -- (3,2) -- (2,2) -- (2,1) -- (1,1) -- (1,0) --cycle; 
			\draw (1,4) -- (1,1) -- (0,1);
			\draw (2,4) -- (2,2) -- (0,2);
			\draw (3,4) -- (3,3) -- (0,3);      
		\end{scope}
		\begin{scope}[xshift= 35cm, yshift=-2.5cm]
			\filldraw[fill=NavyBlue, fill opacity=.6] (0,3) -- (3,3) -- (3,2) -- (0,2) --cycle;
			\filldraw[fill=YellowOrange, fill opacity=.6] (0,2) -- (2,2) -- (2,1) -- (0,1) --cycle;
			\filldraw[fill=ForestGreen, fill opacity=.6] (0,1) -- (1,1) -- (1,0) -- (0,0) --cycle; 
			\filldraw[fill=Red, fill opacity=.6] (0,4) -- (1,4) -- (1,3) -- (0,3) --cycle; 

			\draw (0,0) -- (0,4) -- (4,4) -- (4,3) -- (3,3) -- (3,2) -- (2,2) -- (2,1) -- (1,1) -- (1,0) --cycle; 
			\draw (1,4) -- (1,1) -- (0,1);
			\draw (2,4) -- (2,2) -- (0,2);
			\draw (3,4) -- (3,3) -- (0,3);      
		\end{scope}
	\end{tikzpicture}
	\end{center}
	\Cref{eqn:M:n:mu:nu} is obtained by counting the number of embeddings of the parts of $\nu$ into $\mu$, starting from the parts with the highest value of $\nu$ and proceeding in decreasing order. This concludes the proof.
\end{proof}

\begin{remark}
	The above counting can be performed in a dual way, namely by counting the number of coverings of $\nu$ by parts of $\mu$, starting by the smallest parts of $\mu$ and proceeding in increasing order. From this point-of-view, we obtain
	\begin{equation}
		M_{n,\mu,\nu}
		=
		\frac{1}{z_{\nu}} \prod_{k=0}^{\nu_1}
			\Biggl(
				\sum_{i = 0}^{k} p_i(\nu)
				-
				\sum_{j=0}^{k-1} p_j(\mu)
			\Biggr)^{\!\! \underline{\tilde{p}_k(\mu)}}
			,
	\end{equation}
	where and $\tilde{p}_k(\mu) = p_k(\mu)$ for all $k$, with the only exception of $k = \nu_1$ for which one sets $\tilde{p}_{\nu_1}(\mu) = \sum_{m \geq \nu_1} {p}_m(\mu)$.
\end{remark}

\begin{example}
	From \cref{eqn:M:n:mu:nu}, one can show that
	\begin{equation}
		M_{n,\mu,(1^m,0^{n-m})} = \frac{(n - p_0(\mu))^{\underline{m}}}{m!} \,,
		\quad
		M_{n,\mu,(2,1^m,0^{n-m-1})} = \frac{(n - p_0(\mu) - 1)^{\underline{m}} (n - p_0(\mu) - p_1(\mu))}{m!} \,.
	\end{equation}
\end{example}

\subsection{Visualising the large genus asymptotics}
\label{app:visual}
In the main body of the paper, we have highlighted several features of the large genus asymptotics of intersection numbers. On the one hand, thanks to determinantal formulae, we were able to provide an algorithm for the computation of subleading corrections to the leading asymptotics of $\psi$-class intersection numbers as functions of the multiplicities $p_i$. On the other hand, we have shown how, in the presence of multiple Borel plane singularities, the resurgent large-order asymptotics organise themselves according to the distance from the origin of the various singularities. This results in the leading asymptotics being determined by the singularity that is the closest to the origin, while the remaining singularities yield exponentially subleading contributions. In the case of the $W_{g,n}(\bm{x})$ correlators, because of the competition between the Borel plane singularities $\pm A(x_i)$, the values of the $x_i$ determine which singularity yields the leading asymptotics. This kind of effect manifests itself also in the case of the $r$-spin intersection numbers with $r \geq 3$, where we observe exponentially suppressed corrections to the leading asymptotics associated to Borel singularities which are at higher distances from the origin than the leading ones.

Most of the above-mentioned effects are subleading in nature, and as such hidden by the leading asymptotics. However, as it will be made clear below, one can always construct sequences out of intersection numbers which converge to various quantities highlighting subleading corrections to the leading asymptotics. Plotting such sequences provides instructive visualisations of various effects.

\subsubsection{Subleading corrections to the asymptotics of $\psi$-class intersection numbers}
The large genus asymptotics for $\psi$-class intersection numbers is captured by the following sequence $G_{d,K}$, whose behaviour is dictated by \cref{eqn:intro:psi:large:g}:
\begin{equation}
	G_{d,K}
	\coloneqq
	\ms{S}_{\ms{A}}^{-1} \, \frac{4\pi}{2^n} \,
	\frac{\ms{A}^{2g-2+n} \, \prod_{i=1}^n (2d_i + 1)!!}{\Gamma(2g-2+n)} \,
	\braket{\tau_{d_1} \cdots \tau_{d_n}}
	-
	\sum_{k=1}^{K}\frac{\ms{A}^k}{(2g-3+n)^{\underline{k}}} \, \alpha_{k}
	=
	1 + \bigO\Bigl( \frac{1}{g^{K+1}} \Bigr) \,.
\end{equation}
Notice that the sum starts from $k = 1$, i.e.~it leaves out the leading term. By construction, the higher $K$ is (that is, the more subleading terms encoded by the coefficients $\alpha_k$ we include in the sequence), the faster the sequence converges to $1$. This is shown for $n = 1$ in \cref{fig:subleading}, which illustrates how the asymptotics of intersection numbers are substantially improved by including more and more subleading corrections.
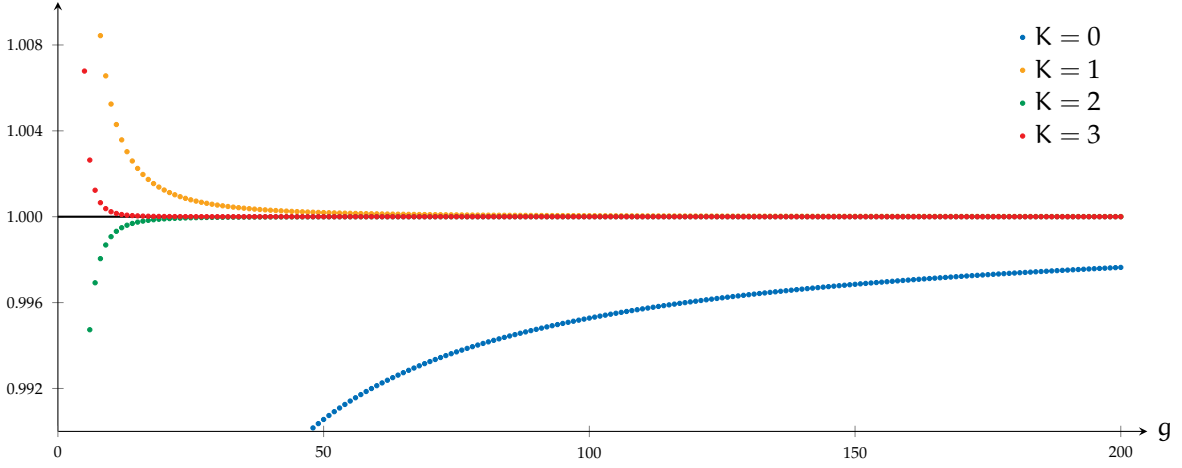
\begin{figure}
\centering
\begin{tikzpicture}
	\begin{axis}[
		x=.9*\textwidth/205,
		xmin=0, xmax=205,
		ymin=0.99, ymax=1.01,
		axis lines=center,
		legend style={draw=none}, legend pos=north east,
		xtick={0, 50, 100, 150, 200}, extra x ticks={0}, xlabel={$g$}, xlabel style={right},
		ytick={0.992,0.996,1,1.004,1.008}, yticklabels={$0.992$,$0.996$,$1.000$,$1.004$,$1.008$},
	]

	\addplot[thick, domain=0:200, forget plot] {1};
	\addplot[NavyBlue, mark = *, mark size=0.8, only marks] table {Images/subleading0.txt};
	\addlegendentry{$\, K=0$}; 
	\addplot[YellowOrange, mark = *, mark size=0.8, only marks] table {Images/subleading1.txt};
	\addlegendentry{$\, K=1$};
	\addplot[ForestGreen, mark = *, mark size=0.8, only marks] table {Images/subleading2.txt};
	\addlegendentry{$\, K=2$};
	\addplot[Red, mark = *, mark size=0.8, only marks] table {Images/subleading3.txt};
	\addlegendentry{$\, K=3$};

	\end{axis}
\end{tikzpicture}
\caption{
	The sequences $G_{3g-2,0}$ (blue), $G_{3g-2,1}$ (orange),$G_{3g-2,2}$ (green), and $G_{3g-2,3}$ (red). Notice how the convergence to 1 visibly improves as $K$ increases.
}
\label{fig:subleading}
\end{figure}

\subsubsection{Subleading corrections: dependence on the multiplicities}
\label{app:subleading:mult}
The dependence of the coefficients $\alpha_k$ on the multiplicities $p_m = \#\set{ d_i = m }$ can also be illustrated quite effectively. This is achieved by constructing the following sequences $H_{d,K}$, whose large genus asymptotics is again given by \cref{eqn:intro:psi:large:g}:
\begin{equation}
	H_{d,K}
	\coloneqq
	\frac{(2g-3+n)^{\underline{k}}}{\ms{A}^K} \, G_{d,K-1}
	=
	\alpha_K \bigl( n, p_0, \ldots, p_{\floor{\frac{3}{2}K} - 1} \bigr)
	+
	\bigO\bigl( g^{-1} \bigr) \,.
\end{equation}
The dependence on the multiplicities $p_m$ means that, according to the family of intersection numbers which are used to construct $H_{d,K}$, the latter converges to different values. This indeed can be observed in \cref{fig:multip:dep}. We consider $2$-point intersection numbers and construct the sequences $H_{d,K}$ for four distinct families of intersection numbers, corresponding to $d_1 = 0,1,2,3$, and for three different values of $K$. In \cref{fig:multip:dep:K0} ($K=0$) we see that they all converge to the same value, namely $\alpha_0 = 1$, which indeed does not depend on the the multiplicities. In \cref{fig:multip:dep:K1} ($K=1$), instead, we see that the sequence constructed from intersection numbers with $d_1 = 0$ converges to a different value compared to the other three sequences, due to the dependence on $p_0$ of $\alpha_1$. Finally, in \cref{fig:multip:dep:K2} ($K=2$) we see all four sequences converge to distinct values, as expected from the dependence on $p_0$, $p_1$ and $p_2$ of $\alpha_2$.
\begin{figure}[t]
\centering
	\begin{subfigure}[h]{1\textwidth}
		\centering
		\begin{tikzpicture}
			\begin{axis}[
				x=.9*\textwidth/75,
				yscale=.75,
				xmin=0, xmax=62.5,
				ymin=0.8, ymax=1.03,
				axis lines=center,
				legend style={draw=none, at={(1.2,1.7)},anchor=east},
				xtick={0, 10, 20, 30, 40, 50, 60}, extra x ticks={0}, xlabel={$g$}, xlabel style={right},
				ytick={0.85,0.90,0.95,1}, yticklabels={$0.85$,$0.90$,$0.95$,$1.00$},
			]

			\addplot[thick, NavyBlue, domain=0:60, forget plot] {1};

			\addplot[NavyBlue, mark = *, mark size=1, only marks, mark options={yscale=1.33}] table {Images/alpha00.txt};
			\addlegendentry{$\, d_1=0$}; 
			\addplot[YellowOrange, mark = *, mark size=1, only marks, mark options={yscale=1.33}] table {Images/alpha01.txt};
			\addlegendentry{$\, d_1=1$};
			\addplot[ForestGreen, mark = *, mark size=1, only marks, mark options={yscale=1.33}] table {Images/alpha02.txt};
			\addlegendentry{$\, d_1=2$};
			\addplot[Red, mark = *, mark size=1, only marks, mark options={yscale=1.33}] table {Images/alpha03.txt};
			\addlegendentry{$\, d_1=3$};

			\end{axis}
		\end{tikzpicture}
		\caption{
			The sequences $H_{(0,3g-1),0}$ (blue), $H_{(1,3g-2),0}$ (orange), $H_{(2,3g-3),0}$ (green), and $H_{(3,3g-4),0}$ (red): all four sequences converge to $\alpha_0=1$.
		}
		\label{fig:multip:dep:K0}
	\end{subfigure}
	\begin{subfigure}[h]{1\textwidth}
		\centering
		\begin{tikzpicture}
			\begin{axis}[
				x=.9*\textwidth/75,
				yscale=.75,
				xmin=0, xmax=62.5,
				ymin=-1.6, ymax=-0.05,
				axis lines=center, axis x line* = bottom,
				legend style={draw=none, at={(1.2,0.7)},anchor=east},
				xtick={0, 10, 20, 30, 40, 50, 60}, extra x ticks={0}, xlabel={$g$}, xlabel style={right},
				ytick={-0.2,-0.6,-1.0,-1.4}, yticklabels={$-0.2$,$-0.6$,$-1.0$,$-1.4$},
			]

			\addplot[thick, NavyBlue, domain=0:60, forget plot] {-5/12};
			\addplot[thick, YellowOrange, domain=0:60, forget plot] {-17/12};

			\addplot[NavyBlue, mark = *, mark size=1, only marks, mark options={yscale=1.25}] table {Images/alpha10.txt};
			\addlegendentry{$\, d_1=0$}; 
			\addplot[YellowOrange, mark = *, mark size=1, only marks, mark options={yscale=1.25}] table {Images/alpha11.txt};
			\addlegendentry{$\, d_1=1$};
			\addplot[ForestGreen, mark = *, mark size=1, only marks, mark options={yscale=1.25}] table {Images/alpha12.txt};
			\addlegendentry{$\, d_1=2$};
			\addplot[Red, mark = *, mark size=1, only marks, mark options={yscale=1.25}] table {Images/alpha13.txt};
			\addlegendentry{$\, d_1=3$};

			\end{axis}
		\end{tikzpicture}
		\caption{
			The sequences $H_{(0,3g-1),1}$ (blue), $H_{(1,3g-2),1}$ (orange), $H_{(2,3g-3),1}$ (green), and $H_{(3,3g-4),1}$ (red): the first one  converges to $\alpha_1( 2, 1) = -\tfrac{5}{12}$ (blue line), while the others converge to $\alpha_1( 2, 0) = -\tfrac{17}{12}$ (orange line).
		}
		\label{fig:multip:dep:K1}
	\end{subfigure}
	\begin{subfigure}[h]{1\textwidth}
		\centering
		\begin{tikzpicture}
			\begin{axis}[
				x=.9*\textwidth/75,
				yscale=.75,
				xmin=0, xmax=62.5,
				ymin=0, ymax=4.6,
				axis lines=center, axis x line* = bottom,
				legend style={draw=none, at={(1.2,0.75)},anchor=east},
				xtick={0, 10, 20, 30, 40, 50, 60}, extra x ticks={0}, xlabel={$g$}, xlabel style={right},
			]

			\addplot[thick, NavyBlue, domain=0:60, forget plot] {205/288};
			\addplot[thick, YellowOrange, domain=0:60, forget plot] {613/288};
			\addplot[thick, ForestGreen, domain=0:60, forget plot] {1045/288};
			\addplot[thick, Red, domain=0:60, forget plot] {1225/288};

			\addplot[NavyBlue, mark = *, mark size=1, only marks, mark options={yscale=1.25}] table {Images/alpha20.txt};
			\addlegendentry{$\, d_1=0$}; 
			\addplot[YellowOrange, mark = *, mark size=1, only marks, mark options={yscale=1.25}] table {Images/alpha21.txt};
			\addlegendentry{$\, d_1=1$};
			\addplot[ForestGreen, mark = *, mark size=1, only marks, mark options={yscale=1.25}] table {Images/alpha22.txt};
			\addlegendentry{$\, d_1=2$};
			\addplot[Red, mark = *, mark size=1, only marks, mark options={yscale=1.25}] table {Images/alpha23.txt};
			\addlegendentry{$\, d_1=3$};

			\end{axis}
		\end{tikzpicture}
		\caption{
			The sequences $H_{(0,3g-1),2}$ (blue), $H_{(1,3g-2),2}$ (orange), $H_{(2,3g-3),2}$ (green), and $H_{(3,3g-4),2}$ (red): they converge to $\alpha_2( 2, 1,0,0) = \tfrac{205}{288}$ (blue line), $\alpha_2( 2, 0,1,0) = \tfrac{613}{288}$ (orange line), $\alpha_2( 2, 0,0,1) = \tfrac{1045}{288}$ (green line), and $\alpha_2( 2, 0,0,0) = \tfrac{1225}{288}$ (red line) respectively.
		}
		\label{fig:multip:dep:K2}
	\end{subfigure}
\caption{Visualisation of the dependence of the subleading contribution on the multiplicities.}
\label{fig:multip:dep}
\end{figure}
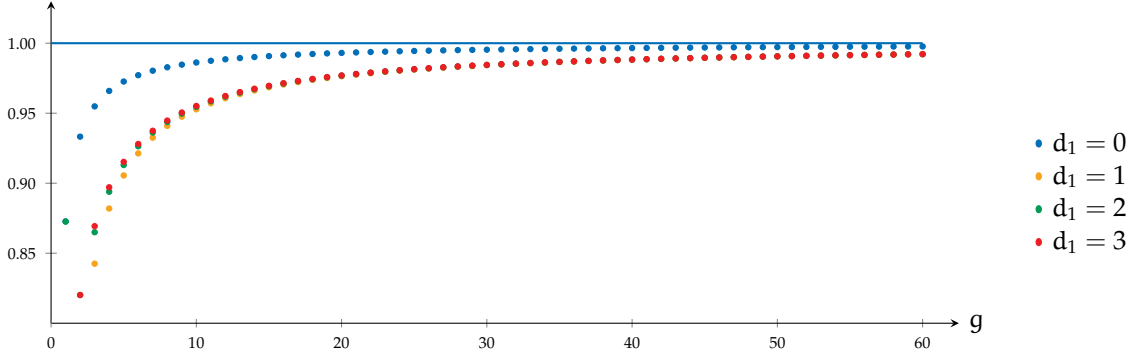
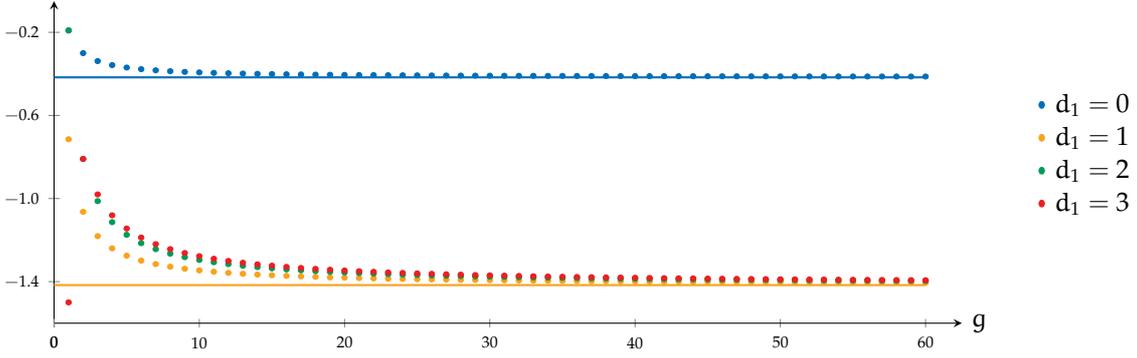
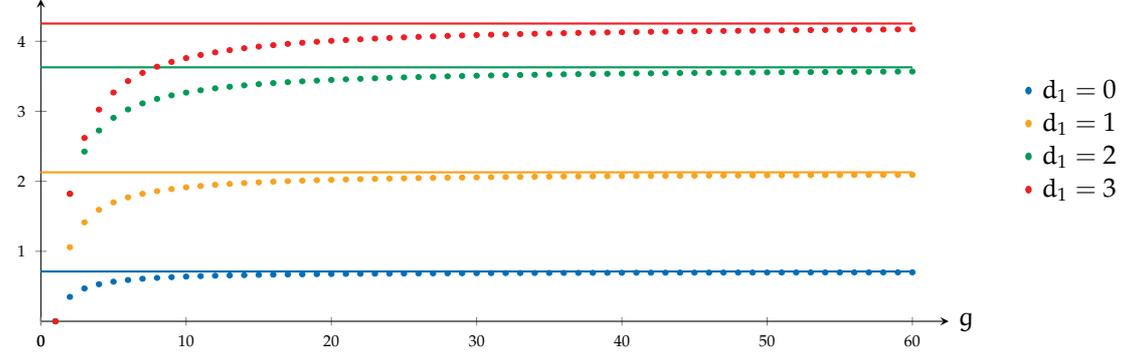

\subsubsection{Exponentially subleading corrections}
\label{app:exp}
The Borel plane singularity structure of $r$-spin intersection numbers with $r \geq 3$ leads to large genus asymptotics which sharply differ from those of $\psi$-class and $\Theta$-class intersection numbers. First of all, the leading contribution to the asymptotics is given by pairs of complex conjugate instanton actions, which leads to the oscillating asymptotics given in \eqref{eqn:intro:rspin:large:g}. Moreover, for $r \geq 4$ the asymptotics are characterised by the presence of exponentially subleading corrections, due to the remaining Borel plane singularities. The leading oscillating behaviour is particularly easy to visualise. One simply has to introduce the sequences $I^{r\textup{-spin}}_{d,a}$, whose asymptotics is given by \cref{eqn:intro:rspin:large:g}:
\begin{equation}
\begin{split}
	I^{r\textup{-spin}}_{d,a}
	& \coloneqq
	\ms{S}_{r,1}^{-1} \,
	\frac{2\pi}{2^n} \,
	\frac{(-r)^{g-1-|d|} |\ms{A}_{r,1}|^{2g-2+n} \prod_{i=1}^n (r d_i + a_i)!_{(r)}}{\Gamma(2g-2+n)} \,
	\braket{\tau_{d_1,a_1} \cdots \tau_{d_n,a_n}}^{r\textup{-spin}} \\
	& =
	(-1)^{|d|-g+1} \, \gamma^{(r,1)}_0 + \bigO\bigl(g^{-1}\bigr) \,.
\end{split}
\end{equation} 
This is shown in \cref{fig:spin4leading} for $r=4$, $n=1$. 
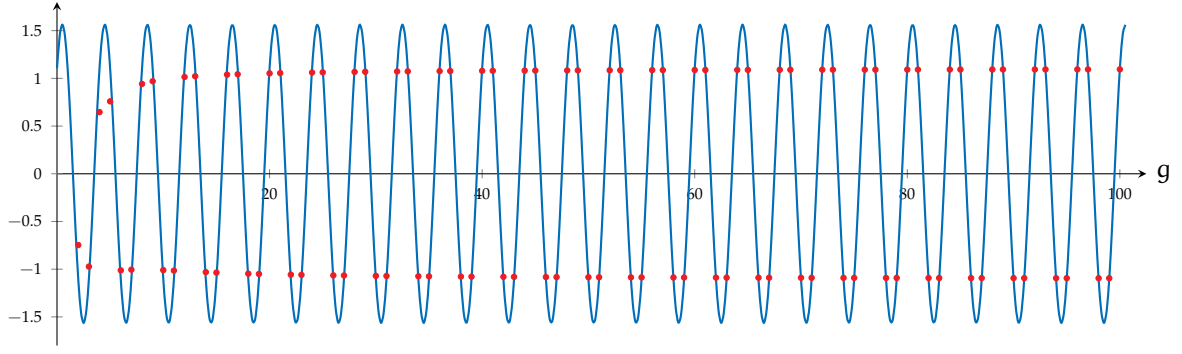
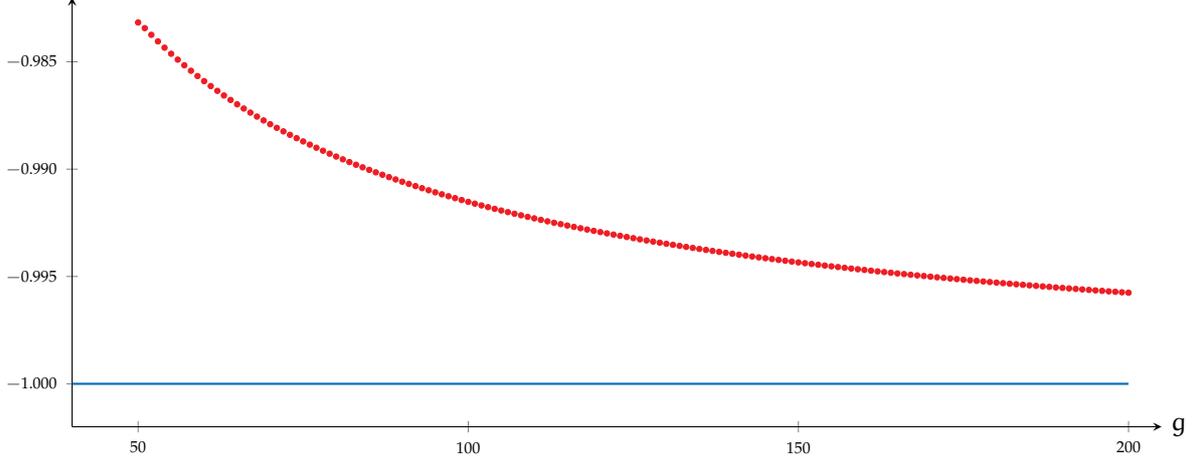
\begin{figure}[t]
\centering
	\begin{subfigure}[h]{1\textwidth}
		\centering
		\centering
		\begin{tikzpicture}
			\begin{axis}[
				x=.9*\textwidth/102.5,
				yscale=.8,
				xmin=0, xmax=102.5,
				ymin=-1.8, ymax=1.8,
				axis lines=center,
				xtick={0, 20, 40, 60, 80, 100}, xlabel={$g$}, xlabel style={right},
				ytick={-1.5,-1,-0.5,0,0.5,1,1.5}, extra y ticks={0},
			]
			
			\addplot[thick, NavyBlue, samples=400, smooth, domain=0:100.53] {25/(16*sqrt(2))*(cos(deg(x*pi/2))+sin(deg(x*pi/2)))};
			\addplot[Red, mark = *, mark size=1, only marks, mark options={yscale=1.25}] table {Images/spin4leading.txt};

			\end{axis}
		\end{tikzpicture}
		\caption{
			The sequence $I^{4\textup{-spin}}_{d,a}$ (red) and $(-1)^{d-g+1}\,\gamma^{(4,1)}_0$ (blue), plotted as a continuous function of $g$.}
		\label{fig:spin4leading}
	\end{subfigure}
	\begin{subfigure}[h]{1\textwidth}
		\centering
		\centering
		\begin{tikzpicture}
			\begin{axis}[
				x=.9*\textwidth/165,
				xmin=40, xmax=205,
				ymin=-1.002, ymax=-0.982,
				axis lines=center, axis x line* = bottom,
				xtick={50, 100, 150, 200}, xlabel={$g$}, xlabel style={right},
				ytick={-0.985,-0.990,-0.995,-1.000}, yticklabels={$-0.985$,$-0.990$,$-0.995$,$-1.000$},
			]
			
			\addplot[thick, NavyBlue, domain=40:200] {-1};
			\addplot[Red, mark = *, mark size=1, only marks] table {Images/spin4subleading.txt};

			\end{axis}
		\end{tikzpicture}
		\caption{
			The sequence $J^{4\textup{-spin}}_{d,a,80}$ (red) and $(-1)^{d-g+1}\,\gamma^{(4,2)}_0$ (blue), plotted as a continuous function of $g$.
		}
		\label{fig:spin4subleading}
	\end{subfigure}
\caption{
	Visualisation of the exponentially leading and subleading contributions in the $1$-point $4$-spin intersection numbers. Here $d$ and $a$ are functions of $g$, uniquely determined by $4d+a=5(2g-1)$ with $a \in \set{1,2,3}$.
}
\label{fig:spin4}
\end{figure}
The contributions to the asymptotics due to the remaining Borel plane singularities are exponentially suppressed, and therefore a bit trickier to visualise. Nevertheless, this can be achieved by constructing the following sequences:
\begin{equation}
	J^{r\textup{-spin}}_{d,a,K}
	\coloneqq
	\frac{|\ms{A}_{r,2}|^{2g-2+n}}{|\ms{A}_{r,1}|^{2g-2+n}}
	\left(
		I^{r\textup{-spin}}_{d,a}
		-
		\sum_{k=0}^{K} \frac{|\ms{A}_{r,1}|^k}{(2g-3+n)^{\underline{k}}} \,(-1)^{|d|-g+1}\,
		\gamma^{(r,1)}_{k}
	\right) \,.
\end{equation}
For $K \gg 0$, we are removing a large number of subleading corrections from the first line of \cref{eqn:intro:rspin:large:g}, allowing for the second line to show up. This is illustrated , again for $r=4$, $n=1$, in \cref{fig:spin4subleading}. It is important to note that, for $r=4$, there is only one exponentially subleading correction to the asymptotics, corresponding to the last line of \cref{eqn:intro:rspin:large:g}. Such a term, which is specific to the even $r$ case, does not come from a pair of complex-conjugate instanton actions, but rather from a single one. As such, it does not lead to oscillating asymptotics, much like in the $r=2$ case.

\subsubsection{Competition between singularities in the Borel plane}
\label{app:sings}
The competition between singularities in the Borel plane is particularly well illustrated by the large genus asymptotics of the $n$-point correlators $W_{g,n}(\bm{x})$. In order to show this effect, let us consider the case of Airy correlators, whose large genus asymptotics were given in \cref{eqn:large:g:corr:WK}. One immediately realises that the large genus asymptotics are dominated by the instanton action $A(x_i)$ with the smallest absolute value. Therefore, as the values of the $x_i$ change, so does the dominance in the large genus asymptotics. This can be seen quite distinctly for two-point correlators by introducing the sequence $L_g$, whose asymptotic is given by \cref{eqn:large:g:corr:WK}:
\begin{equation}
	L_g(x_1,x_2)
	\coloneqq
	2g\sqrt{\frac{W_{g,2}(x_1,x_2)}{W_{g+1,2}(x_1,x_2)}}
	=
	\min\set{ \left|A(x_1)\right|, \, \left|A(x_2)\right| } + \bigO\bigl( g^{-1} \bigr) \,,
\end{equation}
with $A(x) = \frac{4}{3}x^{3/2}$. Hence, if for example $x_1$ and $x_2$ are allowed to vary on the positive real axis, one expects $L_g(x_1,x_2)$ to converge to $A(x_1)$ as long as $x_1 \leq x_2$, and to $A(x_2)$ as long as $x_2 \leq x_1$. This is indeed what happens, as it can be seen in \cref{fig:W2}.

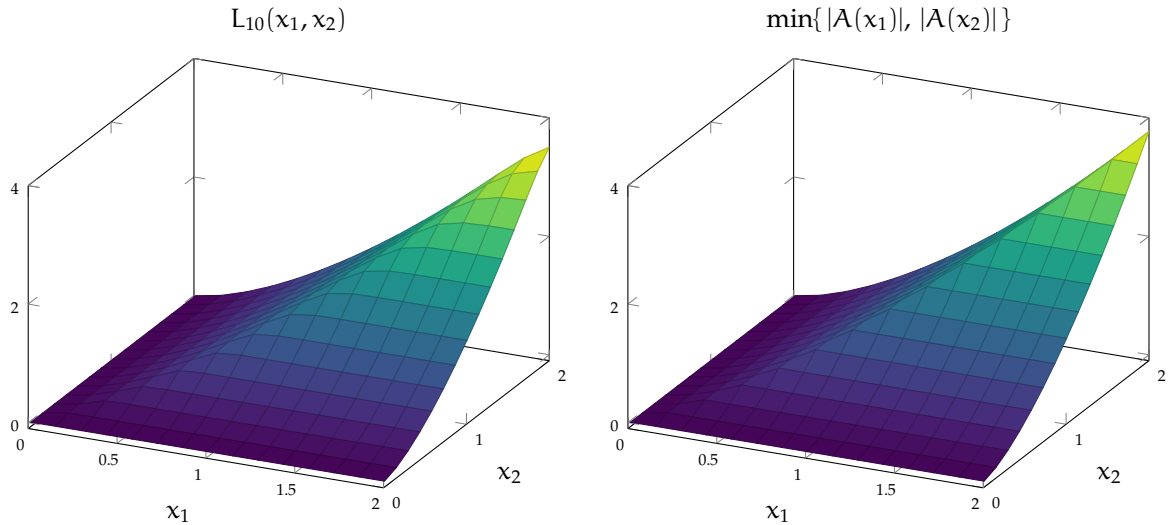
\begin{figure}[t]
\centering
\begin{subfigure}[h]{0.49\textwidth}
\centering
\begin{tikzpicture}[
	declare function={
		f(\x,\y)=sqrt(((1804857108504066435*x^(32)+1749323043627018237*x^(31)*y+1751967522906877675*x^(30)*y^2+1752942945592071730*x^(29)*y^3+1752685772002755030*x^(28)*y^4+1752726378358962930*x^(27)*y^5+1752740590583635695*x^(26)*y^6+1752731626538963385*x^(25)*y^7+1752734263022690535*x^(24)*y^8+1752735025271387160*x^(23)*y^9+1752734039213498760*x^(22)*y^(10)+1752734499373846680*x^(21)*y^(11)+1752734591673451350*x^(20)*y^(12)+1752734329032299850*x^(19)*y^(13)+1752734510860789350*x^(18)*y^(14)+1752734525112211500*x^(17)*y^(15)+1752734373639953220*x^(16)*y^(16)+1752734525112211500*x^(15)*y^(17)+1752734510860789350*x^(14)*y^(18)+1752734329032299850*x^(13)*y^(19)+1752734591673451350*x^(12)*y^(20)+1752734499373846680*x^(11)*y^(21)+1752734039213498760*x^(10)*y^(22)+1752735025271387160*x^9*y^(23)+1752734263022690535*x^8*y^(24)+1752731626538963385*x^7*y^(25)+1752740590583635695*x^6*y^(26)+1752726378358962930*x^5*y^(27)+1752685772002755030*x^4*y^(28)+1752942945592071730*x^3*y^(29)+1751967522906877675*x^2*y^(30)+1749323043627018237*x*y^(31)+1804857108504066435*y^(32))/(12800*x^3*y^3*(x+y)*(238436656380769*x^(28)-8082598521382*x^(27)*y+238862056302947*x^(26)*y^2-7927907640590*x^(25)*y^3+238816086843089*x^(24)*y^4-7919795382968*x^(23)*y^5+238818818521676*x^(22)*y^6-7921810238096*x^(21)*y^7+238819490140052*x^(20)*y^8-7921633222400*x^(19)*y^9+238819203029228*x^(18)*y^(10)-7921478624264*x^(17)*y^(11)+238819227054614*x^(16)*y^(12)-7921581590204*x^(15)*y^(13)+238819311299474*x^(14)*y^(14)-7921581590204*x^(13)*y^(15)+238819227054614*x^(12)*y^(16)-7921478624264*x^(11)*y^(17)+238819203029228*x^(10)*y^(18)-7921633222400*x^9*y^(19)+238819490140052*x^8*y^(20)-7921810238096*x^7*y^(21)+238818818521676*x^6*y^(22)-7919795382968*x^5*y^(23)+238816086843089*x^4*y^(24)-7927907640590*x^3*y^(25)+238862056302947*x^2*y^(26)-8082598521382*x*y^(27)+238436656380769*y^(28))))^(-1));
	}]
	\begin{axis}[
		colormap/viridis,
		xmin=0, xmax=2,
		ymin=0, ymax=2,
		zmin=-0.1, zmax=4,
		xlabel={$x_1$}, ylabel={$x_2$}, title={$L_{10}(x_1,x_2)$},
	]
		\addplot3[surf, samples=15, domain=0.01:2] {f(\x,\y)};
	\end{axis}
\end{tikzpicture}
\end{subfigure}
\begin{subfigure}[h]{0.49\textwidth}
\centering
\begin{tikzpicture}[
	declare function={
		f(\x,\y)=(x<=y)*4/3*x^(3/2) + (y<x)*4/3*y^(3/2);
	}]
	\begin{axis}[
		colormap/viridis,
		xmin=0, xmax=2,
		ymin=0, ymax=2,
		zmin=-0.1, zmax=4,
		xlabel={$x_1$}, ylabel={$x_2$}, title={$\min\{\, |A(x_1)|,\,|A(x_2)| \,\}$},
	]
		\addplot3[surf, samples=15, domain=0.01:2] {f(\x,\y)};
	\end{axis}
\end{tikzpicture}
\end{subfigure}
\caption{
	The function $L_{10}(x_1,x_2)$ (left), and the function  $\min\{\, |A(x_1)|, \, |A(x_2)| \,\}$ (right).
}
\label{fig:W2}
\end{figure}

\printbibliography

\end{document}